\newcommand{\be}{\begin{equation}}
\newcommand{\ee}{\end{equation}}
\newcommand{\bea}{\begin{eqnarray}}
\newcommand{\eea}{\end{eqnarray}}
\newcommand{\bean}{\begin{eqnarray*}}
\newcommand{\eean}{\end{eqnarray*}}
\newcommand{\brray}{\begin{array}}
\newcommand{\erray}{\end{array}}
\newcommand{\ben}{\begin{equation}{nonumber}}
\newcommand{\een}{\end{equation}{nonumber}}
\newtheorem{defn}{Definition}[section]
\newtheorem{thm}[defn]{Theorem}
\newtheorem{lemma}[defn]{Lemma}
\newtheorem{prop}[defn]{Proposition}
\newtheorem{corr}[defn]{Corollary}
\newtheorem{xmpl}[defn]{Example}
\newtheorem{rmk}[defn]{Remark}
\newcommand{\bdfn}{\begin{defn}}
\newcommand{\bthm}{\begin{thm}}
\newcommand{\blmma}{\begin{lemma}}
\newcommand{\bppsn}{\begin{prop}}
\newcommand{\bcrlre}{\begin{corr}}
\newcommand{\bxmpl}{\begin{xmpl}}
\newcommand{\brmrk}{\begin{rmk}}
\newcommand{\edfn}{\end{defn}}
\newcommand{\ethm}{\end{thm}}
\newcommand{\elmma}{\end{lemma}}
\newcommand{\eppsn}{\end{prop}}
\newcommand{\ecrlre}{\end{corr}}
\newcommand{\exmpl}{\end{xmpl}}
\newcommand{\ermrk}{\end{rmk}}
\newcommand{\IC}{\mathbb{C}}
\newcommand{\ahoma}{{}^\A\mathrm{Hom}_\A}
\newcommand{\homaa}{\mathrm{Hom}^\A_\A}
\newcommand{\homc}{\mathrm{Hom}_\IC}
\newcommand{\id}{\mathrm{id}}
\newcommand{\smoothfn}{C^\infty(M)}
\newcommand{\oneformclassical}{\Omega^1 (M)}
\newcommand{\oneform}{{{\Omega}^1 (\mathcal{A}) }}
\newcommand{\twoform}{{{\Omega}^2}(\mathcal{A})}
\newcommand{\tensora}{\otimes_{\mathcal{A}}}
\newcommand{\tensorsym}{\otimes^{{\rm sym}}_{\mathcal{A}}}
\newcommand{\tensorc}{\otimes_{\mathbb{C}}}
\newcommand{\A}{\mathcal{A}}
\newcommand{\B}{\mathcal{B}}
\newcommand{\C}{\mathcal{C}}
\newcommand{\E}{\mathcal{E}}
\newcommand{\F}{\mathcal{F}}
\newcommand{\zeroE}{{}_0\E}
\newcommand{\Ezero}{\E_0}
\newcommand{\Psym}{P_{\rm sym}}
\newcommand{\Hom}{{\rm Hom}}
\newcommand{\aomega}{\A_\Omega}
\newcommand{\staromega}{\ast_\Omega}
\newcommand{\Edelta}{{}_{\mathcal{E}} \Delta}
\newcommand{\deltaE}{\Delta_{\mathcal{E}}}
\newcommand{\sigmacan}{\sigma^{{\rm can}}}
\newcommand{\aone}{a_{(1)}}
\newcommand{\atwo}{a_{(2)}}
\newcommand{\omegaone}{\omega_{(1)}}
\newcommand{\RNum}[1]{\uppercase\expandafter{\romannumeral #1\relax}}
\begin{document}

\title{Covariant connections on bicovariant differential calculus}

\maketitle
\begin{center}
	{\large {Jyotishman Bhowmick and Sugato Mukhopadhyay}}\\
	Indian Statistical Institute\\
	203, B. T. Road, Kolkata 700108\\
	Emails: jyotishmanb$@$gmail.com, m.xugato@gmail.com \\
\end{center}

\begin{abstract}
	Given a bicovariant differential calculus $ (\mathcal{E}, d) $  such that the braiding map is diagonalisable in a certain sense, the bimodule of two-tensors admits a direct sum decomposition into symmetric and anti-symmetric tensors. This is used to prove the existence of a bicovariant torsionless connection  on $ \mathcal{E}. $ Following Heckenberger and Schm{\"u}dgen, we study invariant metrics and the compatibility of covariant connections with such metrics. A sufficient condition for the existence and uniqueness of bicovariant Levi-Civita connections is derived. This condition is shown to hold for cocycle deformations  of classical Lie groups.
\end{abstract}

\section{Introduction}
The theory of bicovariant differential calculi developed by Woronowicz brought quantum groups and their homogeneous spaces into the realm of noncommutative geometry. While the spectral triple framework of Connes (\cite{connes}) and the notion of equivariance with quantum group corepresentations led to seminal papers like \cite{chakpal}, \cite{trieste}, \cite{conneslocalindex} and \cite{neshveyev}, an alternative approach based on differential calculi has also attracted a lot of attention in recent years (see \cite{beggssmith}, \cite{matassakahler}, \cite{buachallacomplex}, \cite{buachallakahler} and references therein).

The goal of this article is to study bicovariant connections on bicovariant differential calculus on quantum groups and the notion of their metric compatibility. Such questions have already been studied by Heckenberger and Schm{\"u}dgen in \cite{heckenberger} in the context of Levi-Civita connections and then in \cite{heckenbergerlaplace} and \cite{heckenbergerspin}. On the other hand, Beggs, Majid and their collaborators studied Levi-Civita connections on quantum groups and homogeneous spaces and we refer to the book \cite{beggsmajidbook} for a comprehensive account. This article aims to build a general theory by working with an arbitrary bicovariant differential calculus and bi-invariant pseudo-Riemannian metrics.

In order to explain our main results, we will need some notations. In the sequel, $ \A $ will stand for a Hopf algebra and $ \E $ the $\A$-bimodule of one-forms coming from a bicovariant differential calculus. The symbol $ \zeroE $ will stand for the set of all left-invariant elements of the module $\E.$ Moreover, $\twoform$ will denote the bimodule of $2$-forms of the differential calculus as in \cite{woronowicz}.
 Woronowicz (\cite{woronowicz}) proved the existence of a canonical braiding map $ \sigma $ which is a bicovariant $\A$-bilinear map from $ \E \tensora \E $ to itself. Consequently, $ \sigma $ restricts to a map (to be denoted by $ {}_0 \sigma $) from $ \zeroE \tensorc \zeroE $ to itself. Throughout most of the article, we will work with the assumption that the map $ {}_0 \sigma $ is diagonalisable. This assumption is satisfied by a fairly large class of bicovariant differential calculi. (see Proposition \ref{14thoct20191}) and we have the following result: 

\begin{thm} (Theorem \ref{25thmay20195}, Remark \ref{24thaugustremark}) \label{3rddec20192}
Suppose $ (\E, d) $ is a bicovariant differential calculus such that the map $ {}_0 \sigma $ is diagonalisable. The symbol $\wedge$ will denote the wedge map $\wedge: \E \tensora \E \rightarrow \twoform.$

 There exists an $\A$-$\A$-bimodule $\F$ such that
$$\E \tensora \E = {\rm Ker} (\wedge) \oplus \F.$$ 
 Moreover, the map $\wedge|_\F: \F \to \twoform$ defines an isomorphism of right $\A$-modules.
\end{thm}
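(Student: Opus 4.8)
The plan is to push the whole problem onto the finite-dimensional space of left-invariant two-tensors $\zeroE \tensorc \zeroE$, where the hypothesis on $\zerosigma$ lives, and then to transport the resulting decomposition back to the full bimodule using that $\sigma$ is $\A$-bilinear. Two structural facts are the starting point. First, $\E$ is free as a left $\A$-module on $\zeroE$, so that $\E \tensora \E \cong \A \tensorc (\zeroE \tensorc \zeroE)$ as left modules, and $\sigma$ preserves the subspace $\zeroE \tensorc \zeroE$ of left-invariant tensors, restricting there to $\zerosigma$. Second, in Woronowicz's construction of $\twoform$ the kernel of the wedge map is $\mathrm{Ker}(\wedge) = \mathrm{Ker}(\id - \sigma)$, the $+1$-eigenspace of $\sigma$; thus isolating the symmetric ($+1$) part of $\sigma$ together with a complementary part is exactly what is needed.

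First I would upgrade the diagonalisability of $\zerosigma$ to diagonalisability of $\sigma$ on all of $\E \tensora \E$. Let $m(t) = \prod_\lambda (t - \lambda)$ run over the finitely many eigenvalues of $\zerosigma$, so that $m(\zerosigma) = 0$. The operator $m(\sigma)$ is left $\A$-linear and vanishes on the left-invariant generators; since these generate $\E \tensora \E$ as a left module, $m(\sigma) = 0$ everywhere, so $\sigma$ is diagonalisable with the same eigenvalues. Write $\E \tensora \E = \bigoplus_\lambda \widetilde{V}_\lambda$ with $\widetilde{V}_\lambda = \{ x : \sigma(x) = \lambda x \}$ and $V_\lambda = \widetilde{V}_\lambda \cap (\zeroE \tensorc \zeroE)$. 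Because $\sigma$ is left linear, $\A \tensorc V_\lambda \subseteq \widetilde{V}_\lambda$, and comparing with $\E \tensora \E \cong \bigoplus_\lambda (\A \tensorc V_\lambda)$ forces $\widetilde{V}_\lambda \cong \A \tensorc V_\lambda$. Crucially, each $\widetilde{V}_\lambda$ is a sub-bimodule: it is a left submodule by construction, and a right submodule because $\sigma$ is right linear, so that $\sigma(x) = \lambda x$ implies $\sigma(x \cdot a) = \lambda\, x \cdot a$.

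I would then set $\F := \bigoplus_{\lambda \neq 1} \widetilde{V}_\lambda$. By the identification $\mathrm{Ker}(\wedge) = \widetilde{V}_1$ from the first paragraph, the eigenspace decomposition of $\sigma$ is exactly the desired bimodule splitting $\E \tensora \E = \mathrm{Ker}(\wedge) \oplus \F$. Finally, $\wedge$ is surjective and right $\A$-linear, so $\wedge|_\F$ is a surjection of right $\A$-modules; it is injective since $\mathrm{Ker}(\wedge) \cap \F = 0$ by construction, and hence an isomorphism $\F \cong \twoform$ of right $\A$-modules.

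The main obstacle is precisely that the hypothesis is available only on the invariant part $\zeroE \tensorc \zeroE$, whereas the conclusion asks for an $\A$-$\A$-bimodule splitting, and the right $\A$-action does not preserve left-invariance. What makes the argument go through is that the eigenspaces of $\sigma$ on the full module are automatically sub-bimodules, a consequence of the bilinearity of $\sigma$; this is the single point where right $\A$-linearity of the braiding is indispensable, and it is what allows the invariant-level spectral decomposition to propagate to a genuine bimodule decomposition.
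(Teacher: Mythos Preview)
Your argument is correct and follows essentially the same route as the paper: both propagate the eigenspace decomposition of $\zerosigma$ to $\E\tensora\E$ via the $\A$-bilinearity of $\sigma$, identify ${\rm Ker}(\wedge)$ with the $+1$-eigenspace, and take $\F$ to be the sum of the remaining eigenspaces. The only cosmetic difference is that the paper packages the complement as $\widetilde{u}^{\E\tensora\E}({}_0\F\tensorc\A)$ and deduces the bimodule property from $\Psym$ being a polynomial in $\sigma$, whereas you read the bimodule property directly off the eigenspace description---these are the same observation.
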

As a first corollary to Theorem \ref{3rddec20192}, we can define a bicovariant $\A$-bilinear idempotent $ \Psym \in \Hom_\A (\E \tensora \E, \E \tensora \E) $ having range $ {\rm Ker} (\wedge) $ and kernel $\F.$ However, unlike the classical case (where $ \Psym = \frac{1 }{2}(1 + \sigma) $), $ \Psym $ is given by a different formula (cf. \eqref{24thaugust20192}). Our next application of Theorem \ref{3rddec20192} is the following: 
\begin{thm} (Theorem \ref{20thaugust20192})
Under the assumptions of Theorem \ref{3rddec20192}, there exists a bicovariant torsionless connection on $\E.$
\end{thm}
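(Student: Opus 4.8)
The plan is to produce one bicovariant connection by hand and then to correct its torsion to zero using the splitting supplied by Theorem~\ref{3rddec20192}. Recall that a left connection on $\E$ is a $\IC$-linear map $\nabla : \E \to \E \tensora \E$ obeying the left Leibniz rule $\nabla(a \eta) = da \otimes \eta + a\, \nabla \eta$ for $a \in \A$, $\eta \in \E$, that its torsion is $T_\nabla = \wedge \circ \nabla - d : \E \to \twoform$, and that $\nabla$ is bicovariant when it intertwines the left coaction $\Edelta$ and the right coaction $\deltaE$ with the corresponding tensor-product coactions on $\E \tensora \E$. A direct computation with the Leibniz rule shows that the $da \otimes \eta$ terms produced by $\wedge \circ \nabla$ cancel against the $da \wedge \eta$ terms of $d(a\eta)$, so that $T_\nabla(a\eta) = a\, T_\nabla(\eta)$; hence $T_\nabla$ is \emph{left} $\A$-linear, and it is manifestly bicovariant whenever $\nabla$ is, since $d$ and $\wedge$ are morphisms of the calculus.

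For the initial connection I would exploit Woronowicz's theorem that $\E$ is free as a left $\A$-module on a basis $\{\omega_i\}$ of $\zeroE$. Define $\nabla^0 \omega_i = 0$ and extend by the left Leibniz rule, so that $\nabla^0(a \omega_i) = da \otimes \omega_i$; this is well defined precisely because $\{\omega_i\}$ is a free left basis. Bicovariance is checked on the generators: since $\Edelta\, \omega_i = 1 \otimes \omega_i$ and $\deltaE\, \omega_i = \sum_j \omega_j \otimes M_{ji}$ for an adjoint-type matrix $(M_{ji})$ with entries in $\A$, both coactions send $\nabla^0 \omega_i = 0$ to $0$ and so commute with $\nabla^0$ on $\zeroE$, and the Leibniz extension stays bicovariant because $d$ is. This gives a bicovariant left connection $\nabla^0$ whose torsion $T_0 = \wedge \circ \nabla^0 - d$ is a bicovariant, left $\A$-linear map $\E \to \twoform$.

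Now I would invoke Theorem~\ref{3rddec20192}. Because $\wedge$ is $\A$-bilinear and $\F$ is a sub-bimodule, $\wedge|_\F : \F \to \twoform$ is in fact a bimodule isomorphism, so its inverse $(\wedge|_\F)^{-1} : \twoform \to \F \subseteq \E \tensora \E$ is left and right $\A$-linear; likewise, as the splitting is realised by the bicovariant idempotent $\Psym$, the summand $\F = (\id - \Psym)(\E \tensora \E)$ is a sub-bicomodule and $(\wedge|_\F)^{-1}$ is a morphism of bicomodules. Setting
\[
L = -\,(\wedge|_\F)^{-1} \circ T_0 : \E \longrightarrow \F \subseteq \E \tensora \E ,
\]
we obtain a left $\A$-linear, bicovariant map, so $\nabla = \nabla^0 + L$ is again a bicovariant left connection. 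Since $L$ takes values in $\F$, where $\wedge$ agrees with $\wedge|_\F$, we get $\wedge \circ L = -T_0$, whence $T_\nabla = T_0 + \wedge \circ L = 0$, giving the desired bicovariant torsionless connection.

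The Leibniz bookkeeping is routine and the genuine content all sits in Theorem~\ref{3rddec20192}, so the two points I would watch most carefully are, first, that $\nabla^0$ is honestly bicovariant (this rests on the freeness of $\E$ over $\zeroE$ and on $\deltaE$ permuting the $\omega_i$ by an $\A$-valued matrix), and second, that $(\wedge|_\F)^{-1}$ carries \emph{both} the bimodule and the bicomodule structure, so that $L$, and therefore $\nabla$, is simultaneously a connection and bicovariant. The statement of Theorem~\ref{3rddec20192} only records that $\wedge|_\F$ is a right $\A$-module isomorphism, but since $\wedge$ is bicovariant and $\A$-bilinear and $\F$ is cut out by the bicovariant idempotent $\Psym$, the extra structure is automatic; making this explicit is the only place where the full strength of the companion result, rather than just its right-module formulation, is actually used.
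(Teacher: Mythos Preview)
Your argument is sound and, once translated to the paper's conventions, coincides with the paper's proof. Note, however, that throughout this article ``connection'' means a \emph{right} connection satisfying $\nabla(\rho a)=\nabla(\rho)a+\rho\tensora da$, with torsion $T_\nabla=\wedge\circ\nabla+d$ (see Section~\ref{section5}), whereas you work with left connections and torsion $\wedge\circ\nabla-d$. Your proof transposes mechanically to the right-handed setting (use the free \emph{right} basis of $\E$ over $\zeroE$, so that $T_0$ becomes right $\A$-linear), but as written it proves a formally different statement from the one the paper has in mind.

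Modulo this, your route is the paper's, just organised in two steps rather than one. The paper sets $\nabla_0(\omega)=Q^{-1}(-d\omega)$ directly on $\zeroE$ (where $Q=\wedge|_\F$) and extends by the Leibniz rule; your $\nabla=\nabla^0+L$ with $\nabla^0\omega_i=0$ and $L=-Q^{-1}T_0$ gives, after translation, exactly $\nabla(\omega_i)=Q^{-1}(-d\omega_i)$, so the two constructions agree on $\zeroE$ and hence everywhere. The paper then verifies left and right covariance by explicit computations using Lemma~\ref{6thjune20192} and the bicovariance of $Q^{-1}$ (Corollary~\ref{25thmay20193}); your sketch ``the Leibniz extension stays bicovariant because $d$ is'' is correct in outline but would need those details in a full write-up. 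Your closing observation that $\wedge|_\F$ is in fact a bimodule and bicomodule isomorphism, not merely a right-module one, is exactly right and is recorded in Remark~\ref{24thaugustremark} and Corollary~\ref{25thmay20193}.
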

We define a notion of compatibility of a left-covariant connection with a left-invariant pseudo-Riemannian metric in Section \ref{section4} and this leads us to the definition of Levi-Civita connection. Our main result concerning the existence and uniqueness of Levi-Civita connection is in terms of the map $ \Psym $ mentioned above. Due to the left-covariance of the map $ \Psym, $ it follows that it restricts to a map 
$$ {}_0 \Psym: \zeroE \tensorc \zeroE \rightarrow \zeroE \tensorc \zeroE. $$
Then we have the following result:
\begin{thm} (Theorem \ref{15thjune20193}, Theorem \ref{18thseptember20193})
Suppose $ (\E, d) $ is a bicovariant differential calculus over a cosemisimple Hopf algebra such that the map $ {}_0 \sigma $ is diagonalisable and $g$ be a bi-invariant pseudo-Riemannian metric on $\E.$ If we assume that the map
\begin{equation} \label{3rddec20193} ({}_0 \Psym)_{23}: {}_0 ({\rm Ker} (\wedge)) \tensora \zeroE \rightarrow \zeroE \tensorc {}_0 ({\rm Ker} (\wedge)) \end{equation}
is an isomorphism, then there exists a unique bicovariant Levi-Civita connection for the triple $ (\E, d, g). $

If in addition, $\Omega$ is a normalised dual $2$-cocycle on $\A$ and $g^\prime$ is a bi-invariant pseudo-Riemannian metric on the bicovariant differential calculus $ (\E_\Omega, d_\Omega), $ then there exists a bi-invariant pseudo-Riemannian metric $g$ on $ (\E, d) $ such that $ g^\prime = g_\Omega $ and moreover, the Levi-Civita connection for $ (\E, d, g) $ deforms to the unique bicovariant Levi-Civita connection for the triple $ (\E_\Omega, d_\Omega, g_\Omega). $ 
\end{thm}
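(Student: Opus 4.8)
The plan is to split the statement into two independent parts, mirroring the two-theorem citation attached to it. The first part is a pure existence-and-uniqueness claim on the fixed calculus $(\E,d)$; the second is a deformation-compatibility claim relating Levi-Civita connections on $(\E,d)$ and $(\E_\Omega,d_\Omega)$.

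\begin{proof}[Proof sketch]
For the first assertion, the strategy is to reduce the existence and uniqueness of a bicovariant Levi-Civita connection to a linear-algebra statement about the invariant subspace $\zeroE \tensorc \zeroE$. A bicovariant connection $\nabla$ on $\E$ is, by left-covariance, determined by its restriction to left-invariant one-forms, so it is encoded by a linear map ${}_0\nabla: \zeroE \to \zeroE \tensorc \zeroE$ (the Maurer--Cartan/Christoffel data), and bi-invariance further constrains this datum to be right-covariant. Torsion-freeness is the condition that the image of $\nabla$ lands in $\mathrm{Ker}(\wedge)$ modulo the exterior derivative, which by Theorem~\ref{3rddec20192} is captured by the idempotent $\Psym$; metric compatibility with the bi-invariant metric $g$ is an equation of the form $(\nabla \otimes \id + (\sigma \otimes \id)(\id \otimes \nabla))g = 0$ restricted to invariants. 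The key point is that, after passing to invariants, the combined Levi-Civita equations become a single \emph{linear} equation whose solvability and unique solvability are governed precisely by the invertibility of $({}_0\Psym)_{23}$ in \eqref{3rddec20193}. So the plan is: first write down the metric-compatibility and torsion equations for the invariant Christoffel map; second, use $g$ to turn metric compatibility into an identification and reduce both conditions to the single operator $({}_0\Psym)_{23}$; third, observe that the hypothesis that this map is an isomorphism lets us solve uniquely for ${}_0\nabla$, and cosemisimplicity guarantees that the invariant data integrate back to a genuine bicovariant connection on all of $\E$. The main obstacle here is step two: correctly assembling the two a priori separate tensorial equations (symmetry/torsion and metric compatibility) into one equation controlled by a single isomorphism, and verifying that the resulting ${}_0\nabla$ is right-covariant so that the connection is genuinely bi-invariant and not merely left-covariant.

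For the second assertion, the plan is to transport everything through the cocycle deformation functor. First I would show that a bi-invariant pseudo-Riemannian metric $g'$ on $(\E_\Omega, d_\Omega)$ pulls back to a bi-invariant metric $g$ on $(\E,d)$ with $g' = g_\Omega$; since cocycle deformation by a normalised dual $2$-cocycle is an equivalence that fixes the underlying coalgebra and hence the invariant subspaces $\zeroE$, the invariant metric tensor is literally unchanged as a linear map and one only reinterprets it over the deformed algebra. The crucial compatibility is that the braiding and the idempotent $\Psym$ are preserved under deformation up to the canonical identification, so that the hypothesis ``$({}_0\Psym)_{23}$ is an isomorphism'' holds for $(\E,d,g)$ if and only if the corresponding map holds for $(\E_\Omega,d_\Omega,g_\Omega)$; this is where one invokes that ${}_0\sigma$ (and therefore its diagonalisation and the resulting $\Psym$) is deformation-invariant at the level of invariant vectors. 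Granting this, the unique Levi-Civita connection produced by the first part of the theorem for $(\E,d,g)$ has invariant Christoffel data that deform functorially, and by uniqueness the deformed data must coincide with the unique Levi-Civita connection for $(\E_\Omega,d_\Omega,g_\Omega)$.

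The hardest part of the whole argument is establishing the deformation-invariance of $\Psym$ and of the isomorphism in \eqref{3rddec20193} in a way that is compatible with the identification $g'=g_\Omega$. Because $\Psym$ is defined through the decomposition $\E\tensora\E = \mathrm{Ker}(\wedge)\oplus\F$ of Theorem~\ref{3rddec20192} rather than by a universal formula like $\tfrac12(1+\sigma)$, one must check that the deformation carries $\mathrm{Ker}(\wedge)$ and $\F$ to their deformed counterparts and hence intertwines $\Psym$ with $(\Psym)_\Omega$ on invariants; this requires knowing that the wedge map and the eigenspace decomposition of ${}_0\sigma$ are natural with respect to cocycle twisting. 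Once that naturality is in place, uniqueness on the deformed side does the rest: any two Levi-Civita connections for $(\E_\Omega,d_\Omega,g_\Omega)$ agree, so the deformed connection, which is visibly bicovariant, torsionless and $g_\Omega$-compatible by functoriality, must be \emph{the} Levi-Civita connection. I expect cosemisimplicity to be used exactly where one needs the invariant-level solution to extend to a well-defined connection on the whole bimodule, via the splitting of corepresentations.
\end{proof}
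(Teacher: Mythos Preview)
Your outline is correct and follows essentially the paper's strategy, but two points deserve sharpening because they are where the real content lies.

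First, the reduction of the Levi--Civita equations to the invertibility of $({}_0\Psym)_{23}$ is not done by ``combining'' the two equations directly. The paper fixes once and for all a bicovariant torsionless connection $\nabla_0$ (whose existence needed the diagonalisability of ${}_0\sigma$), so that torsionless left-covariant connections are parametrised affinely by $L\in\homc(\zeroE,\zeroE\tensorc^{\rm sym}\zeroE)$; metric compatibility of $\nabla_0+L$ then becomes the \emph{linear} equation $\widetilde{\Phi_g}(L)=dg-\widetilde{\Pi_g}(\nabla_0)$, where $\widetilde{\Phi_g}(L)=2(\id\tensorc g)(\sigma\tensorc\id)(L\tensorc\id){}_0(\Psym)$. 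The equivalence ``$\widetilde{\Phi_g}$ is an isomorphism $\Leftrightarrow$ $({}_0\Psym)_{23}$ is an isomorphism'' is then established by an explicit commutative diagram built from the canonical isomorphisms $\zeta_{V,W}$ and the nondegeneracy maps $V_g$, $V_{g^{(2)}}$; the self-adjointness $({}_0\Psym)^*={}_0\Psym$ with respect to $g^{(2)}$ is what makes the diagram close. Your sketch elides this intermediate map $\widetilde{\Phi_g}$, which is the actual object one proves things about.

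Second, cosemisimplicity is \emph{not} used to ``integrate the invariant data back to a connection on all of $\E$''; that extension is automatic from the Leibniz rule and right $\A$-totality of $\zeroE$, and already yields a unique \emph{left}-covariant Levi--Civita connection. Cosemisimplicity enters only to upgrade this to bicovariance: one shows separately that $\widetilde{\Phi_g}$ carries right-covariant maps to right-covariant maps, and then uses the dimension equality $\dim\Hom^\A_\IC(\zeroE,\zeroE\tensorc^{\rm sym}\zeroE)=\dim\Hom^\A_\IC(\zeroE\tensorc^{\rm sym}\zeroE,\zeroE)$ (which is where cosemisimplicity is invoked) to conclude that the inverse also preserves right-covariance, so the unique $L$ is right-covariant.

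For the second assertion your plan matches the paper: one uses that ${}_0\sigma$, hence ${}_0(\Psym)$ and the decomposition $\E\tensora\E=\mathrm{Ker}(\wedge)\oplus\F$, are literally unchanged on invariants under cocycle twist, so $(\Psym)_\Omega=(\Psym)_{\E_\Omega}$ and the hypothesis \eqref{3rddec20193} is deformation-invariant; together with the fact that every bi-invariant metric on $\E_\Omega$ is $g_\Omega$ for some bi-invariant $g$ on $\E$, this gives existence and uniqueness on the deformed side, and direct deformation of $T_\nabla$ and $\widetilde{\Pi_g}(\nabla)-dg$ identifies $\nabla_\Omega$ as the Levi--Civita connection.
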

Moreover, in \cite{suq2}, it is proven by an explicit computation that the isomorphism \eqref{3rddec20193} also holds for the $4 D_{\pm}$ calculi on $ SU_q (2).$ 

The proofs follow the strategy adopted in \cite{article1} and \cite{article2} where the set up was that of a class of centered bimodules $\E$ equipped with a map $\sigma: \E \tensora \E \rightarrow \E \tensora \E $ such that $ \sigma (\omega \tensora \eta) = \eta \tensora \omega $ for all $\omega, \eta$ in the center of $\E.$ This led to the existence-uniqueness of Levi-Civita connections on fuzzy-spheres, quantum Heisenberg manifold, a class of Rieffel-deformations (\cite{article1}, \cite{article3}) and Cuntz algebras (\cite{soumalya}). Our bimodules are not centered and our braiding operator $\sigma$ does not satisfy the above equation. Moreover, the Koszul-formula proof employed in \cite{article3} requires the pseudo-Riemannian metric to be $\A$-bilinear while our pseudo-Riemannian metric $g$ is assumed to be only right $\A$-linear. However, the bicovariance of the differential calculus as well as the braiding map $\sigma$ and pseudo-Riemannian metrics come to our rescue. The role played by the center of the module in \cite{article1} is now (in some sense) played by the finite dimensional vector space $\zeroE$ of invariant elements in $\E.$ Indeed, by virtue of the characterisation of left-covariant maps (Proposition \ref{14thfeb20192}), the maps $ \sigma, g $ and a left-covariant right connection $\nabla$ are determined by their restrictions $ {}_0 \sigma: \zeroE \tensorc \zeroE \rightarrow \zeroE \tensorc \zeroE, g : \zeroE \tensorc \zeroE \rightarrow \IC, \nabla: \zeroE \rightarrow \zeroE \tensorc \zeroE$ since $ \zeroE $ (respectively $ \zeroE \tensorc \zeroE $) is right $\A$-total in $\E$ (respectively $ \E \tensora \E $). This helps us to convert various module maps to maps between finite dimensional vector spaces. Let us remark that we have consciously avoided using the terminology of Yetter-Drinfeld modules  (\cite{klimyk}) since many definitions and results of this article apply for left-covariant bimodules over a Hopf algebra too (see the results of Subsection \ref{21staugust20191jb}, Definition \ref{metriccompatibility} and some results of Subsection \ref{4thdec20191}).

Let us now clarify  the differences of our approach with some other papers in the literature. Beggs, Majid and their collaborators  studied bimodule connections on many class of examples. We refer to \cite{beggsmajidbook} for a comprehensive account. We work with usual right-connections on bicovariant bimodules as opposed to bimodule connections and moreover, the choice of braiding for us is always the canonical braiding map (Proposition \ref{4thmay20193}) for a bicovariant bimodule. A dual approach of working in the set up of braided derivations has been pursued by Weber in  the paper \cite{weber}. Indeed, a special case considered by Weber is the (one-sided) covariant bimodule of braided derivations on a triangular Hopf algebra. Weber develops the theory of braided covariant derivative on this bimodule and then proves (  Lemma 3.12 and Corollary 3.13 of \cite{weber}) the existence of a unique Levi-Civita braided covariant derivative for any braided metric. The proof follows by a Koszul formula argument. 

Our approach is closer in spirit to that of Heckenberger and Schm\"udgen's paper \cite{heckenberger} who work on the level of one-forms coming from bicovariant differential calculi on the quantum groups $ SL_q (n), $ $ O_q (n) $ and $ Sp_q (n). $ The authors of \cite{heckenberger} show that for a fixed choice of a bi-invariant pseudo-Riemannian metric on the space of one forms, there exist unique bicovariant Levi-Civita connections for these three classes of quantum groups. Heckenberger and Schm\"udgen work on left-connections while we work with right connections and the definition of the torsion is the same. However, there are two main differences, namely, the metric compatibility of a connection and the definition of two-forms. In Proposition \ref{25thnov20191}, we prove that our definition of metric-compatibility matches with that of \cite{heckenberger} for cocycle deformations of classical groups. But in general, these two definitions are different. The issue regarding two-forms is explained in Remark \ref{28thnov20191}. Due to these two differences, we could not recover the results of \cite{heckenberger} from our approach. 

The plan of the article is as follows: in Section \ref{3rddec20191}, we recall the definition and basic properties of bicovariant differential calculus and prove a characterization of left covariant maps between left-covariant bimodules. In Section \ref{24thmay20192}, we obtain a splitting of the bimodule $\E \tensora \E $ which allows us to define the symmetrization map $ \Psym. $ In Section \ref{section2}, we recall the notion of invariant pseudo-Riemannian metric on a bicovariant bimodule from \cite{article5} and prove some additional results which will be needed later. In Section \ref{section5}, we prove the existence of a torsionless bicovariant connection on any bicovariant bimodule provided the map $ {}_0 \sigma $ is diagonalisable. Section \ref{section4} is devoted to studying metric-compatibility of a bicovariant connection with a bi-invariant metric and the bicovariance of some associated maps. In Section \ref{21staugust20197}, we prove a sufficient condition of existence and uniqueness of a unique bicovariant Levi-Civita connection. Finally, in Section \ref{sectioncocyclelc}, we deal with the question of existence and uniqueness of Levi-Civita connections on cocycle deformed bicovariant differential calculus. 

All vector spaces will be assumed to be over the complex field. For vector spaces $ V_1 $ and $ V_2, $ $ \sigma^{{\rm can}} : V_1 \tensorc V_2 \rightarrow V_2 \tensorc V_1 $ will denote the canonical flip map, i.e, $ \sigma^{{\rm can}} (v_1 \tensorc v_2) = v_2 \tensorc v_1. $ A subset $X$ of a right-module $ \mathcal{F}_1 $ over an algebra $B$ is said to be right $B$-total in $ \mathcal{F}_1 $ if $ {\rm Span} \{ f . b: f \in X, b \in B \} $ is equal to $ \mathcal{F}_1.$ If $ \mathcal{F}_2 $ is another right $B$-module, the set of all right $B$-linear maps from $ \mathcal{F}_1 $ to $ \mathcal{F}_2 $ will be denoted by $ \Hom_B (\mathcal{F}_1, \mathcal{F}_2). $ For the rest of the article, $(\A,\Delta)$ will denote a Hopf algebra. We will use the Sweedler notation for the coproduct $\Delta$. Thus, we will write
\begin{equation} \label{28thaugust20191} 
\Delta(a) = a_{(1)} \tensorc a_{(2)}.
\end{equation}
For a left comodule coaction $\Delta_V$ of $\A$ on a vector space $V$ we will write
\begin{equation} \label{28thaugust20192}
\Delta_V(v) = v_{(-1)} \tensorc v_{(0)}.
\end{equation}
Similarly, for a right comodule coaction ${}_V \Delta$, we will use the notation
\begin{equation} \label{28thaugust20193}
{}_V \Delta(v) = v_{(0)} \tensorc v_{(1)}.
\end{equation}

\section{Bicovariant differential calculus} \label{3rddec20191}

In this section we recall and prove some basic facts on bicovariant differential calculus on a Hopf algebra. In the first two subsections, we state the basic properties of bicovariant differential calculus and bicovariant bimodules. In Subsection \ref{21staugust20191jb}, we adapt the arguments of Heckenberger and Schm{\"u}dgen \cite{heckenberger} to prove a characterisation of left-covariant maps on covariant bimodules. Finally, in Subsection \ref{29thnov20191}, we recall the definition of two-forms of a bicovariant differential calculus.\\

\subsection{Covariant Differential Calculi} \label{21staugust20192jb}

Let us start with the definition of a first order differential calculus over any algebra $\B.$

\begin{defn}{(Definition 1.1 of \cite{woronowicz})}
	Let $\B$ be an algebra with unity, $\E$ be a bimodule over $\B$ and $$d: \B \to \E$$ be a $\IC$-linear map. We say that $(\E, d)$ is a first order differential calculus over $\B$ if
	\begin{itemize}
		\item[(i)] For any $a$, $b$ in $\B$, $$d(ab)=(da)b+adb,$$
		\item[(ii)] Any element $\rho$ in $\E$ is of the form $$\rho = \sum_{k=1}^{K}a_kdb_k,$$ for some $a_k$, $b_k$ in $\A$.
	\end{itemize}
\end{defn}
Now we come to the definition of left (or right) covariant first order differential calculus over a Hopf algebra $\A.$
\begin{defn}{(Definitions 1.2, 1.3 of \cite{woronowicz})} \label{3rdseptember20191}
	Let $(\E,d)$ be a first order differential calculus on a Hopf algebra $\A$.\\
	We say that $(\E,d)$ is left-covariant if for any $a_k,b_k$ in $\A$, $k=1, \dots, K$,
	$$(\sum_k a_kdb_k = 0) \text{ implies that }(\sum_k \Delta(a_k)(\id \tensorc d)\Delta(b_k)=0).$$
	We say that $(\E,d)$ is right-covariant if for any $a_k,b_k$ in $\A$, $k=1, \dots, K$,
	$$(\sum_k a_kdb_k = 0) \text{ implies that } (\sum_k \Delta(a_k)(d \tensorc \id)\Delta(b_k)=0).$$
	We say $(\E,d)$ is bicovariant if it is both left-covariant and right-covariant.
\end{defn}

Woronowicz (\cite{woronowicz}) proved that a bicovariant differential calculus is automatically endowed with a left as well as a right comodule coaction. This is the content of the next proposition. 

\begin{prop}{(Propositions 1.2, 1.3 and 1.4 of \cite{woronowicz})} \label{cdc}
	Let $(\E, d)$ be a bicovariant first order differential calculus on $\A$. Then there exists linear mappings 
	$$\Delta_{\E}:\E \to \A \tensorc \E, \quad {}_\E \Delta: \E \to \E \tensora \A $$ such that
	\begin{itemize}
		\item[(i)] $(\E, \Delta_{\E}, {}_\E \Delta)$ is a bicovariant $\A$-bimodule, i.e. $(\E ,\Delta_\E)$ is a left $\A$-comodule, $(\E, {}_\E \Delta)$ is a right $\A$-comodule and the following equations hold for all $e$ in $\E$ and $a$ in $\A$:
		\begin{eqnarray} 
			&	\Delta_\E(ae) = \Delta(a) \Delta_\E (e), \ \ \Delta_\E(ea) = \Delta_\E(e) \Delta(a) \label{8thnov20192} \\
			& {}_\E \Delta(ae) = \Delta(a) {}_\E \Delta(e), \ \ {}_\E \Delta(ea) = {}_\E \Delta(e) \Delta(a) \label{8thnov20193} \\
		& (\id \tensorc {}_\E \Delta) \Delta_\E = (\Delta_\E \tensorc \id) {}_\E \Delta \label{8thnov20194}
		\end{eqnarray}
		\item[(ii)] d is bicovariant, i.e. \begin{equation} \label{23rdaugust20191}
		\Delta_\E \circ d = (\id \tensorc d) \Delta \quad {}_\E \Delta \circ d = (d \tensorc \id) \Delta.
		\end{equation} 
	\end{itemize}
\end{prop}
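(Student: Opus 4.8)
The plan is to build the two coactions directly on elements of the form $a\, db$ and to extract well-definedness from the covariance hypotheses. Concretely, I would set
$$\deltaE(a\, db) = \Delta(a)\,(\id \tensorc d)\Delta(b), \qquad \Edelta(a\, db) = \Delta(a)\,(d \tensorc \id)\Delta(b),$$
where the products are formed in $\A \tensorc \E$ and in $\E \tensorc \A$ respectively, and extend both maps $\IC$-linearly. Since a general element of $\E$ is a sum $\sum_k a_k\, db_k$ whose representation is far from unique, the only genuine issue — and the one place the hypothesis is essential — is well-definedness. For $\deltaE$ this is immediate from left-covariance: if $\sum_k a_k\, db_k = 0$, then Definition \ref{3rdseptember20191} yields $\sum_k \Delta(a_k)(\id \tensorc d)\Delta(b_k) = 0$, so the formula descends to $\E$; the identical argument with right-covariance handles $\Edelta$. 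I regard this as the crux of the proposition, with everything afterwards being bookkeeping in the Sweedler calculus.

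Next I would check the comodule axioms. Coassociativity $(\Delta \tensorc \id)\deltaE = (\id \tensorc \deltaE)\deltaE$ and the counit identity $(\epsilon \tensorc \id)\deltaE = \id$ follow on the generators $a\, db$ by invoking coassociativity and the counit of $\A$ separately in the two tensor slots, and the computation for $\Edelta$ is symmetric. For the bimodule compatibilities \eqref{8thnov20192} and \eqref{8thnov20193}, the left-module identities are formal, since $c\cdot(a\, db) = (ca)\, db$ and $\Delta(ca) = \Delta(c)\Delta(a)$. The right-module identities are the only steps that are not purely formal, and I expect them to be the main piece of genuine algebra: one first rewrites $a(db)c = a\, d(bc) - ab\, dc$ using the Leibniz rule, applies the definition to each term, and then expands $(\id \tensorc d)\Delta(bc)$ via $\Delta(bc) = \Delta(b)\Delta(c)$ and Leibniz once more; two unwanted terms cancel, leaving exactly $\deltaE(a\, db)\,\Delta(c)$.

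Finally I would verify the Yetter--Drinfeld-type compatibility \eqref{8thnov20194} and the bicovariance of $d$ in \eqref{23rdaugust20191}. For \eqref{8thnov20194}, evaluating $(\id \tensorc \Edelta)\deltaE$ and $(\deltaE \tensorc \id)\Edelta$ on $a\, db$ and applying coassociativity to both $a$ and $b$ reduces each side to $a_{(1)} b_{(1)} \tensorc a_{(2)}\, d b_{(2)} \tensorc a_{(3)} b_{(3)}$, so they coincide. The relations in \eqref{23rdaugust20191} are simply the specialization $a = 1$, $b = a$ of the defining formulas, using $\Delta(1) = 1 \tensorc 1$. One structural point worth flagging is that $\Edelta$ must be read as valued in $\E \tensorc \A$ (rather than $\E \tensora \A$) for $(\deltaE \tensorc \id)\Edelta$ to typecheck; with that convention the sole essential input is the covariance hypothesis used in the well-definedness step, and the remainder is routine.
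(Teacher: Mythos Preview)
Your sketch is correct and is exactly Woronowicz's argument; the paper itself supplies no proof for this proposition but simply cites \cite{woronowicz}, so there is nothing to compare against beyond noting that your construction of $\Delta_\E$ and ${}_\E\Delta$ via $\Delta_\E(a\,db)=\Delta(a)(\id\tensorc d)\Delta(b)$, with well-definedness coming from Definition \ref{3rdseptember20191}, is precisely the standard one. Your observation that ${}_\E\Delta$ should be read as landing in $\E\tensorc\A$ rather than $\E\tensora\A$ is also correct (the latter is a typo in the statement).
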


We note the following consequence of Proposition \ref{cdc} which we will need in Theorem \ref{20thaugust20192} and Proposition \ref{11thmay20192}:

\begin{lemma} \label{6thjune20192}
	For any $ a \in \A, $ the following equations holds:
	\begin{enumerate}
		\item[(i)] $ a_{(1)} \tensorc d(a_{(2)}) = (da)_{(-1)} \tensorc (da)_{(0)} $\\
		\item[(ii)] $ d(a_{(1)}) \tensorc a_{(2)} = (da)_{(0)} \tensorc (da)_{(1)} $\\
		\item[(iii)] $ a_{(1)} \tensorc d(a_{(2)}) \tensorc a_{(3)} = (da)_{(-1)} \tensorc (da)_{(0)} \tensorc (da)_{(1)} $
	\end{enumerate} 
\end{lemma}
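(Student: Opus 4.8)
The plan is to recognise all three identities as mere restatements of the bicovariance of $d$ recorded in Proposition \ref{cdc}(ii), once the Sweedler-type notations of \eqref{28thaugust20192} and \eqref{28thaugust20193} are unwound. By \eqref{28thaugust20192}, the right-hand side of (i) is exactly $\deltaE(da)$, and by \eqref{28thaugust20193}, the right-hand side of (ii) is exactly $\Edelta(da)$. So I expect (i) and (ii) to drop out immediately from the two intertwining relations $\deltaE \circ d = (\id \tensorc d)\Delta$ and $\Edelta \circ d = (d \tensorc \id)\Delta$ of \eqref{23rdaugust20191}, with no computation beyond substituting definitions.

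Concretely, for (i) I would write $\deltaE(da) = (\id \tensorc d)\Delta(a) = a_{(1)} \tensorc d(a_{(2)})$ and read off the claim by comparing with \eqref{28thaugust20192}. Symmetrically, for (ii), $\Edelta(da) = (d \tensorc \id)\Delta(a) = d(a_{(1)}) \tensorc a_{(2)}$, which is the assertion via \eqref{28thaugust20193}.

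For (iii), the right-hand side $(da)_{(-1)} \tensorc (da)_{(0)} \tensorc (da)_{(1)}$ is, by the notations above, the common value of $(\id \tensorc \Edelta)\deltaE(da)$ and $(\deltaE \tensorc \id)\Edelta(da)$, these two expressions agreeing by the compatibility relation \eqref{8thnov20194}. I would expand the first one: apply (i) to get $\deltaE(da) = a_{(1)} \tensorc d(a_{(2)})$, then apply $\id \tensorc \Edelta$ and invoke (ii) in the form $\Edelta(d(b)) = d(b_{(1)}) \tensorc b_{(2)}$ with $b = a_{(2)}$. This produces $a_{(1)} \tensorc d((a_{(2)})_{(1)}) \tensorc (a_{(2)})_{(2)}$, which collapses to $a_{(1)} \tensorc d(a_{(2)}) \tensorc a_{(3)}$ by coassociativity of $\Delta$.

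I do not anticipate a genuine obstacle: the lemma is a bookkeeping consequence of Proposition \ref{cdc}. The only point needing care is the index management in (iii)—applying the right coaction to the correct leg and then relabelling $(a_{(2)})_{(1)} \tensorc (a_{(2)})_{(2)}$ as $a_{(2)} \tensorc a_{(3)}$ via coassociativity. It is also worth confirming that expanding the first form of \eqref{8thnov20194} (rather than the second) is the convenient route, since the second form would instead nest (i) inside (ii) and require the mirror-image coassociativity step.
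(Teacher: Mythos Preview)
Your proposal is correct and essentially the same as the paper's argument: parts (i) and (ii) are read off directly from \eqref{23rdaugust20191}, and part (iii) is obtained by passing through $(\id \tensorc \Edelta)\deltaE(da)$ and combining (i), (ii) and coassociativity. The only cosmetic difference is that the paper runs the computation for (iii) from the left-hand side to the right, whereas you start from the right; your explicit invocation of \eqref{8thnov20194} to justify the mixed Sweedler notation $(da)_{(-1)} \tensorc (da)_{(0)} \tensorc (da)_{(1)}$ is a nice touch the paper leaves implicit.
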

\begin{proof}
	Part (i) and part (ii) follow from \eqref{23rdaugust20191}. Finally, for Part (iii), we have
	\begin{eqnarray*}
		&& a_{(1)} \tensorc d(a_{(2)}) \tensorc a_{(3)}
		= (\id \tensorc d \tensorc \id) (\id \tensorc \Delta) \Delta (a) \\
		&=& (\id \tensorc \Edelta) (\id \tensorc d) \Delta (a) {\rm \ (by \eqref{23rdaugust20191})}
		= (\id \tensorc \Edelta) \deltaE (da) {\rm \ (by \eqref{23rdaugust20191})}\\
		&=& (\id \tensorc \Edelta) ((da)_{(-1)} \tensorc (da)_{(0)})
		= (da)_{(-1)} \tensorc \Edelta ((da)_{(0)})\\
		&=& (da)_{(-1)} \tensorc (da)_{(0)} \tensorc (da)_{(1)}.
	\end{eqnarray*}
	This proves the lemma.
\end{proof}

\subsection{Preliminaries on covariant bimodules}

Suppose $ \A $ is a Hopf algebra. Let us recall (\cite{woronowicz}) that a left $\A$-comodule $ (M, \Delta_M) $ is called a left-covariant $\A$-bimodule if $ M $ is an $\A$-bimodule and \eqref{8thnov20192} is satisfied. A right $\A$-comodule $ (M, {}_M \Delta) $ is called a right-covariant $\A$-bimodule if $ M $ is an $\A$-bimodule and \eqref{8thnov20193} is satisfied. A triplet $ (M, \Delta_M, {}_M \Delta) $ is called a bicovariant $\A$-bimodule if the conditions (i) in Proposition \ref{cdc} are satisfied. 
		 
By virtue of Proposition \ref{cdc}, if $(M, d)$ is bicovariant differential calculus over $\A$, then $ (M, \Delta_M, {}_M \Delta) $ is automatically a bicovariant bimodule. Thus, whenever we dealing with a bicovariant differential calculus, we can use all the results on bicovariant bimodules in \cite{woronowicz} and \cite{article5}. Let us record some results on covariant bimodules from \cite{woronowicz} and elsewhere which we will use repeatedly throughout the article. 
We start by recalling the definition of covariant maps. 
 
\begin{defn} \label{8thnov20191}
	Let $(M, \Delta_{ M})$ and $(N, \Delta_N)$ be left-covariant $\A$-bimodules and $ T $ be a $ \mathbb{C} $-linear map from $ M $ to $ N. $
	
	$ T $ is called left-covariant if for all $ m \in M, n \in N, a \in A, $
$$ (\id \tensorc T)(\Delta_{M}(m))=\Delta_N(T(m)). $$
$T$ is called right-covariant if for all $ m \in M, n \in N, a \in A, $
 $$ (T \tensorc \id) {}_M \Delta (m) = {}_N \Delta (T (m)). $$
Finally, a map which is both left and right covariant will be called a bicovariant map. The set of all right $\A$-linear left covariant maps from $ M $ to $ N $ will be denoted by the symbol $ {}^{\A} \Hom_\A (M, N). $ 
	\end{defn}
	
	Now we introduce the left (respectively, right) invariant elements of a left (respectively, right)-covariant bimodule. 

\begin{defn} \label{21staugust20193}
	If $ (M, \Delta_M) $ is a left-covariant bimodule, the subspace of left-invariant elements of $M$ is defined to be the vector space 
	$${}_0M:=\{m \in M : \Delta_M(m)=1\tensorc m\}.$$
	Similarly, if $(M, {}_M\Delta)$ is a right-covariant bimodule over $\A$ , the subspace of right-invariant elements of $M$ is the vector space 
	$$M_0:=\{m \in M : {}_M\Delta(m) = m \tensorc 1\}.$$
\end{defn}

Woronowicz (\cite{woronowicz}) proved that if $ M $ is a left-covariant bimodule over $\A,$ then $ M $ is free as a left (as well as a right) $ \A$-module. In fact, one has the following result:

\begin{prop}{(Theorem 2.1 and Theorem 2.3 of \cite{woronowicz})} \label{moduleiso}
	Let $ (M, \Delta_M) $ be a left-covariant $\A$-bimodule. Then the multiplication maps $\widetilde{u}^M:{}_0M \tensorc \A \to M$ and $ \widetilde{v}^M: \A \tensorc {}_0 M \rightarrow M $ are isomorphisms. Similarly, if $ (M, {}_M \Delta) $ is a right-covariant bimodule, then the multiplication maps $ M_0 \tensorc \A \to M$ and $ \A \tensorc M_0 \to M$ are isomorphisms.
\end{prop}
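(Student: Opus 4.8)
The plan is to realise the left-invariant elements as the range of a canonical projection and then to use that projection to produce the inverses of the multiplication maps. Throughout I write $\Delta_M(m) = m_{(-1)} \tensorc m_{(0)}$, and I let $S$ and $\epsilon$ denote the antipode and counit of $\A$. The first step is to introduce the map $P : M \to M$, $P(m) = S(m_{(-1)}) m_{(0)}$, where the product uses the left $\A$-module structure of $M$, and to check that $P(m) \in {}_0 M$ for every $m$. Applying $\Delta_M$ and using \eqref{8thnov20192} gives $\Delta_M(P(m)) = \Delta(S(m_{(-1)}))\,\Delta_M(m_{(0)})$; expanding via the anti-comultiplicativity of $S$, recognising (by comodule coassociativity) the two inner legs as a genuine coproduct, and collapsing them with the antipode identity $S(z_{(1)}) z_{(2)} = \epsilon(z)1$ together with the comodule counit axiom, one is left with $1 \tensorc P(m)$. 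Thus $P$ is a projection of $M$ onto ${}_0 M$.

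For the left-module map $\widetilde{v}^M$, I would define $\beta : M \to \A \tensorc {}_0 M$ by $\beta(m) = m_{(-1)} \tensorc P(m_{(0)})$, which lands in $\A \tensorc {}_0 M$ by the previous step, and verify that it is a two-sided inverse. Composing one way, $\widetilde{v}^M(\beta(m)) = m_{(-1)} S(m_{(0)(-1)}) m_{(0)(0)}$, which after coassociativity equals $m_{(-1)(1)} S(m_{(-1)(2)}) m_{(0)} = \epsilon(m_{(-1)}) m_{(0)} = m$. Composing the other way, for $a \in \A$ and $n \in {}_0 M$ one has $\Delta_M(an) = a_{(1)} \tensorc a_{(2)} n$, whence $\beta(an) = a_{(1)} \tensorc P(a_{(2)} n) = a_{(1)} \tensorc \epsilon(a_{(2)}) n = a \tensorc n$. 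Hence $\widetilde{v}^M$ is an isomorphism, and the map $\A \tensorc M_0 \to M$ of the right-covariant case follows by the mirror-image argument using the right coaction ${}_M \Delta$ and the projection $m \mapsto m_{(0)} S(m_{(1)})$ onto $M_0$.

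The right-module map $\widetilde{u}^M$ is the delicate point. I would fix a linear basis $\{\omega_i\}$ of ${}_0 M$, which by the previous paragraph is a free left-module basis of $M$, and analyse the right action. Since $P(\omega_i a) \in {}_0 M$ depends linearly on $a$, there are functionals $f_{ij} \in \A^*$ with $P(\omega_i a) = \sum_j f_{ij}(a)\,\omega_j$, and unwinding $P$ through the counit yields the commutation relation $\omega_i a = \sum_j a_{(1)} f_{ij}(a_{(2)})\,\omega_j$. The crux is then to show that the matrix $(f_{ij})$ is invertible in the convolution sense; this rests on the multiplicativity $f_{ij}(ab) = \sum_k f_{ik}(a) f_{kj}(b)$ and the normalisation $f_{ij}(1) = \delta_{ij}$, whose convolution inverse is furnished by $f_{ij} \circ S$. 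Inverting the commutation relation expresses each element of $M$ uniquely in the form $\sum_j \omega_j b_j$, which is exactly the assertion that $\widetilde{u}^M$ is an isomorphism; the map $M_0 \tensorc \A \to M$ is entirely symmetric.

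I expect the main obstacle to lie precisely in this last step. The projection argument settles the left-module decomposition at once but produces nothing on the right, because the left coaction only ever extracts an algebra factor on the left. The genuine work is therefore the construction of the commutation functionals $f_{ij}$ and, above all, their convolution-invertibility through the antipode; this is where the full Hopf-algebra structure, as opposed to a mere bialgebra structure, is indispensable.
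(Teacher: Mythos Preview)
The paper does not supply its own proof of this proposition; it simply records the statement and cites Theorems~2.1 and~2.3 of Woronowicz. Your proposal is essentially a reconstruction of Woronowicz's original argument: the projection $P(m)=S(m_{(-1)})m_{(0)}$ onto ${}_0M$, the explicit inverse $\beta$ of $\widetilde{v}^M$, and, for $\widetilde{u}^M$, the commutation functionals $f_{ij}$ together with their convolution inverse $f_{ij}\circ S$. So there is nothing to compare against in the present paper, and your outline matches the source it cites.

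One remark on the step you flag as delicate. You phrase the conclusion as ``inverting the commutation relation expresses each element uniquely as $\sum_j\omega_j b_j$'', and this is correct, but it is worth being explicit about \emph{how} the inversion runs. The cleanest way is to note that $\psi:=\beta\circ\widetilde{u}^M:{}_0M\tensorc\A\to\A\tensorc{}_0M$ sends $\omega_i\tensorc a$ to $\sum_j F_{ij}(a)\tensorc\omega_j$ with $F_{ij}(a)=a_{(1)}f_{ij}(a_{(2)})$; since $\widetilde{v}^M$ is already known to be bijective, $\widetilde{u}^M$ is bijective iff $\psi$ is. Now the operator matrix $(F_{ij})$ has two-sided inverse $(G_{ij})$ with $G_{ij}(a)=a_{(1)}f_{ij}(S(a_{(2)}))$, because $\sum_k G_{ik}\!\circ F_{kj}=\delta_{ij}\,\id=\sum_k F_{ik}\!\circ G_{kj}$ follows directly from the multiplicativity of $f$ and the antipode axiom. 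This makes both injectivity and surjectivity of $\widetilde{u}^M$ transparent without any appeal to $S^{-1}$. Your identification of the obstacle and of $f_{ij}\circ S$ as the key input is exactly right; only the packaging via $\psi$ is perhaps missing from your sketch.
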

All bicovariant differential calculi $ (\E, d) $ under consideration in this article will be such that $ \E_0 $ and $ \zeroE $ are finite-dimensional vector spaces.

If $ (M, \Delta_M) $ and $ (N, \Delta_N) $ are left-covariant bimodules over $\A,$ then we have a left coaction $ \Delta_{M \tensora N} $ of $ \A $ on $ M \tensora N $ defined by the following formula:
$$ \Delta_{M \tensora N} (m \tensora n) = (m_\A \tensorc \id_M \tensorc \id_N) (\sigma^{{\rm can}})_{23} (\Delta_M (m) \tensorc \Delta_N (n)). $$
Here $m_\A: \A \tensorc \A \to \IC$ denotes the multiplication map. This makes $ M \tensora N $ into a left-covariant $\A$-bimodule. Similarly, there is a right coaction $ {}_{M \tensora N} \Delta $ on $ M \tensora N$ if $ (M, {}_M \Delta) $ and $ (N, {}_N \Delta) $ are right-covariant. If $ M $ and $ N $ are bicovariant bimodules, then it can be easily checked that $(M \tensora N, \Delta_{M \tensora N}, {}_{M \tensora N} \Delta)$ is again a bicovariant bimodule over $\A.$ 
By adapting the proof of Lemma 3.2 of \cite{woronowicz}, we have the following result:
\begin{corr} \label{3rdaugust20191}
		Let $(M,\Delta_M)$ and $(N, \Delta_N)$ be left-covariant bimodules over $\A$ and $\{m_i\}_i$ and $\{n_j\}_j$ be vector space bases of ${}_0 M$ and ${}_0 N$ respectively. Then each element of $M \tensora N$ can be written as $\sum_{ij} a_{ij} m_i \tensora n_j$ and $\sum_{ij} m_i \tensora n_j b_{ij}$, where $a_{ij}$ and $b_{ij}$ are uniquely determined.\\
		A similar result holds for right-covariant bimodules $(M, {}_M \Delta)$ and $(N, {}_N \Delta)$ over $\A$. Finally, if $(M,\Delta_M)$ is a left-covariant bimodule over $\A$ with basis $\{m_i\}_i$ of ${}_0 M$, and $(N, {}_N \Delta)$ is a right-covariant bimodule over $\A$ with basis $\{n_i\}_i$ of $N_0$, then any element of $M \tensora N$ can be written uniquely as $\sum_{ij} a_{ij} m_i \tensora n_j$ as well as $\sum_{ij} m_i \tensora n_j b_{ij}$.
\end{corr}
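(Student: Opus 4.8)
The plan is to reduce the entire statement to the freeness assertions of Proposition~\ref{moduleiso}, using nothing about the comodule structures beyond what that proposition already encodes. By Proposition~\ref{moduleiso}, the map $\widetilde{v}^M$ exhibits $M$ as a free \emph{left} $\A$-module with basis $\{m_i\}$, while $\widetilde{u}^M$ exhibits $M$ as a free \emph{right} $\A$-module with the same basis; the analogous statements hold for $N$ and $\{n_j\}$. Since $M \tensora N$ is the balanced tensor product of a right and a left $\A$-module, the whole corollary is then a routine statement about coordinates in tensor products of free modules.

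For existence of the representation $\sum_{ij} a_{ij}\, m_i \tensora n_j$, I would start from a simple tensor $\mu \tensora \nu$, expand $\nu = \sum_j c_j n_j$ in the left-module basis of $N$, slide the coefficients $c_j$ across the balanced tensor product to obtain $\sum_j (\mu c_j) \tensora n_j$, and finally expand each $\mu c_j \in M$ in the left-module basis of $M$; extending by linearity gives the desired form. The representation $\sum_{ij} m_i \tensora n_j\, b_{ij}$ is produced by the mirror argument: expand $\mu$ in the right-module basis of $M$, slide the coefficients to the right, and expand the resulting elements of $N$ in the right-module basis of $N$.

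The point that needs a little care is uniqueness. For the first normal form I would use the freeness of $N$ as a left module to write $N = \bigoplus_j \A n_j$, whence
$$ M \tensora N = \bigoplus_j \big( M \tensora \A n_j \big). $$
Each summand is isomorphic to $M$ through $\mu \tensora a n_j \mapsto \mu a$, which is well defined and bijective precisely because $\A n_j$ is free of rank one as a left $\A$-module. Consequently the intermediate coefficients $\mu'_j := \sum_i a_{ij} m_i \in M$ are uniquely pinned down by the given element of $M \tensora N$, and the left-freeness of $M$ then forces each $a_{ij}$ to be unique. Uniqueness of $\sum_{ij} m_i \tensora n_j\, b_{ij}$ follows symmetrically, this time decomposing $M = \bigoplus_i m_i \A$ as a right $\A$-module and identifying $m_i \A \tensora N$ with $N$.

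The remaining cases require no new ideas: the statement for right-covariant $M$ and $N$ uses the isomorphisms $\Mzero \tensorc \A \to M$ and $\A \tensorc \Mzero \to M$ from Proposition~\ref{moduleiso} in place of $\widetilde{u}^M, \widetilde{v}^M$, and the mixed case (with $M$ left-covariant and $N$ right-covariant) combines the left-module structure on $\{m_i\}$ with the appropriate one-sided structures on $\{n_j\}$ in exactly the same way. The only genuine obstacle is the bookkeeping in the uniqueness step, namely the rank-one identifications $M \tensora \A n_j \cong M$ and $m_i \A \tensora N \cong N$; once these are in place, the argument is purely formal.
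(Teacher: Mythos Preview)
Your argument is correct and is in the same spirit as the paper's: the paper does not supply a proof here but simply refers the reader to Lemma~3.2 of \cite{woronowicz}, whose proof proceeds exactly by exploiting the one-sided freeness of the bimodules coming from Proposition~\ref{moduleiso}. Your write-up makes the uniqueness step (the rank-one identifications $M \tensora \A n_j \cong M$ and $m_i\A \tensora N \cong N$) more explicit than Woronowicz's, but the underlying idea is identical.
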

Our next proposition states that if $ M $ and $ N $ are left-covariant bimodules, then the left-invariant elements of $ M \tensora N $ is the tensor product of the vector spaces $ {}_0 M $ and $ {}_0 N. $
\begin{prop} \label{3rdaugust20192} (Theorem 5.7 of \cite{schauenberg}, Proposition 2.6 of \cite{article5})
	Let $(M,\Delta_M)$ and $(N, \Delta_N)$ be left-covariant bimodules. Following Definition \ref{21staugust20193}, we denote the left-invariant elements (with respect to the coaction $\Delta_{M \tensora N}$) of $M \tensora N$ by ${}_0(M \tensora N)$. Similarly, the right-invariant elements of $M \tensora N$ (with respect to the coaction ${}_{M \tensora N} \Delta$) will be denoted by $(M \tensora N)_0$. Then we have that
	\begin{equation} \label{21staugust20194}
	{}_0 (M \tensora N) = {\rm span}_\IC \{m \tensora n : m \in {}_0 M, n \in {}_0 N \}.
	\end{equation}
	Similarly, if $(M, {}_M \Delta)$ and $(N, {}_N \Delta)$ are right-covariant bimodules over $\A$, then we have that $$ (M \tensora N)_0 = {\rm span}_\IC \{m \tensora n : m \in M_0, n \in N_0 \}.$$
	Thus, ${}_0 (M \tensora N) = {}_0 M \tensorc {}_0 N$ and $(M \tensora N)_0 = M_0 \tensorc N_0$. Therefore, we are allowed to use the notations $ {}_{0} M \tensorc {}_{0} N $ and $ {}_{0} (M \tensora N) $ interchangeably.
\end{prop}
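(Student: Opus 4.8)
The plan is to prove the two inclusions in \eqref{21staugust20194} separately, with the containment $\supseteq$ being immediate and the reverse containment carrying all the content. First I would unwind the definition of $\Delta_{M \tensora N}$: writing $\Delta_M(m) = m_{(-1)} \tensorc m_{(0)}$ and $\Delta_N(n) = n_{(-1)} \tensorc n_{(0)}$, applying $(\sigmacan)_{23}$ and then the multiplication $m_\A$ on the first two legs yields
$$ \Delta_{M \tensora N}(m \tensora n) = m_{(-1)} n_{(-1)} \tensorc (m_{(0)} \tensora n_{(0)}). $$
From this the inclusion $\supseteq$ is transparent: if $m \in {}_0 M$ and $n \in {}_0 N$ then $m_{(-1)} \tensorc m_{(0)} = 1 \tensorc m$ and $n_{(-1)} \tensorc n_{(0)} = 1 \tensorc n$, so $\Delta_{M \tensora N}(m \tensora n) = 1 \tensorc (m \tensora n)$; since the left-invariant elements form a subspace, the whole span lies in ${}_0(M \tensora N)$.

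For the reverse inclusion I would fix vector space bases $\{m_i\}$ of ${}_0 M$ and $\{n_j\}$ of ${}_0 N$ and take an arbitrary $x \in {}_0(M \tensora N)$. By Corollary \ref{3rdaugust20191} I may write $x = \sum_{ij} m_i \tensora n_j\, b_{ij}$ with the $b_{ij} \in \A$ uniquely determined, and the goal reduces to showing that each $b_{ij}$ is a scalar multiple of $1$. Since each $m_i \tensora n_j$ is left-invariant (by the first paragraph) and $M \tensora N$ is a left-covariant bimodule, so that $\Delta_{M \tensora N}(ya) = \Delta_{M \tensora N}(y)\Delta(a)$, I would compute
$$ \Delta_{M \tensora N}(x) = \sum_{ij} (b_{ij})_{(1)} \tensorc (m_i \tensora n_j)(b_{ij})_{(2)}. $$
The assumed left-invariance of $x$ then gives the identity $\sum_{ij} (b_{ij})_{(1)} \tensorc (m_i \tensora n_j)(b_{ij})_{(2)} = 1 \tensorc \sum_{ij} m_i \tensora n_j\, b_{ij}$ inside $\A \tensorc (M \tensora N)$.

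The decisive step is to decouple the coefficients, and this is exactly where the freeness supplied by Corollary \ref{3rdaugust20191} does the work. For each pair $(k,l)$ the uniqueness of the expansion furnishes a right $\A$-linear coordinate map $\phi_{kl} : M \tensora N \to \A$ sending $\sum_{ij} m_i \tensora n_j\, c_{ij}$ to $c_{kl}$. Applying $\id_\A \tensorc \phi_{kl}$ to both sides of the above identity and invoking right $\A$-linearity of $\phi_{kl}$ collapses the left-hand side to $\Delta(b_{kl})$ and the right-hand side to $1 \tensorc b_{kl}$, so $\Delta(b_{kl}) = 1 \tensorc b_{kl}$. Applying $\id \tensorc \epsilon$ and the counit axiom then forces $b_{kl} = \epsilon(b_{kl})\, 1 \in \IC \cdot 1$, whence $x = \sum_{ij} \epsilon(b_{ij})\, m_i \tensora n_j$ lies in the required span. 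I expect the only point needing care to be the well-definedness and right $\A$-linearity of the functionals $\phi_{kl}$ together with the indexing bookkeeping when $\id \tensorc \phi_{kl}$ is applied; both are controlled entirely by Corollary \ref{3rdaugust20191}, so there is no genuine analytic obstacle.

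Finally, the statement for right-covariant bimodules follows by the mirror-image argument, using ${}_M \Delta$, the right-invariant subspaces $M_0$, $N_0$, and the right-handed half of Corollary \ref{3rdaugust20191} in place of their left counterparts. The closing identifications ${}_0(M \tensora N) = {}_0 M \tensorc {}_0 N$ and $(M \tensora N)_0 = M_0 \tensorc N_0$ are then just reformulations of what has been proved, since the elements $m_i \tensora n_j$ are $\IC$-linearly independent in $M \tensora N$. (As an alternative to the coordinate-functional computation, one could instead observe that both $\big(\mathrm{span}_\IC\{m_i \tensora n_j\}\big) \tensorc \A$ and ${}_0(M \tensora N) \tensorc \A$ map isomorphically onto $M \tensora N$ under multiplication by Proposition \ref{moduleiso} and Corollary \ref{3rdaugust20191}, and then deduce equality of the two invariant subspaces by tensoring the quotient with the nonzero algebra $\A$; I would present the direct argument above as the primary route.)
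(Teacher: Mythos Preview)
The paper does not supply its own proof of this proposition; it merely cites Theorem~5.7 of \cite{schauenberg} and Proposition~2.6 of \cite{article5}. Your argument is correct and self-contained: the easy inclusion is immediate from the formula for $\Delta_{M\tensora N}$, and for the hard inclusion your use of Corollary~\ref{3rdaugust20191} to extract right $\A$-linear coordinate functionals $\phi_{kl}$ is exactly what is needed---applying $\id\tensorc\phi_{kl}$ legitimately collapses the invariance equation to $\Delta(b_{kl})=1\tensorc b_{kl}$, and the counit then forces $b_{kl}\in\IC\cdot 1$. The alternative freeness/dimension argument you sketch at the end is also valid.
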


Finally we recall the canonical braiding map associated with a bicovariant bimodule.

\begin{prop}{(Proposition 3.1 of \cite{woronowicz})} \label{4thmay20193}
	Given a bicovariant bimodule $(M, \Delta_M, {}_M \Delta)$, there exists a unique bimodule homomorphism
	$$\sigma: M \tensora M \to M \tensora M ~ {\rm such} ~ {\rm that} $$ 
	\be \label{30thapril20191} \sigma(\omega \tensora \eta)= \eta \tensora \omega \ee
	for any left-invariant element $\omega$ and right-invariant element $\eta$ in $M$. $\sigma$ is invertible and the following equations hold, making $\sigma$ a bicovariant $\A$-bimodule map from $M \tensora M$ to itself:
	 \begin{equation} \label{21staugust20194jb}
	(\id_\A \tensora \sigma) \Delta_{M \tensora M} = \Delta_{M \tensora M} \circ \sigma, ~ (\sigma \tensora \id_\A) {}_{M \tensora M} \Delta = {}_{M \tensora M} \Delta \circ \sigma.
	\end{equation} 
	Moreover, $\sigma$ satisfies the following braid equation on $M \tensora M \tensora M:$
	$$ (\id \tensora \sigma)(\sigma \tensora \id)(\id \tensora \sigma)= (\sigma \tensora \id)(\id \tensora \sigma)(\sigma \tensora \id). $$
\end{prop}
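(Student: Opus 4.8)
The plan is to construct $\sigma$ explicitly from the canonical expansions of Corollary \ref{3rdaugust20191} and then to verify every assertion by reducing it, through left $\A$-linearity, to the distinguished generators. Fix bases $\{\omega_i\}$ of $\zeroM$ and $\{\eta_j\}$ of $\Mzero$. By the last assertion of Corollary \ref{3rdaugust20191}, every element of $M \tensora M$ is written uniquely as $\sum_{ij} a_{ij}\,\omega_i \tensora \eta_j$ with $a_{ij} \in \A$; hence $\{\omega_i \tensora \eta_j\}$ is a free left $\A$-module basis of $M \tensora M$, and symmetrically so is $\{\eta_j \tensora \omega_i\}$. For \emph{uniqueness}, note that a left $\A$-linear $\sigma$ is determined by its values on this basis, and the swap condition \eqref{30thapril20191} forces $\sigma(\omega_i \tensora \eta_j) = \eta_j \tensora \omega_i$; so at most one such map exists. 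For \emph{existence}, I would define $\sigma$ to be the left $\A$-linear map sending $\omega_i \tensora \eta_j \mapsto \eta_j \tensora \omega_i$. It is well defined by uniqueness of the expansion, satisfies \eqref{30thapril20191}, and carries one free left-module basis bijectively onto another; consequently it is a left-module isomorphism, and in particular invertible (its inverse being the left-linear map $\eta_j \tensora \omega_i \mapsto \omega_i \tensora \eta_j$).

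The substantial point is that this $\sigma$ is also right $\A$-linear, so that it is genuinely a bimodule homomorphism, and I expect this to be the main obstacle. Using left $\A$-linearity, the identity $\sigma(\rho\,a) = \sigma(\rho)\,a$ reduces to the special case $\sigma\big((\omega_i \tensora \eta_j)\,a\big) = (\eta_j \tensora \omega_i)\,a$ on generators. To verify this I would re-expand $(\omega_i \tensora \eta_j)a = \omega_i \tensora \eta_j a$ in the left-coefficient basis: the commutation of $a$ past $\eta_j$ and then past $\omega_i$ is governed by the explicit relations between the left and right module structures of $M$ furnished by the isomorphisms of Proposition \ref{moduleiso}. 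Equivalently, one may define the manifestly right $\A$-linear map $\sigma'$ through the unique right-coefficient expansion $\sum_{ij} \omega_i \tensora \eta_j\, b_{ij} \mapsto \sum_{ij} \eta_j \tensora \omega_i\, b_{ij}$ and show $\sigma = \sigma'$. Either way, this is precisely the step where the interplay of the two module (and comodule) structures must be used in earnest.

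Granting that $\sigma$ is a bimodule map, I would prove \emph{bicovariance} \eqref{21staugust20194jb} by testing on generators. Both $(\id \tensora \sigma)\,\Delta_{M \tensora M}$ and $\Delta_{M \tensora M}\circ\sigma$ intertwine left multiplication with $\Delta$, so it suffices to check them on $\omega_i \tensora \eta_j$. From the compatibility \eqref{8thnov20194} one deduces that left-invariance is preserved by the right coaction and right-invariance by the left coaction; writing $\Delta_M(\eta_j) = \eta_{j(-1)} \tensorc \eta_{j(0)}$ with $\eta_{j(0)}$ again right-invariant, a short computation gives $\Delta_{M \tensora M}(\omega_i \tensora \eta_j) = \eta_{j(-1)} \tensorc (\omega_i \tensora \eta_{j(0)})$. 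Applying $\id \tensora \sigma$ and the swap property \eqref{30thapril20191} yields $\eta_{j(-1)} \tensorc (\eta_{j(0)} \tensora \omega_i)$, which is exactly $\Delta_{M \tensora M}(\eta_j \tensora \omega_i) = \Delta_{M \tensora M}(\sigma(\omega_i \tensora \eta_j))$. The right-covariance identity for ${}_{M \tensora M}\Delta$ follows by the mirror-image computation.

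Finally, for the \emph{braid equation} on $M \tensora M \tensora M$, both sides are left $\A$-linear, so I would verify the identity on the left-module basis $\{\omega_i \tensora \omega_j \tensora \omega_k\}$ of triply left-invariant vectors, which spans $M \tensora M \tensora M$ freely by Proposition \ref{3rdaugust20192} together with Proposition \ref{moduleiso}. This requires the explicit action of $\sigma$ on $\zeroM \tensorc \zeroM$, read off from the right coaction of the $\omega_i$ (which lands back in $\zeroM \tensorc \A$), after which both composites reduce to the same rearrangement and the equality becomes a Yang--Baxter-type relation among the resulting structure constants. The delicate bookkeeping here, as in the proof of right $\A$-linearity, is the tracking of the middle tensor factor through the successive braidings, and it is again where the bicovariant structure does the real work.
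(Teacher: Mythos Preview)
The paper does not give its own proof of this proposition: it is stated purely as a quotation of Proposition~3.1 of Woronowicz and used as a black box thereafter. So there is nothing in the paper to compare your argument against directly; your proposal is really a sketch of Woronowicz's original argument.

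That said, as a proof your write-up has two genuine gaps, and you flag them yourself. First, right $\A$-linearity: you correctly isolate this as ``the main obstacle'' and describe two equivalent strategies (push $a$ through and re-expand, or define a right-linear $\sigma'$ and show $\sigma=\sigma'$), but you do not carry either out. This step is not routine bookkeeping: it is exactly where the bicovariance compatibility \eqref{8thnov20194} enters, via the explicit functionals $f^i_j$ relating the left and right module structures on $\zeroM$ and $\Mzero$ in Woronowicz's Theorems~2.1--2.4. Without actually invoking those relations the claim does not follow from Proposition~\ref{moduleiso} alone. Second, the braid equation: reducing to triply left-invariant tensors is fine, but saying that ``both composites reduce to the same rearrangement and the equality becomes a Yang--Baxter-type relation among the resulting structure constants'' is a restatement of what has to be proved, not a proof. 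In Woronowicz's argument this step again uses the specific form of ${}_0\sigma$ in terms of the right-coaction matrix of $\zeroM$, and the identity is then a nontrivial consequence of coassociativity; it does not fall out formally.

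Your treatment of uniqueness, invertibility, and bicovariance is correct and cleanly argued. In particular, the observation that $\Delta_M$ carries $\Mzero$ into $\A\tensorc \Mzero$ (from \eqref{8thnov20194}) is exactly what is needed for the left-covariance check on generators.
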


\subsection{A characterisation of left-covariant maps and some consequences} \label{21staugust20191jb}

Our aim is to study properties of bicovariant connections on a bicovariant differential calculus, namely, their torsion and compatibility with bi-invariant metrics. We refer to the later sections for the definitions of bicovariant differential calculus and pseudo-Riemannian metrics. Our strategy is to exploit the left-covariance of the various maps (the connection, the metric, the de-Rham differential and the map $ \sigma $) between the underlying bicovariant bimodules of a bicovariant differential calculus to view them as maps between the finite-dimensional vector spaces of left-invariant elements of the respective bimodules. This was already observed and used crucially by Heckenberger and Schm{\"u}dgen in the paper \cite{heckenberger}. The goal of this subsection is to give a systematic treatment to this idea and our main goals are to prove Proposition \ref{14thfeb20192} and its corollaries. The proofs in this subsection are elementary but we provide all the details since we will need to refer to these results repeatedly. 

For the rest of this subsection, we will use the notations introduced in Proposition \ref{moduleiso} freely.

\begin{prop} \label{ahoma}
	Let $(M, \Delta_M)$ and $(N, \Delta_N)$ be left-covariant bimodules over $\A$ and $T$ be a left-covariant right $\A$-linear map from $M$ to $N$. Then $T({}_0 M) \subseteq {}_0 N$. Moreover, there exists a unique $\IC$-linear map ${}_0T$ in $\homc({}_0M,{}_0N)$ such that
	\begin{equation} \label{ahomadiag}
	 (\widetilde{u}^N)^{-1} \circ T = ({}_0 T \tensorc \id) (\widetilde{u}^M)^{-1}.
		\end{equation}
		In particular, a left covariant right $\A$-linear map $T$ from $M$ to $N$ is determined by its action on $ {}_0 M.$
\end{prop}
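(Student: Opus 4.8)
The plan is to prove the statement in three movements: first show that $T$ maps left-invariants to left-invariants, then construct the candidate map ${}_0T$ as the restriction of $T$, and finally verify the intertwining identity \eqref{ahomadiag} together with uniqueness.

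First I would establish that $T({}_0 M) \subseteq {}_0 N$. Take $m \in {}_0 M$, so by Definition \ref{21staugust20193} we have $\Delta_M(m) = 1 \tensorc m$. Since $T$ is left-covariant, applying the defining identity from Definition \ref{8thnov20191} gives
\[
\Delta_N(T(m)) = (\id \tensorc T)(\Delta_M(m)) = (\id \tensorc T)(1 \tensorc m) = 1 \tensorc T(m),
\]
which is precisely the condition for $T(m)$ to lie in ${}_0 N$. This immediately lets me \emph{define} ${}_0 T := T|_{{}_0 M} : {}_0 M \to {}_0 N$, which is $\IC$-linear since $T$ is, so ${}_0 T \in \homc({}_0 M, {}_0 N)$.

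Next I would verify the intertwining identity \eqref{ahomadiag}. By Proposition \ref{moduleiso}, every element of $M$ is uniquely $\sum_i m_i a_i$ with $m_i \in {}_0 M$ and $a_i \in \A$, so $(\widetilde{u}^M)^{-1}(\sum_i m_i a_i) = \sum_i m_i \tensorc a_i$. Applying the right-hand side of \eqref{ahomadiag} yields $({}_0 T \tensorc \id)(\sum_i m_i \tensorc a_i) = \sum_i {}_0 T(m_i) \tensorc a_i = \sum_i T(m_i) \tensorc a_i$. For the left-hand side, using right $\A$-linearity of $T$ I get $T(\sum_i m_i a_i) = \sum_i T(m_i) a_i$, and since each $T(m_i) \in {}_0 N$ by the first step, the multiplication isomorphism $\widetilde{u}^N$ sends $\sum_i T(m_i) \tensorc a_i$ to exactly this element; hence $(\widetilde{u}^N)^{-1}(T(\sum_i m_i a_i)) = \sum_i T(m_i) \tensorc a_i$. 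The two sides agree, proving \eqref{ahomadiag}.

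For uniqueness, I would note that any $\IC$-linear $S \in \homc({}_0 M, {}_0 N)$ satisfying \eqref{ahomadiag} is forced to equal ${}_0 T$: evaluating \eqref{ahomadiag} on $(\widetilde{u}^M)^{-1}(m) = m \tensorc 1$ for $m \in {}_0 M$ gives $(S \tensorc \id)(m \tensorc 1) = S(m) \tensorc 1$, while the left-hand side is $(\widetilde{u}^N)^{-1}(T(m)) = T(m) \tensorc 1$ since $T(m) \in {}_0 N$; thus $S(m) = T(m)$ for all $m \in {}_0 M$. The main subtlety, though an elementary one, is making careful use of Proposition \ref{moduleiso} to ensure that $\widetilde{u}^M$ and $\widetilde{u}^N$ are genuine isomorphisms so that the unique-factorisation of elements in $M$ is available; the rest is a direct unwinding of the definitions. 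The final sentence of the statement, that $T$ is determined by its action on ${}_0 M$, is then an immediate consequence, since \eqref{ahomadiag} recovers $T = \widetilde{u}^N \circ ({}_0 T \tensorc \id) \circ (\widetilde{u}^M)^{-1}$ from the data of ${}_0 T$ alone.
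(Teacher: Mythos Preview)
Your proof is correct and follows essentially the same approach as the paper: both establish $T({}_0 M)\subseteq {}_0 N$ directly from left-covariance, define ${}_0 T$ as the restriction, verify \eqref{ahomadiag} by writing a general element as $\sum_i m_i a_i$ via Proposition \ref{moduleiso}, and deduce uniqueness by evaluating on left-invariants. Your uniqueness argument is spelled out a bit more explicitly than the paper's, but the substance is identical.
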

\begin{proof} 
	Let $\{m_i\}_i$ be a vector space basis for ${}_0M$ and $\{n_j\}_j$ be a vector space basis for ${}_0 N$. Since $T$ is a left-covariant right $\A$-linear map from $M$ to $N$, we have that
	$$ \Delta_N(T(m_i)) = (\id \tensorc T)\Delta_M(m_i) =(\id \tensorc T)(1 \tensorc m_i) =1 \tensorc (T(m_i)).$$
	Therefore, $T(m_i)$ is in ${}_0N$. This proves the first assertion.\\
	Define ${}_0T$ to be the restriction of $T$ on ${}_0 M$. Let $ m = \widetilde{u}^M (\sum_i m_i \tensorc a_i), $ where $ \widetilde{u}^M $ is as defined in Proposition \ref{moduleiso}. Then 
	$$ ({}_0 T \tensorc \id) {(\widetilde{u}^M)}^{-1} (m) = \sum_i {}_0 T (m_i) \tensorc a_i = {(\widetilde{u}^N)}^{-1} \circ T (\sum_i m_i a_i) = {(\widetilde{u}^N)}^{-1} \circ T (m) $$
	and thus \eqref{ahomadiag} follows. The uniqueness follows from the fact that the equation \eqref{ahomadiag} implies that ${}_0T(m_i) = T(m_i)$ for all $i.$		 
\end{proof}	

\begin{corr} \label{4thmay20192}
	Let $(M, \Delta_M)$ be a left-covariant bimodule over $\A$ and $T$ be a left-covariant right $\A$-linear map from $M$ to $\A$. Then there exists a unique $\IC$-linear map ${}_0T$ in $\homc({}_0M,\IC)$ such that
	$$ T = ({}_0 T \tensorc \id) (\widetilde{u}^M)^{-1}. $$
	\end{corr}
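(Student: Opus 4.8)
The plan is to derive this as the special case $N=\A$ of Proposition \ref{ahoma}, where $\A$ is regarded as a left-covariant bimodule over itself with left coaction given by the coproduct $\Delta$. The only genuinely new ingredient is to identify the space of left-invariant elements ${}_0\A$ together with the multiplication isomorphism $\widetilde{u}^\A$ in this case; once this is done the corollary falls out of Proposition \ref{ahoma} after a transparent identification.

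First I would check that $\A$, with its regular bimodule structure and $\Delta_\A:=\Delta$, is a left-covariant $\A$-bimodule: coassociativity of $\Delta$ makes $(\A,\Delta)$ a left $\A$-comodule, and the algebra homomorphism property $\Delta(ab)=\Delta(a)\Delta(b)$ is exactly the compatibility condition \eqref{8thnov20192} in this setting. Next I would compute ${}_0\A=\{a\in\A:\Delta(a)=1\tensorc a\}$. Applying $\id\tensorc\epsilon$ (with $\epsilon$ the counit) to the defining equation and using the counit axiom $(\id\tensorc\epsilon)\Delta=\id$ gives $a=\epsilon(a)\,1$, so ${}_0\A=\IC\cdot 1$, a one-dimensional space canonically identified with $\IC$ via $\lambda\,1\mapsto\lambda$. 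Under this identification the multiplication map $\widetilde{u}^\A:{}_0\A\tensorc\A\to\A$ becomes the canonical isomorphism $\IC\tensorc\A\cong\A$, $\lambda\tensorc a\mapsto\lambda a$; in particular $(\widetilde{u}^\A)^{-1}$ is nothing but this canonical identification, so composing with it changes nothing.

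With these identifications in hand, Proposition \ref{ahoma} applied to $N=\A$ furnishes a unique $\IC$-linear map ${}_0T\in\homc({}_0M,{}_0\A)=\homc({}_0M,\IC)$ satisfying $(\widetilde{u}^\A)^{-1}\circ T=({}_0T\tensorc\id)(\widetilde{u}^M)^{-1}$. Since $(\widetilde{u}^\A)^{-1}$ is the canonical identification $\A\cong\IC\tensorc\A$, this equation reads precisely $T=({}_0T\tensorc\id)(\widetilde{u}^M)^{-1}$, which is the claim, and uniqueness of ${}_0T$ is inherited directly from Proposition \ref{ahoma}. If a self-contained argument is preferred, one may instead repeat the short proof of Proposition \ref{ahoma} verbatim: left-covariance of $T$ gives $\Delta(T(m))=(\id\tensorc T)\Delta_M(m)=1\tensorc T(m)$ for $m\in{}_0M$, whence $T({}_0M)\subseteq{}_0\A=\IC$, and right $\A$-linearity applied to $m=\sum_i m_i a_i=\widetilde{u}^M(\sum_i m_i\tensorc a_i)$ yields the displayed formula.

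There is no substantial obstacle here; the argument is a routine specialization. The one point that requires care is the bookkeeping of the identification ${}_0\A\cong\IC$, since it is exactly what lets the factor $(\widetilde{u}^\A)^{-1}$ disappear and what converts the codomain $\homc({}_0M,{}_0\A)$ provided by Proposition \ref{ahoma} into the stated $\homc({}_0M,\IC)$.
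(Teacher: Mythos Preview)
Your proof is correct and follows exactly the paper's approach: the authors simply write that the result follows by taking $(N,\Delta_N)=(\A,\Delta)$ in Proposition \ref{ahoma}. You have spelled out in detail the identifications ${}_0\A=\IC\cdot 1\cong\IC$ and $\widetilde{u}^\A\cong\id$ that make this specialization work, which the paper leaves implicit.
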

\begin{proof}
	The proof follows by taking $(N,\Delta_N)=(\A, \Delta)$ in Proposition \ref{ahoma}.
\end{proof}	

\begin{prop} \label{inviff}
	Let $(M, \Delta_M)$ and $(N, \Delta_N)$ be left-covariant bimodules over $\A$. Then $\ahoma(M,N)$ is isomorphic to $\rm Hom_\IC({}_0M,{}_0N)$ as complex vector spaces. Moreover a left-covariant right $\A$-linear map $ T $ from $M$ to $N$ is invertible if and only if ${}_0T$ is invertible. More generally, $\lambda$ is an eigenvalue of $T$ if and only if $\lambda$ is an eigenvalue of ${}_0T$.
\end{prop}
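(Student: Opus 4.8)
The plan is to realise the assignment $T \mapsto {}_0 T$ of Proposition \ref{ahoma} as the desired isomorphism, and then to read off invertibility and eigenvalues from the way $T$ looks under the identifications $\widetilde{u}^M, \widetilde{u}^N$. First I would record that the map $\Phi : \ahoma(M,N) \to \homc({}_0 M, {}_0 N)$, $\Phi(T) = {}_0 T$, is a well-defined $\IC$-linear map by Proposition \ref{ahoma}, and that it is injective: equation \eqref{ahomadiag} expresses $T = \widetilde{u}^N \circ ({}_0 T \tensorc \id) \circ (\widetilde{u}^M)^{-1}$, so $T$ is recovered from ${}_0 T$. It then remains to prove surjectivity. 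Given $\phi \in \homc({}_0 M, {}_0 N)$, I would set
$$ T_\phi := \widetilde{u}^N \circ (\phi \tensorc \id) \circ (\widetilde{u}^M)^{-1}. $$
As a composite of right $\A$-linear maps ($\widetilde{u}^M, \widetilde{u}^N$ are right $\A$-module isomorphisms by Proposition \ref{moduleiso}, and $\phi \tensorc \id$ is right $\A$-linear for the action on the $\A$-factor), $T_\phi$ is right $\A$-linear; concretely $T_\phi(m\, a) = \phi(m)\, a$ for $m \in {}_0 M$, $a \in \A$. Evaluating at $a = 1$ gives ${}_0 T_\phi = \phi$, so surjectivity, and hence the claimed vector space isomorphism, follows once $T_\phi$ is shown to be left covariant.

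Left covariance is the only genuine computation. For $m \in {}_0 M$ and $a \in \A$, left-invariance of $m$ together with \eqref{8thnov20192} gives $\Delta_M(m a) = (1 \tensorc m)\Delta(a) = a_{(1)} \tensorc m\, a_{(2)}$, and likewise $\Delta_N(\phi(m) a) = a_{(1)} \tensorc \phi(m)\, a_{(2)}$ since $\phi(m) \in {}_0 N$. Therefore
$$ (\id \tensorc T_\phi)\Delta_M(m a) = a_{(1)} \tensorc T_\phi(m\, a_{(2)}) = a_{(1)} \tensorc \phi(m)\, a_{(2)} = \Delta_N(T_\phi(m a)), $$
and since elements of the form $m\,a$ span $M$ (Proposition \ref{moduleiso}), $T_\phi$ is left covariant, completing the proof of surjectivity.

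For the remaining statements I would exploit that, by \eqref{ahomadiag}, the isomorphisms $\widetilde{u}^M$ and $\widetilde{u}^N$ intertwine $T$ with ${}_0 T \tensorc \id_\A$ acting on ${}_0 M \tensorc \A$ and ${}_0 N \tensorc \A$. Since $\A$ is a nonzero vector space and we work over a field, for any linear $\phi$ one has $\ker(\phi \tensorc \id_\A) = (\ker \phi) \tensorc \A$ and ${\rm coker}(\phi \tensorc \id_\A) = ({\rm coker}\, \phi)\tensorc \A$; hence $\phi \tensorc \id_\A$ is injective (respectively surjective, bijective) precisely when $\phi$ is. Applying this with $\phi = {}_0 T$ shows that $T$ is invertible if and only if ${}_0 T$ is. For the eigenvalue statement (where $M = N$), I note that $T - \lambda\,\id_M \in \ahoma(M,M)$ with ${}_0(T - \lambda\,\id_M) = {}_0 T - \lambda\,\id_{{}_0 M}$; thus $\ker(T - \lambda\,\id_M) \cong \ker({}_0 T - \lambda\,\id_{{}_0 M}) \tensorc \A$, which is nonzero if and only if $\ker({}_0 T - \lambda\,\id_{{}_0 M})$ is. Hence $\lambda$ is an eigenvalue of $T$ if and only if it is an eigenvalue of ${}_0 T$.

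The only non-formal step is the left-covariance check in the surjectivity argument; everything else is bookkeeping once the identification $T \leftrightarrow {}_0 T \tensorc \id_\A$ coming from \eqref{ahomadiag} is in hand. A minor subtlety worth handling cleanly is the meaning of \emph{eigenvalue}: reading it as the existence of a nontrivial kernel of $T - \lambda\,\id$ makes the argument valid verbatim even when the module $M$ is infinite-dimensional, since tensoring by the nonzero vector space $\A$ is exact and reflects (non)triviality of kernels. (Alternatively, the invertibility clause can be obtained from the multiplicativity ${}_0(T\circ S) = {}_0 T \circ {}_0 S$, using that the inverse of a left-covariant right $\A$-linear isomorphism is again of this type, but the tensor-product picture treats invertibility and eigenvalues uniformly.)
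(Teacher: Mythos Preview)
Your proof is correct and follows essentially the same route as the paper: both build the bijection $T \leftrightarrow {}_0 T$ via $T = \widetilde{u}^N \circ ({}_0 T \tensorc \id) \circ (\widetilde{u}^M)^{-1}$, and both reduce the invertibility and eigenvalue statements to the behaviour of ${}_0 T$ through this identification. The only minor difference is that the paper reads ``eigenvalue'' as ``$T - \lambda$ not invertible'' and deduces the eigenvalue claim directly from the invertibility clause, whereas you read it as ``$\ker(T - \lambda) \neq 0$'' and argue via $\ker(T-\lambda) \cong \ker({}_0 T - \lambda)\tensorc \A$; you already flag this ambiguity, and both readings are consistent once ${}_0 M$ is finite-dimensional (the standing assumption in the paper's applications).
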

\begin{proof}
Let us recall (Definition \ref{8thnov20191}) that $ \ahoma(M,N) $ denotes the set of all right $ \A $-linear left-covariant maps from $ M $ to $ N. $ We define a map 
	$$ \ahoma(M,N) \rightarrow \homc({}_0M, {}_0N); ~ T \mapsto {}_0T$$
	as in Proposition \ref{ahoma}. As $ T $ is left-covariant, by Proposition \ref{ahoma}, $ T ({}_0 M) \subseteq {}_0 N. $ Since $T$ is determined by its action on ${}_0(M)$, this map is one-one. Given an element $ {}_0 T $ in $ \homc ({}_0 M, {}_0 N), $ the map $ \widetilde{u}^N ({}_0 T \tensorc \id_\A) (\widetilde{u}^M)^{-1} $ defines an element, say $ T, $ in $ \Hom_\A (M, N) $ which can be easily checked to be left-covariant and whose image under the above map is $ {}_0 T. $ Thus, the map is a bijection.
	
	The equation \eqref{ahomadiag} implies that $T$ is invertible if and only if ${}_0T$ is invertible. Finally, $\lambda$ is an eigenvalue of ${}_0T$ if and only if ${}_0(T-\lambda . 1) = {}_0 T - \lambda . 1$ is not invertible and ${}_0(T-\lambda . 1)$ is not invertible if and only if $T- \lambda . 1$ is not invertible by the above argument. Hence, $\lambda$ is an eigenvalue of $T$ if and only if it is an eigenvalue of ${}_0T$.
\end{proof}	

\begin{prop} \label{14thfeb20192}
	Let $(M, \Delta_{ M})$ and $(N, \Delta_N)$ be left-covariant $\A$-$\A$ bimodules. Then a right $\A$-linear map $T: M \to N$ is left-covariant if and only if $T({}_0M)\subseteq {}_0N$. 
	
	In particular, if $ S: M \tensora N \rightarrow M \tensora N $ is a right $\A$-linear map, then Proposition \ref{3rdaugust20192} implies that $ S $ is left-covariant if and only if $ S ({}_0 M \tensorc {}_0 N) \subseteq {}_0 M \tensorc {}_0 N. $
\end{prop}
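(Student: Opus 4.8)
The plan is to prove both directions separately, using the characterisation of left-invariant elements and the module isomorphism $\widetilde{u}^M$ from Proposition \ref{moduleiso}.

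For the forward direction, suppose $T: M \to N$ is right $\A$-linear and left-covariant. This is exactly the situation of Proposition \ref{ahoma}, whose first assertion already gives $T({}_0 M) \subseteq {}_0 N$. So this direction requires no new work beyond invoking the earlier proposition.

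For the converse, I would assume $T$ is right $\A$-linear with $T({}_0 M) \subseteq {}_0 N$ and deduce left-covariance, i.e. that $(\id \tensorc T)\Delta_M = \Delta_N \circ T$. The strategy is to verify this identity on a spanning set, using Proposition \ref{moduleiso}: every $m \in M$ can be written as $m = \sum_i m_i a_i$ for a fixed basis $\{m_i\}_i$ of ${}_0 M$ and elements $a_i \in \A$, by surjectivity of $\widetilde{u}^M$. By $\IC$-linearity of both sides it suffices to check the identity on elements of the form $m_i a$ with $a \in \A$. For such an element, I would compute the left-hand side using \eqref{8thnov20192} (the fact that $\Delta_M(m_i a) = \Delta_M(m_i)\Delta(a) = (1 \tensorc m_i)\Delta(a)$ since $m_i$ is left-invariant) together with right $\A$-linearity of $T$; and the right-hand side using right $\A$-linearity of $T$, the hypothesis that $T(m_i) \in {}_0 N$ so that $\Delta_N(T(m_i)) = 1 \tensorc T(m_i)$, and again \eqref{8thnov20192} for $N$. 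Both sides should collapse to the same expression built from $\Delta(a)$ and $1 \tensorc T(m_i a)$, giving the desired equality.

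The potential obstacle is purely bookkeeping: one must track the Sweedler indices of $\Delta(a)$ correctly through the $\A$-linearity steps and confirm that the left coaction's compatibility with the bimodule structure (equation \eqref{8thnov20192}) is applied on the correct side. There is no deep difficulty here since $T({}_0 M) \subseteq {}_0 N$ converts the left-invariance of the $m_i$ into left-invariance of the $T(m_i)$, after which the coactions on both sides are forced to agree by the bimodule compatibility. Finally, the last assertion about $S: M \tensora N \to M \tensora N$ follows immediately by applying the equivalence just proved to the left-covariant bimodule $M \tensora N$, whose left-invariant elements are identified with ${}_0 M \tensorc {}_0 N$ by Proposition \ref{3rdaugust20192}.
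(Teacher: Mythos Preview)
Your proposal is correct and follows essentially the same approach as the paper: invoke Proposition \ref{ahoma} for the forward direction, and for the converse write a general element as $\sum_i m_i a_i$ with $m_i \in {}_0 M$, then verify the covariance identity using $\Delta_M(m_i) = 1 \tensorc m_i$, the hypothesis $T(m_i) \in {}_0 N$, right $\A$-linearity of $T$, and \eqref{8thnov20192}. The final assertion about $S$ is handled identically via Proposition \ref{3rdaugust20192}.
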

\begin{proof}
	 If the map $T$ is left-covariant, then by Proposition \ref{ahoma}, $T({}_0 M) \subseteq {}_0 N$. Conversely, suppose $T$ is a right $\A$-linear map and $T({}_0 M) \subseteq {}_0 N$. Let $\{m_i\}_i$ be a vector space basis of ${}_0 M$ and $\sum_i m_i a_i$ be an element of $M$. Then we have that
	\begin{equation*}
		\begin{aligned}
		&\Delta_N(T(\sum_i m_i a_i)) = \sum_i \Delta_N(T(m_i)a_i)
		= \sum_i \Delta_N(T(m_i))\Delta(a_i)\\
		=& \sum_i (1 \tensorc T(m_i))({a_i}_{(1)} \tensorc {a_i}_{(2)})
		= \sum_i ({a_i}_{(1)} \tensorc T(m_i){a_i}_{(2)})\\ =& (\id \tensorc T)(\sum_i {a_i}_{(1)} \tensorc m_i {a_i}_{(2)}) 
		= (\id \tensorc T)(\Delta_M(\sum_i m_i a_i)).
		\end{aligned}
	\end{equation*}
	Hence $T$ is a left-covariant map.
\end{proof}	

\begin{rmk}
	Analogues of Proposition \ref{ahoma}, Corollary \ref{4thmay20192}, Proposition \ref{inviff} and Proposition \ref{14thfeb20192} also hold for right-covariant right $\A$-linear maps from $(M, {}_M\Delta)$ to $(N, {}_N\Delta).$ 
\end{rmk}

We end this subsection by proving two results related to bicovariant right $\A$-linear maps.
\begin{prop} \label{18thsep20196}
Let $ (M, \Delta_M, {}_M \Delta) $ and $ (N, \Delta_N, {}_N \Delta) $ be bicovariant $\A$-bimodules and $T$ be a left-covariant right $\A$-linear map from $M$ to $N.$ If the map $ {}_0 T = T|_{{}_0 M}: {}_0 M \rightarrow {}_0 N $ as in Proposition \ref{ahoma} is right-covariant, i.e, $ {}_N \Delta {}_0 T = ({}_0 T \tensorc \id) {}_M \Delta, $ then the map $ T $ is also right-covariant. 
\end{prop}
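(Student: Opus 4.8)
The plan is to reduce the right-covariance of $T$ on all of $M$ to the hypothesised right-covariance of ${}_0 T$ on the left-invariant part ${}_0 M$, exploiting the freeness of $M$ as a left $\A$-module (Proposition \ref{moduleiso}) together with the right $\A$-linearity of $T$ and the compatibility of the two coactions. Fix a vector space basis $\{m_i\}_i$ of ${}_0 M$; by Proposition \ref{moduleiso} every element of $M$ can be written as $\sum_i m_i a_i$ with $a_i \in \A$, so by $\IC$-linearity it suffices to verify the identity ${}_N \Delta \circ T = (T \tensorc \id) \circ {}_M \Delta$ on elements of this form.

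First I would record the standard fact that the right coaction restricts to ${}_M \Delta({}_0 M) \subseteq {}_0 M \tensorc \A$. This follows from the compatibility condition \eqref{8thnov20194}: applying it to $m \in {}_0 M$ (so that $\Delta_M(m) = 1 \tensorc m$) gives $1 \tensorc {}_M \Delta(m) = (\Delta_M \tensorc \id){}_M \Delta(m)$, which forces the first leg of ${}_M \Delta(m)$ to be left-invariant. This is exactly what makes the hypothesis ${}_N \Delta \circ {}_0 T = ({}_0 T \tensorc \id) \circ {}_M \Delta$ well-posed, and it lets me expand ${}_M \Delta(m_i) = \sum_j m_j \tensorc t_{ij}$ for unique $t_{ij} \in \A$.

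With this notation the hypothesis reads ${}_N \Delta(T(m_i)) = ({}_0 T \tensorc \id){}_M \Delta(m_i) = \sum_j T(m_j) \tensorc t_{ij}$, where I have used that ${}_0 T = T|_{{}_0 M}$ and that each $m_j$ is left-invariant. I would then compute both sides of the desired identity on $m = \sum_i m_i a_i$. Using the right $\A$-linearity of $T$ and the coaction-multiplicativity ${}_N \Delta(na) = {}_N \Delta(n)\Delta(a)$ from \eqref{8thnov20193}, the right-hand side ${}_N \Delta(T(m))$ becomes $\sum_{i,j} T(m_j){a_i}_{(1)} \tensorc t_{ij}{a_i}_{(2)}$. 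Likewise, using ${}_M \Delta(m_i a_i) = {}_M \Delta(m_i)\Delta(a_i)$ and the right $\A$-linearity of $T$ once more, the left-hand side $(T \tensorc \id){}_M \Delta(m)$ expands to the same expression $\sum_{i,j} T(m_j){a_i}_{(1)} \tensorc t_{ij}{a_i}_{(2)}$. The two agree, which proves that $T$ is right-covariant.

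The computation is entirely routine once the bookkeeping is set up; there is no serious obstacle. The only point requiring care is the restriction property ${}_M \Delta({}_0 M) \subseteq {}_0 M \tensorc \A$, which is what allows the invariant part of ${}_M \Delta(m_i)$ to be re-expanded in the basis $\{m_j\}$ and the symbol ${}_0 T$ to be applied there; without it the hypothesis would not even be meaningful. Everything else is a matter of pushing the coactions past the right $\A$-action and invoking the right $\A$-linearity of $T$.
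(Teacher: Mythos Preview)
Your proof is correct and follows essentially the same approach as the paper's: both reduce to checking right-covariance on elements of the form $m a$ with $m \in {}_0 M$, $a \in \A$, using the right $\A$-linearity of $T$, the multiplicativity \eqref{8thnov20193} of the right coaction, and the hypothesised right-covariance of ${}_0 T$. The paper works with a generic $m \in {}_0 M$ in Sweedler notation rather than a fixed basis, and tacitly uses (via Theorem 2.4 of \cite{woronowicz}) the fact ${}_M \Delta({}_0 M) \subseteq {}_0 M \tensorc \A$ that you take the care to justify explicitly from \eqref{8thnov20194}.
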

\begin{proof}
Let $ m $ be an element of $ {}_0 M $ and $ a $ an element of $\A.$ Then by right $\A$-linearity of $T$ and right-covariance of $ {}_0 T, $ we get
\begin{eqnarray*}
 && {}_N \Delta (T (m a)) = {}_N \Delta (T (m) a) = {}_N \Delta (T (m)) \Delta (a)\\
 &=& {}_N \Delta ({}_0 T (m)) \Delta (a) = ({}_0 T \tensorc \id) ({}_M \Delta (m)) \Delta (a)\\ &=& (({}_0 T \tensorc \id) (m_{(0)} \tensorc m_{(1)})) (\aone \tensorc \atwo)\\
&=& ({}_0 T) (m_{(0)}) \aone \tensorc m_{(1)} \atwo = T (m_{(0)} \aone) \tensorc m_{(1)} \atwo \\
&=& (T \tensorc \id) ((m_{(0)} \tensorc m_{(1)}) (\aone \tensorc \atwo)) = (T \tensorc \id) {}_M \Delta (m a).
\end{eqnarray*}
Since $ {}_0 M $ is right $\A$-total in $M,$ this proves that $ T $ is a right covariant map. 
\end{proof} 
Before stating the next result, let us note that if $T$ is a bicovariant right $\A$-linear map from $M$ to $N$ such that $\{m_i\}_i$ and $\{n_j\}_j$ are vector space bases for ${}_0 M$ and ${}_0 N$ respectively, then by Theorem 2.4 of \cite{woronowicz}, we get 
$${}_{{}_0 M} \Delta(m_i) = \sum_k m_k \tensorc a_{ki} {\rm \ and \ } {}_{{}_0N} \Delta(n_j) = \sum_l n_l \tensorc b_{lj} ,$$
 for some elements $\{a_{ki}\}_{ki}$ and $\{b_{lj}\}_{lj}$ in $\A$.
\begin{lemma} \label{21stjune20191}
	If an element $T$ of $\homc({}_0 M, {}_0N)$ is such that for all $m$, $T(m_i)= \sum_{j} n_j T^i_j$ for some elements $T^i_j$ in $\IC$, then $T$ is a right-covariant map from ${}_0 M $ to ${}_0 N $ if and only if
	\begin{equation} \label{27thjune20193}
	\sum_{il} n_l \tensorc b_{lj} T^i_j = \sum_{jk} n_j \tensorc T^k_j a_{ki}.
	\end{equation}
\end{lemma}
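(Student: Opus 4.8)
The plan is to verify the right-covariance identity directly on a basis. By definition, the map $T \in \homc({}_0 M, {}_0 N)$ is right-covariant precisely when
\[ {}_{{}_0 N} \Delta \circ T = (T \tensorc \id) \circ {}_{{}_0 M} \Delta \]
as maps from ${}_0 M$ into $N \tensorc \A$ (this is the restriction to invariant elements of the right-covariance condition appearing in Proposition \ref{18thsep20196}). Both sides are $\IC$-linear in their argument, so since $\{m_i\}_i$ is a $\IC$-basis of ${}_0 M$, the identity holds on all of ${}_0 M$ if and only if it holds on each $m_i$. Thus the whole statement reduces to evaluating and comparing the two sides on the basis vectors $m_i$.

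First I would compute the left-hand side. Using $T(m_i) = \sum_j n_j T^i_j$ with $T^i_j \in \IC$ together with the formula ${}_{{}_0 N} \Delta(n_j) = \sum_l n_l \tensorc b_{lj}$, and pulling the scalars $T^i_j$ out of the coaction, I get
\[ {}_{{}_0 N} \Delta(T(m_i)) = \sum_j {}_{{}_0 N} \Delta(n_j)\, T^i_j = \sum_{j,l} n_l \tensorc b_{lj} T^i_j. \]
Next I would compute the right-hand side from ${}_{{}_0 M} \Delta(m_i) = \sum_k m_k \tensorc a_{ki}$, applying $T$ in the first leg and expanding $T(m_k)$:
\[ (T \tensorc \id)\, {}_{{}_0 M} \Delta(m_i) = \sum_k T(m_k) \tensorc a_{ki} = \sum_{j,k} n_j \tensorc T^k_j a_{ki}. \]

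Comparing these two expressions shows that the right-covariance identity, evaluated at $m_i$, is exactly equation \eqref{27thjune20193} (to be read for each fixed $i$, with $j$, $k$, $l$ summed). Hence $T$ is right-covariant if and only if \eqref{27thjune20193} holds: the ``only if'' direction is the evaluation at each $m_i$, while the nontrivial ``if'' direction is precisely the reduction to basis elements justified by $\IC$-linearity in the first paragraph. I do not expect any genuine obstacle here; the argument is pure bookkeeping, and the only points needing care are keeping the summation indices straight and noting that the complex coefficients $T^i_j$ commute past the comodule coactions. If one further wished to convert \eqref{27thjune20193} into scalar relations among the $a_{ki}$, $b_{lj}$ and $T^i_j$, one could invoke the linear independence of $\{n_l\}_l$ in $N \tensorc \A$ (Corollary \ref{3rdaugust20191}), but this is not needed for the stated equivalence.
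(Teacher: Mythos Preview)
Your proof is correct and follows essentially the same route as the paper: compute both sides of the right-covariance identity on the basis vectors $m_i$ and compare. Your write-up is in fact cleaner about the index bookkeeping than the paper's own proof (you make explicit that $i$ is fixed while $j,k,l$ are summed, whereas the paper's displayed sums have some stray indices), and your explicit appeal to $\IC$-linearity for the reduction to basis elements is a helpful clarification.
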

\begin{proof}
	If $ T$ is a right-covariant complex linear map from $ {}_0 M $ to $ {}_0 N, $ then ${}_{{}_0N}\Delta \circ T= (T \tensorc \id) {}_{{}_0M}\Delta$. Now:
	\begin{equation} \label{27thjune20191}
	{}_{{}_0N}\Delta (T(m_i)) = {}_{{}_0N}\Delta(\sum_{j} n_j T^i_j)
	= \sum_l n_l \tensorc \sum_{i} b_{lj} T^i_j.
	\end{equation}
	On the other hand,
	\begin{equation} \label{27thjune20192}
	 (T \tensorc \id){}_{{}_0M}\Delta(m_i) = (T \tensorc \id)(\sum_k m_k \tensorc a_{ki}) = \sum_j n_j \tensorc \sum_{k} T^k_j a_{ki} 
	\end{equation}
	Comparing equations \eqref{27thjune20191} and \eqref{27thjune20192}, we get that $T$ is an element of $\homc^\A({}_0M, {}_0N)$ if and only if \eqref{27thjune20193} holds.
\end{proof}

\subsection{The space of two-forms} \label{29thnov20191}

Following Woronowicz (\cite{woronowicz}), let us define the space of two forms associated to a bicovariant different calculus.
\begin{defn} \label{25thmay20194}
Let $ (\E, d) $ be a bicovariant first order differential calculus and $ \sigma $ be the map as in Proposition \ref{4thmay20193}. We define 
$$\twoform := (\E \tensora \E) \big/ \rm {\rm Ker} (\sigma - 1).$$
 The symbol $\wedge$ will denote the quotient map
	$$ \wedge: \E \tensora \E \to \twoform. $$
	Finally, we will denote $\rm{Ker}$$(\wedge)$ by the symbol $\E \tensorsym \E.$ Thus,
	\begin{equation} \label{22ndaugust20191}
	{\rm Ker}(\wedge) = {\rm Ker}(\sigma - 1) = \E \tensorsym \E
	\end{equation}
\end{defn}
It can be easily checked that $\wedge$ is a bimodule map and hence $\rm Ker$$(\wedge)$ is an $\A$-$\A$ sub-bimodule. The higher order forms are defined similarly. We refer to \cite{woronowicz} for the details. In particular, we define $ \Omega^0 (\A) = \A $ and $ \Omega^1 (\A) = \E. $ Then $ \wedge $ extends to a map
 $$ \wedge : \Omega^k (\A) \tensora \Omega^l (\A) \rightarrow \Omega^{k + l} (\A). $$
Let us now collect some more facts from \cite{woronowicz} which are going to be useful in the sequel. Note that by virtue of \eqref{21staugust20194jb}, the map $\sigma$ is both left and right covariant.

\bppsn \label{25thmay20192}
Suppose $ (\E, d) $ is a first order bicovariant differential calculus on $\A$ and $ \Omega^k (\A) $ be the higher order forms. The left and right comodule coactions $ \Delta_{\E \tensora \E} $ and $ {}_{\E \tensora \E} \Delta $ of $ \A $ on $ \E \tensora \E $ descend to comodule coactions of $ \A $ on $ \twoform $ as $ {\rm Ker} (\sigma - 1) $ is left and right-invariant. This makes $ \twoform $ a bicovariant $\A$-$\A$-bimodule. The same is true for $ \Omega^k (\A) $ for all $ k \geq 0. $ 

Moreover, the map $ d $ extends to a bicovariant map from $ \oplus_{k \geq 0} \Omega^k (\A) $ to itself and satisfies $ d^2 = 0 $ and 
$$ d (\theta \wedge \theta^\prime) = d \theta \wedge \theta^\prime + (- 1)^k \theta \wedge d \theta^\prime $$
if $ \theta \in \Omega^k (\A) $ and $ \theta^\prime \in \Omega^l (\A).$
\eppsn

Our definition of two-forms is in general different than that considered in \cite{heckenberger}. 

\begin{rmk} \label{28thnov20191} Suppose $\A$ is a $q$-deformation of a classical compact semisimple Lie group and $\E$ be a bicovariant bimodule over $\A.$ Then typically, the (q-dependent) eigenvalues of $\sigma$ consist of real numbers other than $\pm 1.$ Let $I$ be the set of eigenvalues of $\sigma$ which tend (in limit) to $1$ as $ q  $ tends to $1.$ 

The authors of  \cite{watamura} define $$ \twoform = \frac{ \E \tensora \E }{  \Pi_{\lambda \in I } (\sigma - \lambda   ) }. $$ 
It is this definition of $\twoform$ which was taken in \cite{heckenberger}. Thus, the definition of two-forms considered in this article are different than that in \cite{heckenberger} unless the only eigenvalues of $\sigma$ are $\pm 1.$
\end{rmk}

\section{Splitting of the space of two tensors and the symmetric two tensors} \label{24thmay20192}

For a Riemannian manifold $ M, $ the $\smoothfn$-$\smoothfn$ bimodule $\oneformclassical \otimes_{\smoothfn} \oneformclassical$ admits the canonical decomposition: 
\begin{equation} \label{21staugust20198} \oneformclassical \otimes_{\smoothfn} \oneformclassical = \text{Ker} (\wedge) \oplus \F , \end{equation}
 The bimodule $\text{Ker} (\wedge)$ can be identified with the symmetric two-tensors and the bimodule $ \F $ can be identified with the antisymmetric $2$-tensors and therefore, $ \F $ is isomorphic to $ \Omega^2 (M). $ If $ \sigma $ denotes the canonical flip map, i.e. $ \sigma (e \otimes_{C^\infty (M) } f) = f \otimes_{C^\infty (M) } e, $ we have an idempotent
$$ \Psym:= \frac{\sigma + 1}{2} \in \Hom_{C^\infty (M)} (\Omega^1 (M) \otimes_{C^\infty (M) } \Omega^1 (M), \Omega^1 (M) \otimes_{C^\infty (M) } \Omega^1 (M)) $$
 with range $ {\rm Ker} (\wedge) $ and kernel $ \F. $ The aim of this section is to prove a noncommutative analogue of the decomposition \eqref{21staugust20198} under a mild assumption (Theorem \ref{25thmay20195}). This decomposition will help us to prove the existence of a bicovariant torsionless connection on a bicovariant differential calculus (see Theorem \ref{20thaugust20192}).

Let $(\E,d)$ be a bicovariant differential calculus on a Hopf algebra $\A$. Then the coactions $\Delta_{\E \tensora \E}$ and ${}_{\E \tensora \E} \Delta$ turn $\E \tensora \E$ into a bicovariant bimodule. In particular, Proposition \ref{moduleiso} guarantees the isomorphism of the multiplication map
\begin{equation} \label{22ndaugust20193}
\widetilde{u}^{\E \tensora \E}: (\zeroE \tensorc \zeroE) \tensorc \A = {}_0(\E\tensora \E) \tensorc \A \to \E \tensora \E
\end{equation} 
Moreover, by Proposition \ref{4thmay20193}, we have a canonical $\A$-$\A$ bimodule map $\sigma$ from $\E \tensora \E$ to $\E \tensora \E$. As noted in the previous subsection, by \eqref{21staugust20194jb}, the map $\sigma$ is both left and right covariant. Then Proposition \ref{ahoma} and Proposition \ref{3rdaugust20192} imply that there exists a unique map
\begin{equation} \label{21staugust20195}
	{}_0 \sigma : \zeroE \tensorc \zeroE = {}_0 (\E \tensora \E) \to {}_0 (\E \tensora \E) = \zeroE \tensorc \zeroE
\end{equation}
such that 
\begin{equation} \label{8thnov20196} (\widetilde{u}^{\E \tensora \E})^{-1} \sigma = ({}_0\sigma \tensorc \id)(\widetilde{u}^{\E \tensora \E})^{-1}.\end{equation}
 For the rest of this article, we will make the assumption that the map ${}_0 \sigma: \zeroE \tensorc \zeroE \to \zeroE \tensorc \zeroE$ is diagonalisable. This assumption holds for a large class of Hopf algebras as indicated in the next proposition.

\begin{prop} \label{14thoct20191}
	Let $\E$ be the space of one-forms of a first order differential calculus over a Hopf algebra and ${}_0 \sigma : \zeroE \tensorc \zeroE \to \zeroE \tensorc \zeroE$ be the map as in \eqref{21staugust20195}. Then
	\begin{enumerate}
		\item[(i)] For the classical bicovariant differential calculus on a Lie group, the map ${}_0 \sigma$ is diagonalisable.
		\item[(ii)] Let $(\E,d)$ be the bicovariant differential calculus on the algebra $\A$ of regular functions on a linear algebraic group $G$ such that the category of finite dimensional representations of $G$ is semisimple. Suppose $\A_\Omega$ is the cocycle deformation of $\A$ with respect to an invertible $2$-cocycle $\Omega$ (see Section \ref{sectioncocyclelc}). Then we have a canonical bicovariant differential calculus $(\E_\Omega,d_\Omega)$ on $\A_\Omega$ obtained by deforming the usual bicovariant differential calculus on $\A$ (see Proposition \ref{11thmay20192}). Let $\sigma_\Omega$ be the braiding map of Proposition \ref{4thmay20193} applied to the calculus $(\E_\Omega, d_\Omega)$. Then ${}_0 (\sigma_\Omega) : {}_0(\E_\Omega \tensora \E_\Omega) \to {}_0 (\E_\Omega \tensora \E_\Omega)$ is diagonalisable.
		\item[(iii)] The assumption holds for the bicovariant differential calculi on $SL_q(N)$, $O_q(N)$, $Sp_q(N)$ studied in \cite{heckenberger}. More generally, if the map $\sigma$ satisfies a Hecke-type relation $\Pi_i (\sigma - \lambda_i) = 0$ for distinct scalars $\lambda_i$, then ${}_0 \sigma$ is diagonalisable.
	\end{enumerate} 
\end{prop}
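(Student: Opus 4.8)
The unifying tool I would set up first is that the assignment $T \mapsto {}_0 T$ of Proposition \ref{ahoma} is not merely linear but a \emph{unital algebra homomorphism} from $\ahoma(\E \tensora \E, \E \tensora \E)$ to $\homc(\zeroE \tensorc \zeroE, \zeroE \tensorc \zeroE)$. Indeed, applying \eqref{ahomadiag} twice gives $(\widetilde{u}^{\E \tensora \E})^{-1}(TS) = ({}_0 T\, {}_0 S \tensorc \id)(\widetilde{u}^{\E \tensora \E})^{-1}$ for any left-covariant right $\A$-linear endomorphisms $T,S$, so ${}_0(TS) = {}_0 T \circ {}_0 S$ by the uniqueness clause of Proposition \ref{ahoma}, while ${}_0\id = \id$ and linearity are clear. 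Each power $\sigma^k$ is again bicovariant and bimodular by Proposition \ref{4thmay20193}, so it lies in $\ahoma(\E \tensora \E, \E \tensora \E)$ and hence ${}_0(p(\sigma)) = p({}_0\sigma)$ for every polynomial $p$. Consequently, if $\sigma$ is annihilated by $\prod_i(\sigma - \lambda_i)$ with pairwise distinct scalars $\lambda_i$, then ${}_0\sigma$ is annihilated by the same polynomial, its minimal polynomial has distinct roots, and ${}_0\sigma$ is diagonalisable. This already proves the ``more generally'' assertion of part (iii), and in fact shows that ${}_0\sigma$ is diagonalisable whenever $\sigma$ is, since the minimal polynomial of ${}_0\sigma$ divides that of $\sigma$ (refining Proposition \ref{inviff}).

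For part (i), I would identify $\sigma$ explicitly. When $\A$ is commutative the module of one-forms is a symmetric bimodule, so the flip $\sigmacan$ on $\E \tensora \E$ is a well-defined $\A$-bimodule map; it is bicovariant and satisfies \eqref{30thapril20191}, hence equals $\sigma$ by the uniqueness in Proposition \ref{4thmay20193}. Thus $\sigma^2 = \id$, i.e.\ $(\sigma - 1)(\sigma + 1) = 0$, and the principle of the first paragraph applies with $\lambda_1 = 1,\ \lambda_2 = -1$, so ${}_0\sigma$ is the flip and is diagonalisable. For the named quantum groups in part (iii), the braiding $\sigma$ is known to obey a Hecke relation with two distinct roots on $SL_q(N)$ and a cubic Birman--Wenzl--Murakami type relation with distinct roots on $O_q(N)$ and $Sp_q(N)$ for generic $q$ (see \cite{heckenberger}, and compare Remark \ref{28thnov20191}); the same principle then yields diagonalisability of ${}_0\sigma$.

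For part (ii), the plan is to transport the commutative computation through the cocycle deformation of Section \ref{sectioncocyclelc}. The deformation replaces $\A$ by $\A_\Omega$ without changing the underlying coalgebra, hence induces a monoidal equivalence of comodule categories under which the underlying vector spaces and left coactions are unchanged; in particular ${}_0(\E_\Omega) = \zeroE$ and ${}_0(\E_\Omega \tensora \E_\Omega) = \zeroE \tensorc \zeroE$ as vector spaces (cf.\ Proposition \ref{11thmay20192} and Proposition \ref{3rdaugust20192}). The canonical braiding of Proposition \ref{4thmay20193} is the braiding of this (Yetter--Drinfeld) category, and for a tensor square of a single object such a braiding is transported under a monoidal equivalence by conjugation by the monoidal structure isomorphism. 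Concretely, I would exhibit an invertible $\phi$ on $\zeroE \tensorc \zeroE$, built from $\Omega$, with ${}_0(\sigma_\Omega) = \phi^{-1}\,({}_0\sigma)\,\phi$. Since $\A$ is the coordinate algebra of a classical group, ${}_0\sigma$ is the flip by the argument of part (i), hence diagonalisable, and a conjugate of a diagonalisable operator is diagonalisable.

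The step I expect to be the main obstacle is establishing the conjugation formula ${}_0(\sigma_\Omega) = \phi^{-1}({}_0\sigma)\phi$ rigorously: one must verify that the monoidal structure isomorphism of the deformation functor, restricted to left-invariant elements, is precisely an invertible endomorphism of $\zeroE \tensorc \zeroE$ intertwining the two braidings, and that the coquasitriangular structure of the commutative $\A$ is trivial so that ${}_0\sigma$ really is the flip. This requires unwinding the explicit deformation of $\sigma$ in terms of $\Omega$ and $\Omega^{-1}$; once the conjugation is in hand, diagonalisability follows immediately, and all three parts reduce to the algebra-homomorphism principle of the first paragraph together with the elementary fact that an operator killed by a separable polynomial is diagonalisable.
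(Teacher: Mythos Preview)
Your argument for parts (i) and (iii) is exactly the paper's: since ${}_0\sigma$ is the restriction of $\sigma$ to the invariant subspace $\zeroE \tensorc \zeroE$, any polynomial identity $\prod_i(\sigma-\lambda_i)=0$ with distinct $\lambda_i$ restricts to ${}_0\sigma$, forcing its minimal polynomial to be separable and hence ${}_0\sigma$ diagonalisable. The paper states this in one line without formalising the algebra homomorphism $T\mapsto{}_0T$, but the content is identical.

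For part (ii) your approach is correct but more laboured than the paper's, and the ``main obstacle'' you anticipate dissolves. You plan to exhibit an explicit invertible $\phi$ on $\zeroE\tensorc\zeroE$ built from $\Omega$ with ${}_0(\sigma_\Omega)=\phi^{-1}({}_0\sigma)\phi$, via the monoidal structure of the deformation functor. The paper's route is shorter: by construction (Proposition \ref{11thjuly20192}) the cocycle deformation $T_\Omega$ of a bicovariant map $T$ is \emph{equal} to $T$ as a $\IC$-linear map, only the bimodule structures on source and target change. Since the left coactions are unchanged, ${}_0(M_\Omega)={}_0M$ as vector spaces, and Proposition \ref{11thjuly20193} gives ${}_0(T_\Omega)={}_0T$ literally. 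Combined with the fact (deferred to \cite{article5}) that the deformation $\sigma_\Omega$ of $\sigma$ \emph{is} the Woronowicz braiding of $\E_\Omega$, one gets ${}_0(\sigma_\Omega)={}_0\sigma$ on the nose, so diagonalisability of the latter gives diagonalisability of the former. The identification $\xi:\E_\Omega\otimes_{\A_\Omega}\E_\Omega\to(\E\tensora\E)_\Omega$ that you are implicitly worrying about is absorbed as an abuse of notation; it contributes at most a conjugation on invariants, which is harmless for diagonalisability and need not be computed. Your categorical argument would arrive at the same place, but the paper's observation that deformation is the identity on underlying linear maps removes the need to unwind the monoidal structure isomorphism explicitly.
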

\begin{proof}
	Suppose the map $\sigma$ satisfies a relation $\Pi_i (\sigma - \lambda_i) = 0$ for distinct scalars $\lambda_i$. Since ${}_0 \sigma(\zeroE \tensorc \zeroE) \subseteq \zeroE \tensorc \zeroE$, we have the equality $\Pi_i ({}_0\sigma - \lambda_i) = 0$ as maps from $\zeroE \tensorc \zeroE$ to itself. Therefore, the minimal polynomial of the map ${}_0 \sigma$ is a product of distinct linear factors and so ${}_0 \sigma$ is diagonalisable. Since the bicovariant differential calculi on $SL_q(N)$, $O_q(N)$ and $Sp_q(N)$ studied in \cite{heckenberger} satisfy Hecke-type relations as above, this completes the proof of part (iii). The classical case follows similarly, since here $\sigma(e \tensora f) = f \tensora e$ for all $e, f$ in $\E$, so that $\sigma^2 - 1 = 0$ and therefore, the above reasoning applies. Finally, we refer to Theorem \ref{11thmay20192} for the proof of part (ii).
\end{proof}

Let us introduce the following notations.
\begin{defn} \label{22ndaugust20192}
	Suppose the map ${}_0 \sigma$ is diagonalisable. The eigenspace decomposition of $\zeroE \tensorc \zeroE$ will be denoted by $\zeroE \tensorc \zeroE = \bigoplus_{\lambda \in \Lambda} V_\lambda$, where $\Lambda$ is the set of distinct eigenvalues of ${}_0 \sigma$ and $V_\lambda$ is the eigenspace of ${}_0\sigma$ corresponding to the eigenvalue $\lambda$. Thus, $ V_1 $ will denote the eigenspace of $ {}_0 \sigma $ for the eigenvalue $ \lambda = 1. $
	
Moreover,	we define $\zeroE \tensorc^{\rm sym} \zeroE $ to be the set of all left-invariant elements of $\E \tensorsym \E, $ i.e, 
	\[ \zeroE \tensorc^{\rm sym} \zeroE := \Big\{\sum_k \rho_k \tensora \nu_k \in \E \tensora \E: \Delta_{\E \tensora \E} (\sum_k \rho_k \tensora \nu_k)= 1\tensorc \sum_k \rho_k \tensora \nu_k, \sum_k \rho_k \wedge \nu_k = 0 \Big\}.\]
 	We also define ${}_0 \F := \bigoplus_{\lambda \in \Lambda \backslash \{1\}} V_\lambda.$ 
\end{defn}
The assumption that $ {}_0 \sigma $ is diagonalisable is enough to prove Theorem \ref{25thmay20195} As a first step to prove that theorem, we make the following observation: 
\begin{lemma} \label{13thjune20191}
 Let ${}_0\sigma$ be the map in \eqref{21staugust20195}. Then we have
 $$ {\rm We} ~ {\rm have} ~ \zeroE \otimes^{{\rm sym}}_{\mathbb{C}} \zeroE = {\rm Ker} ({}_0 \sigma - 1).$$
\end{lemma}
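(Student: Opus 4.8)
The plan is to unwind both sides of the claimed equality and reduce each of them to the \emph{same} subset of $\zeroE \tensorc \zeroE$, namely the $\sigma$-fixed points among the left-invariant two-tensors. The only inputs needed are the identity ${\rm Ker}(\wedge) = {\rm Ker}(\sigma - 1)$ recorded in \eqref{22ndaugust20191}, the identification of left-invariant two-tensors from Proposition \ref{3rdaugust20192}, and the fact (established in Proposition \ref{ahoma}) that ${}_0 \sigma$ is nothing but the restriction of $\sigma$ to $\zeroE \tensorc \zeroE$.

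First I would rewrite the left-hand side. By Definition \ref{22ndaugust20192}, $\zeroE \otimes^{{\rm sym}}_{\mathbb{C}} \zeroE$ consists of those $x \in \E \tensora \E$ that are left-invariant and satisfy $\wedge(x) = 0$. By Proposition \ref{3rdaugust20192} the left-invariant elements of $\E \tensora \E$ are precisely $\zeroE \tensorc \zeroE = {}_0(\E \tensora \E)$, and by \eqref{22ndaugust20191} the condition $\wedge(x) = 0$ is equivalent to $\sigma(x) = x$. Hence
$$ \zeroE \otimes^{{\rm sym}}_{\mathbb{C}} \zeroE = \{ x \in \zeroE \tensorc \zeroE : \sigma(x) = x \}. $$

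Next I would identify the right-hand side. Since $\sigma$ is left-covariant (by \eqref{21staugust20194jb}), Proposition \ref{ahoma} shows that $\sigma$ maps $\zeroE \tensorc \zeroE = {}_0(\E \tensora \E)$ into itself and that its restriction there is exactly ${}_0 \sigma$; concretely, evaluating \eqref{8thnov20196} on a left-invariant element $x$, for which $(\widetilde{u}^{\E \tensora \E})^{-1}(x) = x \tensorc 1$, gives $\sigma(x) = {}_0 \sigma(x)$. Therefore, for $x \in \zeroE \tensorc \zeroE$, the equation $\sigma(x) = x$ holds if and only if ${}_0 \sigma(x) = x$, that is, if and only if $x \in {\rm Ker}({}_0 \sigma - 1)$. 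Combining this with the previous display yields the claim, noting that ${\rm Ker}({}_0 \sigma - 1)$ is automatically contained in $\zeroE \tensorc \zeroE$.

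There is no genuine obstacle here; the content is entirely bookkeeping. The one point deserving care — and the only place where the covariance hypotheses enter — is the compatibility used in the second step, namely that passing to left-invariants commutes with forming $\sigma$-fixed points. This rests on $\sigma$ preserving the space $\zeroE \tensorc \zeroE$ of left-invariant two-tensors, which is exactly what left-covariance delivers via Proposition \ref{14thfeb20192} and Proposition \ref{ahoma}; once this is in place, the two eigenvalue-$1$ descriptions coincide verbatim.
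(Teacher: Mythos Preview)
Your proof is correct and follows essentially the same route as the paper's: both unwind Definition \ref{22ndaugust20192}, invoke \eqref{22ndaugust20191} to replace $\wedge(x)=0$ by $\sigma(x)=x$, and use Proposition \ref{3rdaugust20192} to identify the left-invariant two-tensors with $\zeroE \tensorc \zeroE$, on which $\sigma$ restricts to ${}_0\sigma$. The only difference is cosmetic---you spell out explicitly why $\sigma|_{\zeroE \tensorc \zeroE} = {}_0\sigma$ via \eqref{8thnov20196}, whereas the paper leaves this step implicit.
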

\begin{proof}
	The result follows by a simple computation. Indeed,
\begin{equation*}
	\begin{aligned}
		 &\zeroE \otimes^{{\rm sym}}_{\mathbb{C}} \zeroE\\
		=&\{\sum_k \rho_k \tensora \nu_k \in \E \tensora \E: \Delta_{\E \tensora \E} (\sum_k \rho_k \tensora \nu_k)\\ & \quad = 1\tensorc \sum_k \rho_k \tensora \nu_k, \sum_k \rho_k \wedge \nu_k = 0\}\\
		=&\{\sum_k \rho_k \tensora \nu_k \in \E \tensora \E: \Delta_{\E \tensora \E} (\sum_k \rho_k \tensora \nu_k) \\ =& 1\tensorc \sum_k \rho_k \tensora \nu_k, (\sigma -1)(\sum_k \rho_k \tensora \nu_k)=0\}\\
		&{\rm \ (since \ Ker}(\wedge) = {\rm \ Ker}(\sigma - 1) {\rm \ by \ \eqref{22ndaugust20191}})\\
		=&\{\sum_k \rho_k \tensora \nu_k \in \zeroE \tensorc \zeroE: ({}_0\sigma -1)(\sum_k \rho_k \tensorc \nu_k)=0\}\\
		& {\rm (} ~ {\rm as} ~ {}_0 (\E \tensora \E) = \zeroE \tensorc \zeroE ~ {\rm by} ~ {\rm Proposition} ~ \ref{3rdaugust20192} ~ {\rm)}\\ 
		=&{\rm Ker} ({}_0 \sigma - 1).
	\end{aligned}	
\end{equation*}	
\end{proof} 

\begin{rmk}
Let $\zeroE \otimes^{{\rm sym}}_{\mathbb{C}} \zeroE $ and $ {}_0 \F $ be as in Definition \ref{22ndaugust20192}.	We note that $\zeroE \otimes^{{\rm sym}}_{\mathbb{C}} \zeroE = V_1$, where $V_\lambda$ is as in Definition \ref{22ndaugust20192}. Furthermore, since $ {}_0 \sigma $ is diagonalisable, we have the following decomposition:
	\begin{equation} \label{18thjuly20191}
	\zeroE \tensorc \zeroE = \zeroE \otimes^{{\rm sym}}_{\mathbb{C}} \zeroE \oplus {}_0\F.
	\end{equation}
\end{rmk}

\begin{thm} \label{25thmay20195}
	Let $\widetilde{u}^{\E \tensora \E}$ be the isomorphism of \eqref{22ndaugust20193}. We define $\F := \widetilde{u}^{\E \tensora \E}({}_0 \F \tensorc \A)$. Then $\wedge|_\F: \F \to \twoform$ defines an isomorphism of right $\A$-modules. Moreover, 
	$$\E \tensora \E = {\rm Ker} (\wedge) \oplus \F = \E \tensora^{\rm sym} \E \oplus \F.$$
\end{thm}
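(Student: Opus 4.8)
The plan is to translate the desired bimodule-level decomposition into the finite-dimensional vector space $\zeroE \tensorc \zeroE$, prove it there using diagonalisability of $\,{}_0\sigma$, and then transport it back via the isomorphism $\widetilde{u}^{\E \tensora \E}$. The starting point is the eigenspace decomposition $\zeroE \tensorc \zeroE = \zeroE \tensorsym \zeroE \oplus {}_0\F$ recorded in \eqref{18thjuly20191}, where $\zeroE \tensorsym \zeroE = V_1 = {\rm Ker}({}_0\sigma - 1)$ by Lemma \ref{13thjune20191}, and ${}_0\F = \bigoplus_{\lambda \neq 1} V_\lambda$. Applying $\widetilde{u}^{\E \tensora \E}(- \tensorc \A)$ to both summands and using that this map is a right $\A$-module isomorphism immediately yields
\[
\E \tensora \E = \widetilde{u}^{\E \tensora \E}\big((\zeroE \tensorsym \zeroE) \tensorc \A\big) \oplus \F.
\]
So the first task is to identify $\widetilde{u}^{\E \tensora \E}\big((\zeroE \tensorsym \zeroE) \tensorc \A\big)$ with ${\rm Ker}(\wedge)$.

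\emph{First} I would establish this identification. Since $\zeroE \tensorsym \zeroE = {\rm Ker}(\sigma - 1) \cap (\zeroE \tensorc \zeroE)$ (by \eqref{22ndaugust20191} and Lemma \ref{13thjune20191}), and since $\sigma$ is right $\A$-linear with ${}_0\sigma$ its restriction satisfying \eqref{8thnov20196}, the intertwining relation shows that $\widetilde{u}^{\E \tensora \E}$ carries ${\rm Ker}({}_0\sigma - 1) \tensorc \A$ onto ${\rm Ker}(\sigma - 1) = {\rm Ker}(\wedge)$. Concretely, for $\xi \in \zeroE \tensorc \zeroE$ and $a \in \A$, one has $(\sigma - 1)\widetilde{u}^{\E\tensora\E}(\xi \tensorc a) = \widetilde{u}^{\E\tensora\E}\big(({}_0\sigma - 1)(\xi) \tensorc a\big)$, so membership in ${\rm Ker}(\sigma-1)$ is detected componentwise against the basis of $\zeroE \tensorc \zeroE$; summing over the eigenspace decomposition then shows ${\rm Ker}(\sigma - 1) = \widetilde{u}^{\E\tensora\E}\big(V_1 \tensorc \A\big)$. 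This gives the decomposition $\E \tensora \E = {\rm Ker}(\wedge) \oplus \F$ with $\F = \widetilde{u}^{\E\tensora\E}({}_0\F \tensorc \A)$.

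\emph{Next} I would prove that $\wedge|_\F : \F \to \twoform$ is a right $\A$-module isomorphism. Right $\A$-linearity is clear since $\wedge$ is a bimodule map (noted after Definition \ref{25thmay20194}) and $\F$ is a right $\A$-submodule. For injectivity: if $\eta \in \F$ with $\wedge(\eta) = 0$, then $\eta \in {\rm Ker}(\wedge) \cap \F = 0$ by the direct-sum decomposition just established. For surjectivity: $\wedge$ is surjective onto $\twoform$ by definition (it is the quotient map), and since $\E \tensora \E = {\rm Ker}(\wedge) \oplus \F$, every class in $\twoform = (\E \tensora \E)/{\rm Ker}(\wedge)$ has a representative in $\F$; hence $\wedge|_\F$ is onto. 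Thus $\wedge|_\F$ is a bijective right $\A$-linear map, i.e. an isomorphism of right $\A$-modules.

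\emph{The main obstacle} I anticipate is the componentwise kernel computation in the first step: one must verify cleanly that $\widetilde{u}^{\E\tensora\E}$ intertwines $\sigma - 1$ with $({}_0\sigma - 1) \tensorc \id$, and that consequently the kernel of $\sigma - 1$ splits as the image of $V_1 \tensorc \A$ rather than something larger. This hinges on Corollary \ref{3rdaugust20191} (unique expansion of elements of $\E \tensora \E$ in a basis $\{m_i \tensora n_j\}$ of $\zeroE \tensorc \zeroE$ with coefficients in $\A$) together with the diagonalisability of ${}_0\sigma$, which guarantees that the restriction of $\sigma - 1$ acts invertibly on the complement $\widetilde{u}^{\E\tensora\E}({}_0\F \tensorc \A)$, so nothing from $\F$ leaks into the kernel. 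Everything else is a routine transport of a linear-algebraic splitting through a module isomorphism.
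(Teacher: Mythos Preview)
Your proposal is correct and follows essentially the same strategy as the paper: transport the finite-dimensional decomposition \eqref{18thjuly20191} through the isomorphism $\widetilde{u}^{\E\tensora\E}$, identify the image of $V_1 \tensorc \A$ with ${\rm Ker}(\wedge)$, and read off the isomorphism $\wedge|_\F \cong \twoform$ from the resulting direct sum.

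There is one minor methodological difference worth noting. To establish the key identification $\widetilde{u}^{\E\tensora\E}\big((\zeroE \tensorc^{\rm sym}\zeroE)\tensorc\A\big) = {\rm Ker}(\sigma-1)$, you use the intertwining relation \eqref{8thnov20196} together with an eigenbasis of ${}_0\sigma$ to compute ${\rm Ker}(\sigma-1)$ directly. The paper instead proves \eqref{22ndaugust20194} by first showing that $\E\tensorsym\E = {\rm Ker}(\sigma-1)$ is itself a bicovariant sub-bimodule (using the bicovariance of $\sigma$), and then applying the structure theorem for left-covariant bimodules (Proposition \ref{moduleiso}) to it. Your route is slightly more elementary and self-contained; the paper's route has the side benefit of establishing the bicovariance of $\E\tensorsym\E$, a fact that is reused later (for instance in Corollary \ref{25thmay20193} and in the proof of Proposition \ref{20thaugust20193}). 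Either argument is short and valid.
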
 
\begin{proof}
The proof easily follows by a computation and the following observation:
\begin{equation} \label{22ndaugust20194}
{}_0(\E \tensorsym \E) = {}_0 \E \tensorc^{\rm sym} \zeroE~ {\rm and} ~ {\rm so} ~ {\widetilde{u}}^{\E \tensora \E} ({}_0 \E \tensorc^{\rm sym} \zeroE \tensorc \A) = \E \tensorsym \E. 
\end{equation}
The equation $ {}_0(\E \tensorsym \E) = {}_0 \E \tensorc^{\rm sym} \zeroE $ follows directly from the definitions of $ {}_0 \E \tensorc^{\rm sym} \zeroE $ and $ \E \tensorsym \E = {\rm Ker} (\wedge). $ Then the second equation of \eqref{22ndaugust20194} follows from Proposition \ref{moduleiso} once we prove that $ \E \tensorsym \E $ is a bicovariant bimodule. 

$ \E \tensorsym \E $ is a bimodule since it is the kernel of the bimodule map $ \wedge. $ Since $ \E \tensorsym \E \subseteq \E \tensora \E, $ it is enough to check that $ \E \tensorsym \E $ is invariant under both $ \Delta_{\E \tensora \E} $ and $ {}_{\E \tensora \E} \Delta. $ Now, if $ X \in \E \tensorsym \E, $ then by \eqref{21staugust20194jb}, we get
$$ (\id \tensorc (\sigma - \id)) \Delta_{\E \tensora \E} (X) = \Delta_{\E \tensora \E} (\sigma - 1) (X) = 0$$
as $ X \in \E \tensorsym \E = {\rm Ker} (\wedge) = {\rm Ker} (\sigma - \id) $ by \eqref{22ndaugust20191}. This proves that 
$$ \Delta_{\E \tensora \E} (\E \tensorsym \E) \subseteq \A \tensorc {\rm Ker} (\sigma - \id) = \A \tensorc {\rm Ker} (\wedge) = \A \tensorc (\E \tensorsym \E). $$
Similarly, $ \E \tensorsym \E $ is also invariant under $ {}_{\E \tensora \E} \Delta. $ This proves our claim that $ \E \tensorsym \E $ is a bicovariant $\A$-bimodule which completes the proof of \eqref{22ndaugust20194}.
 
Now we can compute:
	\begin{equation*}
	\begin{aligned}
		 &\E \tensora \E = \widetilde{u}^{\E \tensora \E}(\widetilde{u}^{\E \tensora \E})^{-1}(\E \tensora \E)\\
		 =& \widetilde{u}^{\E \tensora \E}((\zeroE \tensorc \zeroE) \tensorc \A) = \widetilde{u}^{\E \tensora \E}(((\zeroE \tensorc^{\rm sym} \zeroE) \oplus {}_0 \F) \tensorc \A) {\rm\ (by \eqref{18thjuly20191})}\\
		=& \widetilde{u}^{\E \tensora \E}(((\zeroE \tensorc^{\rm sym} \zeroE) \tensorc \A) \oplus ({}_0 \F \tensorc \A)) = \E \tensorsym \E \oplus \F \\ &{\rm \ (by \ \eqref{22ndaugust20194} \ and \ the \ definition \ of \ \F)}\\
		=& {\rm Ker}(\wedge) \oplus \F {\rm \ (by \ the \ definition \ of \ \E \tensorsym \E)}.
	\end{aligned}	
	\end{equation*}
	Finally, since $\E \tensora \E = {\rm Ker}(\wedge) \oplus \F$, we have that
	$$ \F \cong (\E \tensora \E)/{\rm Ker}(\wedge) = (\E \tensora \E)/{\rm Ker}(\sigma - 1) = \twoform, $$
	by \eqref{22ndaugust20191} and the definition of $\twoform$. Hence, $\wedge|_{\F}: \F \to \twoform $ is an isomorphism of right $\A$-modules.
\end{proof}

\subsection{The idempotent $\Psym$ and its properties}
In this subsection, we study the idempotent element of $ \Hom_\A (\E \tensora \E, \E \tensora \E) $ with range $ \E \tensorsym \E $ and kernel $ \F. $ 
\begin{defn} \label{27thmay20193}
	We will denote by ${}_0(\Psym)$ the idempotent element in Hom$(\zeroE \tensorc \zeroE, \zeroE \tensorc \zeroE)$ with range $\zeroE \tensorc^{\rm sym} \zeroE$ and kernel ${}_0\F$. By Proposition \ref{inviff}, ${}_0(\Psym)$ extends to a right $\A$-linear left-covariant map from $\E \tensora \E$ to $\E \tensora \E$. We are going to denote the extension by the symbol $\Psym$. More concretely, $$\Psym:= \widetilde{u}^{\E \tensora \E}({}_0(\Psym) \tensorc \id)(\widetilde{u}^{\E \tensora \E})^{-1}.$$
\end{defn}

\begin{prop} \label{16thseptember20191}
	The map $\Psym$ is the idempotent map from $\E \tensora \E$ to itself, with range onto $\E \tensorsym \E$ and with kernel $\F$. In fact, $\Psym$ is also a left $\A$-linear and bicovariant map. Thus $\Psym$ is bicovariant map from $\E \tensora \E$ to itself which is both left and right $\A$-linear.
\end{prop}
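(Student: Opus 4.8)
The plan is to verify each asserted property of $\Psym$ by transferring it to the already-understood map ${}_0(\Psym)$ on the finite-dimensional space $\zeroE \tensorc \zeroE$, using the correspondence of Proposition \ref{inviff} and Proposition \ref{14thfeb20192}. By Definition \ref{27thmay20193}, ${}_0(\Psym)$ is the idempotent with range $\zeroE \tensorc^{\rm sym} \zeroE = V_1$ and kernel ${}_0 \F = \bigoplus_{\lambda \neq 1} V_\lambda$, and $\Psym = \widetilde{u}^{\E \tensora \E}({}_0(\Psym) \tensorc \id)(\widetilde{u}^{\E \tensora \E})^{-1}$ is its right $\A$-linear left-covariant extension.

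First I would establish that $\Psym$ is idempotent: since ${}_0(\Psym)^2 = {}_0(\Psym)$ and the assignment $S \mapsto {}_0 S$ of Proposition \ref{inviff} is an algebra isomorphism (it is linear, and the defining relation \eqref{ahomadiag} shows ${}_0(S T) = {}_0 S \, {}_0 T$), the extension satisfies $\Psym^2 = \Psym$. Next I would identify the range and kernel. Applying $\widetilde{u}^{\E \tensora \E}$ to the decomposition $\zeroE \tensorc \zeroE = (\zeroE \tensorc^{\rm sym} \zeroE) \oplus {}_0 \F$ and using the key identity \eqref{22ndaugust20194}, namely $\widetilde{u}^{\E \tensora \E}((\zeroE \tensorc^{\rm sym} \zeroE) \tensorc \A) = \E \tensorsym \E$, together with $\widetilde{u}^{\E \tensora \E}({}_0 \F \tensorc \A) = \F$ by the definition of $\F$ in Theorem \ref{25thmay20195}, I get that the range of $\Psym$ is $\E \tensorsym \E = {\rm Ker}(\wedge)$ and its kernel is $\F$; this uses that $\widetilde{u}^{\E \tensora \E}$ intertwines $({}_0(\Psym) \tensorc \id)$ with $\Psym$ and carries the eigenspace splitting tensored with $\A$ to the bimodule splitting of Theorem \ref{25thmay20195}.

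The remaining task is covariance and the second-sided linearity. Left-covariance of $\Psym$ is immediate from its construction via Proposition \ref{ahoma}, or alternatively from Proposition \ref{14thfeb20192}: since ${}_0(\Psym)$ maps $\zeroE \tensorc \zeroE$ into itself, the right $\A$-linear extension $\Psym$ sends ${}_0(\E \tensora \E)$ into itself and is therefore left-covariant. For right-covariance I would invoke Proposition \ref{18thsep20196}: it suffices to check that ${}_0(\Psym)$ is a right-covariant map of the right comodule ${}_0(\E \tensora \E)$, i.e.\ that the eigenspaces $V_\lambda$ of ${}_0 \sigma$ are right-coinvariant subspaces. This holds because $\sigma$ is right-covariant by \eqref{21staugust20194jb}, so ${}_0 \sigma$ is right-covariant (by the right-covariant analogue of Proposition \ref{ahoma} noted in the remark following Proposition \ref{14thfeb20192}); consequently ${}_{\E \tensora \E}\Delta$ commutes with ${}_0 \sigma$ and preserves each eigenspace $V_\lambda$, hence preserves both $V_1$ and ${}_0 \F$ and commutes with the spectral projection ${}_0(\Psym)$. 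Then Proposition \ref{18thsep20196} upgrades right-covariance of ${}_0(\Psym)$ to right-covariance of $\Psym$.

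Finally, left $\A$-linearity: this does not follow formally from the above construction, which only guarantees right $\A$-linearity, and I expect this to be the main obstacle. The cleanest route is to observe that $\Psym$ is already known to be bicovariant and right $\A$-linear, and that ${\rm Ker}(\wedge)$ and $\F$ are both $\A$-$\A$-sub-bimodules: ${\rm Ker}(\wedge)$ because $\wedge$ is a bimodule map, and $\F$ because it is a bicovariant bimodule complement whose left-module structure can be read off from Theorem \ref{25thmay20195}. Since $\E \tensora \E = {\rm Ker}(\wedge) \oplus \F$ is then a direct sum of $\A$-$\A$-sub-bimodules and $\Psym$ is precisely the projection onto the first summand along the second, $\Psym$ respects the left action as well, hence is left $\A$-linear. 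Combining this with the established left- and right-covariance gives that $\Psym$ is a bicovariant map that is both left and right $\A$-linear, completing the proof.
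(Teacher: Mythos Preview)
Your argument for idempotence, range, and kernel is fine and matches the paper. Your route to right-covariance via the eigenspace decomposition of ${}_0\sigma$ is also correct, though the paper obtains it more directly (see below).

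There is, however, a genuine gap in your argument for left $\A$-linearity. You assert that $\F$ is an $\A$-$\A$-sub-bimodule ``whose left-module structure can be read off from Theorem \ref{25thmay20195}'', but that theorem only defines $\F = \widetilde{u}^{\E \tensora \E}({}_0\F \tensorc \A)$ as a right $\A$-submodule; nothing in its statement or proof establishes closure under left multiplication. In fact, in the paper's logical order, the bimodule property of $\F$ is recorded only \emph{after} this proposition (Remark \ref{24thaugustremark}), and is deduced precisely from the left $\A$-linearity of $\Psym$ that you are trying to prove. So as written your argument is circular.

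The paper closes this gap, and simultaneously handles right-covariance, by the single observation that ${}_0(\Psym)$ is a polynomial in ${}_0\sigma$: since ${}_0\sigma$ is diagonalisable with eigenvalue set $\Lambda$, the spectral projector onto $V_1$ is
\[
{}_0(\Psym)=\prod_{\lambda\in\Lambda\setminus\{1\}}\frac{{}_0\sigma-\lambda}{1-\lambda},
\]
and hence, via \eqref{8thnov20196},
\[
\Psym=\prod_{\lambda\in\Lambda\setminus\{1\}}\frac{\sigma-\lambda}{1-\lambda}.
\]
Since $\sigma$ is a bicovariant $\A$-bimodule map (Proposition \ref{4thmay20193}), so is any polynomial in $\sigma$, and left $\A$-linearity together with right-covariance follow at once. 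Your eigenspace arguments are morally the same observation in disguise---each $\ker(\sigma-\lambda)$ is a sub-bimodule and a sub-bicomodule because $\sigma$ is a bicovariant bimodule map---but you need to actually say this to make the left-linearity step non-circular; invoking Theorem \ref{25thmay20195} does not suffice.
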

\begin{proof}
	By Definition \ref{27thmay20193}, $\Psym$ is a left-covariant right $\A$-linear map from $\E \tensora \E$ to itself. Since ${}_0(\Psym)$ is an idempotent, $\Psym= \widetilde{u}^{\E \tensora \E}({}_0(\Psym) \tensorc \id)(\widetilde{u}^{\E \tensora \E})^{-1}$ is also idempotent. We have that
	\begin{equation*}
		\begin{aligned}
		 &{\rm Ran}(\Psym) = \widetilde{u}^{\E\tensora \E}({}_0(\Psym) \tensorc \id)(\widetilde{u}^{\E \tensora \E})^{-1}(\E \tensora \E)\\
		=&\widetilde{u}^{\E\tensora \E}({}_0(\Psym) \tensorc \id)((\zeroE \tensorc \zeroE) \tensorc \A) = \widetilde{u}^{\E\tensora \E}((\zeroE \tensorc^{\rm sym} \zeroE) \tensorc \A)\\
		 &{\rm (by \ the \ definition \ of \ } {}_0(\Psym){\rm)}\\
		=&\E \tensorsym \E ~ {\rm (} ~ {\rm by} ~ \eqref{22ndaugust20194} {\rm)}.
		\end{aligned}
	\end{equation*}
	 Now we prove that Ker$(\Psym) = \F$. We note that $\Psym$ is an idempotent from the complex vector space $\E\tensora \E$ to itself with range $\E \tensorsym \E$ and kernel containing the subspace $\widetilde{u}^{\E \tensora \E}({}_0 \F \tensorc \A) = \F$. Since $\E \tensora \E = \E \tensorsym \E \oplus \F$ (Theorem \ref{25thmay20195}), this proves that ${\rm Ker}(\Psym) = \F$.\\
	 Finally, we prove that $\Psym$ is a bicovariant $\A$-bimodule map. this follows from the observation that $ {}_0 (\Psym) $ is a polynomial in $ {}_0 \sigma.$ Indeed, in the notation of Definition \ref{22ndaugust20192}, $ {}_0(\Psym) $ is the idempotent with range $ V_{1} $ and kernel $ \oplus_{\lambda \in \Lambda, \lambda \neq 1} V_\lambda $ and so 
	\begin{equation} \label{24thaugust20191}
		{}_0(\Psym) = \Pi_{\lambda \in \Lambda \backslash \{1\} } \frac{{}_0 \sigma - \lambda}{1 - \lambda}.
	\end{equation}
	 Therefore, 
	 \begin{equation*}
	 \begin{aligned}
	 &\Psym = \widetilde{u}^{\E \tensora \E}\big({}_0(\Psym) \tensorc \id \big)(\widetilde{u}^{\E \tensora \E})^{-1}\\ =& \widetilde{u}^{\E \tensora \E}\Big(\big(\Pi_{\lambda \in \Lambda \backslash \{1\} } \frac{1}{1 - \lambda}({}_0 \sigma - \lambda)\big) \tensorc \id\Big)(\widetilde{u}^{\E \tensora \E})^{-1}\\
	 =&\Pi_{\lambda \in \Lambda \backslash \{1\} }\Big(\widetilde{u}^{\E \tensora \E} \big((\frac{1}{1 - \lambda}({}_0 \sigma - \lambda)) \tensorc \id \big)(\widetilde{u}^{\E \tensora \E})^{-1} \Big) = \Pi_{\lambda \in \Lambda \backslash \{1\}}\big(\frac{1}{1 - \lambda}(\sigma - \lambda)\big)
	 \end{aligned}
	 \end{equation*}
	by \eqref{8thnov20196}.Hence,
	\begin{equation} \label{24thaugust20192} \Psym = \Pi_{\lambda \in \Lambda \backslash \{1\}}\big(\frac{1}{1 - \lambda}(\sigma - \lambda)\big). \end{equation}
	 Now $\sigma$ is a bicovariant $\A$-bimodule map from $\E \tensora \E$ to itself and so $\Psym$, being a composition of bicovariant $\A$-bimodule maps from $ \E \tensora \E $ to $ \E \tensora \E $ is itself a bicovariant $\A$-bimodule map from from $ \E \tensora \E $ to $ \E \tensora \E. $
\end{proof}
In the classical case, we have $ \Lambda = \pm 1 $ and so in this case, we recover the formula $ \Psym = \frac{1}{2}(1 + \sigma) $ from \eqref{24thaugust20192}. Let us collect two facts which will be used in the sequel in the following remark.
\begin{rmk} \label{24thaugustremark}
 Since $ \wedge $ and $ \Psym $ are bimodule maps, the right $ \A $-modules $ \E \tensorsym \E = {\rm Ker} (\wedge) $ and $ \F = {\rm Ran} (1 - \Psym) $ are actually $\A$-$\A$-bimodules.
\end{rmk}

\begin{defn} \label{5thseptember20191sm}
	Let $\F$ be the submodule of $\E \tensora \E$ as in Theorem \ref{25thmay20195}. By Theorem \ref{25thmay20195}, we have a right $\A$-linear isomorphism $ \wedge|_\F: \F \to \twoform$ which we will denote by $Q$.
\end{defn}
Let us note that since $ \wedge $ is an $\A$-bimodule map and $ \F $ is an $\A$-$\A$-bimodule, $ Q $ is actually a bimodule isomorphism from $ \F $ to $ \twoform. $ We will use this fact in the following result which is a corollary to Proposition \ref{16thseptember20191}.
\begin{corr} \label{25thmay20193}
	If $(\E,d)$ is a bicovariant first order differential calculus, then $d\omega$ is in ${}_0(\twoform)$ for all $\omega$ in $\zeroE = {}_0(\oneform)$. Moreover, $\wedge$ and $ Q $ are bicovariant maps.
\end{corr}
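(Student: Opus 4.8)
The plan is to handle the three assertions in turn, drawing on the covariance of the extended differential for the first claim, the fact that the coactions on $\twoform$ are quotient coactions for the second, and the bicovariance of $\Psym$ for the third.

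First I would show $d\omega \in {}_0(\twoform)$ for $\omega \in \zeroE$. By Proposition \ref{25thmay20192}, $d$ extends to a bicovariant map on $\oplus_{k \ge 0}\Omega^k(\A)$, so that its left-covariance relation, restricted to the degree-one component, reads $\Delta_{\twoform}\circ d = (\id \tensorc d)\circ \Delta_\E$ as maps from $\E$ into $\A \tensorc \twoform$. Applying this to the left-invariant element $\omega$, for which $\Delta_\E(\omega) = 1 \tensorc \omega$, gives $\Delta_{\twoform}(d\omega) = (\id \tensorc d)(1 \tensorc \omega) = 1 \tensorc d\omega$, which is exactly the statement that $d\omega$ lies in ${}_0(\twoform)$. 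Note that only the left-covariance of the extended $d$ is needed here.

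Next, the bicovariance of $\wedge$ is built into Proposition \ref{25thmay20192}: the coactions $\Delta_{\twoform}$ and ${}_{\twoform}\Delta$ are precisely those that descend from $\E \tensora \E$ through the quotient by ${\rm Ker}(\wedge) = {\rm Ker}(\sigma - 1)$, which is both left- and right-invariant. By the defining property of a quotient coaction, the quotient map intertwines the coactions, i.e. $\Delta_{\twoform}\circ\wedge = (\id \tensorc \wedge)\Delta_{\E \tensora \E}$ and ${}_{\twoform}\Delta\circ\wedge = (\wedge \tensorc \id){}_{\E \tensora \E}\Delta$. Combined with the fact that $\wedge$ is already an $\A$-bimodule map, this is exactly the bicovariance of $\wedge$, so nothing further is required beyond invoking Proposition \ref{25thmay20192}.

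Finally, for $Q = \wedge|_\F$, I would first record that $\F = {\rm Ran}(1 - \Psym)$ is a bicovariant sub-bimodule of $\E \tensora \E$. By Proposition \ref{16thseptember20191}, $\Psym$, and hence $1 - \Psym$, is a bicovariant $\A$-bimodule idempotent; applying $\Delta_{\E \tensora \E}\circ(1 - \Psym) = (\id \tensorc (1 - \Psym))\Delta_{\E \tensora \E}$ gives $\Delta_{\E \tensora \E}(\F) \subseteq \A \tensorc \F$, and the analogous computation with ${}_{\E \tensora \E}\Delta$ gives ${}_{\E \tensora \E}\Delta(\F) \subseteq \F \tensorc \A$, so the coactions on $\F$ are the restrictions of those on $\E \tensora \E$. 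Since $Q$ is the restriction of the bicovariant map $\wedge$ to the invariant submodule $\F$ (and is already a bimodule isomorphism by Definition \ref{5thseptember20191sm} and the following remark), restricting the intertwining identities for $\wedge$ to $\F$, together with $\Delta_{\E \tensora \E}(\F) \subseteq \A \tensorc \F$ so that $\wedge$ agrees with $Q$ on the relevant tensor factor, yields $\Delta_{\twoform}\circ Q = (\id \tensorc Q)\Delta_\F$ and its right-handed analogue; hence $Q$ is bicovariant. I expect the only delicate point to be pinning down the exact form of the covariance of the extended $d$ on two-forms used in the first step; once the relation $\Delta_{\twoform}\circ d = (\id \tensorc d)\Delta_\E$ is granted from Proposition \ref{25thmay20192}, the remaining assertions are formal consequences of the quotient-coaction property of $\wedge$ and the bicovariance of $\Psym$.
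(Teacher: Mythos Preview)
Your proof is correct. The three assertions are handled in the right logical order, and each step appeals to the appropriate ingredient: the bicovariance of the extended $d$ from Proposition \ref{25thmay20192} for $d\omega \in {}_0(\twoform)$, the quotient-coaction property for $\wedge$, and the bicovariance of $\Psym$ (Proposition \ref{16thseptember20191}) for the invariance of $\F$ and hence for $Q$.

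Your route differs from the paper's in one respect worth noting. For the bicovariance of $\wedge$, you invoke directly that the coactions on $\twoform$ are \emph{defined} as quotient coactions through $\wedge$, so the intertwining relation $\Delta_{\twoform}\circ\wedge = (\id\tensorc\wedge)\Delta_{\E\tensora\E}$ is tautological. The paper instead argues via the splitting $\E\tensora\E = {\rm Ker}(\wedge)\oplus\F$: it writes a general $x$ as $y+z$ with $y\in{\rm Ker}(\wedge)$ and $z\in\F$, observes both summands are coaction-invariant, and then reduces the covariance of $\wedge$ on $x$ to the covariance of $Q$ on $z$. Your argument is more elementary and avoids the apparent forward reference in the paper's presentation (where the covariance of $Q$ is asserted before the invariance of $\F$ under the coactions is explicitly established). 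The paper's decomposition argument, on the other hand, makes the role of the splitting from Theorem \ref{25thmay20195} more visible, which is consistent with the corollary being placed as a consequence of that theorem.
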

\begin{proof}
As noted above, $Q$ is an $\A$-$\A$ bimodule isomorphism. Moreover, by Proposition \ref{25thmay20192}, the comodule coactions $\Delta_{\E \tensora \E}$ and ${}_{\E \tensora \E}\Delta$ descend to the coactions $\Delta_{\twoform}$ and ${}_{\twoform}\Delta$ respectively as ${\rm Ker}(\sigma - 1)$ is both right and left-invariant. Thus, the map $Q$ is covariant with respect to the left and right $\A$-coactions on $\F$ and $\twoform$. In particular, this implies that 
\begin{equation} \label{25thmay20196} Q^{-1}({}_0(\twoform)) \subseteq {}_0\F ~ {\rm (}~ {\rm Proposition} ~ \ref{14thfeb20192} ~ {\rm)}. \end{equation} 
Since $\omega$ is in $\zeroE$ and $d$ is bicovariant (\eqref{23rdaugust20191}), we have 
$$\Delta_{\twoform}(d\omega)=(\id_\A \tensorc d)\Delta_\E(\omega)= 1 \tensorc d\omega.$$
For proving the second statement, we note that since $ \Psym $ is a bicovariant map (Proposition \ref{16thseptember20191}), $ \F = {\rm Ker} (\Psym) $ is also invariant under $ \Delta_{\E \tensora \E} $ and $ {}_{\E \tensora \E} \Delta.$ Let $ x $ be an element of $ \E \tensora \E. $ Then Theorem \ref{25thmay20195} guarantees the existence of an element $ y $ in ${\rm Ker} (\wedge) = {\rm Ker} (\sigma - 1) $ and $ z $ in $\F$ such that $ x = y + z. $

As ${\rm Ker} (\sigma - 1) $ and $\F$ are invariant under $ \Delta_{\E \tensora \E}, $ we have 
\begin{eqnarray*}
&& (\wedge \tensorc \id) \Delta_{\E \tensora \E} (x) = (\wedge \tensorc \id) (\Delta_{\E \tensora \E} (y) + \Delta_{\E \tensora \E} (z))\\
&=& 0 + (\wedge \tensorc \id) \Delta_{\E \tensora \E} (z) = (Q \tensorc \id) \Delta_{\E \tensora \E} (z)\\
&=& \Delta_{\Omega^2 (\A)} Q (z) = \Delta_{\Omega^2 (\A)} (\wedge (z)) = \Delta_{\Omega^2 (\A)} (\wedge (x)),
\end{eqnarray*} 
where, in the last step, we have used the left-covariance of the map $Q.$ Therefore, $\wedge$ is left-covariant. Similarly, $\wedge$ is right-covariant. 
\end{proof}
We end this section with one more lemma which will be needed in the proofs of Lemma \ref{15thjune20192} and Theorem \ref{15thjune20193}. This will need a notation.

\begin{defn} \label{2ndaugust20191} 
	Let $V$ and $W$ be finite dimensional complex vector spaces. The canonical vector space isomorphism from $ V \tensora W^* $ to $ \Hom_\IC (W, V) $ will be denoted by the symbol $ \zeta_{V, W}. $ It is defined by the formula:
	\begin{equation} \label{24thaugust20194}
	\zeta_{V,W}(\sum_i v_i \tensora \phi_i)(w) = \sum_i v_i \phi_i(w).
	\end{equation}
\end{defn}

\begin{lemma} \label{15thjune20192}
	The following maps are vector space isomorphisms: 
	$$\zeta_{\zeroE \tensorc \zeroE , \zeroE}:(\zeroE \tensorc^{\rm sym} \zeroE) \tensorc (\zeroE)^* \to {\rm Hom}_\IC(\zeroE, \zeroE \tensorc^{\rm sym} \zeroE), $$ 
	$$ \zeta_{\zeroE, \zeroE \tensorc \zeroE }: \zeroE \tensorc (\zeroE \tensorc^{\rm sym} \zeroE)^* \to \homc (\zeroE \tensorc^{\rm sym} \zeroE, \zeroE).$$ 
\end{lemma}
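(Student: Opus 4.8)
The plan is to recognise both displayed arrows as instances of the canonical isomorphism $\zeta_{V,W}$ of Definition \ref{2ndaugust20191}, so that the only substantive points are the finite-dimensionality of the spaces involved and, for the first arrow, its compatibility with the passage to the subspace $\zeroE \tensorc^{\rm sym} \zeroE \subseteq \zeroE \tensorc \zeroE.$ Recall that throughout the article (see the standing assumption recorded after Proposition \ref{moduleiso}) the space $\zeroE$ is finite-dimensional; hence $\zeroE \tensorc \zeroE$ is finite-dimensional, and so are its subspace $\zeroE \tensorc^{\rm sym} \zeroE$ and all the linear duals occurring in the statement. Since $\zeta_{V,W}$ is an isomorphism whenever $V$ and $W$ are finite-dimensional, the whole lemma reduces to matching the two displayed maps with the correct choices of $V$ and $W.$

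For the first arrow I would take $V = \zeroE \tensorc^{\rm sym} \zeroE$ and $W = \zeroE,$ so that $\zeta_{V,W}$ is the canonical isomorphism $(\zeroE \tensorc^{\rm sym} \zeroE) \tensorc (\zeroE)^* \to \homc(\zeroE, \zeroE \tensorc^{\rm sym} \zeroE).$ To check that this is genuinely the restriction of $\zeta_{\zeroE \tensorc \zeroE, \zeroE}$ announced in the statement, I would fix a basis $\{\omega_i\}_i$ of $\zeroE$ with dual basis $\{\omega_i^*\}_i,$ and write a general element of $(\zeroE \tensorc \zeroE) \tensorc (\zeroE)^*$ uniquely as $\sum_i x_i \tensorc \omega_i^*$ with $x_i \in \zeroE \tensorc \zeroE.$ Evaluating, $\zeta_{\zeroE \tensorc \zeroE, \zeroE}(\sum_i x_i \tensorc \omega_i^*)(\omega_j) = x_j,$ so the associated homomorphism lands in $\zeroE \tensorc^{\rm sym} \zeroE$ precisely when every $x_i$ does, that is, precisely when the element already lies in $(\zeroE \tensorc^{\rm sym} \zeroE) \tensorc (\zeroE)^*.$ Conversely each $T \in \homc(\zeroE, \zeroE \tensorc^{\rm sym} \zeroE)$ is recovered from $x_j := T(\omega_j),$ so the restriction is a bijection onto $\homc(\zeroE, \zeroE \tensorc^{\rm sym} \zeroE).$

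For the second arrow the choice $V = \zeroE,$ $W = \zeroE \tensorc^{\rm sym} \zeroE$ (read off from the displayed domain and codomain) exhibits the map directly as $\zeta_{V,W},$ an isomorphism $\zeroE \tensorc (\zeroE \tensorc^{\rm sym} \zeroE)^* \to \homc(\zeroE \tensorc^{\rm sym} \zeroE, \zeroE)$ by finite-dimensionality, with no subspace bookkeeping required. I do not expect any real obstacle here: the entire content is the elementary identity $V \tensorc W^* \cong \homc(W, V)$ for finite-dimensional $V$ and $W,$ and the only step needing a line of care is the subspace-compatibility of the first arrow, which the basis computation above settles.
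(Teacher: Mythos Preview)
Your treatment of the first arrow is correct and matches the paper's: you check that the restriction of $\zeta_{\zeroE \tensorc \zeroE,\zeroE}$ to the subspace $(\zeroE \tensorc^{\rm sym} \zeroE)\tensorc(\zeroE)^*$ lands in $\homc(\zeroE,\zeroE\tensorc^{\rm sym}\zeroE)$, and then conclude by injectivity plus a dimension count. The paper does exactly this, without the explicit basis.

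For the second arrow, however, your dismissal ``no subspace bookkeeping required'' overlooks the point of the subscript. The map in the statement is $\zeta_{\zeroE,\,\zeroE\tensorc\zeroE}$, i.e.\ the canonical isomorphism with $W=\zeroE\tensorc\zeroE$, \emph{not} $W=\zeroE\tensorc^{\rm sym}\zeroE$. To make sense of its restriction to $\zeroE\tensorc(\zeroE\tensorc^{\rm sym}\zeroE)^*$ one must first embed $(\zeroE\tensorc^{\rm sym}\zeroE)^*$ into $(\zeroE\tensorc\zeroE)^*$, and dually view $\homc(\zeroE\tensorc^{\rm sym}\zeroE,\zeroE)$ inside $\homc(\zeroE\tensorc\zeroE,\zeroE)$. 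Neither inclusion is canonical: the dual of a subspace is naturally a quotient, not a subspace. The paper uses the idempotent ${}_0(\Psym)$ to split both, identifying $\psi\in(\zeroE\tensorc^{\rm sym}\zeroE)^*$ with $\psi\circ{}_0(\Psym)\in(\zeroE\tensorc\zeroE)^*$ (this is equation \eqref{23rdaugust20193}), and then checks that $\zeta_{\zeroE,\zeroE\tensorc\zeroE}$ carries $\zeroE\tensorc(\zeroE\tensorc^{\rm sym}\zeroE)^*$ into $\homc(\zeroE\tensorc^{\rm sym}\zeroE,\zeroE)$ under these identifications, finishing by a dimension count.

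Your shortcut of taking $W=\zeroE\tensorc^{\rm sym}\zeroE$ does produce \emph{an} isomorphism, and after the identifications it agrees with the paper's map; but you have not said what those identifications are, and they are not optional. The later arguments (Lemma \ref{15thnov20191}, equation \eqref{15thjune20191}, and the commutative diagram in Step~2 of Section \ref{21staugust20197}) all apply $\zeta_{\zeroE,\zeroE\tensorc\zeroE}$ to elements of the big space and then compose with ${}_0(\Psym)$; the bookkeeping you skipped is precisely what makes those formulas coherent.
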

\begin{proof} By the definition of the map $ \zeta_{\zeroE \tensorc \zeroE , \zeroE},$
	 $$\zeta_{\zeroE\tensorc \zeroE , \zeroE} ((\zeroE \tensorc^{\rm sym} \zeroE) \tensorc (\zeroE)^*) \subseteq \homc(\zeroE, \zeroE \tensorc^{\rm sym} \zeroE).$$
	Since $\zeta_{\zeroE\tensorc \zeroE , \zeroE}$ is an isomorphism from $(\zeroE \tensorc \zeroE) \tensorc (\zeroE)^*$ onto $\homc(\zeroE, \zeroE \tensorc \zeroE)$ and $${\rm dim}((\zeroE \tensorc^{\rm sym} \zeroE) \tensorc (\zeroE)^*)={\rm dim}(\zeroE, \zeroE \tensorc^{\rm sym} \zeroE),$$
	we have proved the first assertion.
	
	Now we prove the second assertion. By the definition of ${}_0(\Psym)$ (Definition \ref{27thmay20193}), 
	$$\zeroE \tensorc \zeroE = {\rm Ran}({}_0(\Psym)) \oplus {\rm Ran}(1- {}_0(\Psym))$$
	and hence an element $ \psi $ of $(\zeroE \tensorc^{\rm sym} \zeroE)^* = ({\rm Ran}({}_0(\Psym)))^* $ extends to an element $\widetilde{\psi}$ of $(\zeroE \tensorc \zeroE)^*$ by the formula 
	$$\widetilde{\psi}(X)=\psi({}_0(\Psym)(X)) ~ \forall X ~ \in \zeroE \tensorc \zeroE.$$
	More generally,
	\begin{equation} \label{23rdaugust20193}
	\homc(\zeroE \tensorc^{\rm sym} \zeroE, \IC) = \{ \psi \in \homc(\zeroE \tensorc \zeroE, \IC) : \psi((1-{}_0(\Psym))(X))=0 \ \forall X \in \zeroE \tensorc \zeroE \}. 
	\end{equation}
	This allows us to view $ \psi \in (\zeroE \tensorc^{\rm sym} \zeroE)^* $ as an element $ \widetilde{\psi} \in (\zeroE \tensorc \zeroE)^* $ such that $ \widetilde{\psi} ((1 - {}_0(\Psym)) (X)) = 0. $ 
	
	Thus, for $e$ in $\zeroE$, $\widetilde{\psi}$ as above and for all $X$ in $\zeroE \tensorc \zeroE$, we have
	$$ (\zeta_{\zeroE, \zeroE \tensorc \zeroE })(e \tensorc \widetilde{\psi})((1 - {}_0(\Psym))(X)) = e \widetilde{\psi}((1 - {}_0(\Psym))(X)) = 0.$$
	This implies that 
	$$ \zeta_{\zeroE, \zeroE \tensorc \zeroE }(\zeroE \tensorc (\zeroE \tensorc^{\rm sym} \zeroE)^*) \subseteq \homc(\zeroE \tensorc^{\rm sym} \zeroE, \zeroE).$$
	As $\zeta_{\zeroE, \zeroE \tensorc \zeroE }$ is an isomorphism from $\zeroE \tensorc (\zeroE \tensorc \zeroE)^*$ onto $\homc(\zeroE \tensorc \zeroE, \zeroE)$ and ${\rm dim}(\zeroE \tensorc (\zeroE \tensorc^{\rm sym} \zeroE)^*) = {\rm dim}(\homc(\zeroE \tensorc^{\rm sym} \zeroE, \zeroE))$, $\zeta_{\zeroE, \zeroE \tensorc \zeroE }$ maps $\zeroE \tensorc (\zeroE \tensorc^{\rm sym} \zeroE)^*$ isomorphically onto $\homc(\zeroE \tensorc^{\rm sym} \zeroE, \zeroE)$. This finishes the proof of the lemma.
\end{proof}

\section{Some generalities on pseudo-Riemannian metrics} \label{section2}

 In this section, we discuss some results on covariant metrics on bicovariant bimodules. After defining pseudo-Riemannian metrics, we collect some relevant results from \cite{heckenberger} and \cite{article5}. The Subsection \ref{gtwoadjoint} deal with the adjoints of the maps $ {}_0 \sigma $ and $ {}_0(\Psym) $ with respect to a certain map $ g^{(2)} $ on $ \zeroE \tensorc \zeroE. $ 

\begin{defn} \label{24thmay20191}
	Suppose $ \E $ is a bicovariant $\A$-$\A$ bimodule $\E$ and $ \sigma: \E \tensora \E \rightarrow \E \tensora \E $ be the map as in Proposition \ref{4thmay20193}. A pseudo-Riemannian metric for the pair $ (\E, \sigma) $ is a right $\A$-linear map $g:\E \tensora \E \to \A$ such that the following conditions hold:
	\begin{enumerate}
	\item[(i)] $ g \circ \sigma = g. $
	\item[(ii)] If $g(\rho \tensora \nu)=0$ for all $\nu$ in $\E,$ then $\rho = 0.$ 
	\end{enumerate}
\end{defn}
Let us recall (\cite{heckenberger}) that a pseudo-Riemannian metric $g$ on a left-covariant $\A$-$\A$ bimodule $\E$ is said to be left-invariant if for all $\rho, \nu$ in $\E$,
	$$ (\id \tensorc \epsilon g)(\Delta_{(\E \tensora \E)}(\rho \tensora \nu)) = g(\rho \tensora \nu). $$
	Similarly, a pseudo-Riemannian metric $g$ on a right-covariant $\A$-$\A$ bimodule $\E$ is said to be right-invariant if for all $\rho, \nu$ in $\E$,
	$$ (\epsilon g \tensorc \id)({}_{(\E \tensora \E)}\Delta(\rho \tensora \nu)) = g(\rho \tensora \nu). $$
	Finally, a pseudo-Riemannian metric $g$ on a bicovariant $\A$-$\A$ bimodule $\E$ is said to be bi-invariant if it is both left-invariant as well as right-invariant. Then we have the following results.
	
\begin{lemma} (\cite{heckenberger}, \cite{article5}) \label{14thfeb20191}
 Suppose $\E$ is a bicovariant bimodule and $g$ a pseudo-Riemannian metric on $\E.$ We will say that $ g $ is left (respectively right)-covariant if $ g $ is a left (respectively, right)-covariant map between the bicovariant bimodules $ \E \tensora \E $ and $ \A. $
	
 Then $g$ is left-invariant if and only if $g: \E \tensora \E \to \A$ is a left-covariant map. Similarly, $g$ is right-invariant if and only if $g: \E \tensora \E \to \A$ is a right-covariant map.
	
 Consequently, if $g$ is a pseudo-Riemannian metric which is left-invariant on $\E$, then $g(\omega_1 \tensora \omega_2) \in \IC.1$ for all $\omega_1, \omega_2$ in $\zeroE.$ Similarly, if $ g $ is a right-invariant pseudo-Riemannian metric on $\E,$ then $ g (\eta_1 \tensora \eta_2) \in \mathbb{C}. 1 $ for all $ \eta_1, \eta_2 $ in $ \E_0. $
\end{lemma}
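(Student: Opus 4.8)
The statement has three parts: (a) the equivalence "left-invariant $\iff$ left-covariant as a map," (b) the analogous right-hand statement, and (c) the consequence that on invariant elements the metric takes scalar values. The plan is to prove (a) by unwinding both definitions and exhibiting them as the same equation, deduce (b) by the mirror-symmetric argument, and obtain (c) by applying (a) together with the characterisation of left-invariant elements in Definition \ref{21staugust20193}.

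First I would prove (a). Recall that left-covariance of $g:\E\tensora\E\to\A$ means, by Definition \ref{8thnov20191} applied with $N=\A$ and $\Delta_N=\Delta$, that $\Delta\circ g=(\id\tensorc g)\Delta_{\E\tensora\E}$. On the other hand, left-invariance is the condition $(\id\tensorc\epsilon g)\Delta_{\E\tensora\E}(\rho\tensora\nu)=g(\rho\tensora\nu)$, where $\epsilon$ is the counit of $\A$. For the direction "covariant $\Rightarrow$ invariant," I would apply $(\id\tensorc\epsilon)$ to both sides of the covariance equation and use the counit axiom $(\id\tensorc\epsilon)\Delta=\id$ on $\A$, which collapses the left-hand side to $g$ itself and reproduces exactly the invariance equation. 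For the converse "invariant $\Rightarrow$ covariant," the idea is to feed the invariance identity into the coassociativity/compatibility of the coaction $\Delta_{\E\tensora\E}$: since $\Delta_{\E\tensora\E}$ is a genuine left coaction, $(\Delta\tensorc\id)\Delta_{\E\tensora\E}=(\id\tensorc\Delta_{\E\tensora\E})\Delta_{\E\tensora\E}$, and applying $(\id\tensorc\id\tensorc\epsilon g)$ to this, then invoking invariance in the last two legs, should recover $(\id\tensorc g)\Delta_{\E\tensora\E}=\Delta\circ g$. The key manipulation is that right $\A$-linearity of $g$ (part of the definition of a pseudo-Riemannian metric) guarantees $g$ interacts correctly with the module structure so that these comodule identities apply leg-by-leg.

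Part (b) follows by the same argument with left replaced by right throughout: one uses the right coaction ${}_{\E\tensora\E}\Delta$, the counit applied to the appropriate leg, and right $\A$-linearity of $g$; there is nothing genuinely new, so I would simply state that it is proved mutatis mutandis. For part (c), I would take $\omega_1,\omega_2\in\zeroE$, so that $\rho\tensora\nu:=\omega_1\tensora\omega_2\in{}_0(\E\tensora\E)=\zeroE\tensorc\zeroE$ by Proposition \ref{3rdaugust20192}, meaning $\Delta_{\E\tensora\E}(\omega_1\tensora\omega_2)=1\tensorc(\omega_1\tensora\omega_2)$. Substituting this into the left-invariance identity gives $(\id\tensorc\epsilon g)(1\tensorc(\omega_1\tensora\omega_2))=\epsilon(g(\omega_1\tensora\omega_2))\cdot 1=g(\omega_1\tensora\omega_2)$, which exhibits $g(\omega_1\tensora\omega_2)$ as the scalar $\epsilon(g(\omega_1\tensora\omega_2))$ times $1$, i.e. an element of $\IC.1$. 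The right-invariant case is identical using $\E_0$ and ${}_{\E\tensora\E}\Delta$.

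**Main obstacle.** The routine direction is "covariant $\Rightarrow$ invariant," which is a one-line counit computation. The genuinely careful step is the converse, "invariant $\Rightarrow$ covariant": here one must verify that the single scalar-valued invariance identity upgrades to the full $\A$-valued covariance identity. The crux is to correctly exploit coassociativity of $\Delta_{\E\tensora\E}$ together with right $\A$-linearity of $g$ so that the counit can be removed from one tensor leg and replaced by the full coproduct on $\A$. I expect this bookkeeping with Sweedler indices — keeping track of which leg the counit acts on and ensuring the module-map property of $g$ is used at the right moment — to be the only real subtlety; once the leg-by-leg structure is set up correctly, the identity drops out. Since the paper attributes this to \cite{heckenberger} and \cite{article5}, I would also remark that the statement is essentially standard and the proof merely assembles these comodule identities.
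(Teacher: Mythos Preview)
Your proposal is correct and follows essentially the same route as the paper, which simply cites Proposition~3.3 of \cite{article5} for the equivalence (a)--(b) and then invokes Proposition~\ref{14thfeb20192} together with Proposition~\ref{3rdaugust20192} for the scalar-value consequence (c). Your coassociativity argument for the direction ``invariant $\Rightarrow$ covariant'' is exactly the standard one behind the cited result, and your direct computation for (c) is a minor variant of the paper's: the paper first converts invariance to covariance via (a) and then reads off $g({}_0(\E\tensora\E))\subseteq{}_0\A=\IC.1$ from Proposition~\ref{14thfeb20192}, whereas you stay with the invariance identity throughout---both amount to the same evaluation on $1\tensorc(\omega_1\tensora\omega_2)$.

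One small remark: your stated concern that right $\A$-linearity of $g$ is needed in the ``invariant $\Rightarrow$ covariant'' step is unnecessary. The argument you outline uses only the comodule coassociativity $(\Delta\tensorc\id)\Delta_{\E\tensora\E}=(\id\tensorc\Delta_{\E\tensora\E})\Delta_{\E\tensora\E}$ and the counit axiom; the module structure of $g$ plays no role there (it enters only later, e.g.\ in Proposition~\ref{14thfeb20192}, where the paper's route for (c) does require right $\A$-linearity).
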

\begin{proof}
The first assertion was proved in Proposition 3.3 of \cite{article5}. The second assertion was already noted in \cite{heckenberger}. It follows by a combination of Proposition \ref{14thfeb20192} and Proposition \ref{3rdaugust20192}. 
\end{proof}

\bdfn \label{24thaugust20195}
 Let $ \E $ and $ g $ be as above. For a fixed basis $ \{ \omega_1, \cdots , \omega_n \} $ of $ \zeroE, $ we define $ g_{ij} = g (\omega_i \tensora \omega_j). $ 
We define a map 
 $$ V_g: \E \rightarrow \E^*, ~ V_g (e) (f) = g (e \tensora f). $$
\edfn

The following result will be used multiple times in the sequel.

\begin{prop}[Proposition 3.6 of \cite{article5}] \label{23rdmay20192}
	Let $g$ be a left-invariant pseudo-Riemannian metric for a pair $(\E,d)$ as in Definition \ref{24thmay20191}. Then the following statements hold:
	\begin{itemize}
	 \item[(i)] The map $ V_g $ is a one-one right $ \A $-linear map from $ \E $ to $ \E^*. $ If $ e \in \E $ is such that $ g (e \tensora f) = 0 $ for all $ f \in {}_0 \E, $ then $ e = 0. $ In particular, the map $ V_{g} $ is one-one and hence a vector space isomorphism from $ {}_0 \E $ to $ ({}_0 \E)^*.$
	\item[(ii)] The matrix $((g_{ij}))_{ij}$ is invertible. Let $g^{ij}$ denote the $(i,j)$-th entry of the inverse of the matrix $((g_{ij}))_{ij}$. Then $g^{ij}$ is an element of $\IC.1$ for all $i,j$.
	\end{itemize}
\end{prop}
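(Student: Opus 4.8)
The plan is to read off all four assertions from the non-degeneracy axiom (ii) of Definition \ref{24thmay20191}, combined with the right $\A$-totality of $\zeroE$ in $\E$ furnished by Proposition \ref{moduleiso}, thereby reducing everything to a statement about the finite-dimensional vector space $\zeroE$ via Lemma \ref{14thfeb20191}. First I would dispose of the routine point that $V_g$ (Definition \ref{24thaugust20195}) is right $\A$-linear: for fixed $e$, the assignment $f \mapsto g(e \tensora f)$ lies in $\E^* = \Hom_\A(\E, \A)$ precisely because $g$ is right $\A$-linear, and the relation $ea \tensora f = e \tensora af$ in $\E \tensora \E$ shows $V_g(ea) = V_g(e) \cdot a$ once $\E^*$ is given its natural right $\A$-module structure $(\phi \cdot a)(f) = \phi(af)$.

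The crux of part (i) is the claim that $g(e \tensora f) = 0$ for all $f \in \zeroE$ forces $e = 0$. Here I would upgrade the hypothesis from invariant test elements to arbitrary ones: by Proposition \ref{moduleiso} every $\nu \in \E$ is a finite sum $\sum_i \omega_i a_i$ with $\omega_i \in \zeroE$ and $a_i \in \A$, so right $\A$-linearity gives $g(e \tensora \nu) = \sum_i g(e \tensora \omega_i) a_i = 0$; condition (ii) of Definition \ref{24thmay20191} then yields $e = 0$. In particular $V_g$ is injective on all of $\E$ (take $f$ ranging over $\zeroE$ when $V_g(e) = 0$), which settles the first bullet.

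For the ``in particular'' clause and for part (ii), I would restrict $V_g$ to $\zeroE$. By Lemma \ref{14thfeb20191}, left-invariance of $g$ gives $g(\omega \tensora \eta) \in \IC.1$ whenever $\omega, \eta \in \zeroE$, so $V_g(\omega)|_{\zeroE}$ is a genuine element of the $\IC$-linear dual $(\zeroE)^*$, defining a linear map $\zeroE \to (\zeroE)^*$. The injectivity just established (applied with $e \in \zeroE$) shows this map is one-one, and since $\dim \zeroE = \dim (\zeroE)^* < \infty$ it is an isomorphism. Finally, writing $V_g(\omega_i) = \sum_j g_{ij}\, \omega_j^*$ in the dual basis identifies the matrix of this isomorphism with $((g_{ij}))_{ij}$; invertibility of the map gives invertibility of $((g_{ij}))_{ij}$ over $\IC$, and because the matrix has scalar entries in $\IC.1$, the entries $g^{ij}$ of its inverse again lie in $\IC.1$.

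The only step requiring genuine input beyond bookkeeping is the upgrade of the non-degeneracy hypothesis from $\zeroE$ to $\E$; everything else is a dimension count. I therefore do not expect a serious obstacle, the essential leverage being that $\zeroE$ is right $\A$-total in $\E$, which converts the module-level non-degeneracy of $g$ into a finite-dimensional statement.
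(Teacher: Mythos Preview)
Your argument is correct. Note, however, that the paper does not supply its own proof of this proposition: it is quoted verbatim as Proposition 3.6 of \cite{article5} and no proof appears in the present text, so there is nothing to compare against beyond checking that your reasoning is sound. The key step you identify --- upgrading the vanishing hypothesis from $f \in \zeroE$ to all $f \in \E$ via right $\A$-totality of $\zeroE$ (Proposition \ref{moduleiso}) and right $\A$-linearity of $g$ --- is exactly what is needed, and the remaining assertions then follow by the finite-dimensional linear algebra you describe, using Lemma \ref{14thfeb20191} to ensure the entries $g_{ij}$ are scalars.
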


\subsection{The $ g^{(2)} $-adjoint of a left-covariant map} \label{gtwoadjoint}
Suppose $\E$ is a bicovariant bimodule and $g$ a pseudo-Riemannian metric. Then following the lines of \cite{article1} and \cite{article2}, it is straightforward to define (Definition \ref{27thaugust2019night1}) a complex valued map $ g^{(2)} $ on $ \zeroE \tensorc \zeroE. $ The goal of this subsection is to show that any complex linear map from $ \zeroE \tensorc \zeroE $ to itself admits an adjoint with respect to $ g^{(2)}.$ Moreover, in Lemma \ref{14thjune20191} and Proposition \ref{15thjune20195}, we show that the maps $ {}_0 \sigma $ and $ {}_0(\Psym) $ are actually self-adjoint. These results will be used in Lemma \ref{15thnov20191} and Theorem \ref{15thjune20193} for deriving a sufficient condition for the existence of a Levi-Civita connection. For similar results in the context of a class of centered bimodules, we refer to Lemma 3.5 and Lemma 4.17 of \cite{article1}. However, in contrast to \cite{article1}, we do not have the left $\A$-linearity of the metric but the bicovariance of $g$ and $\sigma$ help us to derive the results.

Let $ \E $ be a bicovariant bimodule over $\A$ and $ \{ \omega_i \}_i $ a basis of $ \zeroE. $ Then the set $ \{ \omega_i \tensorc \omega_j \}_{ij} $ is a basis for the finite dimensional vector space $ \zeroE \tensorc \zeroE. $ Thus, we are allowed to make the following definition.
 \begin{defn} \label{27thaugust2019night1}
	Suppose $g$ is a left-covariant pseudo-Riemannian metric on $\E$.
	We define a map 
		$$ g^{(2)}: (\zeroE \tensorc \zeroE) \tensorc (\zeroE \tensorc \zeroE) \rightarrow \mathbb{C} ~ {\rm by} ~ {\rm the} ~ {\rm formula} $$
		$$ g^{(2)} ((\omega_1 \tensorc \omega_2) \tensorc (\omega_3 \tensorc \omega_4)) = g(\omega_1 \tensora g(\omega_2 \tensora \omega_3) \tensora \omega_4) $$
	for all $\omega_1, \omega_2, \omega_3, \omega_4 \in \zeroE.$
		
	We also define a map $ V_{g^{(2)}} : (\zeroE \tensorc \zeroE) \rightarrow (\zeroE \tensorc \zeroE)^*: = \Hom_{\mathbb{C}} (\zeroE \tensorc \zeroE, \mathbb{C}) $ by the formula
		$$ V_{g^{(2)}} (\omega_1 \tensorc \omega_2) (\omega_3 \tensorc \omega_4) = g^{(2)} ((\omega_1 \tensorc \omega_2) \tensorc (\omega_3 \tensorc \omega_4)).$$
\end{defn}
Since $ g (\omega_1 \tensora \omega_2) \in \mathbb{C} $ by the second assertion of Lemma \ref{14thfeb20191}, it is clear that the element 
	$ g^{(2)} ((\omega_1 \tensora \omega_2) \tensora (\omega_3 \tensorc \omega_4)) $ indeed belongs to $ \mathbb{C}. $ 
	
Let us note that the maps $ g^{(2)} $ and $ V_{g^{(2)}} $ are both right $\A$-linear. The following non-degeneracy property is going to be crucial in the sequel.
\begin{prop} \label{13thjune20192}
	Let $Y$ be an element of $\zeroE \tensorc \zeroE$. If $g^{(2)}(X \tensorc Y)= 0$ for all $X$ in $\zeroE \tensorc \zeroE$, then $Y=0$. Similarly, if $g^{(2)}(Y \tensorc X)=0$ for all $X$ in $\zeroE \tensorc \zeroE$, then $Y=0$. In particular, the map 
	$V_{g^{(2)}}$ defined in Definition \ref{27thaugust2019night1} is a vector space isomorphism from $ \zeroE \tensorc \zeroE $ to $ (\zeroE \tensorc \zeroE)^*.$ 
\end{prop}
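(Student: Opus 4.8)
The plan is to reduce the non-degeneracy of $g^{(2)}$ to that of $g$ on the finite-dimensional space $\zeroE$, which is already encoded in the isomorphism $V_g \colon \zeroE \to (\zeroE)^*$ of Proposition \ref{23rdmay20192}(i).

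First I would simplify the defining formula for $g^{(2)}$. Since $g$ is left-covariant, Lemma \ref{14thfeb20191} gives $g(\omega_2 \tensora \omega_3) \in \IC.1$, a central scalar; pulling it out by right $\A$-linearity of $g$ turns the nested expression into a product,
\[ g^{(2)}((\omega_1 \tensorc \omega_2) \tensorc (\omega_3 \tensorc \omega_4)) = g(\omega_2 \tensora \omega_3)\, g(\omega_1 \tensora \omega_4) \]
for all $\omega_1, \omega_2, \omega_3, \omega_4 \in \zeroE$.

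From this I would read off, under the canonical identification $(\zeroE)^* \tensorc (\zeroE)^* \cong (\zeroE \tensorc \zeroE)^*$ (valid as $\zeroE$ is finite-dimensional), that $V_{g^{(2)}}(\omega_1 \tensorc \omega_2) = V_g(\omega_2) \tensorc V_g(\omega_1)$, i.e. $V_{g^{(2)}} = \iota \circ (V_g \tensorc V_g) \circ \sigmacan$, where $\sigmacan$ is the flip on $\zeroE \tensorc \zeroE$ and $\iota$ the canonical isomorphism of dual spaces. Each factor is an isomorphism: $V_g$ by Proposition \ref{23rdmay20192}(i), and $\sigmacan$, $\iota$ trivially. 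Hence $V_{g^{(2)}}$ is a vector space isomorphism from $\zeroE \tensorc \zeroE$ onto $(\zeroE \tensorc \zeroE)^*$, which is the final assertion.

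The two non-degeneracy statements then drop out. Since $g^{(2)}(X \tensorc Y) = V_{g^{(2)}}(X)(Y)$, vanishing for all $X$ means $Y$ is annihilated by the whole image of the surjective map $V_{g^{(2)}}$, so $Y = 0$; dually, $g^{(2)}(Y \tensorc X) = V_{g^{(2)}}(Y)(X)$ vanishing for all $X$ forces $V_{g^{(2)}}(Y) = 0$, whence $Y = 0$ by injectivity. The only point demanding care is the index bookkeeping --- the flip $\sigmacan$ that appears because the inner copy of $g$ pairs the \emph{second} slot of $X$ with the \emph{first} slot of $Y$ --- together with the justification that the nested $g$ collapses to a product, which rests precisely on $g(\omega_2 \tensora \omega_3)$ being a central scalar. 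A more computational alternative writes $Y = \sum_{kl} Y_{kl}\,\omega_k \tensorc \omega_l$, sets $X = \omega_i \tensorc \omega_j$, and solves $\sum_{kl} Y_{kl}\, g_{jk} g_{il} = 0$ by applying invertibility of $((g_{ij}))$ (Proposition \ref{23rdmay20192}(ii)) twice; this sidesteps the dual-space identifications at the cost of heavier computation.
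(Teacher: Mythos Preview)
Your argument is correct. The key simplification $g^{(2)}((\omega_1\tensorc\omega_2)\tensorc(\omega_3\tensorc\omega_4))=g(\omega_2\tensora\omega_3)\,g(\omega_1\tensora\omega_4)$ holds exactly as you say, and the factorisation $V_{g^{(2)}}=\iota\circ(V_g\tensorc V_g)\circ\sigmacan$ immediately gives the isomorphism; the two non-degeneracy statements then follow from surjectivity and injectivity respectively.

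The paper takes precisely the ``computational alternative'' you sketch at the end: it fixes a basis, writes $Y=\sum_{ij}\omega_i\tensorc\omega_j\,b_{ij}$, and for each pair $(i_0,j_0)$ exhibits an explicit test element $X=\sum_{kl}g^{i_0 l}g^{j_0 k}\omega_k\tensorc\omega_l$ for which $g^{(2)}(X\tensorc Y)=b_{i_0 j_0}$, using the inverse matrix $((g^{ij}))$ from Proposition~\ref{23rdmay20192}(ii). Your main route is more conceptual and basis-free, recognising $V_{g^{(2)}}$ as a tensor square of $V_g$ up to a flip; the paper's route is the direct matrix inversion. Both are short, and each makes transparent a different aspect: yours the tensor-product structure of $g^{(2)}$, the paper's the concrete invertibility of the Gram matrix.
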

\begin{proof}
	Let $\{ \omega_i \}_i$ be a basis for $\zeroE$ so that $\{ \omega_i \tensorc \omega_j \}_{ij}$ is a basis for $\zeroE \tensorc \zeroE$. By Proposition \ref{23rdmay20192}, the matrix whose $i,j$-th element is $g_{ij} = g(\omega_i \tensorc \omega_j)$ is invertible in $M_n(\IC)$. We will denote by $g^{ij}$ the $i,j$-th entry of the inverse of the matrix $((g_{ij}))_{ij}$. 
	
	Suppose $ \{ b_{ij} \}_{ij} $ are complex numbers $b_{ij}$ such that 
	$$Y = \sum_{ij} \omega_i \tensorc \omega_j b_{ij}.$$
	Let us fix the indices $i_0, j_0$ and define 
	$$X= \sum_{kl} g^{i_0l}g^{j_0k} \omega_k \tensorc \omega_l.$$ 
	Then we get
	\begin{align*}
		& 0 = g^{(2)}(X \tensorc Y) = g^{(2)}(\sum_{ijkl} g^{i_0l}g^{j_0k}(\omega_k \tensorc \omega_l) \tensorc (\omega_i \tensorc \omega_j)b_{ij})\\
	 = & \sum_{ijkl} g^{i_0 l} g^{j_0 k} g (\omega_k \tensora g_{li} \omega_j) b_{ij}		= \sum_{ijkl} g^{i_0l} g_{li} g^{j_0k} g_{kj} b_{ij}
			= \sum_{ij} \delta_{i_0i} \delta_{j_0j}b_{ij}
			= b_{i_0 j_0}.
	\end{align*}
	Hence, if $g^{(2)}(X \tensorc Y) = 0$ for all $X$, then $Y=0$.\\
	To prove the second statement, fix indices $i_0, j_0$ and define $X= \sum_{kl} g^{li_0} g^{kj_0} \omega_k \tensorc \omega_l$. Then, we compute the following.
	\begin{align*}
		g^{(2)}(Y \tensorc X) = & g^{(2)}(\sum_{ijkl} (\omega_i \tensorc \omega_j b_{ij}) \tensorc (\omega_k \tensorc \omega_l g^{li_0} g^{kj_0}))\\
			= & \sum_{ijkl} g_{il}g^{li_0} g_{jk}g^{kj_0} b_{ij}
			= \sum_{ij} \delta_{i_0i} \delta_{j_0j} b_{ij}
			= b_{i_0j_0}.
	\end{align*}
	Hence, if $g^{(2)}(Y \tensorc X) = 0$ for all $X$, then $Y=0$.
\end{proof}	
As a consequence of Proposition \ref{13thjune20192}, we are in a position to define the $ g^{(2)} $-adjoint of a complex linear map from $\zeroE \tensorc \zeroE$ to itself. Since the proof is elementary, we omit it.
\begin{prop} \label{27thmay20191}
	Suppose $h: V \tensorc V \to \IC$ be a linear map such that the following holds:
	$$h(v \tensorc w)= 0 \ \forall w \in V \textit{ implies } v = 0.$$
	Then, if $T$ is a $\IC$-linear map from $V$ to $V$,there exists a unique $\IC$-linear map $T^* : V \to V$ such that $$h(T^*(v) \tensorc w)= h(v \tensorc T(w)) \ \forall v, w \in V$$
\end{prop}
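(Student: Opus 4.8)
The plan is to construct $T^*$ as the composite $\Phi^{-1}\circ\Psi$ of two linear maps into $V^*$, one built from the pairing $h$ and one built from $h$ twisted by $T$. First I would introduce the linear map $\Phi\colon V\to V^*$ defined by $\Phi(v)=h(v\tensorc\,\cdot\,)$, i.e. $\Phi(v)(w)=h(v\tensorc w)$; this is linear in $v$ because $h$ is bilinear. The hypothesis on $h$, namely that $h(v\tensorc w)=0$ for all $w$ forces $v=0$, says exactly that $\ker\Phi=0$, so $\Phi$ is injective. Since $V$ is finite dimensional (in the application $V=\zeroE\tensorc\zeroE$), an injective linear map between spaces of equal finite dimension $\dim V=\dim V^*$ is automatically bijective, so $\Phi$ is a vector space isomorphism onto $V^*$.

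Next I would define $\Psi\colon V\to V^*$ by $\Psi(v)(w)=h(v\tensorc T(w))$. Bilinearity of $h$ together with linearity of $T$ makes $\Psi$ a well-defined linear map. I would then set $T^*:=\Phi^{-1}\circ\Psi\colon V\to V$, which is linear as a composite of linear maps. By construction $\Phi(T^*(v))=\Psi(v)$ for every $v$, and evaluating both sides at an arbitrary $w\in V$ gives $h(T^*(v)\tensorc w)=\Phi(T^*(v))(w)=\Psi(v)(w)=h(v\tensorc T(w))$, which is precisely the required adjoint identity.

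For uniqueness I would argue directly from the non-degeneracy hypothesis: if $S\colon V\to V$ is any linear map satisfying $h(S(v)\tensorc w)=h(v\tensorc T(w))$ for all $v,w$, then $h\big((S-T^*)(v)\tensorc w\big)=0$ for all $w$, whence $(S-T^*)(v)=0$ by the assumed property of $h$; as this holds for every $v$ we conclude $S=T^*$. The proof is entirely formal and there is essentially no obstacle: the only place where a genuine hypothesis enters beyond the defining non-degeneracy of $h$ is the passage from injectivity of $\Phi$ to its surjectivity, which relies on finite dimensionality of $V$. In the intended applications $V=\zeroE\tensorc\zeroE$ is finite dimensional, so this is harmless. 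Alternatively, mirroring the proof of Proposition \ref{13thjune20192}, one can fix a basis of $V$, observe that the matrix of $h$ in this basis is invertible (this is exactly the content of Proposition \ref{13thjune20192} when $h=g^{(2)}$ and $V=\zeroE\tensorc\zeroE$), and then solve the resulting linear system for the matrix of $T^*$, which is the same computation carried out in coordinates.
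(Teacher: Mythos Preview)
Your argument is correct and is exactly the standard one the authors have in mind: in the paper the proof is simply omitted with the remark that it is elementary. Your identification of the one substantive hypothesis---finite dimensionality of $V$, needed to pass from injectivity of $\Phi$ to bijectivity---is apt; the proposition as stated does not make this explicit, but in every application in the paper $V=\zeroE\tensorc\zeroE$ is finite dimensional, so there is no issue.
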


Now, ${}_0\sigma$ and ${}_0(\Psym)$ are linear maps from $\zeroE \tensorc \zeroE$ to itself. By virtue of Proposition \ref{13thjune20192}, we can apply Proposition \ref{27thmay20191} to $h=g^{(2)}$ and $T={}_0\sigma$ or ${}_0(\Psym)$. Thus, $({}_0\sigma)^*$ and $({}_0(\Psym))^*$ exist.

\begin{lemma} \label{14thjune20191}
	Let $\E$ be a bicovariant $\A$-$\A$-bimodule, $ \sigma $ the braiding map of Proposition \ref{4thmay20193} and $g$ be a bi-invariant pseudo-Riemannian metric on $\E$, then $ ({}_0\sigma)^* = {}_0\sigma. $ 
\end{lemma}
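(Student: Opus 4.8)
The plan is to verify the defining property of the $g^{(2)}$-adjoint directly: since $g^{(2)}$ is non-degenerate (Proposition \ref{13thjune20192}), the uniqueness clause of Proposition \ref{27thmay20191} (with $h = g^{(2)}$ and $T = {}_0\sigma$) shows that $({}_0\sigma)^* = {}_0\sigma$ is \emph{equivalent} to the single symmetry identity
$$ g^{(2)}({}_0\sigma(X)\tensorc Y) = g^{(2)}(X\tensorc {}_0\sigma(Y)) \qquad \text{for all } X, Y \in \zeroE\tensorc\zeroE. $$
So the whole proof reduces to establishing this one equation. First I would fix a basis $\{\omega_i\}$ of $\zeroE$, set $g_{ij} = g(\omega_i\tensora\omega_j)$, and recall from the left-invariance half of Lemma \ref{14thfeb20191} that each $g_{ij}\in\IC$; hence the formula of Definition \ref{27thaugust2019night1} simplifies to $g^{(2)}((\omega_i\tensorc\omega_j)\tensorc(\omega_k\tensorc\omega_l)) = g_{il}\,g_{jk}$. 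Writing ${}_0\sigma(\omega_i\tensorc\omega_j) = \sum_{ab}\sigma_{ij}^{ab}\,\omega_a\tensorc\omega_b$ and expanding both sides by bilinearity, the claim becomes the purely scalar statement
$$ \sum_{ab}\sigma_{ij}^{ab}\,g_{al}\,g_{bk} = \sum_{cd}\sigma_{kl}^{cd}\,g_{id}\,g_{jc} \qquad \text{for all } i,j,k,l. $$

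The substance of the argument is to prove this identity, and it cannot come from metric compatibility alone: the relation $g\circ\sigma = g$ of Definition \ref{24thmay20191}(i) gives only the single contraction $\sum_{ab}\sigma_{ij}^{ab} g_{ab} = g_{ij}$ for each $(i,j)$, whereas the displayed identity is a full matrix equation. The extra input is the \emph{naturality} of the braiding with respect to the $\IC$-valued morphism $g$. Concretely, because $\sigma$ obeys the braid equation of Proposition \ref{4thmay20193} and $g$ is bicovariant (Lemma \ref{14thfeb20191}), one has, on invariant three-tensors, the hexagon-type relations
$$ (\id\tensora g)(\sigma\tensora\id)(\id\tensora\sigma) = g\tensora\id, \qquad (g\tensora\id)(\id\tensora\sigma)(\sigma\tensora\id) = \id\tensora g. $$
Rewriting $g^{(2)} = g\circ(\id\tensora g\tensora\id)$, I would use these two relations to transport the occurrence of $\sigma$ on the first tensor factor through the inner $g$-contraction and onto the second factor; the two relations are precisely what allow a braiding to be moved across a $g$-pairing in each of the two directions, and $g\circ\sigma = g$ (together with $g\circ\sigma^{-1}=g$) absorbs the residual braidings produced along the way. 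Equivalently, one may argue entirely element-wise: right-invariance of $g$ yields, via Lemma \ref{14thfeb20191}, the corepresentation-invariance of the matrix $((g_{ij}))$, and feeding this into Woronowicz's explicit description of $\sigma$ through the right coaction reproduces the same identity.

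The main obstacle is exactly this transport step, and the essential conceptual point is that \emph{both} covariances of $g$ are indispensable. Left-invariance is what makes $g^{(2)}$ scalar-valued, so that nesting $g$ inside $g$ is legitimate and $g^{(2)}$ is a genuine bilinear form on the finite-dimensional space $\zeroE\tensorc\zeroE$; right-invariance is what promotes $\sigma$ from a mere bimodule map to a map compatible with the comodule structure, thereby validating the naturality relations above. It is here that the situation diverges from the left-$\A$-linear framework of \cite{article1}, and it is for this reason that the hypothesis is bi-invariance rather than one-sided invariance. The delicate bookkeeping point to watch is the inner-versus-outer pairing structure of $g^{(2)}$ (the two inner legs are coupled, as are the two outer legs): an inversion of this pattern would yield the $\sigma$-transform of $g$ in place of $g$ itself, so the two hexagon relations must be applied in the order dictated by the $g^{(2)} = g\circ(\id\tensora g\tensora\id)$ factorisation.
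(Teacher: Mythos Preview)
Your reduction to the identity $g^{(2)}({}_0\sigma(X)\tensorc Y)=g^{(2)}(X\tensorc {}_0\sigma(Y))$ is correct, as is your diagnosis that both covariances of $g$ must enter. But the heart of your argument, the two hexagon-type relations
\[
(\id\tensora g)(\sigma\tensora\id)(\id\tensora\sigma)=g\tensora\id,\qquad
(g\tensora\id)(\id\tensora\sigma)(\sigma\tensora\id)=\id\tensora g,
\]
is asserted without proof, and the justification you offer (``the braid equation together with bicovariance of $g$'') does not suffice. These relations are precisely the naturality of the braiding with respect to $g$, and naturality holds automatically only for \emph{morphisms} of the braided category of bicovariant bimodules; such morphisms are $\A$-\emph{bilinear} bicomodule maps, whereas here $g$ is only right $\A$-linear. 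The braid equation alone says nothing about $g$, and bicovariance of $g$ is a compatibility with the coactions, not with $\sigma$. Your alternative element-wise route via Woronowicz's explicit formula for $\sigma$ could in principle be carried out, but you have not done so; as it stands the crucial step is missing.

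The paper's proof sidesteps this entirely by a device you do not consider: rather than staying inside $\zeroE\tensorc\zeroE$, it extends $g^{(2)}$ right $\A$-linearly to all of $(\E\tensora\E)^{\tensora 2}$ and evaluates on the \emph{mixed} basis $\zeroE\tensora\Ezero$ of left-invariant tensor right-invariant elements. On such pairs $\sigma$ is literally the flip by its defining property \eqref{30thapril20191}, and by bi-invariance both $g(\omega\tensora\omega')$ and $g(\eta\tensora\eta')$ are scalars (Lemma~\ref{14thfeb20191}), so the identity $g^{(2)}(\sigma(\omega\tensora\eta)\tensora(\omega'\tensora\eta'))=g^{(2)}((\omega\tensora\eta)\tensora\sigma(\omega'\tensora\eta'))$ is a two-line computation. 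Right $\A$-totality of $\zeroE\tensora\Ezero$ (Corollary~\ref{3rdaugust20191}) together with $\A$-bilinearity of $\sigma$ then extends it everywhere. This is exactly where the bi-invariance hypothesis does its work, and it avoids any need for naturality or explicit structure constants of $\sigma$.
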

\begin{proof}
	We will actually prove a stronger statement. Since $g^{(2)}$ is a map from $({}_0 \E \tensorc {}_0 \E) \tensorc ({}_0 \E \tensorc {}_0 \E)$ to $\IC$, it extends uniquely to a right $\A$-linear left-covariant map (to be denoted by $g^{(2)}$ again, by an abuse of notation) from $(\E \tensora \E) \tensora (\E \tensora \E)$ to $\A$ by Proposition \ref{inviff}. We will prove that for $e,f,e^\prime, f^\prime$ in $\E$,
	\begin{equation} \label{14thjune20192}
		g^{(2)}(\sigma(e \tensora f)\tensora (e^\prime \tensora f^\prime))=g^{(2)}((e \tensora f)\tensora \sigma(e^\prime \tensora f^\prime))
	\end{equation}
	To this end, we claim that it is enough to prove that for all $\omega, \omega^{\prime}$ in $\zeroE$ and $\eta, \eta^{\prime}$ in $\Ezero$,
	\begin{equation}
	g^{(2)}(\sigma(\omega \tensora \eta) \tensora (\omega^{\prime} \tensora \eta^{\prime}))=g^{(2)}((\omega \tensora \eta) \tensora \sigma(\omega^{\prime} \tensora \eta^{\prime})). \label{13thfeb20192}
	\end{equation}
	Indeed, by Corollary \ref{3rdaugust20191}, for every element $a$ in $\A$, there exists elements $ x_i \in \zeroE, y_i \in \E_0 $ and $ a_i \in \A $ such that 
	$$a(\omega^\prime \tensora \eta^\prime)= \sum_{i} x_i \tensora y_i a_{i}.$$
	Hence, if \eqref{13thfeb20192} is true, the right $\A$-linearity of the map $g^{(2)}$ implies that
	\begin{align*}
		&g^{(2)}(\sigma(\omega \tensora \eta a) \tensora (\omega^\prime \tensora \eta^\prime b)) = g^{(2)}(\sigma(\omega \tensora \eta) \tensora a(\omega^{\prime} \tensora \eta^{\prime})) b \\
	=&	\sum_{i} g^{(2)}(\sigma(\omega \tensora \eta) \tensora (x_i \tensora y_i))a_{i}b = \sum_{i} g^{(2)}((\omega \tensora \eta) \tensora \sigma(x_i \tensora y_i))a_{i}b \\
		=& \sum_{i} g^{(2)}((\omega \tensora \eta) \tensora \sigma(x_i \tensora y_i a_{i}))b = g^{(2)}((\omega \tensora \eta) \tensora a \sigma (\omega^\prime \tensora \eta^\prime))b \\
		&= g^{(2)}((\omega \tensora \eta a) \tensora \sigma (\omega^\prime \tensora \eta^\prime b)).
	\end{align*}
	Here we have used the bilinearity of the map $\sigma$. Since $\zeroE \tensora \Ezero$ is right $\A$-total in $\E \tensora \E$ (by Corollary \ref{3rdaugust20191}), this proves \eqref{14thjune20192} provided we prove \eqref{13thfeb20192}. This proves our claim.
	
	Thus, we are left with proving \eqref{13thfeb20192} which follows from the following computation:
	\begin{align*}
		&g^{(2)}(\sigma(\omega \tensora \eta) \tensora (\omega^\prime \tensora \eta^\prime))=g^{(2)}((\eta \tensora \omega) \tensora (\omega^\prime \tensora \eta^\prime))\\
		=&g(\eta \tensora \eta^\prime) g (\omega \tensora \omega^\prime)
		=g^{(2)}((\omega \tensora \eta) \tensora (\eta^\prime \tensora \omega^\prime))\\
		=&g^{(2)}((\omega \tensora \eta) \tensora \sigma(\omega^\prime \tensora \eta^\prime)),
	\end{align*}
	where we have used \eqref{30thapril20191} twice and the facts that $g(\omega \tensora \omega^\prime)$ and $g(\eta \tensora \eta^\prime)$ take values in $\IC.1$ (second assertion of Lemma \ref{14thfeb20191}). This completes the proof of the lemma.
\end{proof}

\begin{prop} \label{15thjune20195}
	We have $({}_0(\Psym))^*={}_0(\Psym)$. If	 $V_{g^{(2)}}: \zeroE \tensorc \zeroE \to (\zeroE \tensorc \zeroE)^*$ is the map defined in Definition \ref{27thaugust2019night1}, then
	\begin{equation} \label{27thaugust20194} V_{g^{(2)}}({}_0(\Psym)(X))(Y) = V_{g^{(2)}}(X) \circ {}_0(\Psym)(Y) \ \forall X,Y \in \zeroE \tensorc \zeroE. \end{equation} 
	In particular, $V_{g^{(2)}}$ is a vector space isomorphism from $\zeroE \tensorc^{\rm sym} \zeroE$ onto $(\zeroE \tensorc^{\rm sym} \zeroE)^*$.
\end{prop}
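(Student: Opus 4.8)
The plan is to reduce all three assertions to the self-adjointness of $ {}_0 \sigma $ (Lemma \ref{14thjune20191}) together with the fact, recorded in \eqref{24thaugust20191}, that $ {}_0(\Psym) $ is a polynomial in $ {}_0 \sigma. $ First I would observe that the assignment $ T \mapsto T^* $ furnished by Proposition \ref{27thmay20191} (applied to $ h = g^{(2)}, $ which is legitimate by the two-sided non-degeneracy in Proposition \ref{13thjune20192}) is a $ \IC $-linear anti-automorphism of the algebra $ \homc(\zeroE \tensorc \zeroE, \zeroE \tensorc \zeroE). $ Since $ g^{(2)} $ is complex bilinear (not sesquilinear), the defining identity $ g^{(2)}(T^*(v) \tensorc w) = g^{(2)}(v \tensorc T(w)) $ together with non-degeneracy yields $ (cS + T)^* = c\,S^* + T^*, $ $ (ST)^* = T^* S^* $ and $ \id^* = \id. $ Consequently $ (T^k)^* = (T^*)^k, $ so for any polynomial $ p $ one has $ p(T)^* = p(T^*); $ taking $ T = {}_0 \sigma $ and using $ ({}_0 \sigma)^* = {}_0 \sigma $ then gives $ p({}_0\sigma)^* = p({}_0\sigma). $ Applied to the polynomial \eqref{24thaugust20191} this immediately proves $ ({}_0(\Psym))^* = {}_0(\Psym). $

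Next, the identity \eqref{27thaugust20194} is just this self-adjointness unwound through Definition \ref{27thaugust2019night1} and needs no further computation. Indeed, by definition $ V_{g^{(2)}}({}_0(\Psym)(X))(Y) = g^{(2)}({}_0(\Psym)(X) \tensorc Y), $ whereas $ (V_{g^{(2)}}(X) \circ {}_0(\Psym))(Y) = g^{(2)}(X \tensorc {}_0(\Psym)(Y)); $ equality of the two sides is precisely the adjoint relation $ g^{(2)}(({}_0(\Psym))^*(X) \tensorc Y) = g^{(2)}(X \tensorc {}_0(\Psym)(Y)) $ combined with the self-adjointness just established.

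For the final assertion I would use that $ \zeroE \tensorc^{\rm sym} \zeroE = {\rm Ran}({}_0(\Psym)) $ (Definition \ref{27thmay20193}) and that, by \eqref{23rdaugust20193}, $ (\zeroE \tensorc^{\rm sym} \zeroE)^* $ is identified with the annihilator $ \{ \psi \in (\zeroE \tensorc \zeroE)^* : \psi \circ (1 - {}_0(\Psym)) = 0 \}. $ The key step is to show $ V_{g^{(2)}} $ carries $ {\rm Ran}({}_0(\Psym)) $ into this annihilator: for $ X $ with $ {}_0(\Psym)(X) = X $ and arbitrary $ Y, $ the identity \eqref{27thaugust20194} gives
$$ V_{g^{(2)}}(X)\big((1 - {}_0(\Psym))(Y)\big) = g^{(2)}\big((1 - {}_0(\Psym))(X) \tensorc Y\big) = 0, $$
since $ (1 - {}_0(\Psym))(X) = 0 $ by idempotency. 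Hence $ V_{g^{(2)}}(\zeroE \tensorc^{\rm sym} \zeroE) \subseteq (\zeroE \tensorc^{\rm sym} \zeroE)^*. $ As $ V_{g^{(2)}} $ is already injective on all of $ \zeroE \tensorc \zeroE $ (Proposition \ref{13thjune20192}), it is injective on the subspace, and since $ \dim(\zeroE \tensorc^{\rm sym} \zeroE) = \dim((\zeroE \tensorc^{\rm sym} \zeroE)^*) $ in finite dimensions, the restriction is a vector space isomorphism onto $ (\zeroE \tensorc^{\rm sym} \zeroE)^*. $

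The only genuinely delicate point is the first step: one must verify that the $ g^{(2)} $-adjoint really is a $ \IC $-linear anti-automorphism, and in particular that \emph{no} complex conjugation intervenes, so that self-adjointness of $ {}_0 \sigma $ passes unchanged through the complex-coefficient polynomial \eqref{24thaugust20191}. This is exactly where the bilinearity (rather than sesquilinearity) of $ g^{(2)} $ is essential, guaranteeing $ (cT)^* = c\,T^*. $ A minor bookkeeping point is to keep the identification of $ (\zeroE \tensorc^{\rm sym} \zeroE)^* $ used in the last step aligned with the one set up in \eqref{23rdaugust20193} and Lemma \ref{15thjune20192}, so that the annihilator computation lands in the intended dual space; everything after the first paragraph is then formal.
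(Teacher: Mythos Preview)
Your proof is correct and follows essentially the same route as the paper's: self-adjointness of $ {}_0(\Psym) $ via the polynomial expression \eqref{24thaugust20191} and Lemma \ref{14thjune20191}, then unwinding the definition of $ V_{g^{(2)}} $ to get \eqref{27thaugust20194}, and finally the annihilator description \eqref{23rdaugust20193} together with injectivity (Proposition \ref{13thjune20192}) and a dimension count. You make explicit the anti-automorphism property of $ T \mapsto T^* $ and the role of bilinearity of $ g^{(2)} $, which the paper leaves implicit, but the argument is otherwise the same.
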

\begin{proof}
Since $({}_0\sigma)^*={}_0\sigma$ by Lemma \ref{14thjune20191} and $ {}_0 (\Psym) $ is a polynomial in ${}_0 \sigma $ by \eqref{24thaugust20191}, we have $({}_0(\Psym))^*={}_0(\Psym)$. Then \eqref{27thaugust20194} follows from the definition of $V_{g^{(2)}}$.

 Finally, for the last assertion, let us recall the identification 
\begin{equation} \label{15thnov20193}	(\zeroE \tensorc^{\rm sym} \zeroE)^* = \{ \phi \in (\zeroE \tensorc \zeroE)^* : \phi(X) = \phi({}_0(\Psym)(X)) \ \forall X \in \zeroE \tensorc \zeroE \} \end{equation} 
	from \eqref{23rdaugust20193}. Now, if $X$ is in $\zeroE \tensorc^{\rm sym} \zeroE = {\rm Ran} ({}_0(\Psym)), $ then for all $Y$ in $\zeroE \tensorc \zeroE$, we have \begin{equation*} V_{g^{(2)}}(X)(Y) = V_{g^{(2)}}({}_0(\Psym)(X))(Y) = V_{g^{(2)}}(X)({}_0(\Psym)(Y)). \end{equation*}
	by \eqref{27thaugust20194}. Therefore, $V_{g^{(2)}}(\zeroE \tensorc^{\rm sym} \zeroE)$ is a subspace of $(\zeroE \tensorc^{\rm sym} \zeroE)^*$ by \eqref{15thnov20193}. Now by Proposition \ref{13thjune20192}, the map $V_{g^{(2)}}$ is one one and so we reach our
 our desired conclusion by a dimension argument.
\end{proof}

\section{Connections on a bicovariant differential calculus and their torsion} \label{section5}
In this section, we prove the existence of a bicovariant torsionless connection on any bicovariant differential calculus which satisfies the condition that ${}_0 \sigma$ is diagonalisable. We will be working with right connections as opposed to left connections. We start by recalling the definition of right connections and their torsion for which we will need the space of two-forms $\twoform$ defined in Definition \ref{25thmay20194}. 
\begin{defn} (\cite{heckenberger})
	Let $ (\E, d)$ be a bicovariant differential calculus on $\A.$ A (right) connection on $\E$ is a $\IC$-linear map $\nabla: \E \to \E \tensora \oneform$ such that, for all $a$ in $\A$ and $\rho$ in $\E$, the following equation holds:
	$$ \nabla(\rho a)= \nabla(\rho)a + \rho \tensora da. $$
	The torsion of a connection $\nabla$ on $\E$ is the right $\A$-linear map 
	$$T_\nabla:= \wedge \circ \nabla + d :\E \to \twoform.$$
	$\nabla$ is said to be torsionless if $T_{\nabla}=0$.
\end{defn}
Our notion of torsion is the same as that of \cite{heckenberger}, with the only difference being that they work with left connections.

 We say that $ \nabla $ is left (respectively, right)-covariant if $ \nabla $ is a left (respectively, right)-covariant map between the bicovariant bimodules $ \E $ and $ \E \tensora \E. $
\begin{lemma} (\cite{heckenberger}) \label{nablaleftinv}
	If $\nabla$ is a left-covariant connection on a bicovariant differential calculus $ (\E, d),$ then $\nabla(\zeroE) \subseteq \zeroE \tensorc \zeroE$.
\end{lemma}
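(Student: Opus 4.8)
The plan is to apply the definition of left-covariance directly to a left-invariant one-form, since the result is essentially an unwinding of definitions. Recall (Definition \ref{8thnov20191}) that the assertion that $\nabla$ is left-covariant means precisely that
$$ (\id \tensorc \nabla)\, \Delta_\E = \Delta_{\E \tensora \E} \circ \nabla $$
as $\IC$-linear maps from $\E$ to $\A \tensorc (\E \tensora \E)$. The one subtlety I would flag at the outset is that $\nabla$ is \emph{not} right $\A$-linear (it obeys the Leibniz rule rather than $\A$-linearity), so I cannot invoke Proposition \ref{ahoma} or Proposition \ref{14thfeb20192} to conclude that invariant elements map to invariant elements; those results presuppose right $\A$-linearity. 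However, the notion of left-covariance in Definition \ref{8thnov20191} is defined for arbitrary $\IC$-linear maps, so the displayed comodule identity is available and is all that is needed.

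First I would fix $\omega \in \zeroE$, so that by Definition \ref{21staugust20193} we have $\Delta_\E(\omega) = 1 \tensorc \omega$. Applying the left-covariance identity to $\omega$ then gives
$$ \Delta_{\E \tensora \E}(\nabla(\omega)) = (\id \tensorc \nabla)\Delta_\E(\omega) = (\id \tensorc \nabla)(1 \tensorc \omega) = 1 \tensorc \nabla(\omega). $$
This is exactly the statement that $\nabla(\omega)$ is a left-invariant element of $\E \tensora \E$, i.e. $\nabla(\omega) \in {}_0(\E \tensora \E)$.

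Finally I would invoke Proposition \ref{3rdaugust20192}, which identifies ${}_0(\E \tensora \E)$ with $\zeroE \tensorc \zeroE$, to conclude that $\nabla(\omega) \in \zeroE \tensorc \zeroE$, as claimed. There is no genuine obstacle in the argument; the only point requiring care is the bookkeeping noted above, namely to use the comodule characterisation of left-covariance directly on $\nabla$ rather than any of the module-map consequences derived earlier in the section.
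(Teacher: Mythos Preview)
Your proof is correct and is essentially the same argument the paper has in mind. The paper's one-line proof cites Proposition \ref{14thfeb20192} together with Proposition \ref{3rdaugust20192}; your caution about the right $\A$-linearity hypothesis is well-placed, but if you look at the proof of the forward direction in Proposition \ref{ahoma} (which is what Proposition \ref{14thfeb20192} invokes), the computation $\Delta_N(T(m)) = (\id \tensorc T)(1 \tensorc m) = 1 \tensorc T(m)$ uses only left-covariance, not right $\A$-linearity---and this is exactly the computation you wrote out.
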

\begin{proof}
	This follows by combining Proposition \ref{14thfeb20192} and Proposition \ref{3rdaugust20192}.
\end{proof}	
Now we state and prove the main result of this section which requires the diagonalisability of the map $ {}_0 \sigma. $ Indeed, we will be using the map $ Q = \wedge|_{\F} : \F \rightarrow \twoform $ (Definition \ref{5thseptember20191sm}) which makes sense due to the splitting $ \E \tensora \E = (\E \tensorsym \E) \oplus \F $ (Theorem \ref{25thmay20195}) which in turn follows from the assumption of diagonalisability of the map $ {}_0 \sigma. $ Let us recall that $Q$ is a bimodule isomorphism from $\F$ to $ \twoform. $

\begin{thm} \label{20thaugust20192}
	Suppose $(\E,d)$ is a bicovariant differential calculus on $\A$ such that ${}_0 \sigma$ is diagonalisable. Then $\E$ admits a bicovariant torsionless connection.
\end{thm}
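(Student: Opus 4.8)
The plan is to build the connection explicitly on the left-invariant one-forms and to extend it by the Leibniz rule, using the splitting of Theorem \ref{25thmay20195} to force the torsion to vanish. Recall from Definition \ref{5thseptember20191sm} that $Q = \wedge|_\F : \F \to \twoform$ is a bimodule isomorphism, and that $\Psym$ (Proposition \ref{16thseptember20191}) is the idempotent with range $\E \tensorsym \E = \mathrm{Ker}(\wedge)$ and kernel $\F$; consequently $\wedge = Q \circ (\id - \Psym)$. By Corollary \ref{25thmay20193} we have $d\omega \in {}_0(\twoform)$ for every $\omega \in \zeroE$, and by \eqref{25thmay20196} the map $Q^{-1}$ sends ${}_0(\twoform)$ into ${}_0\F \subseteq \zeroE \tensorc \zeroE$. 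So I first define
$$ \nabla_0 := -\, Q^{-1} \circ d|_{\zeroE} : \zeroE \to {}_0\F \subseteq \zeroE \tensorc \zeroE . $$
Since $\E$ is free as a right $\A$-module on a basis $\{\omega_i\}$ of $\zeroE$ (Proposition \ref{moduleiso}), I extend $\nabla_0$ to a $\IC$-linear map $\nabla : \E \to \E \tensora \E$ by $\nabla(\sum_i \omega_i a_i) = \sum_i \nabla_0(\omega_i)\, a_i + \sum_i \omega_i \tensora d a_i$. Because the decomposition $\rho = \sum_i \omega_i a_i$ is unique, $\nabla$ is well defined, and a short computation using $d(ba) = (db)a + b\,da$ together with $\omega \tensora b\,da = \omega b \tensora da$ in $\E \tensora \E$ verifies the connection identity $\nabla(\rho a) = \nabla(\rho) a + \rho \tensora da$.

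Torsionlessness is immediate from the construction. The torsion $T_\nabla = \wedge \circ \nabla + d$ is right $\A$-linear, so by right $\A$-totality of $\zeroE$ it suffices to check that it vanishes on each $\omega \in \zeroE$. There $\nabla(\omega) = \nabla_0(\omega) \in \F = \mathrm{Ker}(\Psym)$, hence $(\id - \Psym)(\nabla_0(\omega)) = \nabla_0(\omega)$ and
$$ \wedge(\nabla(\omega)) = Q\big((\id - \Psym)(\nabla_0(\omega))\big) = Q(\nabla_0(\omega)) = -\,Q(Q^{-1}(d\omega)) = -\,d\omega, $$
so $T_\nabla(\omega) = -d\omega + d\omega = 0$, and therefore $T_\nabla = 0$ on all of $\E$.

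It remains to show $\nabla$ is bicovariant. The point is that $\nabla_0 = -Q^{-1}\circ d$ is a composition of bicovariant maps: $d$ is bicovariant by \eqref{23rdaugust20191} and Proposition \ref{25thmay20192}, and $Q$ is bicovariant by Corollary \ref{25thmay20193}, so $\nabla_0$ is a bicovariant map of comodules $\zeroE \to \zeroE \tensorc \zeroE$ (in particular $\nabla_0(\zeroE)\subseteq \zeroE\tensorc\zeroE$, as required by Lemma \ref{nablaleftinv}). I then propagate each covariance to $\nabla$ by a direct calculation on the generators $\omega a$ ($\omega \in \zeroE$, $a \in \A$), which is enough because the covariance conditions are $\IC$-linear in the argument and such elements span $\E$. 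For left-covariance one expands $\deltaE(\omega a) = a_{(1)} \tensorc \omega a_{(2)}$ and $\deltaE(da) = a_{(1)} \tensorc d(a_{(2)})$ and matches both terms of $\nabla(\omega a) = \nabla_0(\omega)a + \omega \tensora da$ against $(\id \tensorc \nabla)\deltaE(\omega a)$. For right-covariance one uses that $\omega_{(0)} \in \zeroE$ (a consequence of \eqref{8thnov20194}), the right-covariance of $\nabla_0$, and $\Edelta(da) = d(a_{(1)}) \tensorc a_{(2)}$, again matching term by term against $(\nabla \tensorc \id)\Edelta(\omega a)$.

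The main obstacle is precisely this last step. Since $\nabla$ is a connection rather than a module map, the clean criterion of Proposition \ref{18thsep20196} does not apply, and one cannot merely test covariance on a total set and invoke right $\A$-linearity. Instead the tensor-product coaction computations must be carried out by hand on the generators $\omega a$, carefully tracking how the coactions interact with both the $\nabla_0$-term and the extra $\omega \tensora da$ term produced by the Leibniz rule; the right-covariant case is the more delicate one, because the right coaction $\Edelta$ acts nontrivially on the left-invariant generators.
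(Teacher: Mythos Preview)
Your proposal is correct and follows essentially the same route as the paper: define $\nabla_0 = -Q^{-1}\circ d$ on $\zeroE$, extend by the Leibniz rule via the free right $\A$-module structure, check torsionlessness on $\zeroE$ (your observation that $T_\nabla$ is right $\A$-linear is a clean shortcut), and then verify left- and right-covariance by hand on the generators $\omega a$. The paper carries out the two covariance computations you sketch in full detail, using Lemma~\ref{6thjune20192} (the Sweedler form of the bicovariance of $d$) and the bicovariance of $Q^{-1}$ exactly as you anticipate; your identification of the right-covariance step as the delicate one is accurate, and the paper's argument there matches what you outline.
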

\begin{proof}
The proof of existence of a torsionless connection $ \nabla_0 $ follows exactly along the lines of Theorem 2.13 of \cite{article1}. The only difference here is that we need to define $ \nabla_0 $ in such a way that it remains bicovariant.

	We define $\widetilde{\nabla}_0: \zeroE \to \zeroE \tensorc \zeroE$ by 
	$$\widetilde{\nabla}_0(\omega) = Q^{-1} (-d(\omega)).$$
	Indeed, by Corollary \ref{25thmay20193} and \eqref{25thmay20196}, $\widetilde{\nabla}_0(\omega) $ is an element of $ \zeroE \tensorc \zeroE$ for all $\omega$ in $\zeroE.$ Let $\{ \omega_i \}_i$ be a vector space basis of $\zeroE$. By the right $\A$-totality of $\zeroE$ in $\E$, we extend $\widetilde{\nabla}_0$ to a map $\nabla_0 : \E \to \E \tensora \E$ by the formula
	$$ \nabla_0 (\sum_i \omega_i a_i) = \sum_i \widetilde{\nabla}_0(\omega_i) a_i + \sum_i \omega_i \tensora da_i .$$
	Since $\E$ is a free module with basis $ \{ \omega_i \}_i, $ the above formula is well-defined. 
	It follows that for all $\omega$ in $\zeroE$ and for all $a$ in $\A$, 
	$$\nabla_0(\omega a) = \widetilde{\nabla}_0(\omega)a + \omega \tensora da.$$
	From this equation and the right $\A$-totality of $\zeroE$ in $\E,$ it can be easily checked that $ \nabla_0 $ is a connection.
	Now we prove that $\nabla_0$ is torsionless. Indeed, since by Definition \ref{5thseptember20191sm}, we have $ \wedge \circ Q^{-1} = \id_{\Omega^2 (\A)},$ we can deduce that
	\begin{align*}
		\wedge \circ \nabla_0(\omega a) = & \wedge \circ (\widetilde{\nabla}_0(\omega)a + \omega \tensora da)
		= \wedge \circ Q^{-1} (-d(\omega)) a + \omega \wedge da\\
		= & -d(\omega)a + \omega \wedge da
		= -d(\omega a).
	\end{align*}
	Before proceeding further, let us note that since $ \nabla_0 $ coincides with $ \widetilde{\nabla}_0 $ on $ \zeroE $ and $ \widetilde{\nabla}_0 (\omega) $ belongs to $ \zeroE \tensorc \zeroE $ if $ \omega \in \zeroE, $ $ \nabla_0 (\omega)$ is in $\zeroE \tensorc \zeroE. $ We will use this fact in the rest of the proof where $ \omega $ and $a$ will stand for arbitrary elements of $\zeroE$ and $\A$ respectively. 
	
	To show that $\nabla_0$ is left-covariant, we observe that since $ \nabla_0 (\omega) \in \zeroE \tensorc \zeroE, $ $ \Delta_{\E \tensora \E} (\nabla_0 (\omega)) = 1 \tensorc \nabla_0 (\omega).$ Using this, we get
	\begin{align*}
		 &(\id \tensorc \nabla_0)(\Delta_\E(\omega a))
		= (\id \tensorc \nabla_0)(\Delta_\E(\omega)\Delta(a))\\
		= & (\id \tensorc \nabla_0)((1 \tensorc \omega)(a_{(1)} \tensorc a_{(2)}))
		= a_{(1)} \tensorc \nabla_0(\omega a_{(2)})\\
		= & a_{(1)} \tensorc (\nabla_0(\omega) a_{(2)} + \omega \tensora d a_{(2)})
		= (1 \tensorc \nabla_0(\omega))(a_{(1)} \tensorc a_{(2)}) + a_{(1)} \tensorc \omega \tensora da_{(2)}\\
		= & (1 \tensorc \nabla_0(\omega))(a_{(1)} \tensorc a_{(2)}) + (da)_{(-1)} \tensorc \omega \tensora (da)_{(0)} \ \big({\rm by \ part \ (i) \ of \ Lemma \ \ref{6thjune20192} } \big)\\
		= & \Delta_{\E \tensora \E}(\nabla_0(\omega))\Delta(a) + \Delta_{\E \tensora \E}(\omega \tensora da)
		= \Delta_{\E \tensora \E}(\nabla_0(\omega)a + \omega \tensora da)\\
		= & \Delta_{\E \tensora \E}(\nabla_0(\omega a)).
	\end{align*}
	 Finally, we show that $\nabla_0$ is also right-covariant. Since $\E$ is a bicovariant bimodule, $ {}_\E \Delta (\omega) = \omega_{(0)} \tensorc \omegaone $ belongs to $ \zeroE \tensorc \E $ by Theorem 2.4 of \cite{woronowicz}. Hence $ \omega_{(0)} $ belongs to $ \zeroE $ and we are allowed to write
		$$ \nabla_0 (\omega_{(0)} \aone) = Q^{-1} (- d (\omega_{(0)})) \aone + \omega_{(0)} \tensorc d (\aone).$$
		Thus, we obtain
	\begin{equation*}
	\begin{aligned}
		 & (\nabla_0 \tensorc \id){}_{\E}\Delta(\omega a) = (\nabla_0 \tensorc \id)(\omega_{(0)} a_{(1)} \tensorc \omega_{(1)} a_{(2)})\\
		=& \nabla_0 (\omega_{(0)} a_{(1)}) \tensorc \omega_{(1)} a_{(2)} = (Q^{-1} (-d(\omega_{(0)}))a_{(1)} + \omega_{(0)} \tensora d (a_{(1)})) \tensorc \omega_{(1)} a_{(2)}\\
		=& (Q^{-1} \tensorc \id)\big(((-d) \tensorc \id)(\omega_{(0)} \tensorc \omega_{(1)})\big)(a_{(1)} \tensorc a_{(2)}) + \omega_{(0)} \tensora d (a_{(1)}) \tensorc \omega_{(1)} a_{(2)}\\
		=&(Q^{-1} \tensorc \id)\big(((-d) \tensorc \id)(\omega_{(0)} \tensorc \omega_{(1)})\big)(a_{(1)} \tensorc a_{(2)}) + \omega_{(0)} \tensora (da)_{(0)} \tensorc \omega_{(1)} (da)_{(1)}\\
		& \big({\rm by \ Part \ (ii) \ of \ Lemma \ \ref{6thjune20192} } \big)\\
		=&(Q^{-1} \tensorc \id)\big(((-d) \tensorc \id)({}_{\E \tensora \E}\Delta(\omega))\big)(\Delta(a)) + {}_{\E \tensora \E}\Delta(\omega \tensora da)\\
		=& (Q^{-1} \tensorc \id) \big({}_{\twoform} \Delta(-d(\omega))\big)\Delta(a) + {}_{\E \tensora \E}\Delta(\omega \tensora da)\\
		& \big({\rm since \ d \ is \ a \ bicovariant \ map \ from \ \E \ to \ \Omega^2 (\A) \ by \ Proposition \ \ref{25thmay20192} \ } \big)\\
		=& {}_{\E \tensora \E}\Delta(Q^{-1}(-d(\omega)))\Delta(a) + {}_{\E \tensora \E}\Delta(\omega \tensora da) \\
		& \big({\rm since \ Q \ is \ right \ covariant \ by \ Corollary \ \ref{25thmay20193} \ } \big)\\
		=& {}_{\E \tensora \E}\Delta(\nabla_0(\omega))\Delta(a) + {}_{\E \tensora \E}\Delta(\omega \tensora da)\\
		=& {}_{\E \tensora \E}\Delta(\nabla_0(\omega)a + \omega \tensora da)\\
		=& {}_{\E \tensora \E}\Delta(\nabla_0(\omega a)).
	\end{aligned}	
	\end{equation*}
	This finishes the proof.
\end{proof}

We end this section by proving the following result which will be needed in the proof of Proposition \ref{Phigiso}.
\begin{lemma} \label{15feb20191}
	If $\nabla_1$ and $\nabla_2$ are two left-covariant torsionless connections on a bicovariant differential calculus $ (\E, d)$ on $\A,$ then $\nabla_1 - \nabla_2$ is an element of $\homc(\zeroE, \zeroE \tensorc^{\rm sym} \zeroE)$.
\end{lemma}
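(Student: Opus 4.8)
The plan is to show that $\nabla_1 - \nabla_2$, viewed as a map on $\zeroE$, is $\IC$-linear and takes values inside $\zeroE \tensorc^{\rm sym} \zeroE$. First I would observe that the Leibniz (inhomogeneous) terms of the two connections cancel: for $\rho \in \E$ and $a \in \A$, the defining identity gives $\nabla_i(\rho a) = \nabla_i(\rho) a + \rho \tensora da$ for $i = 1, 2$, so that $(\nabla_1 - \nabla_2)(\rho a) = (\nabla_1 - \nabla_2)(\rho) a$. Hence $\nabla_1 - \nabla_2$ is a right $\A$-linear map from $\E$ to $\E \tensora \E$, and being a difference of left-covariant maps it is itself left-covariant. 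By Proposition \ref{ahoma} it is therefore determined by its restriction to $\zeroE$, and this restriction is a $\IC$-linear map which I will now locate precisely.

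Next I would pin down the target of this restriction using the two defining properties of the connections. On the one hand, Lemma \ref{nablaleftinv} gives $\nabla_i(\zeroE) \subseteq \zeroE \tensorc \zeroE$ for each left-covariant connection, whence $(\nabla_1 - \nabla_2)(\zeroE) \subseteq \zeroE \tensorc \zeroE$. On the other hand, both connections are torsionless, so $\wedge \circ \nabla_i + d = 0$ for $i = 1, 2$; subtracting these two equations yields $\wedge \circ (\nabla_1 - \nabla_2) = 0$, so the image of $\nabla_1 - \nabla_2$ is contained in ${\rm Ker}(\wedge) = \E \tensorsym \E$.

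Finally I would combine the two containments. An element of $\zeroE \tensorc \zeroE$ is automatically left-invariant, since $\zeroE \tensorc \zeroE = {}_0(\E \tensora \E)$ by Proposition \ref{3rdaugust20192}; if it additionally lies in ${\rm Ker}(\wedge) = \E \tensorsym \E$, then by the very definition of $\zeroE \tensorc^{\rm sym} \zeroE$ as the space of left-invariant elements of $\E \tensorsym \E$ (Definition \ref{22ndaugust20192}, equivalently identified with ${\rm Ker}({}_0\sigma - 1)$ in Lemma \ref{13thjune20191}) it belongs to $\zeroE \tensorc^{\rm sym} \zeroE$. Applying this to each $(\nabla_1 - \nabla_2)(\omega)$ with $\omega \in \zeroE$ shows the restriction maps $\zeroE$ into $\zeroE \tensorc^{\rm sym} \zeroE$, i.e. $\nabla_1 - \nabla_2 \in \homc(\zeroE, \zeroE \tensorc^{\rm sym} \zeroE)$.

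There is no deep obstacle here: the argument is a short bookkeeping once Lemma \ref{nablaleftinv}, the cancellation of the Leibniz terms, the torsionless condition, and the identification of Lemma \ref{13thjune20191} are in hand. The only point requiring mild care is recognizing that the intersection of ``lands in $\zeroE \tensorc \zeroE$'' (coming from left-covariance) with ``lands in ${\rm Ker}(\wedge)$'' (coming from torsionlessness) is precisely $\zeroE \tensorc^{\rm sym} \zeroE$, which is exactly what the definition of the left-invariant symmetric two-tensors provides.
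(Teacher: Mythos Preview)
Your proof is correct and follows essentially the same route as the paper: use torsionlessness to land in ${\rm Ker}(\wedge)$, use Lemma \ref{nablaleftinv} to land in $\zeroE \tensorc \zeroE$, and then identify the intersection as $\zeroE \tensorc^{\rm sym} \zeroE$. You add a little more detail than the paper (the explicit cancellation of Leibniz terms and right $\A$-linearity), but this is harmless and the key steps coincide.
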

\begin{proof}
	If $\nabla_1$ and $\nabla_2$ are two torsionless connections, we have that $ \wedge \circ \nabla_1 = - d = \wedge \circ \nabla_2$. Therefore, 
	$$\rm Ran(\nabla_1 -\nabla_2) \subseteq {\rm Ker} (\wedge) = \E \tensorsym \E.$$
	Moreover, by Lemma \ref{nablaleftinv}, if $\omega$ is an element of $\zeroE$, then $(\nabla_1 -\nabla_2)(\omega)$ is in $\zeroE \tensorc \zeroE,$ i.e, $ (\nabla_1 - \nabla_2) (\omega) $ is invariant under $ \Delta_{\E \tensora \E}. $ Hence, by \eqref{22ndaugust20194}, $(\nabla_1 -\nabla_2)(\omega)$ is an element of $ {}_0(\E \tensorsym \E) = \zeroE \tensorc^{\rm sym} \zeroE$.
\end{proof}

\section{Metric Compatibility of a bicovariant connection} \label{section4}

In this section, we define the notion of metric-compatibility of a left-covariant connection with a left-invariant pseudo-Riemannian metric. We will need the map $ {}_0 (\Psym) $ introduced in Definition \ref{27thmay20193}. Our definition coincides with that in the classical case (Proposition \ref{20thnov20191}) and also with that in \cite{heckenberger} for cocycle deformations of classical Lie groups. The latter statement is derived at the end of Section \ref{sectioncocyclelc}.
\begin{defn}
	Let $\nabla$ be a left-covariant connection on a bicovariant calculus $(\E, d)$ and $ g $ a left-invariant pseudo-Riemannian metric. Then we define 
	$$\widetilde{\Pi^0_g}(\nabla):{}_0\E \tensorc {}_0\E \rightarrow {}_0\E ~ {\rm by} ~ {\rm the} ~ {\rm following} ~ {\rm formula}:$$
\begin{equation} \label{1stmay20191}	\widetilde{\Pi^0_g}(\nabla)(\omega_i \tensorc \omega_j)=2(\id \tensorc g)(\sigma \tensorc \id)(\nabla \tensorc \id){}_0(\Psym)(\omega_i \tensorc \omega_j). \end{equation}
	Next, for all $ \omega_1,\omega_2$ in $\zeroE $ and $a$ in $\A$, we define $\widetilde{\Pi_g}(\nabla):\E \tensora \E \rightarrow \E$ by 
	$$\widetilde{\Pi_g}(\nabla)\circ {\widetilde{u}}^{\E \tensora \E}(\omega_1 \tensorc \omega_2 \tensorc a)= \widetilde{\Pi^0_g}(\nabla)(\omega_1 \tensorc \omega_2)a + g(\omega_1 \tensora \omega_2)da.$$
\end{defn}
It is easy to see that $ \widetilde{\Pi^0_g} (\nabla) $ indeed maps $ \zeroE \tensorc \zeroE $ to $\zeroE.$ Indeed, let $\omega_1, \omega_2$ be elements of $\zeroE$. Since ${}_0(\Psym)$ is a map from $\zeroE \tensorc \zeroE$ to itself, $ {}_0(\Psym) (\omega_1 \tensorc \omega_2)$ is in $\zeroE \tensorc \zeroE. $ Then, by Lemma \ref{nablaleftinv}, $(\nabla \tensorc \id) ({}_0(\Psym))(\omega_1 \tensorc \omega_2)$ is in $\zeroE \tensorc \zeroE \tensorc \zeroE$. Since $\sigma$ is left-covariant and $g$ is left-invariant, Proposition \ref{14thfeb20192} and the second assertion of Lemma \ref{14thfeb20191} imply that the element $(\id \tensorc g)(\sigma \tensorc \id)(\nabla \tensorc \id) ({}_0(\Psym))(\omega_1 \tensorc \omega_2)$ belongs to $\zeroE .$

Finally, by Proposition \ref{moduleiso} and the notation adopted in Proposition \ref{3rdaugust20192} $ \widetilde{u}^{\E \tensora \E}: \zeroE \tensorc \zeroE \tensorc \A \to \E \tensora \E $ is an isomorphism, hence $\widetilde{\Pi_g}$ is well-defined.
\begin{rmk} \label{23rdnov20194}
If $ \nabla $ is left-covariant and $g$ is left-invariant, the above argument shows that $ \widetilde{\Pi_g} (\nabla) (\zeroE \tensorc \zeroE) \subseteq \zeroE$ and thus by Proposition \ref{14thfeb20192}, the map $ \widetilde{\Pi_g} (\nabla) $ is left-covariant. 
\end{rmk}
For the rest of the article, $ dg $ will denote the map
$$ dg: \E \tensora \E \rightarrow \E, \ dg (e \tensora f) = d (g (e \tensora f)). $$
Now we define the notion of metric compatibility of a bicovariant connection. 
\begin{defn} \label{metriccompatibility}
	Suppose $(\E, d)$ is a left-covariant differential calculus over $\A$ and $g$ is a left-invariant pseudo-Riemannian metric. We say that a left-covariant connection $\nabla$ on $\E$ is compatible with $g$ if, as maps from $ \E \tensora \E $ to $\E$,
	\[ \widetilde{\Pi_g}(\nabla)=dg .\]
\end{defn}
We now show that our formulation of metric-compatibility of a connection coincides with that in the classical case of commutative Hopf algebras. 
\begin{prop} \label{20thnov20191}
The above definition of metric compatibility coincides with that in the classical case.
\end{prop}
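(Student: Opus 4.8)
The plan is to specialise every ingredient in Definition~\ref{metriccompatibility} to the classical situation and to verify directly that the equality $\widetilde{\Pi_g}(\nabla)=dg$ becomes the usual metric-compatibility condition $\nabla g=0$. In the classical case $\A$ is the commutative algebra of functions on a Lie group, $\E=\oneformclassical$ is a symmetric bimodule (so $a\theta=\theta a$), the braiding $\sigma$ is the flip $\sigma(e\tensora f)=f\tensora e$, and by \eqref{24thaugust20192} the symmetriser is ${}_0(\Psym)=\frac{1}{2}(1+{}_0\sigma)$. Moreover $g$ is symmetric: since $g\circ\sigma=g$ (Definition~\ref{24thmay20191}(i)) and $\sigma$ is the flip, $g(\omega\tensora\eta)=g(\eta\tensora\omega)$. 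I would first record that for one-forms $\omega,\eta$ the operator $(\id\tensorc g)(\sigma\tensorc\id)(\nabla\tensorc\id)$ applied to $\omega\tensorc\eta$ is exactly the classical contraction $g(\nabla\omega,\eta)\in\E$: writing $\nabla\omega=\sum_k e_k\tensora\theta_k$, the flip moves the form-leg $\theta_k$ to the front and $g$ pairs $e_k$ with $\eta$, producing $\sum_k g(e_k\tensora\eta)\theta_k$.

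With this identification in hand, the first step is to evaluate $\widetilde{\Pi^0_g}(\nabla)$ on invariant generators. Since ${}_0(\Psym)(\omega_i\tensorc\omega_j)=\frac{1}{2}(\omega_i\tensorc\omega_j+\omega_j\tensorc\omega_i)$ and the prefactor $2$ in \eqref{1stmay20191} cancels the $\frac{1}{2}$, we obtain
\begin{equation*}
\widetilde{\Pi^0_g}(\nabla)(\omega_i\tensorc\omega_j)=g(\nabla\omega_i,\omega_j)+g(\nabla\omega_j,\omega_i).
\end{equation*}
Writing $\nabla\omega_j=\sum_l \eta_l\tensora\theta_l'$ and using the symmetry $g(\eta_l\tensora\omega_i)=g(\omega_i\tensora\eta_l)$ rewrites the second summand as $g(\omega_i,\nabla\omega_j)$, so that $\widetilde{\Pi^0_g}(\nabla)(\omega_i\tensorc\omega_j)=g(\nabla\omega_i,\omega_j)+g(\omega_i,\nabla\omega_j)$. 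This is precisely the classical derivative-of-the-metric expression $\Phi_g(\nabla)(\omega_i\tensora\omega_j)$ evaluated on invariant one-forms.

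The second step is to extend this to arbitrary one-forms. Using the defining extension formula for $\widetilde{\Pi_g}(\nabla)$ on a general element $(\omega_1\tensora\omega_2)a$, together with the Leibniz rule $\nabla(\omega_2 a)=\nabla(\omega_2)a+\omega_2\tensora da$, the right $\A$-linearity of $g$, and the symmetry $a\theta=\theta a$ of the bimodule, I would expand the classical map $\Phi_g(\nabla)(\omega_1\tensora\omega_2 a)$ and check that both sides equal $\widetilde{\Pi^0_g}(\nabla)(\omega_1\tensorc\omega_2)a+g(\omega_1\tensora\omega_2)da$. Since $\zeroE\tensorc\zeroE$ is right $\A$-total in $\E\tensora\E$, this proves $\widetilde{\Pi_g}(\nabla)=\Phi_g(\nabla)$ as maps from $\E\tensora\E$ to $\E$, whence the two compatibility conditions $\widetilde{\Pi_g}(\nabla)=dg$ and $\Phi_g(\nabla)=dg$ literally coincide.

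The only delicate point, and the one I would write out most carefully, is the bookkeeping of the flip $\sigma$ together with the symmetry of $g$ needed to match the single symmetrised expression produced by ${}_0(\Psym)$ and the factor $2$ against the two-term classical contraction $g(\nabla\omega,\eta)+g(\omega,\nabla\eta)$; beyond this the argument is a routine application of the Leibniz rules and the right $\A$-linearity already exploited throughout Section~\ref{section4}, together with the fact (second assertion of Lemma~\ref{14thfeb20191}) that $g$ takes scalar values on invariant arguments.
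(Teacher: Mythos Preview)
Your proposal is correct and follows essentially the same route as the paper's proof: specialise $\sigma$ to the flip, use ${}_0(\Psym)=\tfrac{1}{2}(1+{}_0\sigma)$ so that the factor $2$ in \eqref{1stmay20191} cancels the symmetriser's $\tfrac{1}{2}$, verify the identity on left-invariant elements, and then extend via the Leibniz rules and right $\A$-totality. The only presentational difference is that the paper states the classical compatibility condition explicitly as $g_{13}\big(\nabla(e)\tensora e'+\nabla(e')\tensora e\big)=dg(e\tensora e')$ with $g_{13}=(\id\tensora g)(\sigma^{\rm can}\tensora\id)$ (citing \cite{article1}), whereas you package this as an undefined map $\Phi_g(\nabla)$; making that definition explicit would close the loop.
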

\begin{proof} Let $G$ be a linear algebraic group, $\A$ be its (commutative) Hopf algebra of regular functions and $g$ be a left-invariant pseudo-Riemannian metric on the classical space of forms. In this case, the canonical braiding map $ \sigma $ is equal to the flip map $ \sigmacan, $ i.e, for all $e, f $ in $\Omega^1(\A),$
\[ \sigma (e \tensora f) = \sigmacan (e \tensora f) = f \tensora e. \]

Since $ g \circ \sigma = g, $ we have $ g (e \tensora f) = g (f \tensora e).$ Moreover, the map $ \Psym $ is equal to $ \frac{1 }{2}(1 + \sigma). $ 
Let us recall (\cite{article1}) that a connection $\nabla$ on $\Omega^1(\A)$ is compatible with $g$ if and only if
$$g_{13}\big(\nabla(e)\otimes_{\A} e^\prime + \nabla(e^\prime)\otimes_{\A} e \big)=dg(e \otimes_{\A} e^\prime),$$
for all $e, e^\prime$ in $\Omega^1(\A)$, where $g_{13}=(\id \otimes_{\A} g)(\sigma^{{\rm can}} \otimes_{\A} \id) $.\\
Let $\{ e_i \}_i$ be a basis of left-invariant one-forms of $\Omega^1(\A)$. If $e, e^\prime$ belong to $\Omega^1(\A),$ then there exist elements $a_i, b_j$ in $\A$ such that $ e = \sum_i e_i a_i$ and $ e^\prime = \sum_j e_j b_j.$ If $\nabla$ is metric compatible in the sense of Definition \ref{metriccompatibility}, i.e, $\widetilde{\Pi_g}(\nabla) - dg = 0$, then using the Leibniz properties of $ \nabla $ and $d$ and the equation $ g (e_i \tensora e_j) = g (e_j \tensora e_i), $ it is easy to check that
	\begin{align*} &g_{13}(\nabla(e)\otimes_{\A} e^\prime +\nabla(e^\prime) \otimes_{\A} e) = \widetilde{\Pi_g}(\nabla)(\sum_{ij} e_i \tensora e_j a_i b_j)\\ =& dg(\sum_{ij} e_i \tensora e_j a_i b_j) = dg(e \tensora e^\prime). \end{align*}

This argument is reversible and thus, our definition of metric compatibility coincides with that in the classical case.
\end{proof}
It is also true that our definition of metric compatibility coincides with that of \cite{heckenberger} for cocycle deformations of classical Lie groups. We state this result at the end of this section (Proposition \ref{25thnov20191}). 

In Remark \ref{28thnov20191}, we mentioned that the definition of two-forms considered in \cite{heckenberger} is in general different from the definition taken in this article. If we work with the two-forms of \cite{heckenberger}, our definition of metric-compatibility will need to be suitably modified. This will be taken up elsewhere.  

\subsection{Covariance properties of the map $ \widetilde{\Pi_g} $} \label{4thdec20191}

In this subsection, we derive some covariance properties of the maps $ \widetilde{\Pi_g},$ $\widetilde{\Pi_g} (\nabla) - dg $ which will be used in Subsection \ref{26thseptember20192} and Subsection \ref{24thnov20191}.

\begin{lemma} \label{21staugust20192}
	If $\nabla$ is a bicovariant connection on $\E$ and $g$ is a bi-invariant pseudo-Riemannian metric, then $\widetilde{\Pi_g}$ is a right-covariant map.
\end{lemma}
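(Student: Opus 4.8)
The plan is to verify the defining identity of right-covariance, ${}_\E\Delta\circ\widetilde{\Pi_g}(\nabla) = (\widetilde{\Pi_g}(\nabla)\tensorc\id)\,{}_{\E \tensora \E}\Delta$. The essential point to keep in mind is that $\widetilde{\Pi_g}(\nabla)$ is \emph{not} right $\A$-linear: its defining formula carries a Leibniz-type term $g(\omega_1\tensora\omega_2)\,da$, so Proposition \ref{18thsep20196} cannot be applied to it directly. I would therefore first split $\widetilde{\Pi_g}(\nabla)$ into an $\A$-linear part and a derivation part. Let $\Phi := \widetilde{u}^\E(\widetilde{\Pi^0_g}(\nabla)\tensorc\id)(\widetilde{u}^{\E \tensora \E})^{-1}$ be the right $\A$-linear, left-covariant extension of $\widetilde{\Pi^0_g}(\nabla)$ furnished by Proposition \ref{ahoma}, and recall $dg = d\circ g$. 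Using that $g(\omega_1\tensora\omega_2)\in\IC$ for $\omega_1,\omega_2\in\zeroE$ (second assertion of Lemma \ref{14thfeb20191}), one checks on the generating elements $\widetilde{u}^{\E \tensora \E}(\omega_1\tensorc\omega_2\tensorc a)$ that
$$\widetilde{\Pi_g}(\nabla) = \Phi + dg,$$
because $dg((\omega_1\tensora\omega_2)a) = d\big(g(\omega_1\tensora\omega_2)\,a\big) = g(\omega_1\tensora\omega_2)\,da$ while $\Phi((\omega_1\tensora\omega_2)a)=\widetilde{\Pi^0_g}(\nabla)(\omega_1\tensorc\omega_2)\,a$. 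It then suffices to show that each summand is right-covariant.

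The term $dg$ is immediate: $g$ is right-covariant since it is right-invariant (Lemma \ref{14thfeb20191}) and $d$ is right-covariant by \eqref{23rdaugust20191}, so that
$${}_\E\Delta\circ dg = (d\tensorc\id)\,\Delta\circ g = (d\tensorc\id)(g\tensorc\id)\,{}_{\E \tensora \E}\Delta = (dg\tensorc\id)\,{}_{\E \tensora \E}\Delta.$$
For $\Phi$, which \emph{is} right $\A$-linear and left-covariant with ${}_0\Phi = \widetilde{\Pi^0_g}(\nabla)$, Proposition \ref{18thsep20196} reduces the claim to showing that the map $\widetilde{\Pi^0_g}(\nabla):\zeroE\tensorc\zeroE\to\zeroE$ is right-covariant.

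To establish this last point I would exhibit $\widetilde{\Pi^0_g}(\nabla)$ as a composition of morphisms of right $\A$-comodules, namely
$$\zeroE\tensorc\zeroE \xrightarrow{\ {}_0(\Psym)\ } \zeroE\tensorc\zeroE \xrightarrow{\ {}_0\nabla\,\tensorc\,\id\ } \zeroE\tensorc\zeroE\tensorc\zeroE \xrightarrow{\ {}_0\sigma\,\tensorc\,\id\ } \zeroE\tensorc\zeroE\tensorc\zeroE \xrightarrow{\ \id\,\tensorc\,{}_0 g\ } \zeroE,$$
a composite of right-covariant maps being right-covariant. Here ${}_0(\Psym)$ and ${}_0\sigma$ are right-covariant because $\Psym$ (Proposition \ref{16thseptember20191}) and $\sigma$ (by \eqref{21staugust20194jb}) are bicovariant and right $\A$-linear, so their restrictions to the invariant subspaces are comodule morphisms; likewise ${}_0 g$ is right-covariant since $g$ is. The only factor requiring care is ${}_0\nabla = \nabla|_{\zeroE}$, because $\nabla$ is not $\A$-linear. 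Nevertheless, right-covariance of $\nabla$ gives ${}_{\E \tensora \E}\Delta(\nabla(\omega)) = (\nabla\tensorc\id)\,{}_\E\Delta(\omega)$, and since ${}_\E\Delta(\omega)\in\zeroE\tensorc\A$ (Theorem 2.4 of \cite{woronowicz}) and $\nabla(\zeroE)\subseteq\zeroE\tensorc\zeroE$ (Lemma \ref{nablaleftinv}), both sides already lie in $(\zeroE\tensorc\zeroE)\tensorc\A$; this is precisely the statement that ${}_0\nabla$ is right-covariant.

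The main obstacle is exactly this failure of $\A$-linearity: neither $\widetilde{\Pi_g}(\nabla)$ nor $\nabla$ is right $\A$-linear, so Proposition \ref{18thsep20196} is not applicable as stated, and the Leibniz term must be peeled off as the separately right-covariant map $dg$. Once the decomposition $\widetilde{\Pi_g}(\nabla)=\Phi+dg$ is in place, and once one observes that restriction to the invariant subspaces carries right-covariant maps (not necessarily $\A$-linear, as for $\nabla$) to genuine comodule morphisms, what remains is the routine verification that sums and compositions of right-covariant maps are again right-covariant.
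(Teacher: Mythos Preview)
Your proof is correct and follows essentially the same line as the paper's: the core observation is that $\widetilde{\Pi^0_g}(\nabla)$ is a composition of right-covariant maps (${}_0(\Psym)$, ${}_0\nabla\tensorc\id$, ${}_0\sigma\tensorc\id$, $\id\tensorc{}_0g$), each factor being right-covariant because $\sigma$, $\Psym$, $\nabla$, and $g$ are bicovariant. The paper's proof is a single sentence to this effect; you are more careful in that you explicitly split off the Leibniz term $dg$ and invoke Proposition~\ref{18thsep20196} for the remaining right $\A$-linear piece $\Phi$, which is appropriate since the statement names $\widetilde{\Pi_g}$ rather than $\widetilde{\Pi^0_g}$, but the substance is the same.
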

\begin{proof}
 The maps $ \sigma $ and $ {}_0(\Psym) $ are bicovariant (Proposition \ref{16thseptember20191}). Therefore, if $\nabla$ is also right-covariant, and $g$ is bi-invariant (and hence by the first assertion of Lemma \ref{14thfeb20191} also bicovariant), then $\widetilde{\Pi_g}$ is a composition of right-covariant maps and therefore, right-covariant. 
\end{proof}
\begin{prop} \label{proppignabla}
If the connection $\nabla$ is left-covariant and the pseudo-Riemannian metric $g$ is left-invariant, then the map $ \widetilde{\Pi_g} (\nabla) - dg: \E \tensora \E \rightarrow \E $ is a left-covariant right $\A$-linear map. Moreover, if $\nabla$ is bicovariant and the pseudo-Riemannian metric $g$ is bi-invariant, then $\widetilde{\Pi_g} (\nabla) - dg$ is also a bicovariant map.
\end{prop}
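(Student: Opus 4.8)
The plan is to reduce both statements to the characterisation of left-covariant right $\A$-linear maps in Proposition \ref{14thfeb20192}, after first establishing the (non-obvious) right $\A$-linearity of the difference. The key point is that neither $\widetilde{\Pi_g}(\nabla)$ nor $dg$ is right $\A$-linear on its own---each obeys a Leibniz-type rule---but the offending terms cancel. Concretely, using that every element of $\E \tensora \E$ is of the form $\widetilde{u}^{\E \tensora \E}(\xi \tensorc a)$ with $\xi \in \zeroE \tensorc \zeroE$ and $a \in \A$ (Corollary \ref{3rdaugust20191}), I would compute $T := \widetilde{\Pi_g}(\nabla) - dg$ on such an element. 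By the defining formula for $\widetilde{\Pi_g}(\nabla)$ one has $\widetilde{\Pi_g}(\nabla)(\xi \cdot a) = \widetilde{\Pi^0_g}(\nabla)(\xi)\, a + g(\xi)\, da$. On the other hand, since $g$ is right $\A$-linear and $g(\xi) \in \IC . 1$ for $\xi \in \zeroE \tensorc \zeroE$ (second assertion of Lemma \ref{14thfeb20191}), so that $d(g(\xi)) = 0$, one gets $dg(\xi \cdot a) = d(g(\xi)\, a) = g(\xi)\, da$. Subtracting, the term $g(\xi)\, da$ cancels and $T(\xi \cdot a) = \widetilde{\Pi^0_g}(\nabla)(\xi)\, a$.

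From this single formula both assertions of the first statement follow at once. Right $\A$-linearity is clear, since $T(\xi \cdot (ab)) = \widetilde{\Pi^0_g}(\nabla)(\xi)(ab) = \big(T(\xi \cdot a)\big) b$ and $\zeroE \tensorc \zeroE$ is right $\A$-total in $\E \tensora \E$. Taking $a = 1$ gives $T(\zeroE \tensorc \zeroE) = \widetilde{\Pi^0_g}(\nabla)(\zeroE \tensorc \zeroE) \subseteq \zeroE$ by Remark \ref{23rdnov20194}; hence $T$ carries ${}_0(\E \tensora \E) = \zeroE \tensorc \zeroE$ into ${}_0 \E = \zeroE$, and Proposition \ref{14thfeb20192} yields that $T$ is left-covariant. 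This settles the left-covariant case.

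For the bicovariant case I would argue that $T$ is right-covariant by exhibiting it as a difference of two genuinely right-covariant maps---covariance being a comodule property that does not require module linearity. The map $\widetilde{\Pi_g}(\nabla)$ is right-covariant by Lemma \ref{21staugust20192}. For $dg = d \circ g$, note that $g$ is right-covariant (being right-invariant, hence right-covariant by Lemma \ref{14thfeb20191}) while $d : \A \to \E$ is right-covariant by \eqref{23rdaugust20191}; a composition of right-covariant maps is right-covariant, so $dg$ is right-covariant. Therefore $T = \widetilde{\Pi_g}(\nabla) - dg$ is right-covariant, and combined with the left-covariance already established, $T$ is bicovariant.

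I expect the main obstacle to be precisely the right $\A$-linearity of $T$: one must resist treating $\widetilde{\Pi_g}(\nabla)$ and $dg$ as module maps (they are not) and instead see that their Leibniz defects coincide. The verification hinges on invoking, in the right order, that $g$ is scalar-valued on left-invariant tensors---so that $d$ annihilates $g(\xi)$---together with the right $\A$-linearity of $g$, after which the cancellation is forced.
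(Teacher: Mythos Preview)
Your proposal is correct and follows essentially the same approach as the paper: establish right $\A$-linearity by cancelling the Leibniz defects, deduce left-covariance from $T(\zeroE \tensorc \zeroE)\subseteq \zeroE$ via Proposition \ref{14thfeb20192}, and obtain right-covariance in the bicovariant case from Lemma \ref{21staugust20192} together with the right-covariance of $d\circ g$. In fact your explicit formula $T(\xi\cdot a)=\widetilde{\Pi^0_g}(\nabla)(\xi)\,a$ makes the right $\A$-linearity step---which the paper only asserts ``can be checked easily and we omit the proof''---completely transparent.
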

\begin{proof} We start by proving that $\widetilde{\Pi_g} - dg$ is a right $\A$-linear. Since $ \{ \omega \tensora \omega^\prime: \omega, \omega^\prime \in \zeroE \} $ is right $\A$-total in $ \E \tensora \E, $ it suffices to show that for all $ \omega_1, \omega_2 \in \zeroE $ and $a, b \in \A, $ we have: 
$$(\widetilde{\Pi_g} (\nabla) - dg)((\omega_1\tensora \omega_2 a)b) =\big((\widetilde{\Pi_g} (\nabla) - dg)(\omega_1\tensora \omega_2 a)\big)b.$$
This equation can be checked easily and we omit the proof.
Now, we prove that $\widetilde{\Pi_g} - dg$ is a left-covariant map. Since $g$ is left-invariant, for any $\omega_1, \omega_2$ in $\zeroE$, $g(\omega_1 \tensora \omega_2) \in \IC$ by the second assertion of Lemma \ref{14thfeb20191}, and so $dg(\omega_1 \tensora \omega_2)=0$. Hence, 
$$(\widetilde{\Pi_g}(\nabla)-dg)(\omega_1 \tensora \omega_2)= \widetilde{\Pi^0_{g}}(\omega_1 \tensorc \omega_2),$$
 which is in $\zeroE$. Therefore, by Proposition \ref{14thfeb20192} , the map $\widetilde{\Pi_g}(\nabla)-dg$ is a left-covariant map.\\
Finally, if $\nabla$ is bicovariant and $g$ is bi-invariant, then by Lemma \ref{21staugust20192}, $\widetilde{\Pi^0_{g}}$ is a right-covariant map. Moreover, $g$ and $d$~are bicovariant (first assertion of Lemma \ref{14thfeb20191} and Proposition \ref{cdc}). Hence $\widetilde{\Pi_g}(\nabla)-dg$ is also a bicovariant map.
\end{proof}

\begin{corr} \label{remark4.29}
Suppose $ \nabla $ is a bicovariant connection and $g$ is a bi-invariant pseudo-Riemannian metric on $(\E, d).$ Then the map $ \widetilde{\Pi_g} (\nabla) - dg $ is a right-covariant $\mathbb{C}$-linear map from $ \zeroE \otimes^{{\rm sym}}_{\mathbb{C}} \zeroE $ to $ \zeroE. $ 
\end{corr}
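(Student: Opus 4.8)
The plan is to obtain the statement as an essentially formal consequence of Proposition \ref{proppignabla} together with the descent of covariant maps to invariant elements developed in Subsection \ref{21staugust20191jb}. Write $T := \widetilde{\Pi_g}(\nabla) - dg$. Since $\nabla$ is bicovariant and $g$ is bi-invariant, Proposition \ref{proppignabla} already tells us that $T: \E \tensora \E \to \E$ is a bicovariant right $\A$-linear map. Recalling from Lemma \ref{13thjune20191} and \eqref{22ndaugust20194} that $\zeroE \tensorc^{\rm sym} \zeroE = {}_0(\E \tensorsym \E)$ sits inside ${}_0(\E \tensora \E) = \zeroE \tensorc \zeroE$, the left-covariance of $T$ together with Proposition \ref{ahoma} immediately yields $T(\zeroE \tensorc^{\rm sym} \zeroE) \subseteq T({}_0(\E \tensora \E)) \subseteq \zeroE$. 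Thus the restriction $ {}_0 T := T|_{{}_0(\E \tensora \E)}$ is at least a $\IC$-linear map from $\zeroE \tensorc^{\rm sym} \zeroE$ into $\zeroE$, and it remains only to verify right-covariance.

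For the right-covariance, first I would record that the right coaction ${}_{\E \tensora \E}\Delta$ carries left-invariant elements to left-invariant ones; this is Theorem 2.4 of \cite{woronowicz} (quoted just before Lemma \ref{21stjune20191}), so for $x \in {}_0(\E \tensora \E)$ we may write ${}_{\E \tensora \E}\Delta(x) = x_{(0)} \tensorc x_{(1)}$ with $x_{(0)} \in {}_0(\E \tensora \E)$. Applying the right-covariance of $T$ on all of $\E \tensora \E$ then gives
\begin{equation*}
{}_\E \Delta(T(x)) = (T \tensorc \id){}_{\E \tensora \E}\Delta(x) = T(x_{(0)}) \tensorc x_{(1)} = ({}_0 T \tensorc \id){}_{\E \tensora \E}\Delta(x),
\end{equation*}
where the last equality uses $T(x_{(0)}) = {}_0 T(x_{(0)})$. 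Since $T(x) = {}_0 T(x) \in \zeroE$ and the coactions ${}_\E\Delta$ and ${}_{\E\tensora\E}\Delta$ restrict to the right comodule structures on $\zeroE$ and $\zeroE \tensorc \zeroE$, this identity is exactly the right-covariance of ${}_0 T$ as a map $\zeroE \tensorc \zeroE \to \zeroE$.

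Finally, I would restrict further to $\zeroE \tensorc^{\rm sym} \zeroE$. The key point making this meaningful is that $\E \tensorsym \E = {\rm Ker}(\wedge)$ is a bicovariant sub-bimodule of $\E \tensora \E$, which was established inside the proof of Theorem \ref{25thmay20195}; consequently ${}_{\E \tensora \E}\Delta$ maps $\zeroE \tensorc^{\rm sym} \zeroE = {}_0(\E \tensorsym \E)$ into $(\zeroE \tensorc^{\rm sym} \zeroE) \tensorc \A$, so $\zeroE \tensorc^{\rm sym} \zeroE$ is a right subcomodule of $\zeroE \tensorc \zeroE$. The right-covariance identity for ${}_0 T$ obtained above therefore restricts cleanly to this subcomodule, giving the desired right-covariant $\IC$-linear map from $\zeroE \tensorc^{\rm sym} \zeroE$ to $\zeroE$. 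The only step that is not purely formal is checking that the right coaction preserves the symmetric piece, but this is already guaranteed by the bicovariance of $\E \tensorsym \E$ proved in Theorem \ref{25thmay20195}, so I expect the corollary to follow with no further computation.
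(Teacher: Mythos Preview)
Your proof is correct and follows essentially the same route as the paper: both arguments draw the bicovariance of $\widetilde{\Pi_g}(\nabla)-dg$ from Proposition~\ref{proppignabla} and then restrict to left-invariant symmetric tensors. The only cosmetic difference is that the paper first observes $dg=0$ on $\zeroE \tensorc^{\rm sym} \zeroE$ (since $g$ takes scalar values there) to identify the restriction with $\widetilde{\Pi^0_g}(\nabla)$, whereas you carry $T=\widetilde{\Pi_g}(\nabla)-dg$ throughout and spell out more explicitly why right-covariance descends to the subcomodule.
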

\begin{proof}
Since $ \zeroE \otimes^{{\rm sym}}_{\mathbb{C}} \zeroE \subseteq \zeroE \tensorc \zeroE $ and $ g (\zeroE \tensorc \zeroE) \in \mathbb{C}.1 $ (second assertion of Lemma \ref{14thfeb20191}), the map $ dg $ is equal to zero on $ \zeroE \otimes^{{\rm sym}}_{\mathbb{C}} \zeroE.$ Hence, 
$$ \widetilde{\Pi_g} (\nabla) - dg = \widetilde{\Pi_g} (\nabla) = \widetilde{\Pi^0_g} (\nabla) ~ {\rm on} ~ \zeroE \otimes^{{\rm sym}}_{\mathbb{C}} \zeroE \subseteq \zeroE \tensorc \zeroE.$$ 
However, as noted before, $ \widetilde{\Pi_g} (\nabla) (\zeroE \tensorc \zeroE) \subseteq \zeroE.$ The right-covariance follows from Proposition \ref{proppignabla}.
\end{proof}

The following result is an immediate corollary of the proof of Proposition \ref{proppignabla} and Definition \ref{metriccompatibility}.
\begin{corr} \label{28thoct20191sm}
	A connection $\nabla$ on a bicovariant calculus $(\E, d)$ is compatible with a bi-invariant pseudo-Riemannian metric $g$ if and only if $\widetilde{\Pi^0_{g}}(\nabla) = 0$ as a map on ${}_0 \E \tensorc {}_0 \E$.
\end{corr}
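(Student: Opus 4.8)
The plan is to read everything off the proof of Proposition \ref{proppignabla}, so that the corollary really is immediate. Two facts established there are the essential inputs. First, since $g$ is bi-invariant, for $\omega_1, \omega_2 \in \zeroE$ we have $g(\omega_1 \tensora \omega_2) \in \IC.1$ (second assertion of Lemma \ref{14thfeb20191}), whence $dg(\omega_1 \tensora \omega_2) = d(g(\omega_1 \tensora \omega_2)) = 0$, and therefore $(\widetilde{\Pi_g}(\nabla) - dg)(\omega_1 \tensora \omega_2) = \widetilde{\Pi^0_g}(\nabla)(\omega_1 \tensorc \omega_2)$. Second, $\widetilde{\Pi_g}(\nabla) - dg$ is a right $\A$-linear map from $\E \tensora \E$ to $\E$.

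With these in hand I would argue as follows. By Definition \ref{metriccompatibility}, compatibility of $\nabla$ with $g$ is exactly the statement $\widetilde{\Pi_g}(\nabla) - dg = 0$ on $\E \tensora \E$. The set $\{\omega_1 \tensora \omega_2 : \omega_1, \omega_2 \in \zeroE\}$ is right $\A$-total in $\E \tensora \E$ (Corollary \ref{3rdaugust20191} together with Proposition \ref{3rdaugust20192}), and a right $\A$-linear map vanishes on $\E \tensora \E$ if and only if it vanishes on a right $\A$-total set. By the first input, vanishing on that set is precisely the condition $\widetilde{\Pi^0_g}(\nabla)(\omega_1 \tensorc \omega_2) = 0$ for all $\omega_1, \omega_2 \in \zeroE$, i.e. $\widetilde{\Pi^0_g}(\nabla) = 0$ on $\zeroE \tensorc \zeroE$. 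This yields both implications at once.

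If one prefers to avoid invoking right $\A$-linearity of the difference, the same conclusion follows by a direct computation on a general generator $\omega_1 \tensora \omega_2 a$: the defining formula gives $\widetilde{\Pi_g}(\nabla)(\omega_1 \tensora \omega_2 a) = \widetilde{\Pi^0_g}(\nabla)(\omega_1 \tensorc \omega_2)a + g(\omega_1 \tensora \omega_2)da$, while right $\A$-linearity of $g$ and the scalar-valuedness of $g(\omega_1 \tensora \omega_2)$ give $dg(\omega_1 \tensora \omega_2 a) = d(g(\omega_1 \tensora \omega_2)a) = g(\omega_1 \tensora \omega_2)da$; subtracting, these agree for every $a$ exactly when $\widetilde{\Pi^0_g}(\nabla)(\omega_1 \tensorc \omega_2) = 0$, and right $\A$-totality upgrades this to equality on all of $\E \tensora \E$. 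I do not anticipate any genuine obstacle: the content is entirely contained in the proof of Proposition \ref{proppignabla}, and the only new observation is the trivial reduction that $dg$ vanishes on $\zeroE \tensorc \zeroE$.
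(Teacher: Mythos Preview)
Your argument is correct and is exactly what the paper intends: the corollary is stated as immediate from the proof of Proposition~\ref{proppignabla} together with Definition~\ref{metriccompatibility}, and you have spelled out precisely that reduction (vanishing of the right $\A$-linear map $\widetilde{\Pi_g}(\nabla)-dg$ on a right $\A$-total set, where it coincides with $\widetilde{\Pi^0_g}(\nabla)$). Both your routes---via right $\A$-linearity of the difference, and via the direct computation on generators $\omega_1\tensora\omega_2 a$---are faithful elaborations of the paper's one-line justification.
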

Let us remark that in Lemma 3.4 of \cite{heckenberger}, Heckenberger and Schm\"udgen prove an exact analogue of Corollary \ref{28thoct20191sm} for their formulation of metric compatibility. 

We end this section by comparing our notion of metric-compatibility with that of Heckenberger and Schm\"udgen (\cite{heckenberger}). Before we state our result, let us recall that a left connection on $\E$ is a $\IC$-linear map $\nabla: \E \rightarrow \E \tensora \E$ such that $ \nabla (a e) = a \nabla (e) + da \tensora e. $ Similarly, a left $\A$-linear pseudo-Riemannian metric on $\E$ is a left $\A$-linear map $ g: \E \tensora \E \rightarrow \A $ such that $ g \circ \sigma = g $ satisfying the condition that if $ g (e \tensora f) = 0 $ for all $e$ in $\E,$ then $f = 0.$

Suppose $ (\E, d) $ is a bicovariant differential calculus and $g$ a left $\A$-linear bi-invariant pseudo-Riemannian metric on $\E.$ The authors of \cite{heckenberger} call a left connection $\nabla$ on $\E$ to be compatible with $g$ if 
$$ (\id \tensorc g) (\nabla \tensorc \id) + (g \tensorc \id) (\id \tensorc \sigma) (\id \tensorc \nabla) = 0 ~ {\rm on} ~ \zeroE \tensorc \zeroE. $$
Therefore, we need to define the analogue of our compatibility for a bicovariant left connection $\nabla$ with respect to a left $\A$-linear bi-invariant pseudo-Riemannian metric $g$ in order to compare our definition with that in \cite{heckenberger}. To this end, we define a map
$$ \widetilde{{}_L \Pi^0_g} (\nabla):= 2(g \tensorc \id) (\id \tensorc \sigma) (\id \tensorc \nabla) {}_0(\Psym) : \zeroE \tensorc \zeroE \rightarrow \zeroE. $$
Then as before, we define an extension $ \widetilde{{}_L \Pi_g} (\nabla): \E \tensora \E \rightarrow \E $ by 
$$ \widetilde{{}_L \Pi_g} (\nabla) {\widetilde{v}}^{\E \tensora \E} (a \tensorc \omega_1 \tensorc \omega_2) = a \widetilde{{}_L \Pi^0_g} (\nabla) (\omega_1 \tensorc \omega_2) + (da) g (\omega_1 \tensorc \omega_2) ,$$
where $ {\widetilde{v}}^{\E \tensora \E}: \A \tensorc \zeroE \tensorc \zeroE \rightarrow \E \tensora \E $ is the multiplication map which we know is an isomorphism from Proposition \ref{moduleiso} and Corollary \ref{3rdaugust20191}. We say that the bicovariant left connection $\nabla$ is compatible with the left $\A$-linear bi-invariant pseudo-Riemannian metric $g$ if 
\begin{equation} \label{25thnov20192} \widetilde{{}_L \Pi_g} (\nabla) = dg. \end{equation}
It is easy to check that this definition coincides with the definition of metric-compatibility in the classical case. Then a result analogous to Corollary \ref{28thoct20191sm} can be derived to deduce that
\begin{equation} \label{25thnov20193} \widetilde{{}_L \Pi_g} (\nabla) = dg ~ {\rm if} ~ {\rm and} ~ {\rm only} ~ {\rm if} ~ \widetilde{{}_L \Pi^0_g} (\nabla) = 0. \end{equation} 
The next result compares the above two definitions of metric-compatibility. However, since this result needs the definitions and some results on cocycle deformations, we have proved this at the end of Section \ref{sectioncocyclelc}. 
\bppsn \label{25thnov20191}
Let $\A$ be the Hopf algebra of regular functions on a linear algebraic group, $ (\E, d) $ be the classical bicovariant differential calculus on $\A$ and $\Omega$ a normalised dual $2$-cocycle on $\A.$ Consider the bicovariant differential calculus $ (\E_\Omega, d_\Omega) $ over the Hopf algebra $ \A_\Omega $ (see Proposition \ref{11thmay20192}) and let $g^\prime$ be a left $\A$-linear bi-invariant pseudo-Riemannian metric on $\E_\Omega.$ 

A bicovariant left connection $\nabla^\prime$ on $ \E_\Omega $ is compatible with $g^\prime$ in the sense of \eqref{25thnov20192} if and only if $\nabla$ is compatible with $g^\prime$ in the sense of \cite{heckenberger}.
\eppsn

\section{Sufficient conditions for the existence of Levi-Civita connections} \label{21staugust20197} 

In this section, we will derive some sufficient conditions for the existence of Levi-Civita connections for bicovariant differential calculus on quantum groups. As before, unless otherwise mentioned, $(\E,d)$ will denote a bicovariant differential calculus on $\A$ such that the restricted braiding map ${}_0\sigma$ is diagonalisable, and $g$ a bi-invariant pseudo-Riemannian metric on $\E$.
\begin{defn}
Let $ (\E, d) $ be a bicovariant differential calculus such that the map $ {}_0 \sigma $ is diagonalisable and $ g $ a pseudo-Riemannian bi-invariant metric on $ \E. $ A left-covariant connection $ \nabla $ on $ \E $ is called a Levi-Civita connection for the triple $ (\E, d, g) $ if it is torsionless and compatible with $g.$ 
\end{defn}
The strategy to derive our results are the same as in \cite{article1} and \cite{article2}. However, since we are not working with a centered bimodule and the pseudo-Riemannian metric is only right $\A$-linear, the arguments become more delicate. 
 Given a bicovariant differential calculus $ (\E, d) $ and a bi-invariant pseudo-Riemannian metric $g,$ we start by defining a map 
 $$\widetilde{\Phi_g} : \homc(\zeroE, \zeroE \tensorc^{\rm sym} \zeroE) \rightarrow \homc(\zeroE \tensorc^{\rm sym} \zeroE, \zeroE)$$ and show (Proposition \ref{Phigiso}) that the isomorphism of $ \widetilde{\Phi_g} $ guarantees the existence of a unique left-covariant Levi-Civita connection for the triple $(\E, d, g).$ 

However, since our metric is bi-invariant, it is to be expected that our Levi-Civita connection should be bicovariant. This is the second main result of this section (Theorem \ref{17thseptember20191sm}) which requires the Hopf algebra $\A$ to be cosemisimple. We remark that the bicovariance of the Levi-Civita connection (with respect to a different metric-compatibility condition) for $ SL_q (n), Sp_q (n) $ and $ O_q (n) $ were derived in \cite{heckenberger}.

Finally, our third result is Theorem \ref{15thjune20193} where we prove that the map $\widetilde{\Phi_g} : \homc(\zeroE, \zeroE \tensorc^{\rm sym} \zeroE) \rightarrow \homc(\zeroE \tensorc^{\rm sym} \zeroE, \zeroE)$ is an isomorphism if and only if the map 
$$ ({}_0(\Psym))_{23}: (\zeroE \tensorc^{{\rm sym}} \zeroE) \tensorc \zeroE \rightarrow \zeroE \tensorc (\zeroE \tensorc^{{\rm sym}} \zeroE) $$
is an isomorphism. The proofs of Theorem \ref{17thseptember20191sm} and Theorem \ref{15thjune20193} need some preparations which are made in subsection \ref{19thnov20192}.

The main steps involved in the proof are as follows:

{\bf Step 1:} We prove that the isomorphism of $\widetilde{\Phi_g} : \homc(\zeroE, \zeroE \tensorc^{\rm sym} \zeroE) \rightarrow \homc(\zeroE \tensorc^{\rm sym} \zeroE, \zeroE)$ guarantees the existence of a unique left-covariant Levi-Civita connection.

{\bf Step 2:} We prove that the following diagram commutes:
\[
	\begin{tikzcd}
		\homc({}_0 \E, {}_0 \E \tensorc^{\rm sym} {}_0 \E) \arrow[r, rightarrow, "\zeta_{\zeroE \tensorc \zeroE , \zeroE}^{-1}"] \arrow[d, "\widetilde{\Phi_g}"] &[3 ex] ({}_0 \E \tensorc^{\rm sym} {}_0 \E) \tensorc ({}_0 \E)^* \arrow[r, "\id \tensorc V_g^{-1}"] & ({}_0 \E \tensorc^{\rm sym} {}_0 \E) \tensorc {}_0 \E \arrow[d, " ({}_0(\Psym))_{23} "]\\
		\homc({}_0 \E \tensorc {}_0 \E, {}_0 \E) &[3 ex] \arrow[ swap, l, " \zeta^{-1}_{{}_0 \E, {}_0 \E \tensorc {}_0 \E} "] {}_0 \E \tensorc ({}_0 \E \tensorc^{\rm sym} {}_0 \E)^* & \arrow[swap, l, "\id \tensorc V_{g^{(2)}}"] {}_0 \E \tensorc ({}_0 \E \tensorc^{\rm sym} {}_0 \E)
	\end{tikzcd} 
\] 
We note that by virute of Lemma \ref{15thjune20192} and Proposition \ref{15thjune20195}, all the arrows in the diagram except possibly $ ({}_0 (\Psym))_{23}: (\zeroE \tensorc^{{\rm sym}} \zeroE) \tensorc \zeroE \rightarrow \zeroE \tensorc (\zeroE \tensorc^{{\rm sym}} \zeroE) $ have already been proved to be isomorphisms. Thus, the isomorphism of $ ({}_0 (\Psym))_{23}$ implies the isomorphism of $\widetilde{\Phi_g} $ so that by {\bf Step 1}, we have the existence of a unique left-covariant Levi-Civita connection.

For {\bf Step 2} and the right-covariance of the Levi-Civita connection, we need to introduce an auxiliary map $ \widetilde{\Psi_g} $ and obtain certain isomorphisms. This is done in Subsection \ref{19thnov20192}.

\begin{defn} \label{26thseptember20191}
We define a map
$\widetilde{\Phi_g} : \homc(\zeroE, \zeroE \tensorc^{\rm sym} \zeroE) \rightarrow \homc(\zeroE \tensorc^{\rm sym} \zeroE, \zeroE)$ by the following formula: 
$$ \widetilde{\Phi_g}(L)=2(\id \tensorc g) \sigma_{12} (L \tensorc {\rm id}) {}_0 (\Psym).$$
\end{defn}

We start with the following proposition for which we will need a bicovariant torsionless connection whose existence was proved in Theorem \ref{20thaugust20192}.

\begin{prop}\label{Phigiso}
	Suppose $ (\E, d) $ is a bicovariant differential calculus such that ${}_0 \sigma$ is diagonalisable, and $ g $ is a bi-invariant pseudo-Riemannian metric. If the map $ \widetilde{\Phi_g} $ is a vector space isomorphism from $ \homc(\zeroE, \zeroE \tensorc^{\rm sym} \zeroE) $ to $ \homc(\zeroE \tensorc^{\rm sym} \zeroE, \zeroE), $ then there exists a unique left-covariant connection on $\E$ which is torsionless and compatible with $g$.
\end{prop}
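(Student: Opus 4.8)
The plan is to realise metric-compatibility as an affine equation on the space of left-covariant torsionless connections and then to invert it using the hypothesis on $\widetilde{\Phi_g}$.

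First I would fix a bicovariant torsionless connection $\nabla_0$, which exists by Theorem \ref{20thaugust20192}. By Lemma \ref{15feb20191}, every left-covariant torsionless connection $\nabla$ satisfies $\nabla - \nabla_0 \in \homc(\zeroE, \zeroE \tensorc^{\rm sym}\zeroE)$. Conversely, given $L \in \homc(\zeroE, \zeroE\tensorc^{\rm sym}\zeroE)$, I would extend it to a right $\A$-linear map $\widetilde{L}:\E \to \E\tensora\E$ using the right $\A$-totality of $\zeroE$, and set $\nabla := \nabla_0 + \widetilde{L}$. Then $\nabla$ is again a connection (adding a right $\A$-linear map does not affect the Leibniz rule); it is torsionless, since ${\rm Ran}(\widetilde{L}) \subseteq \E \tensorsym \E = {\rm Ker}(\wedge)$, so $\wedge \circ \widetilde{L} = 0$ and $T_\nabla = \wedge\circ\nabla + d = T_{\nabla_0} = 0$; and it is left-covariant, because $\widetilde{L}(\zeroE) \subseteq \zeroE\tensorc\zeroE = {}_0(\E\tensora\E)$, so $\widetilde{L}$ is left-covariant by Proposition \ref{14thfeb20192}, and left-covariance is a linear condition, hence inherited by $\nabla_0 + \widetilde{L}$. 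Thus $L \mapsto \nabla_0 + \widetilde{L}$ is a bijection from $\homc(\zeroE, \zeroE\tensorc^{\rm sym}\zeroE)$ onto the set of left-covariant torsionless connections.

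Next I would convert compatibility into an equation for $L$. By Corollary \ref{28thoct20191sm}, $\nabla$ is compatible with $g$ if and only if $\widetilde{\Pi^0_g}(\nabla) = 0$. Since the formula \eqref{1stmay20191} for $\widetilde{\Pi^0_g}(\nabla) = 2(\id\tensorc g)(\sigma\tensorc\id)(\nabla\tensorc\id){}_0(\Psym)$ is affine-linear in $\nabla$, substituting $\nabla = \nabla_0 + \widetilde{L}$ gives
\[ \widetilde{\Pi^0_g}(\nabla_0 + \widetilde{L}) = \widetilde{\Pi^0_g}(\nabla_0) + 2(\id\tensorc g)(\sigma\tensorc\id)(L\tensorc\id){}_0(\Psym) = \widetilde{\Pi^0_g}(\nabla_0) + \widetilde{\Phi_g}(L), \]
because $\sigma\tensorc\id = \sigma_{12}$ and the second summand is exactly $\widetilde{\Phi_g}(L)$ from Definition \ref{26thseptember20191}. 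I would then check that $\widetilde{\Pi^0_g}(\nabla_0)$ belongs to the target space $\homc(\zeroE\tensorc^{\rm sym}\zeroE, \zeroE)$: by Corollary \ref{remark4.29} the map $\widetilde{\Pi_g}(\nabla_0) - dg$ is a right-covariant map from $\zeroE\tensorc^{\rm sym}\zeroE$ to $\zeroE$, and $dg$ vanishes on $\zeroE\tensorc^{\rm sym}\zeroE$, so there it agrees with $\widetilde{\Pi^0_g}(\nabla_0)$; moreover the leading factor ${}_0(\Psym)$ makes $\widetilde{\Pi^0_g}(\nabla)$ vanish on ${}_0\F$, so the condition $\widetilde{\Pi^0_g}(\nabla) = 0$ on all of $\zeroE\tensorc\zeroE$ is equivalent to its vanishing on $\zeroE\tensorc^{\rm sym}\zeroE$.

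Combining these observations, $\nabla = \nabla_0 + \widetilde{L}$ is a left-covariant Levi-Civita connection precisely when $\widetilde{\Phi_g}(L) = -\widetilde{\Pi^0_g}(\nabla_0)$ in $\homc(\zeroE\tensorc^{\rm sym}\zeroE, \zeroE)$. As $\widetilde{\Phi_g}$ is assumed to be a vector-space isomorphism, this equation has a unique solution $L$, which produces through the above bijection a unique left-covariant torsionless connection compatible with $g$. I expect the main obstacle to be the bookkeeping in the reverse direction of the parametrisation — verifying that every $L$ yields a bona fide left-covariant torsionless connection — together with confirming that the inhomogeneous term $\widetilde{\Pi^0_g}(\nabla_0)$ lands in the correct homset; once these are in place, the identification of the linear part with $\widetilde{\Phi_g}(L)$ and the final inversion are immediate.
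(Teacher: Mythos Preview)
Your proof is correct and follows essentially the same approach as the paper: fix the bicovariant torsionless connection $\nabla_0$ of Theorem \ref{20thaugust20192}, parametrise left-covariant torsionless connections by $\homc(\zeroE,\zeroE\tensorc^{\rm sym}\zeroE)$ via $L\mapsto\nabla_0+\widetilde{L}$, and observe that metric-compatibility becomes the linear equation $\widetilde{\Phi_g}(L)=-\widetilde{\Pi^0_g}(\nabla_0)$ (the paper writes the right-hand side as $dg-\widetilde{\Pi_g}(\nabla_0)$, which agrees on $\zeroE\tensorc^{\rm sym}\zeroE$ since $dg$ vanishes there). The only cosmetic difference is that you set up the bijection first and then solve, while the paper writes down the candidate $\nabla_0+\widetilde{\Phi_g}^{-1}(dg-\widetilde{\Pi_g}(\nabla_0))$ directly and checks its properties afterwards; the ingredients (Lemma \ref{15feb20191}, Corollary \ref{remark4.29}, Corollary \ref{28thoct20191sm}, and the identity $\widetilde{\Pi_g}(\nabla)-\widetilde{\Pi_g}(\nabla_0)=\widetilde{\Phi_g}(\nabla-\nabla_0)$) are identical.
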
 
\begin{proof} Recall the torsionless bicovariant connection $\nabla_0$ constructed in Theorem \ref{20thaugust20192}. Then Corollary \ref{remark4.29} allows us to view $dg-\widetilde{\Pi_g}(\nabla_0) $ as an element of $ \homc(\zeroE \tensorc^{\rm sym} \zeroE,\zeroE)$. Since $\widetilde{\Phi_g}$ is an isomorphism, there exists a unique pre-image of the element $ dg - \widetilde{\Pi_g}(\nabla_0) $ under the map $ \widetilde{\Phi_g}.$ Define the $\IC$-linear map 
$$\nabla_1:=\nabla_0+\widetilde{\Phi_g}^{-1}(dg - \widetilde{\Pi_g}(\nabla_0)): \zeroE \to \zeroE \tensorc \zeroE.$$

Then $\nabla_1-\nabla_0$ is an element of $\homc(\zeroE, \zeroE \tensorc^{\rm sym} \zeroE) \subseteq \homc(\zeroE, \zeroE \tensorc \zeroE)$ and by the proof of Proposition \ref{inviff}, $\nabla_1 -\nabla_0$ extends to an element $L \in \ahoma(\E, \E \tensora \E)$. Define a $\mathbb{C}$-linear map 
$$\nabla = L + \nabla_0 : \E \to \E \tensora \E.$$
Since $L$ and $\nabla_0$ are both left-covariant maps, $\nabla$ is a left-covariant map.
Since $ \nabla_0 $ is a connection and $ L $ is right $\A$-linear, it can be easily checked that $ \nabla $ is a connection.

Now we prove that $\nabla$ is torsionless. Since $(\nabla_1 - \nabla_0)$ is an element of $\homc(\zeroE, \zeroE \tensorc^{\rm sym} \zeroE)$, $L(\omega)$ is in $\zeroE \tensorc^{\rm sym} \zeroE$ for all $\omega$ in $\zeroE$. Since $L$ is right $\A$-linear and the right $\A$-linear span of $\zeroE \tensorc^{\rm sym} \zeroE = {\rm Ran} ({}_0(\Psym)) $ is equal to $\E \tensorsym \E = {\rm Ran} (\Psym) $ (\eqref{22ndaugust20194}), $L(\omega)$ is in $\E \tensorsym \E$ for all $\rho$ in $\E$. Hence, $\wedge \circ L(\rho) = 0$ for all $\rho$ in $\E$. Therefore, for all $\rho$ in $\E$, we have
$$ \wedge \circ \nabla(\rho) = \wedge \circ (L + \nabla_0)(\rho) = \wedge \circ \nabla_0(\rho) = - d(\rho).$$
Therefore, $\nabla$ is torsionless.

Now we prove that $\nabla$ is compatible with $g.$ The fact that $ \nabla $ is torsionless means in particular that $ (\nabla - \nabla_0) (\omega) \in {\rm Ker} (\wedge) = \E \tensorsym \E. $ Thus, $ \nabla - \nabla_0 \in \Hom_\A (\E, \E \tensorsym \E) $ and so $ \widetilde{\Phi}_g (\nabla - \nabla_0) $ is well-defined. It can be easily checked that
\begin{equation} \label{1stmay20192} \widetilde{\Pi_g}(\nabla) - \widetilde{\Pi_g}(\nabla_0)=\widetilde{\Phi_g}(\nabla - \nabla_0) \end{equation}
as maps on $\zeroE \tensorc \zeroE$.

By the definition of $ \nabla, $ 
\begin{equation} \label{1stmay20193} \widetilde{\Phi}_g (\nabla - \nabla_0) = dg - \widetilde{\Pi}_g (\nabla_0) ~ {\rm on} ~ \zeroE \tensorc \zeroE. \end{equation}
Combining \eqref{1stmay20192} and \eqref{1stmay20193}, we conclude that 
 $$ \widetilde{\Pi}_g (\nabla) - \widetilde{\Pi}_g (\nabla_0) = dg - \widetilde{\Pi}_g (\nabla_0) ~ {\rm on} ~ \zeroE \tensorc \zeroE. $$
 Since $\widetilde{\Pi}_g(\nabla) - dg$ is right $\A$-linear by Proposition \ref{proppignabla} and $ \{ \omega_1 \tensorc \omega_2: \omega_1, \omega_2 \in \zeroE \} $ is right $\A$-total in $ \E \tensora \E, $ 
	$$ \widetilde{\Pi}_g (\nabla) - dg = 0 ~ {\rm as} ~ {\rm maps} ~ {\rm on} ~ \E \tensora \E. $$
	Hence, $\nabla$ is compatible with $g.$
	
To show uniqueness, suppose $\nabla^\prime$ is another torsionless left-covariant connection compatible with the metric $g$. Then, by Lemma \ref{15feb20191}, $\nabla-\nabla^\prime \in \homc(\zeroE,\zeroE \tensorc^{\rm sym} \zeroE)$ and
$$\widetilde{\Phi_g}(\nabla-\nabla^\prime)=\widetilde{\Pi_g}(\nabla)-\widetilde{\Pi_g}(\nabla^\prime)=dg - dg=0,$$
where we have used the fact that $\nabla$ and $\nabla^\prime$ are compatible with $g$.
As $\widetilde{\Phi_g}$ is an isomorphism, $\nabla-\nabla^\prime = 0$ as an element of $\homc(\zeroE, \zeroE \tensorc \zeroE)$. Since $\nabla - \nabla^{\prime}$ is a right $\A$-linear map, $\nabla = \nabla^{\prime}$ on $\E$.
\end{proof}

Proposition \ref{Phigiso} gives us a metric-dependent sufficient condition for the existence of a unique left-covariant Levi-Civita connection. Moreover, it also follows (Theorem \ref{17thseptember20191sm}) that if $(\E, d, g)$ satisfies the hypotheses of Proposition \ref{Phigiso}, then the left-covariant Levi-Civita connection is also bicovariant. However, we would like to have a metric independent sufficient condition. This is derived in Theorem \ref{15thjune20193}. Before we prove either of these results, we will need some preparatory lemmas which are derived in the next subsection.

\subsection{Some preparatory results} \label{19thnov20192}

In order to derive the right-covariance of the Levi-Civita connection, we need to define an auxiliary map $\widetilde{\Psi_g} : \homc(\zeroE, \zeroE \tensorc \zeroE) \to \homc(\zeroE \tensorc \zeroE, \zeroE).$ This definition is inspired by Definition 3.8 of \cite{article2}. In Proposition \ref{20thaugust20191}, we will prove that the map $\widetilde{\Psi_g}$ restricts to the map $\widetilde{\Phi_g}$. The goal of this subsection is to prove Proposition \ref{20thaugust20193} which states that $ \widetilde{\Psi_g} $ preserves right-covariance.

We start with an elementary lemma for which we recall that for finite dimensional vector spaces $ V, W, $ $ \zeta_{V, W} $ will be the isomorphism from $ W \tensorc V^* $ to $ \Hom_\IC (V, W) $ as introduced in Definition \ref{2ndaugust20191}. Moreover, $ V_{g^{(2)}} $ will be the map defined in Definition \ref{27thaugust2019night1}. 
\begin{lemma} \label{15thnov20191}
	For $\omega_1, \omega_2, \omega_3 \in \zeroE$, we have that 
	\begin{equation}
	\begin{split}	
	&\zeta_{\zeroE, \zeroE \tensorc \zeroE }\big((\id \tensorc V_{g^{(2)}})(\omega_1 \tensorc \omega_2 \tensorc \omega_3)\big)\circ {}_0(\Psym) \\
	=&\zeta_{\zeroE, \zeroE \tensorc \zeroE }\big((\id \tensorc V_{g^{(2)}})(\id \tensorc {}_0(\Psym))(\omega_1 \tensorc \omega_2 \tensorc \omega_3)\big). \label{2801192}
	\end{split}
	\end{equation}
	\end{lemma}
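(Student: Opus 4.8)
The plan is to evaluate both sides of \eqref{2801192} on an arbitrary element $X \in \zeroE \tensorc \zeroE$, unwind the definitions of $\zeta_{\zeroE, \zeroE \tensorc \zeroE}$ (Definition \ref{2ndaugust20191}) and $V_{g^{(2)}}$ (Definition \ref{27thaugust2019night1}), and thereby reduce the asserted equality to the self-adjointness of $ {}_0(\Psym) $ with respect to $ g^{(2)} $ established in Proposition \ref{15thjune20195}.

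First I would compute the left-hand side. By the definition of $\zeta_{\zeroE, \zeroE \tensorc \zeroE}$, the map $\zeta_{\zeroE, \zeroE \tensorc \zeroE}\big((\id \tensorc V_{g^{(2)}})(\omega_1 \tensorc \omega_2 \tensorc \omega_3)\big) = \zeta_{\zeroE, \zeroE \tensorc \zeroE}\big(\omega_1 \tensorc V_{g^{(2)}}(\omega_2 \tensorc \omega_3)\big)$ sends an element $Y \in \zeroE \tensorc \zeroE$ to $\omega_1 \, V_{g^{(2)}}(\omega_2 \tensorc \omega_3)(Y) = \omega_1 \, g^{(2)}\big((\omega_2 \tensorc \omega_3) \tensorc Y\big)$. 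Precomposing with $ {}_0(\Psym)$ and evaluating at $X$ then yields $\omega_1 \, g^{(2)}\big((\omega_2 \tensorc \omega_3) \tensorc {}_0(\Psym)(X)\big)$.

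Next I would compute the right-hand side in exactly the same manner. Since $(\id \tensorc {}_0(\Psym))(\omega_1 \tensorc \omega_2 \tensorc \omega_3) = \omega_1 \tensorc {}_0(\Psym)(\omega_2 \tensorc \omega_3)$, applying $\id \tensorc V_{g^{(2)}}$ and then $\zeta_{\zeroE, \zeroE \tensorc \zeroE}$ produces the map sending $X$ to $\omega_1 \, g^{(2)}\big({}_0(\Psym)(\omega_2 \tensorc \omega_3) \tensorc X\big)$. Comparing the two expressions, the identity \eqref{2801192} is equivalent to $g^{(2)}\big((\omega_2 \tensorc \omega_3) \tensorc {}_0(\Psym)(X)\big) = g^{(2)}\big({}_0(\Psym)(\omega_2 \tensorc \omega_3) \tensorc X\big)$ for all $X \in \zeroE \tensorc \zeroE$, which is precisely \eqref{27thaugust20194} (equivalently, the equality $({}_0(\Psym))^* = {}_0(\Psym)$) applied with the first slot equal to $\omega_2 \tensorc \omega_3$ and the second slot equal to $X$.

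There is no serious obstacle here: the argument is a mechanical unwinding of definitions followed by a single appeal to Proposition \ref{15thjune20195}. The one point demanding care is the bookkeeping of tensor slots, namely ensuring that the factor $\omega_1$ is carried along untouched and that $ {}_0(\Psym)$ acts on the correct pair of legs on each side. To avoid any ambiguity I would fix, from the outset, the convention of testing both operators against a generic element $X \in \zeroE \tensorc \zeroE$, after which the reduction to the self-adjointness relation is immediate.
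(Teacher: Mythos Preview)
Your proposal is correct and essentially identical to the paper's own proof: both evaluate the two sides on an arbitrary element of $\zeroE \tensorc \zeroE$ (the paper writes it as $\omega_4 \tensorc \omega_5$), unwind the definitions of $\zeta$ and $V_{g^{(2)}}$, and reduce to the self-adjointness identity \eqref{27thaugust20194} from Proposition~\ref{15thjune20195}.
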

\begin{proof}
	Let $\omega_4$, $\omega_5$ be elements of $\zeroE$. Then, by the definition of $\zeta_{\zeroE , \zeroE \tensorc \zeroE}$,
	\begin{equation*}
	\begin{aligned}
		 &\zeta_{\zeroE , \zeroE \tensorc \zeroE}\big((\id \tensorc V_{g^{(2)}})(\omega_1 \tensorc \omega_2 \tensorc \omega_3) \big) \circ {}_0 (\Psym) (\omega_4 \tensorc \omega_5)\\
		=&\zeta_{\zeroE , \zeroE \tensorc \zeroE}\big(\omega_1 \tensorc V_{g^{(2)}}(\omega_2 \tensorc \omega_3) \big) \circ {}_0 (\Psym) (\omega_4 \tensorc \omega_5)\\
		=&\omega_1 V_{g^{(2)}}(\omega_2 \tensorc \omega_3) ({}_0 (\Psym) (\omega_4 \tensorc \omega_5))\\
		=&\omega_1 V_{g^{(2)}}(({}_0 (\Psym)(\omega_2 \tensorc \omega_3))) (\omega_4 \tensorc \omega_5) ~ {\rm (by \ \ref{27thaugust20194})}\\
		=&\zeta_{\zeroE , \zeroE \tensorc \zeroE}\big((\id \tensorc V_{g^{(2)}})(\id \tensorc {}_0 (\Psym))(\omega_1 \tensorc \omega_2 \tensorc \omega_3) \big) (\omega_4 \tensorc \omega_5)
	\end{aligned}
	\end{equation*}
	This proves the lemma.
\end{proof}
Now we define the map $\widetilde{\Psi_g} $ and discuss its properties. 
\begin{defn}
	We define a map $\widetilde{\Psi_g} : \homc(\zeroE, \zeroE \tensorc \zeroE) \to \homc(\zeroE \tensorc \zeroE, \zeroE)$ by the following formula:
	$$ \widetilde{\Psi_g} (L) = 2(\id \tensorc g) \circ (L \tensorc \id).$$
\end{defn}

\begin{lemma} \label{20thaugust20191}
	If $T$ is an element of $\homc(\zeroE, \zeroE \tensorc \zeroE)$, then we have that
	\begin{equation} \label{5thseptember20193sm} \widetilde{\Psi_g}(T) = 2\zeta_{\zeroE, \zeroE \tensorc \zeroE}((\id \tensorc V_{g}^{(2)})(\id \tensorc (V_{g})^{-1})(\zeta_{\zeroE \tensorc \zeroE , \zeroE}^{-1}(T))). \end{equation}
	Moreover, if $T$ is an element of $\homc({\zeroE, \zeroE \tensorc^{\rm sym} \zeroE})$, then the following two equations hold:
	\begin{equation} 
	\label{5thseptember20192sm} \widetilde{\Psi_g}(T)|_{\zeroE \tensorc^{{\rm sym}} \zeroE} = \widetilde{\Phi_g}(T), \end{equation}
	\begin{equation} \label{15thjune20191}
	\widetilde{\Phi_g}(L)=2\zeta_{\zeroE, \zeroE \tensorc \zeroE}\big((\id \tensorc V_{g^{(2)}})(\id \tensorc {}_0(\Psym))(\id \tensorc (V_g)^{-1})(\zeta^{-1}_{\zeroE \tensorc \zeroE,\zeroE}(L))\big).
	\end{equation}
 \end{lemma}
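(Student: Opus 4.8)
The plan is to verify the three displayed identities in turn, each by tracking the action on a basis $\{\omega_i\}_i$ of $\zeroE$ with dual basis $\{\omega^i\}_i$ of $(\zeroE)^*$, and writing $g_{ij}=g(\omega_i\tensora\omega_j)$, with inverse matrix $(g^{ij})$ satisfying $\sum_p g^{ip}g_{pn}=\delta^i_n$ (Proposition \ref{23rdmay20192}). For the first identity \eqref{5thseptember20193sm}, I would write a general $T\in\homc(\zeroE,\zeroE\tensorc\zeroE)$ as $T(\omega_a)=\sum_{j,k}T^{jk}_a\,\omega_j\tensorc\omega_k$, so that $\zeta^{-1}_{\zeroE\tensorc\zeroE,\zeroE}(T)=\sum_i T(\omega_i)\tensorc\omega^i$. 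Applying $\id\tensorc(V_g)^{-1}$ replaces $\omega^i$ by $(V_g)^{-1}(\omega^i)=\sum_p g^{ip}\omega_p$, landing in $\zeroE\tensorc\zeroE\tensorc\zeroE$; then applying $\id\tensorc V_{g^{(2)}}$, with $V_{g^{(2)}}$ acting on the last two legs under the regrouping $\zeroE\tensorc(\zeroE\tensorc\zeroE)$, produces an element of $\zeroE\tensorc(\zeroE\tensorc\zeroE)^*$ whose image under $\zeta_{\zeroE,\zeroE\tensorc\zeroE}$, evaluated on $\omega_a\tensorc\omega_b$, equals $2\sum_{i,j,k,p}T^{jk}_i\,g^{ip}g_{pa}g_{kb}\,\omega_j$. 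Here I use $V_{g^{(2)}}(\omega_k\tensorc\omega_p)(\omega_a\tensorc\omega_b)=g(\omega_p\tensora\omega_a)\,g(\omega_k\tensora\omega_b)=g_{pa}g_{kb}$ directly from Definition \ref{27thaugust2019night1}. The only calculation is the collapse $\sum_p g^{ip}g_{pa}=\delta^i_a$, which reduces the expression to $2\sum_{j,k}T^{jk}_a g_{kb}\,\omega_j=\widetilde{\Psi_g}(T)(\omega_a\tensorc\omega_b)$, exactly the definition $\widetilde{\Psi_g}(T)=2(\id\tensorc g)(T\tensorc\id)$.

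For the second identity \eqref{5thseptember20192sm}, I would observe that the restriction is taken over $\zeroE\tensorc^{\rm sym}\zeroE={\rm Ran}({}_0(\Psym))={\rm Ker}({}_0\sigma-1)$ (Lemma \ref{13thjune20191}). On this subspace ${}_0(\Psym)$ acts as the identity, so the trailing factor ${}_0(\Psym)$ in $\widetilde{\Phi_g}(T)=2(\id\tensorc g)\sigma_{12}(T\tensorc\id){}_0(\Psym)$ disappears. Moreover, when $T$ has image in $\zeroE\tensorc^{\rm sym}\zeroE$, each $T(\omega)$ lies in ${\rm Ker}({}_0\sigma-1)$, whence $\sigma_{12}\big(T(\omega)\tensorc\eta\big)=\big({}_0\sigma(T(\omega))\big)\tensorc\eta=T(\omega)\tensorc\eta$, i.e. $\sigma_{12}(T\tensorc\id)=(T\tensorc\id)$. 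Feeding both observations into the definition of $\widetilde{\Phi_g}$ collapses it to $2(\id\tensorc g)(T\tensorc\id)=\widetilde{\Psi_g}(T)$ on $\zeroE\tensorc^{\rm sym}\zeroE$.

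For the third identity \eqref{15thjune20191}, I would combine the previous two steps with the self-adjointness relation \eqref{27thaugust20194} of Proposition \ref{15thjune20195}. Writing $\widetilde{\Psi_g}(L)=\zeta_{\zeroE,\zeroE\tensorc\zeroE}(Z)$ with $Z=2(\id\tensorc V_{g^{(2)}})(\id\tensorc(V_g)^{-1})(\zeta^{-1}_{\zeroE\tensorc\zeroE,\zeroE}(L))$ from the first identity, the point is that restricting the associated functionals to $\zeroE\tensorc^{\rm sym}\zeroE$ amounts, under the description \eqref{15thnov20193} of $(\zeroE\tensorc^{\rm sym}\zeroE)^*$, to post-composing each of them with ${}_0(\Psym)$; hence by \eqref{5thseptember20192sm} we get $\widetilde{\Phi_g}(L)=\zeta_{\zeroE,\zeroE\tensorc\zeroE}\big((\id\tensorc(\,\cdot\circ{}_0(\Psym)))(Z)\big)$. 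The relation \eqref{27thaugust20194}, namely $V_{g^{(2)}}(X)\circ{}_0(\Psym)=V_{g^{(2)}}({}_0(\Psym)(X))$, then lets me commute this post-composition past $V_{g^{(2)}}$, i.e. $(\id\tensorc(\,\cdot\circ{}_0(\Psym)))(\id\tensorc V_{g^{(2)}})=(\id\tensorc V_{g^{(2)}})(\id\tensorc{}_0(\Psym))$, which rewrites $Z$ into precisely the asserted expression $2(\id\tensorc V_{g^{(2)}})(\id\tensorc{}_0(\Psym))(\id\tensorc(V_g)^{-1})(\zeta^{-1}_{\zeroE\tensorc\zeroE,\zeroE}(L))$.

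The main obstacle is not a deep one: the whole proof is bookkeeping with the isomorphisms $\zeta_{V,W}$ and the dualities $V_g$, $V_{g^{(2)}}$. The single genuinely delicate point is the third identity, where one must correctly translate ``restrict a map to $\zeroE\tensorc^{\rm sym}\zeroE$'' into ``post-compose its $\zeta$-functionals with ${}_0(\Psym)$'' via \eqref{15thnov20193}, and only then invoke the $g^{(2)}$-self-adjointness of ${}_0(\Psym)$ to push the projection inside $V_{g^{(2)}}$. The likeliest source of error is mistracking which tensor leg each of $\id\tensorc(V_g)^{-1}$, $\id\tensorc{}_0(\Psym)$ and $\id\tensorc V_{g^{(2)}}$ acts on under the regroupings $(\zeroE\tensorc\zeroE)\tensorc\zeroE\cong\zeroE\tensorc(\zeroE\tensorc\zeroE)$, so I would fix these conventions explicitly at the outset.
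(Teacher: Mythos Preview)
Your proposal is correct and follows essentially the same route as the paper's proof: a direct basis computation for \eqref{5thseptember20193sm}, the observation that $\sigma$ fixes $T(\omega)$ when $T$ lands in $\zeroE\tensorc^{\rm sym}\zeroE$ for \eqref{5thseptember20192sm}, and the self-adjointness of ${}_0(\Psym)$ with respect to $g^{(2)}$ for \eqref{15thjune20191}. The only cosmetic difference is that the paper packages the step where post-composition by ${}_0(\Psym)$ is pulled through $V_{g^{(2)}}$ into a separate lemma (Lemma~\ref{15thnov20191}), whereas you derive it inline from \eqref{27thaugust20194}; the content is identical.
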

\begin{proof}
	We will use the facts (Proposition \ref{23rdmay20192}) that the elements $g_{ij} = g (e_i \tensorc e_j) $ are scalars and moreover, there exist scalars $ g^{ij} $ such that $ \sum_j g^{ij} g_{jk} = \delta_{ik}. 1 = \sum_j g_{ij} g^{jk}. $
	
	Suppose $T$ is an element of $\homc(\zeroE, \zeroE \tensorc \zeroE) $. Let $\{\omega_i\}_i$ vector space basis of $\zeroE$. Then there exist scalars $T_{ij}^m$ such that 
	$$T(\omega_m)=\sum_{ij} \omega_i \tensorc \omega_j T_{ij}^m $$
	for all $m.$ 
	Hence, we get
	\begin{equation} \label{23rdsept20191} \zeta^{-1}_{\zeroE \tensorc \zeroE,\zeroE}(T)=\sum_{ijkl}\omega_i \tensorc \omega_j \tensorc V_g(\omega_k)g^{lk}T_{ij}^l. \end{equation}
	We claim that
	\begin{equation} \label{23rdsept20192} \frac{1}{2}\widetilde{\Psi_g}(T) = \zeta_{\zeroE, \zeroE \tensorc \zeroE }\big((\id \tensorc V_{g^{(2)}})(\id \tensorc V_g^{-1})(\zeta_{\zeroE\tensorc \zeroE , \zeroE}^{-1}(T)) \big). \end{equation}
	Indeed, for all $m,n,$ we have
		\begin{align*}
		 &\frac{1}{2}\widetilde{\Psi_g}(T)(\omega_m \tensorc \omega_n)\\
		=&(\id \tensorc g)(T \tensorc \id)(\omega_m \tensorc \omega_n)\\
		=&\sum_{ij}(\id \tensorc g)(\omega_i \tensorc \omega_j \tensorc \omega_n T_{ij}^m)\\
		=&\sum_{ij}\omega_i g(\omega_j \tensorc \omega_n)T^m_{ij}\\
		=&\sum_{ijkl}\omega_i g(\omega_j \tensorc g^{lk}g(\omega_k \tensorc \omega_m)T^l_{ij} \omega_n) \\
		=&\sum_{ijkl}\omega_i g^{(2)}((\omega_j \tensorc \omega_k g^{lk}T^l_{ij})\tensorc (\omega_m \tensorc \omega_n))\\
		=&\zeta_{\zeroE, \zeroE \tensorc \zeroE }\big(\sum_{ijkl} \omega_i \tensorc V_{g^{(2)}}(\omega_j \tensorc \omega_k g^{lk} T^l_{ij}) \big)(\omega_m \tensorc \omega_n)\\
		=&\zeta_{\zeroE, \zeroE \tensorc \zeroE }\big((\id \tensorc V_{g^{(2)}})(\sum_{ijkl} \omega_i \tensorc \omega_j \tensorc \omega_k g^{lk} T^l_{ij}) \big)(\omega_m \tensorc \omega_n)\\
		=&\zeta_{\zeroE, \zeroE \tensorc \zeroE }\big((\id \tensorc V_{g^{(2)}})(\id \tensorc V_g^{-1})(\sum_{ijkl}\omega_i \tensorc \omega_j \tensorc V_g(\omega_k)g^{lk}T^l_{ij}) \big)(\omega_m \tensorc \omega_n)\\
		=&\zeta_{\zeroE, \zeroE \tensorc \zeroE }\big((\id \tensorc V_{g^{(2)}})(\id \tensorc V_g^{-1})(\zeta_{\zeroE\tensorc \zeroE , \zeroE}^{-1}(T)) \big)(\omega_m \tensorc \omega_n),\\
	\end{align*}
	where, in the last step, we have used \eqref{23rdsept20191} and also the fact (Proposition \ref{23rdmay20192}) that $ V_g $ is a vector space isomorphism from $\zeroE$ to $ (\zeroE)^*. $ This proves \eqref{23rdsept20192}.
	
	 Next, if $T$ is an element of $\rm Hom_\IC(\zeroE, \zeroE \tensorc^{\rm sym} \zeroE),$ then $ T (\omega_m) \in \zeroE \otimes^{{\rm sym}}_\C \zeroE \subseteq \E \tensorsym \E. $ Since $\sigma(X)=X$ for all $X$ in $\E \tensorsym \E = \rm Ker$$(\sigma- \id),$ we get that
	$$(\sigma T)(\omega_m)=\sigma(T(\omega_m))=T(\omega_m).$$
	Hence, 
	\begin{align*} &\widetilde{\Phi_g}(T) = 2 (\id \tensorc g)(\sigma \tensorc \id)(T \tensorc \id) ({}_0(\Psym))\\ =& 2 (\id \tensorc g)(T \tensorc \id) ({}_0(\Psym)) = \widetilde{\Psi_g} (T) ({}_0(\Psym)), \end{align*}
	which proves \eqref{5thseptember20192sm}.
	Finally, for proving \eqref{15thjune20191}, we use \eqref{23rdsept20192} and \eqref{5thseptember20192sm} to deduce that
	\begin{eqnarray*} 
	 && \widetilde{\Phi_g} (T) = \widetilde{\Psi_g} (T) ({}_0(\Psym))\\
		&=& 2 \zeta_{\zeroE, \zeroE \tensorc \zeroE }\big((\id \tensorc V_{g^{(2)}})(\id \tensorc V_g^{-1})(\zeta^{-1}_{\zeroE\tensorc \zeroE,\zeroE}(T))\big)\circ {}_0(\Psym)\\
		&=& 2 \zeta_{\zeroE, \zeroE \tensorc \zeroE }\big((\id \tensorc V_{g^{(2)}})(\id \tensorc {}_0(\Psym))(\id \tensorc V_g^{-1})(\zeta^{-1}_{\zeroE\tensorc \zeroE,\zeroE}(T))\big)
		\end{eqnarray*}
		and we have used \eqref{2801192} in the last step. This completes the proof of the lemma.
\end{proof}

For the rest of the subsection, we will be using the following notations:\\	
	The set of all right $ \A $-linear left covariant maps from $ M $ to $ N $ will be denoted by the symbol $\ahoma(M,N),$ the set of all right $\A$-linear right covariant maps from $ M $ to $ N $ will be denoted by $ {\rm Hom}^\A_\A (M, N) $ and finally, the set of all right $\A$-linear bicovariant maps will be denoted by ${}^\A \homaa(M,N)$.

\begin{prop} \label{20thaugust20193}
	If $T$ is an element of $\homc^\A(\zeroE, \zeroE \tensorc \zeroE)$, then $\widetilde{\Psi_g}(T)$ is an element of $\homc^\A(\zeroE \tensorc \zeroE, \zeroE)$. Moreover, $ \widetilde{\Phi_g} $ restricts to map from $ \Hom^\A_{\IC} (\zeroE, \zeroE \tensorc^{{\rm sym}} \zeroE) $ to $ \Hom^{\A}_{\IC} (\zeroE \tensorc^{{\rm sym}} \zeroE, \zeroE).$ 
\end{prop}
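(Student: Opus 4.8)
The plan is to prove the two assertions in turn, reducing both to the right-covariance of $T$ and to the right-covariance of the metric $g$. Throughout I write $\widetilde{\Psi_g}(T) = 2(\id \tensorc g)(T \tensorc \id)$ and, for $\omega \in \zeroE$, abbreviate $T(\omega) = \omega^{[1]} \tensorc \omega^{[2]}$ with summation suppressed. First I would record two preliminary facts. Since $g$ is bi-invariant, Lemma \ref{14thfeb20191} shows that $g$ restricts to a right-covariant map on $\zeroE \tensorc \zeroE$ whose values lie in $\IC.1$; evaluating the right-covariance identity $(g \tensorc \id)\,{}_{\E \tensora \E}\Delta = \Delta \circ g$ on an element $\xi \in \zeroE \tensorc \zeroE$ and using that $g(\xi)$ is a scalar (so $\Delta(g(\xi)) = g(\xi)(1 \tensorc 1)$) yields the identity
\[ g(\xi_{(0)})\, \xi_{(1)} = g(\xi)\, 1_\A , \]
which is the form I will use. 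Second, I would write the right coaction on the two-tensor module explicitly: from the tensor-product coaction formula one gets ${}_{\zeroE \tensorc \zeroE}\Delta(\omega \tensorc \eta) = \omega_{(0)} \tensorc \eta_{(0)} \tensorc \omega_{(1)}\eta_{(1)}$, where by the compatibility \eqref{8thnov20194} the elements $\omega_{(0)}, \eta_{(0)}$ again lie in $\zeroE$.

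The heart of the first assertion is then a direct comparison of the two sides of the right-covariance equation for $\widetilde{\Psi_g}(T)$. On the one hand, since $g(\omega^{[2]} \tensorc \eta)$ is a scalar, $\widetilde{\Psi_g}(T)(\omega \tensorc \eta) = 2\,g(\omega^{[2]} \tensorc \eta)\,\omega^{[1]}$ and hence ${}_{\zeroE}\Delta\bigl(\widetilde{\Psi_g}(T)(\omega \tensorc \eta)\bigr) = 2\,g(\omega^{[2]} \tensorc \eta)\,\omega^{[1]}_{(0)} \tensorc \omega^{[1]}_{(1)}$. On the other hand, applying $\widetilde{\Psi_g}(T) \tensorc \id$ to ${}_{\zeroE \tensorc \zeroE}\Delta(\omega \tensorc \eta)$ and substituting the right-covariance of $T$ in the combined Sweedler form
\[ (\omega_{(0)})^{[1]} \tensorc (\omega_{(0)})^{[2]} \tensorc \omega_{(1)} = \omega^{[1]}_{(0)} \tensorc \omega^{[2]}_{(0)} \tensorc \omega^{[1]}_{(1)}\omega^{[2]}_{(1)} \]
brings the right-hand side to $2\,\omega^{[1]}_{(0)} \tensorc \omega^{[1]}_{(1)}\bigl[g(\omega^{[2]}_{(0)} \tensorc \eta_{(0)})\,\omega^{[2]}_{(1)}\eta_{(1)}\bigr]$; applying the scalar identity above with $\xi = \omega^{[2]} \tensorc \eta$ collapses the bracket to $g(\omega^{[2]} \tensorc \eta)\,1_\A$, matching the first computation. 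This proves $\widetilde{\Psi_g}(T) \in \homc^\A(\zeroE \tensorc \zeroE, \zeroE)$.

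I expect the bookkeeping in this comparison to be the main obstacle: the $\A$-valued factor $\omega^{[1]}_{(1)}$ becomes entangled between the two tensor slots, so one must track the multiplication order in $\A$ carefully and use that the scalar value of $g$ is central before the covariance of the metric can be applied to the second slot. This is exactly where the bi-invariance of $g$ (rather than mere right-linearity) is essential, both to guarantee scalar values and to supply the right-covariance identity.

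For the second assertion I would first observe, using Proposition \ref{16thseptember20191} (bicovariance of $\Psym$, hence right-covariance of ${}_0(\Psym)$) together with Theorem \ref{25thmay20195} and \eqref{22ndaugust20194}, that $\zeroE \tensorc^{\rm sym} \zeroE = {}_0(\E \tensorsym \E) = {\rm Ran}({}_0(\Psym))$ is a right subcomodule of $\zeroE \tensorc \zeroE$. Given $T \in \homc^\A(\zeroE, \zeroE \tensorc^{\rm sym} \zeroE) \subseteq \homc^\A(\zeroE, \zeroE \tensorc \zeroE)$, the first part yields $\widetilde{\Psi_g}(T) \in \homc^\A(\zeroE \tensorc \zeroE, \zeroE)$, and by \eqref{5thseptember20192sm} one has $\widetilde{\Phi_g}(T) = \widetilde{\Psi_g}(T)|_{\zeroE \tensorc^{\rm sym}\zeroE}$. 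Since the restriction of a right-covariant map to a right subcomodule is again right-covariant, it follows that $\widetilde{\Phi_g}(T) \in \Hom^\A_\IC(\zeroE \tensorc^{\rm sym}\zeroE, \zeroE)$, which is the desired conclusion.
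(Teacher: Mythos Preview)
Your proof is correct. The argument for the first assertion is a coordinate-free reformulation of the paper's own proof: where the paper fixes a basis $\{\omega_i\}$ of $\zeroE$, writes the coaction as ${}_\E\Delta(\omega_i)=\sum_j\omega_j\tensorc R_{ji}$, translates right-covariance of $T$ and of $\widetilde{\Psi_g}(T)$ into matrix identities via Lemma~\ref{21stjune20191}, and then carries out a lengthy index manipulation invoking the identity $g(\omega_j\tensorc\omega_n)=\sum_{q,s}g(\omega_s\tensorc\omega_q)R_{sj}R_{qn}$ from \cite{article5}, you run the same computation directly in Sweedler notation using the equivalent identity $g(\xi_{(0)})\xi_{(1)}=g(\xi)\,1_\A$ on $\zeroE\tensorc\zeroE$. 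The logical skeleton is identical---both arguments compose the right-covariance of $T$ with the right-covariance of $g$---but your version is shorter and bypasses the auxiliary isomorphisms $\zeta_{\zeroE,\zeroE\tensorc\zeroE}$ and $V_{g^{(2)}}$ that the paper threads through. One point worth making fully explicit in a final write-up: the step that commutes the scalar $g(\omega^{[2]}_{(0)}\tensorc\eta_{(0)})$ past $\omega^{[1]}_{(1)}$ and then applies the metric identity to the $(\omega^{[2]},\eta)$-legs alone is valid because, for each summand $\omega^{[1]}_\alpha\tensorc\omega^{[2]}_\alpha$ in the suppressed sum $T(\omega)$, the coactions on $\omega^{[1]}_\alpha$ and $\omega^{[2]}_\alpha$ are independent Sweedler sums; you acknowledge this in your commentary, but a reader may appreciate seeing it unpacked once. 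Your treatment of the second assertion coincides with the paper's.
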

\begin{proof}
	Let us first observe that $ \zeroE, \zeroE \tensorc \zeroE, \zeroE \tensorc^{{\rm sym}} \zeroE $ are indeed right $\A$-comodules under the coactions $ {}_\E \Delta $ and $ {}_{\E \tensora \E} \Delta. $ Indeed, by Theorem 2.4 of \cite{woronowicz}, there exist elements $R_{ij}$ in $\A$ such that 
	\begin{equation} \label{18thnov20191} {}_\E \Delta (\omega_i) = \sum_j \omega_j \tensorc R_{ji} ~ {\rm so} ~ {\rm that} ~ {}_{\E \tensora \E} \Delta (\omega_i \tensorc \omega_j) = \sum_{k,l} \omega_k \tensorc \omega_l \tensorc R_{ki} R_{lj}. \end{equation}
	Now, let us recall that in the proof of Theorem \ref{25thmay20195}, we have proved that $ \E \tensorsym \E $ is a bicovariant bimodule. Since $ {}_0 (\E \tensorsym \E) = \zeroE \tensorc^{{\rm sym}} \zeroE $ by \eqref{22ndaugust20194}, we can again apply Theorem 2.4 of \cite{woronowicz} to deduce that $ \zeroE \tensorc^{{\rm sym}} \zeroE $ is invariant under $ {}_{\E \tensora \E} \Delta. $
		
	Now, we come to the proof of the result. Let $T$ be an element of $\homc^\A(\zeroE, \zeroE \tensorc \zeroE)$. Then in the notations of Lemma \ref{20thaugust20191}, there exist scalars $T^m_{ij} $ such that 
	$$T(\omega_m) = \sum_{ij} \omega_i \tensorc \omega_j T^m_{ij}.$$
	Since $T$ is right-covariant, applying Lemma \ref{21stjune20191} to the second equation of \eqref{18thnov20191} yields
	\begin{equation} \label{27thjune20195}
	\sum_{ij,n} \omega_i \tensorc \omega_j \tensorc T^n_{ij} R_{nm} = \sum_{ij,kl} \omega_k \tensorc \omega_l \tensorc R_{ki} R_{lj} T^m_{ij}.
	\end{equation}
	We note that $\zeta_{\zeroE \tensorc \zeroE , \zeroE}^{-1}(T)= \sum_{ijkl} \omega_i \tensorc \omega_j \tensorc T^l_{ij} g^{lk} V_{g}(\omega_k) $. 
	
	Then, by \eqref{5thseptember20193sm} in Lemma \ref{20thaugust20191}, 
	$$ \frac{1}{2} \zeta_{\zeroE , \zeroE \tensorc \zeroE}^{-1}(\widetilde{\Psi_g}(T))=\sum_{ijkl} \omega_i \tensorc T^l_{ij} g^{lk} V_{g}^{(2)}(\omega_j \tensorc \omega_k).$$
	Hence,
	\begin{equation} \label{26thsept20191jb} \widetilde{\Psi_g}(T)(\omega_m \tensorc \omega_n) = 2 \sum_{ijkl} \omega_i T^l_{ij} g^{lk} g^{(2)}((\omega_j \tensorc \omega_k) \tensorc (\omega_m \tensorc \omega_n)). \end{equation}
Applying Lemma \ref{21stjune20191} to the map $ \widetilde{\Psi_g} (T) $ and using \eqref{26thsept20191jb}, we can conclude that $\widetilde{\Psi_g}(T)$ is an element of $\homc^\A(\zeroE \tensorc \zeroE, \zeroE)$ if and only if, for all $m,n$, the following equation holds:
	\begin{equation}\label{27thjune20194}
	\begin{aligned} 
	&\sum_{ii^\prime jkl} \omega_{i^\prime} \tensorc R_{i^\prime i} T^l_{ij} g^{lk} g^{(2)}((\omega_j \tensorc \omega_k) \tensorc (\omega_m \tensorc \omega_n))\\ =& \sum_{ijkl,pq} \omega_i \tensorc T^l_{ij} g^{lk} g^{(2)}((\omega_j \tensorc \omega_k) \tensorc (\omega_p \tensorc \omega_q)) R_{pm} R_{qn}.
	\end{aligned}
	\end{equation}
	Hence if we prove \eqref{27thjune20194}, we are done with the first part of the theorem. 
	
	Let us note that
	\begin{equation*}
		\begin{aligned}
			& \sum_{i i^{\prime} jkl} \omega_{i^\prime} \tensorc R_{i^\prime i} T^l_{ij} g^{lk} g^{(2)}((\omega_j \tensorc \omega_k) \tensorc (\omega_m \tensorc \omega_n))\\
			=& \sum_{ii^\prime jkl} \omega_{i^\prime} \tensorc R_{i^\prime i} T^l_{ij} g^{lk} g(\omega_k \tensorc \omega_m) g (\omega_j \tensorc \omega_n) ~ {\rm (} ~ {\rm as} ~	 g(\omega_k \tensorc \omega_m) \in \IC ~ {\rm)}\\
			=& \sum_{ii^\prime jklqs} \omega_{i^\prime} \tensorc R_{i^\prime i} T^l_{ij} g^{lk} g(\omega_k \tensorc \omega_m) g (\omega_s \tensorc \omega_q) R_{sj} R_{qn},
			\end{aligned}
			\end{equation*}
		where, in the last step, we have used Proposition 3.10 of \cite{article5} by which we have 
		$$ g (\omega_j \tensorc \omega_n) = \sum_{q,s} g (\omega_s \tensorc \omega_q) R_{sj} R_{qn}.$$

			Let $ L: \A \rightarrow \Hom_{\IC} (\A, \A) $ denote the multiplication map. Since $ \zeroE \tensorc (\zeroE)^* \tensorc \Hom_\IC (\A, \A) $ is isomorphic to $ \Hom_\IC (\zeroE, \zeroE) \tensorc \Hom_\IC (\A, \A), $ we can write
	\begin{equation*}
		\begin{aligned}
		 & \sum_{ii^\prime jkl} \omega_{i^\prime} \tensorc R_{i^\prime i} T^l_{ij} g^{lk} g^{(2)}((\omega_j \tensorc \omega_k) \tensorc (\omega_m \tensorc \omega_n))\\
			=& \sum_{ii^\prime jklqs} [ \omega_{i^\prime} \tensorc V_{g}(\omega_s) \tensorc T^l_{ij} g^{lk} g(\omega_k \tensorc \omega_m) L_{(R_{i^{\prime} i} R_{sj})} ](\omega_q \tensorc R_{qn})\\
			=& \sum_{ii^\prime jlqs} [(\id \tensorc V_{g} \tensorc L)(\omega_{i^\prime} \tensorc \omega_s \tensorc T^l_{ij} \sum_k (g^{lk} g (\omega_k \tensorc \omega_m)) R_{i^{\prime} i} R_{sj})](\omega_q \tensorc R_{qn})\\
			=& \sum_{ii^\prime jlqs} [(\id \tensorc V_{g} \tensorc L)(\omega_{i^\prime} \tensorc \omega_s \tensorc T^l_{ij} \delta_{lm} R_{i^{\prime} i} R_{sj})](\omega_q \tensorc R_{qn})\\
			=& \sum_{ii^\prime jqs} [(\id \tensorc V_{g} \tensorc L)(\omega_{i^\prime} \tensorc \omega_s \tensorc R_{i^{\prime} i} R_{sj} T^m_{ij})](\omega_q \tensorc R_{qn})\\
			=& \sum_{i j pq} [(\id \tensorc V_{g} \tensorc L)(\omega_{i} \tensorc \omega_j \tensorc T^p_{ij} R_{pm})](\omega_q \tensorc R_{qn}) \ \big({\rm by \ \eqref{27thjune20195}} \big)\\
			=& \sum_{i j pql} [(\id \tensorc V_{g} \tensorc L)(\omega_{i} \tensorc \omega_j \tensorc T^l_{ij} \delta_{lp} R_{pm})](\omega_q \tensorc R_{qn})\\
			=& \sum_{i jl pq} [(\id \tensorc V_{g} \tensorc L)(\omega_{i} \tensorc \omega_j \tensorc T^l_{ij} (\sum_k g^{lk} g (\omega_k \tensorc \omega_p)) R_{pm})](\omega_q \tensorc R_{qn})\\
			=& \sum_{i jkl pq} (\omega_i \tensorc V_{g}(\omega_j) \tensorc T^l_{ij} g^{lk} g (\omega_k \tensorc \omega_p) L_{R_{pm}})(\omega_{q} \tensorc R_{qn})\\
			=& \sum_{ijklpq} \omega_{i} \tensorc T^l_{ij} g^{lk} g (\omega_j \tensorc \omega_q) g (\omega_k \tensorc \omega_p) R_{pm} R_{qn}\\
			=& \sum_{ijkl,pq} \omega_i \tensorc T^l_{ij} g^{lk} g^{(2)}((\omega_j \tensorc \omega_k) \tensorc (\omega_p \tensorc \omega_q)) R_{pm} R_{qn}.
		\end{aligned}
	\end{equation*}
	This proves \eqref{27thjune20194} and therefore, $ \widetilde{\Psi_g} (T) $ is right-covariant.
	
	Now we prove the second assertion of the proposition. Let $T$ be an element of $ \Hom^\A_{\IC} (\zeroE, \zeroE \tensorc^{{\rm sym}} \zeroE).$ Then the first assertion of the proposition implies that $ \widetilde{\Psi_g} (T) $ belongs to $ \Hom^\A_{\IC} (\zeroE \tensorc \zeroE, \zeroE). $ However, by \eqref{5thseptember20192sm}, $ \widetilde{\Psi_g} (T)|_{\zeroE \tensorc^{{\rm sym}} \zeroE} = \widetilde{\Phi_g} (T) $ and by the definition of $ \widetilde{\Phi_g},$ we know that $ \widetilde{\Phi_g} (T) $ belongs to $ \Hom_{\IC} (\zeroE \tensorc^{{\rm sym}} \zeroE, \zeroE).$ Hence, we conclude that $ \widetilde{\Phi_g} (T) $ belongs to $ \Hom^\A_{\IC} (\zeroE \tensorc^{{\rm sym}} \zeroE, \zeroE).$ This finishes the proof of the proposition.
	\end{proof}

\subsection{Right-covariance of the unique left-covariant connection} \label{26thseptember20192}

In this subsection, we prove that the unique torsion-less left-covariant connection compatible with a bi-invariant pseudo-Riemannian metric, obtained under the hypothesis of Proposition \ref{Phigiso}, is actually a bicovariant connection if the Hopf algebra $\A$ is cosemisimple, i.e, if the category of finite dimensional comodules of $\A$ is semisimple. For right $\A$-comodules $V$ and $W,$ the symbol $ \Hom^\A_{\IC} (V, W) $ will continue to denote the set of all right-covariant complex linear maps from $V$ to $W.$ 

If $ \A $ is a cosemisimple Hopf algebra and $V, W$ be finite dimensional comodules as above, then it is well-known that $ {\rm dim} (\Hom^\A_{\IC} (V, W)) = {\rm dim} (\Hom^\A_{\IC} (W, V)). $ Now, if $\A$ is a cosemisimple Hopf algebra and $ (\E, d) $ be a differential calculus such that $ {}_0 \sigma $ is diagonalisable, then in the proof of Proposition \ref{20thaugust20193}, we have noted that $ \zeroE $ and $ \zeroE \tensorc^{{\rm sym}} \zeroE $ are right $\A$-comodules. Hence, we can conclude that
 \begin{equation} \label{19thaugust20192} {\rm dim}(\homc^\A({}_0\E, {}_0\E \tensorc^{\rm sym} {}_0\E)) = {\rm dim}(\homc^\A({}_0\E\tensorc^{\rm sym} {}_0\E, {}_0\E)). \end{equation}

Then we have the following theorem.

\begin{thm} \label{17thseptember20191sm}
	Suppose $ (\E, d) $ is a bicovariant differential calculus over a cosemisimple Hopf algebra $\A$ such that the map ${}_0 \sigma$ is diagonalisable, and $ g $ is a bi-invariant pseudo-Riemannian metric. If the map $\widetilde{\Phi_g}$ is an isomorphism, then the unique left-covariant connection guaranteed by Proposition \ref{Phigiso} is in fact a bicovariant connection.
\end{thm}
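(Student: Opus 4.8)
The plan is to exploit the explicit construction of the Levi-Civita connection in Proposition \ref{Phigiso} together with the right-covariance behaviour of $\widetilde{\Phi_g}$ recorded in Proposition \ref{20thaugust20193} and the dimension count \eqref{19thaugust20192} afforded by cosemisimplicity. Recall that the unique left-covariant Levi-Civita connection produced there is $\nabla = \nabla_0 + L$, where $\nabla_0$ is the bicovariant torsionless connection of Theorem \ref{20thaugust20192} and $L \in \ahoma(\E, \E \tensora \E)$ is the right $\A$-linear extension of the complex-linear map ${}_0 L := \widetilde{\Phi_g}^{-1}(dg - \widetilde{\Pi_g}(\nabla_0)) \in \homc(\zeroE, \zeroE \tensorc^{\rm sym} \zeroE)$. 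Since $\nabla_0$ is already bicovariant, proving that $\nabla$ is right-covariant reduces to proving that $L$ is right-covariant; and by Proposition \ref{18thsep20196} it suffices to show that the restriction ${}_0 L$ to $\zeroE$ is right-covariant, i.e. that ${}_0 L \in \Hom^\A_\IC(\zeroE, \zeroE \tensorc^{\rm sym} \zeroE)$.

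First I would observe that the element to which $\widetilde{\Phi_g}^{-1}$ is applied is itself right-covariant. Indeed, $\nabla_0$ is bicovariant and $g$ is bi-invariant, so Corollary \ref{remark4.29} gives that $\widetilde{\Pi_g}(\nabla_0) - dg$, and hence its negative $dg - \widetilde{\Pi_g}(\nabla_0)$, is a right-covariant $\IC$-linear map from $\zeroE \tensorc^{\rm sym} \zeroE$ to $\zeroE$; that is, $dg - \widetilde{\Pi_g}(\nabla_0) \in \Hom^\A_\IC(\zeroE \tensorc^{\rm sym} \zeroE, \zeroE)$.

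The key step is then the dimension argument. By Proposition \ref{20thaugust20193}, $\widetilde{\Phi_g}$ carries the subspace $\Hom^\A_\IC(\zeroE, \zeroE \tensorc^{\rm sym} \zeroE)$ into $\Hom^\A_\IC(\zeroE \tensorc^{\rm sym} \zeroE, \zeroE)$, and as a restriction of the isomorphism $\widetilde{\Phi_g}$ this map is injective. Because $\A$ is cosemisimple and both $\zeroE$ and $\zeroE \tensorc^{\rm sym} \zeroE$ are finite-dimensional right $\A$-comodules (as noted in the proof of Proposition \ref{20thaugust20193}), these two spaces of right-covariant maps have equal dimension by \eqref{19thaugust20192}. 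An injective linear map between finite-dimensional spaces of equal dimension is a bijection, so $\widetilde{\Phi_g}$ restricts to an isomorphism of $\Hom^\A_\IC(\zeroE, \zeroE \tensorc^{\rm sym} \zeroE)$ onto $\Hom^\A_\IC(\zeroE \tensorc^{\rm sym} \zeroE, \zeroE)$. Consequently $\widetilde{\Phi_g}^{-1}$ maps $\Hom^\A_\IC(\zeroE \tensorc^{\rm sym} \zeroE, \zeroE)$ back into $\Hom^\A_\IC(\zeroE, \zeroE \tensorc^{\rm sym} \zeroE)$; applying this to the right-covariant element $dg - \widetilde{\Pi_g}(\nabla_0)$ shows that ${}_0 L$ is right-covariant.

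Finally I would assemble the conclusion: by Proposition \ref{18thsep20196} the extension $L$ is right-covariant, and since $\nabla_0$ is bicovariant, $\nabla = \nabla_0 + L$ is right-covariant; being already left-covariant, it is bicovariant. I expect the main obstacle to be precisely the cosemisimplicity input — it is the dimension equality \eqref{19thaugust20192} that upgrades the injective restriction of $\widetilde{\Phi_g}$ to a surjection onto the right-covariant maps, and without it one could not conclude that the preimage ${}_0 L$ inherits right-covariance. The remaining ingredients, namely the right-covariance of $dg - \widetilde{\Pi_g}(\nabla_0)$ and the passage from ${}_0 L$ to its extension $L$, are direct applications of Corollary \ref{remark4.29} and Proposition \ref{18thsep20196} respectively.
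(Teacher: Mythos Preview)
Your proposal is correct and follows essentially the same approach as the paper's own proof: both use Corollary \ref{remark4.29} to see that $dg - \widetilde{\Pi_g}(\nabla_0)$ is right-covariant, then combine Proposition \ref{20thaugust20193} with the dimension equality \eqref{19thaugust20192} from cosemisimplicity to deduce that $\widetilde{\Phi_g}$ restricts to an isomorphism between the spaces of right-covariant maps, whence ${}_0 L$ is right-covariant and Proposition \ref{18thsep20196} yields the bicovariance of $L$ and hence of $\nabla = \nabla_0 + L$. The only difference is organizational---the paper states the key claim about $\widetilde{\Phi_g}$ restricting to an isomorphism up front and defers its proof to the end, whereas you argue in strict logical order.
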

\begin{proof}
 The proof follows from the claim that under the hypothesis of the theorem, the map $ \widetilde{\Phi_g} $ is an isomorphism from $ \Hom^\A_{\IC} (\zeroE, \zeroE \tensorc^{{\rm sym}} \zeroE) $ onto $ \Hom^\A_{\IC} (\zeroE \tensorc^{{\rm sym}} \zeroE, \zeroE). $ Indeed, let us recall that 
	in Proposition \ref{Phigiso}, under the assumption that the map $\widetilde{\Phi_g}$ is an isomorphism, we explicitly constructed a torsionless left-covariant connection $\nabla$ compatible with $g$ by the formula $\nabla := L + \nabla_0$. 
	
	Here $\nabla_0$ is the torsionless bicovariant connection constructed in Theorem \ref{20thaugust20192} and $L: \E \to \E \tensora \E$ is the left-covariant right $\A$-linear extension (via Proposition \ref{ahoma}) of the map 
	$$\widetilde{\Phi_g}^{-1}(dg - \widetilde{\Pi_g}(\nabla_0)) : \zeroE \to \zeroE \tensorc \zeroE.$$
	By Corollary \ref{remark4.29}, $dg - \widetilde{\Pi_g}(\nabla_0)$ is a right $\A$-covariant $\IC$-linear map from $ \zeroE \tensorc^{\rm sym} \zeroE $ to $\zeroE$. 
	Hence, our claim implies that $ {}_0 L = \widetilde{\Phi_g}^{-1}(dg - \widetilde{\Pi_g}(\nabla_0)) $ belongs to $ \Hom^\A_{\IC} (\zeroE, \zeroE \tensorc^{{\rm sym}} \zeroE). $
	
	Since $ L $ is left-covariant right $\A$-linear and $ {}_0 L = \widetilde{\Phi_g}^{-1}(dg - \widetilde{\Pi_g}(\nabla_0)) $ is right-covariant, Proposition \ref{18thsep20196} implies that the extension $L$ is a bicovariant right $\A$-linear map from $\E$ to $\E \tensora \E$. Again by the right-covariance of $\nabla_0$, $\nabla = L + \nabla_0$ is a right-covariant map as well.
	
	So we are left with proving that the map $ \widetilde{\Phi_g} : \Hom^\A_{\IC} (\zeroE, \zeroE \tensorc^{{\rm sym}} \zeroE) \rightarrow \Hom^\A_{\IC} (\zeroE \tensorc^{{\rm sym}} \zeroE, \zeroE) $ is an isomorphism. To this end, we observe that since $ \widetilde{\Phi_g} $ is an isomorphism from $ \Hom_{\IC} (\zeroE, \zeroE \tensorc^{{\rm sym}} \zeroE) $ to $ \Hom_{\IC} (\zeroE \tensorc^{{\rm sym}} \zeroE, \zeroE), $ Proposition \ref{20thaugust20193} implies that $ \widetilde{\Phi_g} $ is a one-one map from $ \Hom^\A_{\IC} (\zeroE, \zeroE \tensorc^{{\rm sym}} \zeroE) $ into $ \Hom^\A_{\IC} (\zeroE \tensorc^{{\rm sym}} \zeroE, \zeroE). $ However, by \eqref{19thaugust20192}, 
	$$ {\rm dim} (\Hom^\A_{\IC} (\zeroE \tensorc^{{\rm sym}} \zeroE, \zeroE)) = {\rm dim} (\Hom^\A_{\IC} (\zeroE, \zeroE \tensorc^{{\rm sym}} \zeroE)). $$
	Therefore, $ \widetilde{\Phi_g} $ is a one-one and onto map from $ \Hom^\A_{\IC} (\zeroE, \zeroE \tensorc^{{\rm sym}} \zeroE) $ to $ \Hom^\A_{\IC} (\zeroE \tensorc^{{\rm sym}} \zeroE, \zeroE). $
\end{proof}

\subsection{Sufficient conditions for the isomorphism of $ \widetilde{\Phi_g} $} \label{psym23}
In this subsection, we prove a metric-independent sufficient condition for the map $ \widetilde{\Phi_g} $ to be an isomorphism. We will continue to use the notation $ \zeta_{\E, \F} $ introduced in Definition \ref{2ndaugust20191}.

\begin{thm} \label{15thjune20193}
	Suppose $(\E, d)$ is a bicovariant differential calculus over a cosemisimple Hopf algebra $\A$ such that the map ${}_0 \sigma$ is diagonalisable and $ g $ be a bi-invariant pseudo-Riemannian metric.
	
	The map $ \widetilde{\Phi_g} : \Hom_{\IC} (\zeroE, \zeroE \tensorc^{{\rm sym}} \zeroE) \rightarrow \Hom_{\IC} (\zeroE \tensorc^{{\rm sym}} \zeroE, \zeroE) $ is an isomorphism if and only if $ ({}_0(\Psym))_{23}: (\zeroE \tensorc^{\rm sym} \zeroE) \tensorc \zeroE \rightarrow \zeroE \tensorc (\zeroE \tensorc^{\rm sym} \zeroE) $ is an isomorphism.
In particular, Theorem \ref{17thseptember20191sm} implies that under either of these assumptions, the triple $ (\E, d, g) $ admits a unique bicovariant Levi-Civita connection.
\end{thm}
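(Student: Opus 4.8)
The plan is to read the equivalence directly off the factorisation of $\widetilde{\Phi_g}$ already recorded in \eqref{15thjune20191} of Lemma \ref{20thaugust20191}. For $L \in \homc(\zeroE, \zeroE \tensorc^{\rm sym} \zeroE)$ that identity reads
\[ \widetilde{\Phi_g}(L) = 2\,\zeta_{\zeroE,\,\zeroE \tensorc \zeroE}\big((\id \tensorc V_{g^{(2)}})(\id \tensorc {}_0(\Psym))(\id \tensorc V_g^{-1})(\zeta^{-1}_{\zeroE \tensorc \zeroE,\,\zeroE}(L))\big), \]
so that, as a map on all of $\homc(\zeroE, \zeroE \tensorc^{\rm sym} \zeroE)$,
\[ \widetilde{\Phi_g} = 2\,\zeta_{\zeroE,\,\zeroE \tensorc \zeroE} \circ (\id \tensorc V_{g^{(2)}}) \circ (\id \tensorc {}_0(\Psym)) \circ (\id \tensorc V_g^{-1}) \circ \zeta^{-1}_{\zeroE \tensorc \zeroE,\,\zeroE}. \]
This composite is exactly the one making the diagram displayed in Step 2 above commute, so the first thing I would do is confirm the domains and codomains of each factor against that diagram.

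The crucial identification is that of the middle factor. The map $\zeta^{-1}_{\zeroE \tensorc \zeroE,\,\zeroE}$ lands in $(\zeroE \tensorc^{\rm sym} \zeroE) \tensorc (\zeroE)^*$, and then $\id \tensorc V_g^{-1}$ carries this to $(\zeroE \tensorc^{\rm sym} \zeroE) \tensorc \zeroE \subseteq \zeroE \tensorc \zeroE \tensorc \zeroE$. On this space the factor $\id \tensorc {}_0(\Psym)$ fixes the first leg and applies ${}_0(\Psym)$ to the last two legs, i.e.\ it is precisely $({}_0(\Psym))_{23}$; and since ${}_0(\Psym)$ is the idempotent with range $\zeroE \tensorc^{\rm sym} \zeroE$ (Definition \ref{27thmay20193}), its image sits inside $\zeroE \tensorc (\zeroE \tensorc^{\rm sym} \zeroE)$. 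Thus the middle factor is exactly the map $({}_0(\Psym))_{23} : (\zeroE \tensorc^{\rm sym} \zeroE) \tensorc \zeroE \to \zeroE \tensorc (\zeroE \tensorc^{\rm sym} \zeroE)$ of the statement.

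Next I would check that every other factor in the composite is a vector space isomorphism between the indicated symmetric subspaces: the two $\zeta$-maps by Lemma \ref{15thjune20192}, the map $\id \tensorc V_g^{-1}$ because $V_g : \zeroE \to (\zeroE)^*$ is an isomorphism by the first assertion of Proposition \ref{23rdmay20192}, and $\id \tensorc V_{g^{(2)}}$ because $V_{g^{(2)}}$ restricts to an isomorphism from $\zeroE \tensorc^{\rm sym} \zeroE$ onto $(\zeroE \tensorc^{\rm sym} \zeroE)^*$ by Proposition \ref{15thjune20195}. A composite of linear maps in which all but one factor is invertible is invertible if and only if the remaining factor is; since the nonzero scalar $2$ is harmless, this yields at once that $\widetilde{\Phi_g}$ is an isomorphism if and only if $({}_0(\Psym))_{23}$ is.

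For the final clause I would simply combine this equivalence with the earlier results: under either hypothesis $\widetilde{\Phi_g}$ is an isomorphism, so Proposition \ref{Phigiso} yields a unique left-covariant torsionless connection compatible with $g$, and Theorem \ref{17thseptember20191sm} (using cosemisimplicity of $\A$) shows this connection is bicovariant, giving the unique bicovariant Levi-Civita connection for $(\E, d, g)$. The only delicate point — the ``hard part'' — is the bookkeeping that each auxiliary factor restricts to an isomorphism of the correct \emph{symmetric} subspace rather than of the ambient full tensor space; this is exactly what forces one to invoke the restricted-isomorphism statements of Lemma \ref{15thjune20192} and Proposition \ref{15thjune20195} rather than their naive full-space versions. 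Once the Step 2 diagram is confirmed to commute with all its horizontal arrows isomorphisms, the two-out-of-three argument closes the proof immediately.
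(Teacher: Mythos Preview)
Your proposal is correct and follows essentially the same route as the paper: both arguments read the equivalence off the factorisation \eqref{15thjune20191}, invoke Lemma \ref{15thjune20192}, Proposition \ref{23rdmay20192}(i) and Proposition \ref{15thjune20195} to see that every factor other than $({}_0(\Psym))_{23}$ is an isomorphism between the correct symmetric subspaces, and conclude by composition. Your single ``two-out-of-three'' formulation is slightly cleaner than the paper's separate treatment of the two implications, but the content is identical.
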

\begin{proof}
	 Suppose $({}_0(\Psym))_{23}: (\zeroE \tensorc^{\rm sym} \zeroE) \tensorc \zeroE \rightarrow \zeroE \tensorc (\zeroE \tensorc^{\rm sym} \zeroE)$ is an isomorphism. Since $g$ is left-invariant, part (i) of Proposition \ref{23rdmay20192} implies that $V_{g}^{-1}((\zeroE)^*)=\zeroE$. By the first assertion of Lemma \ref{15thjune20192} and our hypothesis, we can conclude that the following map is an isomorphism:
	$$({}_0(\Psym))_{23}(\id \tensorc V_{g}^{-1})\zeta_{\zeroE \tensorc \zeroE , \zeroE}^{-1}: \homc(\zeroE, \zeroE \tensorc^{\rm sym} \zeroE) \rightarrow \zeroE \tensorc (\zeroE \tensorc^{\rm sym} \zeroE).$$
	Now, by Proposition \ref{15thjune20195}, $V_{g^{(2)}}$ is an isomorphism from $\zeroE \tensorc^{\rm sym} \zeroE$ to $(\zeroE \tensorc^{\rm sym} \zeroE)^*$. Finally, by the second assertion of Lemma \ref{15thjune20192}, $\zeta_{\zeroE, \zeroE \tensorc \zeroE }$ is an isomorphism from $\zeroE \tensorc (\zeroE \tensorc^{\rm sym} \zeroE)^*$ to $\homc(\zeroE \tensorc^{\rm sym} \zeroE,\zeroE)$. Therefore, by \eqref{15thjune20191}, 
	is a composition of isomorphisms and hence an isomorphism itself.

	Conversely, suppose $\widetilde{\Phi_g}: \Hom_{\IC} (\zeroE, \zeroE \tensorc^{{\rm sym}} \zeroE) \rightarrow \Hom_{\IC} (\zeroE \tensorc^{{\rm sym}} \zeroE, \zeroE) $ is an isomorphism. If $({}_0 (\Psym))_{23}$ is not an isomorphism from $ (\zeroE \tensorc^{\rm sym} \zeroE) \tensorc \zeroE$ to $\zeroE \tensorc (\zeroE \tensorc^{\rm sym} \zeroE), $ then it is not one-one. Hence by \eqref{15thjune20191}, $\widetilde{\Phi_g}$ is not an isomorphism, which is a contradiction.
\end{proof}
\begin{rmk}
 In \cite{suq2}, the isomorphism $ ({}_0(\Psym))_{23}: (\zeroE \tensorc^{\rm sym} \zeroE) \tensorc \zeroE \rightarrow \zeroE \tensorc (\zeroE \tensorc^{\rm sym} \zeroE) $ is verified by an explicit computation. We refer to Theorem \ref{18thjuly20194} for a cocycle-twisted version of the above isomorphism.
\end{rmk}	
Our next proposition states that if $ \sigma^2 = 1, $ then the hypothesis of Theorem \ref{15thjune20193} is satisfied.
\begin{prop} \label{15thjune20197}
	If $\sigma^2=1$, then the map $({}_0(\Psym))_{23}$ is an isomorphism from $ ({}_0\E \tensorc^{\rm sym} \E) \tensorc \E$ to $\E \tensorc (\E \tensorc^{\rm sym} \E).$
\end{prop}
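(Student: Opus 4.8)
The plan is to reduce the statement to a three-line computation with the braid relation. Since $\sigma^2 = \id$, equation \eqref{8thnov20196} forces $({}_0\sigma)^2 = \id$ on $\zeroE \tensorc \zeroE$, so every eigenvalue of ${}_0\sigma$ is $\pm 1$ and formula \eqref{24thaugust20191} collapses to $ {}_0(\Psym) = \tfrac12(\id + {}_0\sigma) $ (in the degenerate case $\Lambda = \{1\}$ the empty product also gives ${}_0(\Psym) = \id = \tfrac12(\id + {}_0\sigma)$, since diagonalisability then forces ${}_0\sigma = \id$). First I would introduce, on the triple tensor product $\zeroE \tensorc \zeroE \tensorc \zeroE$, the operators $s_1 := {}_0\sigma \tensorc \id$ and $s_2 := \id \tensorc {}_0\sigma$, together with $P_k := \tfrac12(\id + s_k)$, so that $P_1 = ({}_0(\Psym))_{12}$ and $P_2 = ({}_0(\Psym))_{23}$. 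Restricting the braid equation of Proposition \ref{4thmay20193} and the relation $\sigma^2 = \id$ to left-invariant elements (using Proposition \ref{14thfeb20192}, together with the fact from Proposition \ref{ahoma} that the $ {}_0 $-restriction of a composite of left-covariant maps is the composite of the restrictions) yields $s_1^2 = s_2^2 = \id$ and the braid relation $s_1 s_2 s_1 = s_2 s_1 s_2$ for the restricted map.

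Next I would identify the two sides of the map. Since $\zeroE \tensorc^{\rm sym} \zeroE = {\rm Ran}({}_0(\Psym))$ (Lemma \ref{13thjune20191}, Definition \ref{27thmay20193}), we have $(\zeroE \tensorc^{\rm sym} \zeroE) \tensorc \zeroE = {\rm Ran}(P_1)$ and $\zeroE \tensorc (\zeroE \tensorc^{\rm sym} \zeroE) = {\rm Ran}(P_2)$, and the map in question is precisely $P_2$ restricted to ${\rm Ran}(P_1)$, with image inside ${\rm Ran}(P_2)$. The crux is then to show that this restriction is injective, i.e. that ${\rm Ran}(P_1) \cap \ker(P_2) = \{0\}$. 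An element $x$ of this intersection satisfies $s_1 x = x$ and $s_2 x = -x$, and applying the braid relation gives
\[ -x \;=\; s_1(s_2(s_1 x)) \;=\; s_1 s_2 s_1\, x \;=\; s_2 s_1 s_2\, x \;=\; s_2(s_1(s_2 x)) \;=\; x, \]
where the first equality uses $s_1 x = x$ then $s_2 x = -x$, the middle equality is the braid relation, and the last uses $s_2 x = -x$ then $s_1 x = x$. Hence $2x = 0$, so $x = 0$ and $P_2|_{{\rm Ran}(P_1)}$ is injective. Finally, since ${\rm dim}({\rm Ran}(P_1)) = {\rm dim}(\zeroE \tensorc^{\rm sym} \zeroE)\cdot {\rm dim}(\zeroE) = {\rm dim}({\rm Ran}(P_2))$, an injective linear map between spaces of equal finite dimension is a bijection, giving the claimed isomorphism onto ${\rm Ran}(P_2) = \zeroE \tensorc (\zeroE \tensorc^{\rm sym} \zeroE)$.

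The computations are routine, and the only genuine care is needed in the bookkeeping of the first paragraph: checking that $s_1^2 = s_2^2 = \id$ and the braid relation really descend from $\sigma$ to ${}_0\sigma$ on the triple tensor product, and that the domain and codomain are correctly identified as ${\rm Ran}(P_1)$ and ${\rm Ran}(P_2)$. Once these identifications are in place, the displayed braid-relation computation is the entire content of the argument, and no analysis of the eigenvalues of $\sigma$ beyond the fact that they lie in $\{\pm 1\}$ is required.
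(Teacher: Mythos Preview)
Your proposal is correct and follows essentially the same approach as the paper's own proof: both reduce to ${}_0(\Psym) = \tfrac12(\id + {}_0\sigma)$, take an element in the kernel of $({}_0(\Psym))_{23}$ restricted to $(\zeroE \tensorc^{\rm sym} \zeroE)\tensorc \zeroE$, apply the braid relation to the resulting eigenvector equations to force it to vanish, and finish with a dimension count. Your write-up is in fact a bit more careful than the paper's in spelling out why the braid relation descends to ${}_0\sigma$ on the triple tensor product and in handling the degenerate case $\Lambda = \{1\}$.
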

\begin{proof}
	Since $\sigma^2 = 1,$ $\pm 1$ are the only eigenvalues of ${}_0\sigma$ in this case and so by \eqref{24thaugust20192}, ${}_0(\Psym)= \frac{1}{2}(1 + {}_0\sigma)$. Now, let $X$ be an element of $ ({}_0\E \tensorc^{\rm sym} \zeroE) \tensorc \zeroE$ such that $({}_0(\Psym))_{23}(X)=0$. Then
	$ (\Psym)_{(12)} (X) = X $ so that $ ({}_0\sigma)_{12}(X)=X.$
	
	Moreover, $ ({}_0\sigma)_{23}(X)= (2({}_0(\Psym))_{23}-1)(X) =-X.$
	We further obtain that \[({}_0\sigma)_{12}({}_0\sigma)_{23}({}_0\sigma)_{12}(X)=-X \quad {\rm and} \quad ({}_0\sigma)_{23}({}_0\sigma)_{12}({}_0\sigma)_{23}(X)=X.\]
	Since ${}_0\sigma$ is a braiding, this implies that $X=0$. Hence $({}_0(\Psym))_{23}$ is a one-one map from $ ({}_0\E \tensorc^{\rm sym} \zeroE) \tensorc \zeroE$ to $\zeroE \tensorc (\zeroE \tensorc^{\rm sym} \zeroE)$ and therefore, by a dimension count, $({}_0(\Psym))_{23}$ is also onto $\zeroE \tensorc (\zeroE \tensorc^{\rm sym} \zeroE)$. Hence $({}_0(\Psym))_{23}$ is an isomorphism from ${}_0\E \tensorc^{\rm sym} \zeroE \tensorc \zeroE$ to $\zeroE \tensorc \zeroE \tensorc^{\rm sym} \zeroE$.
	
	${}_0\sigma$ is an involution, $\frac{1}{2}(1 + {}_0\sigma)$ is an idempotent map and hence
	\[{\rm Ran}(\frac{1}{2}(1 + {}_0\sigma))= {\rm Ker}({}_0\sigma -1) = {}_0\E \tensorc^{\rm sym} {}_0\E.\]
	Since $\pm 1$ are the only eigenvalues of ${}_0\sigma$ in this case, by \eqref{24thaugust20192}, ${}_0(\Psym)= \frac{1}{2}(1 + {}_0\sigma)$. 
\end{proof}

\begin{rmk}
In Corollary \ref{18thsep20194}, we show that the hypothesis of Proposition \ref{15thjune20197} holds for the space of one-forms for cocycle deformations of a linear algebraic group $G$ whose category of finite dimensional representations is semisimple. Thus, for these examples, we indeed have a unique bicovariant Levi-Civita connection by Proposition \ref{15thjune20197} (see Proposition \ref{18thsep20195}).
\end{rmk}

\section{Levi-Civita connections for cocycle deformations} \label{sectioncocyclelc}

This section concerns the Levi-Civita connections on bicovariant differential calculus on cocycle deformations of Hopf algebras. We will start by recalling the cocycle deformations of bicovariant differential calculus and pseudo-Riemannian metrics. Then we discuss the effect of cocycle deformations on the map $ \Psym $ as well as bicovariant connections. Finally, we prove the main theorem which states that if $ (\E, d) $ is a bicovariant differential calculus such that ${}_0 \sigma$ is diagonalisable and $g$ is a pseudo-Riemannian bi-invariant metric on $\E$ such that $ (\E, d, g) $ admits a bicovariant Levi-Civita connection, then $\nabla$ deforms to a Levi-Civita connection for the deformed triple $ (\E_\Omega, d_\Omega, g_\Omega). $ Throughout the section, we will use several facts about cocycle deformations for which our main reference is \cite{paganietal}.
For a Hopf algebra $ (\A, \Delta), $ we will denote its restricted dual by the symbol $ (\widehat{\A}, \widehat{\Delta}). $
\bdfn
A normalized dual $2$-cocycle on a Hopf algebra $ \A $ is an invertible element $ \Omega$ in $\widehat{A} \tensorc \widehat{A} $ such that the following equations hold:
$$ (\widehat{\epsilon} \tensorc \id_{\widehat{\A}}) (\Omega) = 1 = (\id_{\widehat{\A}} \tensorc \widehat{\epsilon}) (\Omega),~
 (\Omega \tensorc 1) [ (\widehat{\Delta} \tensorc \id_{\widehat{\A}}) (\Omega) ] = (1 \tensorc \Omega) [ (\id_{\widehat{\A}} \tensorc \widehat{\Delta}) (\Omega) ]. $$
Here, $ \widehat{\epsilon} $ denotes the counit of the Hopf algebra $ \widehat{A}. $
\edfn
Given a Hopf algebra $ (\A, \Delta) $ and such a cocycle $ \Omega $, we have a new Hopf algebra $ (\A_\Omega, \Delta_\Omega) $ which is equal to $ \A $ as a vector space, the coproduct $ \Delta_\Omega = \Delta. $ However, the algebra structure $ \ast_\Omega $ on $ \A_\Omega $ is twisted and to describe it, we will need an auxiliary map $\gamma.$ Indeed, if $ \Omega = \sum_i h_i \tensorc k_i \in \widehat{A} \tensorc \widehat{\A}, $ let us define a map $ \gamma: \A \tensorc \A \rightarrow \mathbb{C} $ by:
 $$ \gamma (a \tensorc b) = \sum_i h_i (a) k_i (b). $$
Then $\gamma$ is a unital and convolution invertible map which satisfies the following equation for all $ a, b, c \in \A: $ 
$$ \gamma (a_{(1)} \tensorc b_{(1)}) \gamma (a_{(2)} b_{(2)} \tensorc c) = \gamma (b_{(1)} \tensorc c_{(1)}) \gamma (a \tensorc b_{(2)} c_{(2)}). $$
If $ \overline{\gamma} $ is the convolution inverse to $ \gamma, $ then the deformed product $ \ast_\Omega $ is defined by:
$$ a \ast_\Omega b = \gamma (a_{(1)} \tensorc b_{(1)}) a_{(2)} b_{(2)} \overline{\gamma} (a_{(3)} \tensorc b_{(3)}).$$
Bicovariant bimodules over $\A$ and bicovariant maps between any two such bimodules are canonically twisted in the presence of a cocycle.
Suppose $ M $ is a bicovariant $\A$-bimodule and $ \Omega $ is a dual $2$-cocycle on $\A.$ Then by Proposition 4.2 of \cite{article5} (see also Proposition 2.27 of \cite{paganietal}), we have a bicovariant $\A_\Omega $-bimodule $ M_\Omega $ which is equal to $ M $ as a vector space but the left and right $ \A_\Omega $-module structures are defined by the following formulas:
	\begin{eqnarray} & \label{9thmay20197} a *_\Omega m = \gamma(a_{(1)} \tensorc m_{(-1)}) a_{(2)} . m_{(0)} \overline{\gamma}(a_{(3)} \tensorc m_{(1)})\\ & \label{9thmay20198} m *_\Omega a = \gamma(m_{(-1)} \tensorc a_{(1)}) m_{(0)} . a_{(2)} \overline{\gamma}(m_{(1)} \tensorc a_{(3)}), \end{eqnarray}
	for all elements $m$ of $M$ and for all elements $a$ of $\A$. Here, $*_\Omega$ denotes the right and left $\A_\Omega$-module actions, and $.$ denotes the right and left $\A$-module actions.
	The $\A_\Omega$-bicovariant structures are given by \begin{equation} \label{10thseptember20191sm} {}_{M_\Omega} \Delta:= {}_M \Delta: M_\Omega \rightarrow \A_\Omega \tensorc M_\Omega ~ {\rm and} ~ {}_{M_\Omega} \Delta:= {}_M \Delta: M_\Omega \rightarrow M_\Omega \tensorc \A_\Omega. \end{equation} 
In particular, if $ \omega \in {}_0 M $ and $ a \in \A, $ we have
\begin{equation} \label{9thmay20192} \omega *_\Omega a = \omega_{(0)}. a_{(1)} \overline{\gamma} (\omega_{(1)} \tensorc a_{(2)}). \end{equation}
Now we come to the deformation of maps:
\begin{prop} \label{11thjuly20192} (Proposition 2.27 of \cite{paganietal})
	Let $(M, \Delta_M, {}_M \Delta)$ and $(N, \Delta_N, {}_N \Delta)$ be bicovariant $\A$-bimodules, $T: M \to N$ be a $\IC$-linear bicovariant map and $\Omega$ be a cocycle as above. Then there exists a map $T_\Omega: M_\Omega \to N_\Omega$ defined by $T_\Omega (m) = T(m)$ for all $m$ in $M$. Thus, $T_\Omega = T$ as $\IC$-linear maps. Moreover, we have the following:
	\begin{itemize}
		\item[(i)] the deformed map $T_\Omega: M_\Omega \to N_\Omega$ is an $\A_\Omega$ bicovariant map,
		\item[(ii)] if $T$ is a bicovariant right (respectively left) $\A$-linear map, then $T_\Omega$ is a bicovariant right (respectively, left) $\A_\Omega$-linear map,
		\item[(iii)] if $(P, \Delta_P, {}_{P} \Delta)$ is another bicovariant $\A$-bimodule, and $S: N \to {P} $ is a bicovariant map, then $(S \circ T)_\Omega : M_\Omega \to P_\Omega$ is a bicovariant map and $S_\Omega \circ T_\Omega = (S \circ T)_\Omega$.
	\end{itemize} 
\end{prop}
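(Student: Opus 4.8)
The plan is to exploit the two structural features that make this deformation so rigid: $T_\Omega$ and $T$ are literally the same $\IC$-linear map, and by \eqref{10thseptember20191sm} the comodule coactions of $M_\Omega$ and $N_\Omega$ are unchanged from those of $M$ and $N$ (only the module actions are twisted). With this in hand part (i) is immediate. Since $\Delta_{M_\Omega} = \Delta_M$ and $T_\Omega = T$, the left-covariance identity $(\id \tensorc T_\Omega)\Delta_{M_\Omega} = \Delta_{N_\Omega}\circ T_\Omega$ is word-for-word the left-covariance $(\id \tensorc T)\Delta_M = \Delta_N\circ T$ assumed for $T$, and likewise for the right coactions. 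Hence $T_\Omega$ is $\A_\Omega$-bicovariant with no further computation.

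The substance is part (ii), which I would verify directly from the defining formula \eqref{9thmay20198} for the twisted right action. Writing the coactions of $m$ as $m_{(-1)} \tensorc m_{(0)}$ and $m_{(0)} \tensorc m_{(1)}$, I compute
\[ T_\Omega(m \staromega a) = T\big(\gamma(m_{(-1)} \tensorc a_{(1)})\, m_{(0)}.a_{(2)}\, \overline{\gamma}(m_{(1)} \tensorc a_{(3)})\big). \]
The two decisive observations are that $\gamma$ and $\overline{\gamma}$ take values in $\IC$, so they pass through the $\IC$-linear map $T$, and that $T$ is right $\A$-linear for the undeformed action, giving $T(m_{(0)}.a_{(2)}) = T(m_{(0)}).a_{(2)}$. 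This reduces the expression to $\gamma(m_{(-1)} \tensorc a_{(1)})\, T(m_{(0)}).a_{(2)}\, \overline{\gamma}(m_{(1)} \tensorc a_{(3)})$. To recognise this as $T(m)\staromega a$, I invoke the bicovariance of $T$: left-covariance yields $T(m)_{(-1)} \tensorc T(m)_{(0)} = m_{(-1)} \tensorc T(m_{(0)})$ and right-covariance yields $T(m)_{(0)} \tensorc T(m)_{(1)} = T(m_{(0)}) \tensorc m_{(1)}$, so the full bicomodule coaction of $T(m)$ is $m_{(-1)} \tensorc T(m_{(0)}) \tensorc m_{(1)}$. Substituting this into the defining formula for $T(m)\staromega a$ reproduces the displayed expression exactly, proving right $\A_\Omega$-linearity. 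The left $\A_\Omega$-linearity statement is entirely symmetric, using \eqref{9thmay20197} in place of \eqref{9thmay20198}.

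Part (iii) is then formal. Because deformation leaves every underlying linear map unchanged, $(S\circ T)_\Omega = S\circ T = S_\Omega\circ T_\Omega$ as $\IC$-linear maps, which is the asserted equality. That $S\circ T$ is bicovariant is the standard fact that a composite of coaction-intertwining maps is again coaction-intertwining, and hence by part (i) its deformation $(S\circ T)_\Omega$ is an $\A_\Omega$-bicovariant map.

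The only genuinely delicate point is the index bookkeeping in part (ii): one must match the three legs carrying $m_{(-1)}$, $m_{(0)}$ and $m_{(1)}$ correctly after applying $T$, and it is precisely here that the full bicovariance of $T$ (rather than covariance on a single side) is needed, since the twisted action \eqref{9thmay20198} feeds $m$ into both the left and the right comodule slots of $\gamma$ and $\overline{\gamma}$. Everything else reduces to the scalar-valuedness of $\gamma,\overline{\gamma}$ and the one-sided module-linearity already assumed for $T$.
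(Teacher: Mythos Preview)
Your argument is correct. The paper does not supply its own proof of this proposition; it simply cites it as Proposition~2.27 of \cite{paganietal}, so there is no in-paper argument to compare against. Your direct verification---using that $T_\Omega=T$ as $\IC$-linear maps, that the coactions are unchanged under deformation by \eqref{10thseptember20191sm}, that $\gamma,\overline{\gamma}$ are scalar-valued, and that bicovariance of $T$ gives $T(m)_{(-1)}\tensorc T(m)_{(0)}\tensorc T(m)_{(1)}=m_{(-1)}\tensorc T(m_{(0)})\tensorc m_{(1)}$---is exactly the natural proof and handles all three parts cleanly.
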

As a corollary to Proposition \ref{11thjuly20192}, we obtain:
\begin{prop} \label{11thjuly20193}
	Let $(M, \Delta _M, {}_M \Delta)$ and $(N, \Delta_M, {}_N \Delta)$ be bicovariant bimodules of a Hopf algebra $\A$, $T$ be a bicovariant right $\A$-linear map from $M$ to $N$ and $\Omega$ be a cocycle as above. Then 
	\begin{equation} \label{27thaugust20192}
	 T_\Omega = u^{N_\Omega} \circ ({}_0 T \tensorc \id) \circ (u^{M_\Omega})^{-1}.
	\end{equation}
	In particular, the $\IC$-linear map ${}_0(T_\Omega)$ from ${}_0(M_\Omega) = {}_0 M$ to itself (as in Proposition \ref{ahoma}) coincides with ${}_0T$.
	Moreover, $T_\Omega$ is an invertible map if and only if $ T$ is invertible, and more generally, $\lambda$ is an eigenvalue of $T_\Omega$ if and only if it is an eigenvalue of $T$.
\end{prop}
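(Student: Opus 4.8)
The plan is to reduce every assertion to the characterisation of left-covariant maps already established in Subsection \ref{21staugust20191jb}, now applied over the Hopf algebra $\A_\Omega$ rather than $\A$. By Proposition \ref{11thjuly20192}, the deformed bimodules $M_\Omega$ and $N_\Omega$ are bicovariant $\A_\Omega$-bimodules and $T_\Omega\colon M_\Omega \to N_\Omega$ is a bicovariant right $\A_\Omega$-linear map with $T_\Omega = T$ as $\IC$-linear maps. Consequently Proposition \ref{ahoma} and Proposition \ref{inviff}, whose proofs use only the structure of a left-covariant right-linear map between left-covariant bimodules over a Hopf algebra, apply verbatim to $T_\Omega$ over $\A_\Omega$.

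First I would record two identifications. Since the coactions $\Delta_{M_\Omega}$ and ${}_{M_\Omega}\Delta$ coincide with $\Delta_M$ and ${}_M\Delta$ as $\IC$-linear maps by \eqref{10thseptember20191sm}, the subspaces of left-invariant elements agree, so that ${}_0(M_\Omega) = {}_0 M$ and ${}_0(N_\Omega) = {}_0 N$. Next, because $T({}_0 M) \subseteq {}_0 N$ by Proposition \ref{ahoma} and $T_\Omega = T$ as $\IC$-linear maps, the restriction ${}_0(T_\Omega) := T_\Omega|_{{}_0(M_\Omega)}$ is exactly $T|_{{}_0 M} = {}_0 T$. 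This already proves the \emph{in particular} clause.

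To obtain \eqref{27thaugust20192}, I would apply the defining relation \eqref{ahomadiag} of Proposition \ref{ahoma} to the map $T_\Omega$ over $\A_\Omega$, which gives
\[ T_\Omega = \widetilde{u}^{N_\Omega} \circ ({}_0(T_\Omega) \tensorc \id) \circ (\widetilde{u}^{M_\Omega})^{-1}, \]
where $\widetilde{u}^{M_\Omega}$ is the multiplication isomorphism furnished by Proposition \ref{moduleiso} for the left-covariant $\A_\Omega$-bimodule $M_\Omega$ (this is the map written $u^{M_\Omega}$ in the statement). Substituting the identity ${}_0(T_\Omega) = {}_0 T$ established above then yields the claimed formula.

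Finally, the invertibility and eigenvalue assertions follow by comparing two instances of Proposition \ref{inviff}: applied to $T$ over $\A$, invertibility of $T$ (respectively, $\lambda$ being an eigenvalue of $T$) is equivalent to the corresponding property of ${}_0 T$, while applied to $T_\Omega$ over $\A_\Omega$, the same property of $T_\Omega$ is equivalent to that of ${}_0(T_\Omega)$; since ${}_0(T_\Omega) = {}_0 T$, the two are equivalent. The only point requiring genuine care is that these earlier characterisation results transport to the $\A_\Omega$-setting, which is precisely what Proposition \ref{11thjuly20192} guarantees, so I expect no substantive obstacle beyond the two invariance identifications.
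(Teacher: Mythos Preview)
Your proposal is correct and follows essentially the same approach as the paper: both arguments rest on the facts that $T_\Omega$ is a bicovariant right $\A_\Omega$-linear map (Proposition \ref{11thjuly20192}), that ${}_0(M_\Omega) = {}_0 M$ because the coactions are unchanged, and that $T_\Omega = T$ as $\IC$-linear maps, and then invoke the characterisation of left-covariant maps over $\A_\Omega$. The only cosmetic difference is the order of presentation: you first observe ${}_0(T_\Omega) = {}_0 T$ and then cite \eqref{ahomadiag} to obtain the formula, whereas the paper verifies \eqref{27thaugust20192} by a short direct computation and reads off ${}_0(T_\Omega) = {}_0 T$ afterwards.
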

\begin{proof}
	Since $T$ is a bicovariant right $\A$-linear map from $M$ to $N$, by Proposition \ref{11thjuly20192}, $T_\Omega$ is an $\A_\Omega$ bicovariant right linear map. Since ${}_0 (M_\Omega) ={}_0 M$ and ${}_0 (N_\Omega)$ as vector spaces, and $T_\Omega$ is a left-covariant map, hence for all $m$ in ${}_0 (M_\Omega)$, the element $T_\Omega(m)$ belongs to ${}_0 (N_\Omega)$. Then we compute, for any $m$ in ${}_0 (M_\Omega)$ and any element $a$ of $\A_\Omega,$
	\begin{equation*}
		\begin{aligned}
			& (u^{N_\Omega})^{-1} \circ T_\Omega (m *_\Omega a) = (u^{N_\Omega})^{-1} (T_\Omega (m) *_\Omega a)
			= T_\Omega (m) \tensorc a {\rm \ (by \ the \ definition \ of \ } u^{N_\Omega})\\
			=& T(m) \tensorc a
			= ({}_0 T) (m) \tensorc a = ({}_0 T \tensorc \id)(u^{M_\Omega})^{-1}(m *_\Omega a),
		\end{aligned}	
	\end{equation*}
	as $m$ belongs to ${}_0 (M_\Omega)$. Thus we have that 
	$$(u^{N_\Omega})^{-1} \circ T_\Omega = ({}_0 T \tensorc \id)(u^{M_\Omega})^{-1}, ~ {\rm i.e.,} ~ T_\Omega = u^{N_\Omega} \circ ({}_0 T \tensorc \id) \circ (u^{M_\Omega})^{-1}.$$
	Evaluating this equation on an element of ${}_0(M_\Omega) = {}_0 M$ yields ${}_0 (T_\Omega) = {}_0 T$.
	
	Finally, applying Proposition \ref{inviff} to $T_\Omega$ and using the fact that ${}_0 (T_\Omega) = {}_0 T$, we get that $T_\Omega$ is invertible if and only if $T$ is invertible. More generally, $\lambda $ is an eigenvalue of $T_\Omega$ if and only if it is an eigenvalue of $T$.
\end{proof}	

Now we are ready to state the main theorem of this subsection. 
\bthm \label{11thmay20192} 
 Suppose $ (\E, d) $ is a bicovariant differential calculus on a Hopf algebra $\A$ and $ \Omega $ a dual $2$-cocycle. Then the following statements hold:
\begin{itemize}
\item[(i)] $ (\E_\Omega, d_\Omega) $ is a differential calculus on $ \aomega. $ 
\item[(ii)] The deformation $\sigma_\Omega$ of $\sigma$ is the unique bicovariant $\A_\Omega$-bimodule braiding map on $\E_\Omega$ given by Proposition \ref{4thmay20193}. If the map ${}_0\sigma$ is diagonalisable. Then the map ${}_0(\sigma_\Omega)$ is also diagonalisable.
\item[(iii)] If $g$ is a bi-invariant pseudo-Riemannian metric on $\E,$ then $g$ deforms to a bi-invariant pseudo-Riemannian metric $g_\Omega$ on $\E_\Omega$. Any bi-invariant pseudo-Riemannian metric on $\E_\Omega$ is a deformation of some bi-invariant pseudo-Riemannian metric on $\E$.
\end{itemize}
\ethm
\begin{proof}
	The first assertion follows from Proposition 3.2 and Corollary 3.4 of \cite{majidcocycle}. The first part of the second assertion follows from Theorem 4.7 of \cite{article5}. Next, by Proposition \ref{11thjuly20193}, we have that the $\IC$-linear maps ${}_0(\sigma_\Omega)$ and ${}_0 \sigma$ coincide. Therefore, if ${}_0 \sigma$ is diagonalisable, so is ${}_0(\sigma_\Omega)$ is diagonalisable.
	
	Finally, the third assertion follows from Theorem 4.15 of \cite{article5}.
\end{proof}
As a direct consequence of the above theorem, we have the following:
\begin{corr} \label{18thsep20194}
	If the unique bicovariant $\A$-bimodule braiding map $\sigma$ for a bicovariant $\A$-bimodule $\E$ satisfies the equation $\sigma^2 = 1$, then the bicovariant $\A_\Omega$-bimodule braiding map $ \sigma_\Omega$ for the bicovariant $\A_\Omega$-bimodule $\E_\Omega$ also satisfies $\sigma_\Omega^2 = 1$.
	
	In particular, if $\A$ is the commutative Hopf algebra of regular functions on a linear algebraic group $G$ and $\E$ is its canonical space of one-forms, then the braiding map $\sigma_\Omega$ for $\E_\Omega$ satisfies $\sigma_\Omega^2 = 1$.
\end{corr}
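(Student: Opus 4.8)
The plan is to exploit the functoriality of cocycle deformation with respect to composition, together with the identification of $\sigma_\Omega$ as the canonical braiding on $\E_\Omega$. First I would recall that by Proposition \ref{4thmay20193} and \eqref{21staugust20194jb}, the braiding $\sigma: \E \tensora \E \to \E \tensora \E$ is a bicovariant right $\A$-linear (indeed $\A$-bimodule) map, so that Proposition \ref{11thjuly20192} applies to it verbatim. The hypothesis $\sigma^2 = 1$ says precisely that $\sigma \circ \sigma = \id_{\E \tensora \E}$ as bicovariant maps.

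Next I would apply part (iii) of Proposition \ref{11thjuly20192} with $S = T = \sigma$ to obtain $(\sigma \circ \sigma)_\Omega = \sigma_\Omega \circ \sigma_\Omega$. Since the deformation of a map is the same underlying $\IC$-linear map, $(\id_{\E \tensora \E})_\Omega = \id_{(\E \tensora \E)_\Omega}$, and therefore $\sigma_\Omega \circ \sigma_\Omega = \id$. By Theorem \ref{11thmay20192}(ii), the deformed map $\sigma_\Omega$ is exactly the canonical $\A_\Omega$-bimodule braiding on $\E_\Omega$ furnished by Proposition \ref{4thmay20193}, so the notation is unambiguous and we conclude $\sigma_\Omega^2 = 1$. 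An equivalent route, which I might prefer to record since it is even more elementary, uses Proposition \ref{11thjuly20193}: it gives ${}_0(\sigma_\Omega) = {}_0 \sigma$ as $\IC$-linear maps on $\zeroE \tensorc \zeroE$. Restricting the relation $\sigma^2 = 1$ via \eqref{8thnov20196} yields $({}_0 \sigma)^2 = 1$, hence $({}_0(\sigma_\Omega))^2 = 1$; the extension formula \eqref{27thaugust20192} then propagates this relation back to $\sigma_\Omega$, again giving $\sigma_\Omega^2 = 1$.

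For the second assertion I would observe that when $\A$ is the commutative Hopf algebra of regular functions on $G$ and $\E$ is its classical module of one-forms, the canonical braiding is the flip, $\sigma(e \tensora f) = f \tensora e = \sigmacan(e \tensora f)$ (as already recorded in the proof of Proposition \ref{14thoct20191}), so that $\sigma^2 = 1$ holds automatically. Applying the first part of the corollary then yields $\sigma_\Omega^2 = 1$.

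I do not expect a genuine obstacle here, since the statement is advertised as a direct consequence of Theorem \ref{11thmay20192}. The one point requiring care is the identification of the two a priori distinct meanings of the symbol $\sigma_\Omega$ — the deformation of $\sigma$ in the sense of Proposition \ref{11thjuly20192}, versus the intrinsic braiding of $\E_\Omega$ supplied by Proposition \ref{4thmay20193} applied to $(\E_\Omega, d_\Omega)$ — but this coincidence is exactly the content of Theorem \ref{11thmay20192}(ii), so once it is invoked the proof reduces to the multiplicativity of deformation (or, equivalently, to the invariance of the minimal polynomial of ${}_0\sigma$ under deformation).
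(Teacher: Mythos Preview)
Your proof is correct and essentially what the paper intends: the corollary is stated there as ``a direct consequence'' of Theorem \ref{11thmay20192} with no further argument given, and your write-up simply spells out that consequence, invoking exactly the right ingredients (Theorem \ref{11thmay20192}(ii) for the identification of $\sigma_\Omega$ with the intrinsic braiding, multiplicativity of deformation from Proposition \ref{11thjuly20192}(iii), and the flip description of the classical braiding from Proposition \ref{14thoct20191}). Your care about the two a priori meanings of $\sigma_\Omega$ is well placed and correctly discharged.
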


Before we proceed further, let us recall from Proposition 2.26 of \cite{paganietal} that there exists a bicovariant $\A_\Omega$- bimodule isomorphism 
	$$ \xi: \E_\Omega \otimes_{\A_\Omega} \E_\Omega \rightarrow (\E \tensora \E)_\Omega. $$
	The isomorphism $\xi$ and its inverse are respectively given by
	\begin{eqnarray}
	\xi (m \otimes_{\A_\Omega} n) & = \gamma(m_{(-1)} \tensorc n_{(-1)}) m_{(0)} \tensora n_{(0)} \overline{\gamma}(m_{(1)} \tensorc n_{(1)}) \label{23rdnov20192} \\
	 \xi^{-1} (m \tensora n) & = \overline{\gamma}(m_{(-1)} \tensorc n_{(-1)}) m_{(0)} \otimes_{\A_\Omega} n_{(0)} {\gamma}(m_{(1)} \tensorc n_{(1)}) \label{23rdnov20193}
	\end{eqnarray}
	We will use this isomorphism for the rest of this subsection as well as the next subsection.

If $(\E,d)$ is a bicovariant differential calculus such that ${}_0\sigma$ is diagonalisable, then we have proved (Theorem \ref{25thmay20195}) that $\E \tensora \E = {\rm Ker}(\wedge) \oplus \F$, where $\F = \widetilde{u}^{\E \tensora \E}({}_0 \F \tensorc \A)$. Here, ${}_0 \F$ is the direct sum of eigenspaces of ${}_0 \sigma$ corresponding to the eigenvalues which are not equal to $1$ and $\widetilde{u}^{\E \tensora \E}$ is the isomorphism defined in \eqref{22ndaugust20193}. Moreover, we have a bicovariant $\A$-bilinear idempotent map $\Psym$ on $\E \tensora \E$ with range ${\rm Ker}(\wedge)$ and kernel $\F$. $\Psym$ is defined by the equation (see Definition \ref{27thmay20193})
\[ \Psym = \widetilde{u}^{\E \tensora \E}({}_0 (\Psym) \tensorc \id)(\widetilde{u}^{\E \tensora \E})^{-1}. \]
Since $ \Psym : \E \tensora \E \rightarrow \E \tensora \E $ is bicovariant, we have a deformed map $ (\Psym)_\Omega: (\E \tensora \E)_\Omega \rightarrow (\E \tensora \E)_\Omega. $ With an abuse of notation, we will denote the map $ \xi^{-1} (\Psym)_\Omega \xi : \E_\Omega \otimes_{\A_\Omega} \E_\Omega \rightarrow \E_\Omega \otimes_{\A_\Omega} \E_\Omega $ by the symbol $ (\Psym)_\Omega $ again.

Now, let us consider the bicovariant differential calculus $(\E_\Omega, d_\Omega)$. By Theorem \ref{11thmay20192}, we can apply Theorem \ref{25thmay20195} (to $(\E_\Omega, d_\Omega)$) to get a bicovariant $\A_\Omega$-bilinear idempotent $(\Psym)_{\E_\Omega}$ on $\E_\Omega \otimes_{\A_\Omega} \E_\Omega$. It is worthwhile to note that the map $(\Psym)_{\E_\Omega}$ coincides with the cocycle deformation $(\Psym)_\Omega$ of the map $\Psym$. Indeed, since ${}_0 (\sigma_\Omega) = {}_0 \sigma$ on ${}_0(\E_\Omega) \tensorc {}_0 (\E_\Omega) = {}_0 \E \tensorc \zeroE$, the kernel of $(\Psym)_{\E_\Omega}$ is equal to $\widetilde{u}^{\E_\Omega \otimes_{\A_\Omega} \E_\Omega}({}_0 \F \tensorc \A_\Omega)$. However, using the isomorphism $(\E \tensora \E)_\Omega \cong \E_\Omega \otimes_{\A_\Omega} \E_\Omega$, it is easy to check that
\begin{align*}
	&\widetilde{u}^{\E_\Omega \otimes_{\A_\Omega} \E_\Omega}({}_0\F \tensorc \A_\Omega) = (\widetilde{u}^{\E \tensora \E})_\Omega (({}_0 \F \tensorc \A)_\Omega)\\ = &((\widetilde{u}^{\E \tensora \E})({}_0 \F \tensorc \A))_\Omega = \F_\Omega = {\rm Ker}((\Psym)_\Omega).
\end{align*}
On the other hand, by the definition of $(\Psym)_{\E_\Omega}$,
\begin{align*}
	&{\rm Ran}((\Psym)_{\E_\Omega}) = {\rm Ker}(\sigma_\Omega - 1) = ({\rm Ker}(\sigma - 1))_\Omega \\
	=&({\rm Ran}(\Psym))_\Omega = {\rm Ran}((\Psym)_\Omega)
\end{align*}
Since $(\Psym)_{\E_\Omega}$ and $(\Psym)_\Omega$ are both idempotents on $\E_\Omega \otimes_{\A_\Omega} \E_\Omega$ with the same kernel and the same range, we can conclude that $(\Psym)_\Omega = (\Psym)_{\E_\Omega}$. We collect the observations made above in the following proposition.

\begin{prop} \label{25thseptember20191}
	Let $(\E,d)$ be a bicovariant differential calculus over $\A$ such that ${}_0 \sigma$ is diagonalisable and $\Omega$ be a normalised dual 2-cocycle. Then the maps $(\Psym)_{\E_\Omega}$ and $(\Psym)_\Omega$ coincide. Moreover, we have
	\[ \E_\Omega \otimes_{\A_\Omega} \E_\Omega = {\rm Ker}(\wedge_\Omega) \oplus \F_\Omega = {\rm Ker}(\sigma_\Omega - 1) \oplus \F_\Omega. \]
\end{prop}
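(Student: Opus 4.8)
The plan is to use the elementary fact that an idempotent on a vector space is uniquely determined by its range and its kernel; thus, to prove $(\Psym)_\Omega = (\Psym)_{\E_\Omega}$ it suffices to check that these two idempotents on $\E_\Omega \otimes_{\A_\Omega} \E_\Omega$ have the same range and the same kernel. First I would record that $\Psym$ is a bicovariant $\A$-bilinear idempotent on $\E \tensora \E$ (Proposition \ref{16thseptember20191}), so by Proposition \ref{11thjuly20192} it deforms to a map $(\Psym)_\Omega$ on $(\E \tensora \E)_\Omega$, which we transport to $\E_\Omega \otimes_{\A_\Omega} \E_\Omega$ along the isomorphism $\xi$; functoriality of the deformation (Proposition \ref{11thjuly20192}(iii)) gives $(\Psym)_\Omega \circ (\Psym)_\Omega = (\Psym \circ \Psym)_\Omega = (\Psym)_\Omega$, so $(\Psym)_\Omega$ is again idempotent. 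On the other side, Theorem \ref{11thmay20192}(ii) guarantees that ${}_0(\sigma_\Omega)$ is diagonalisable, so Theorem \ref{25thmay20195} and Proposition \ref{16thseptember20191} apply verbatim to $(\E_\Omega, d_\Omega)$ and produce the idempotent $(\Psym)_{\E_\Omega}$ with range ${\rm Ker}(\wedge_\Omega) = {\rm Ker}(\sigma_\Omega - 1)$ and kernel $\widetilde{u}^{\E_\Omega \otimes_{\A_\Omega} \E_\Omega}({}_0 \F \tensorc \A_\Omega)$.

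To match the ranges, I would use that the deformation is additive in its argument (since $T_\Omega = T$ as $\IC$-linear maps, Proposition \ref{11thjuly20192}) and that the identity deforms to the identity, whence $(\sigma - 1)_\Omega = \sigma_\Omega - 1$; therefore ${\rm Ran}((\Psym)_\Omega) = ({\rm Ran}(\Psym))_\Omega = ({\rm Ker}(\sigma - 1))_\Omega = {\rm Ker}(\sigma_\Omega - 1) = {\rm Ran}((\Psym)_{\E_\Omega})$. To match the kernels, I would invoke Proposition \ref{11thjuly20193}, which supplies ${}_0(\sigma_\Omega) = {}_0\sigma$ (so that the sum ${}_0\F$ of the non-unit eigenspaces is common to the two calculi) and the compatibility \eqref{27thaugust20192} of multiplication isomorphisms, i.e.\ $\widetilde{u}^{\E_\Omega \otimes_{\A_\Omega}\E_\Omega} = (\widetilde{u}^{\E \tensora \E})_\Omega$. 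Combining these yields ${\rm Ker}((\Psym)_{\E_\Omega}) = \widetilde{u}^{\E_\Omega \otimes_{\A_\Omega}\E_\Omega}({}_0\F \tensorc \A_\Omega) = (\widetilde{u}^{\E \tensora \E}({}_0\F \tensorc \A))_\Omega = \F_\Omega = {\rm Ker}((\Psym)_\Omega)$. Equal range and equal kernel force $(\Psym)_{\E_\Omega} = (\Psym)_\Omega$, which is the first assertion.

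The second assertion is then immediate: Theorem \ref{25thmay20195} read off for $(\E_\Omega, d_\Omega)$ gives $\E_\Omega \otimes_{\A_\Omega} \E_\Omega = {\rm Ker}(\wedge_\Omega) \oplus {\rm Ker}((\Psym)_{\E_\Omega})$, and the identifications just established rewrite the second summand as $\F_\Omega$ and, via \eqref{22ndaugust20191} for the deformed calculus, rewrite ${\rm Ker}(\wedge_\Omega)$ as ${\rm Ker}(\sigma_\Omega - 1)$. I expect the main obstacle to be the bookkeeping through $\xi$: one must ensure that the deformation functor commutes with forming kernels and ranges, with the eigenspace decomposition of ${}_0\sigma$, and above all with the multiplication isomorphism $\widetilde{u}$. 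The clean way around this is to lean on $T_\Omega = T$ as $\IC$-linear maps (Proposition \ref{11thjuly20192}), which trivialises the kernel/range bookkeeping, and on \eqref{27thaugust20192} for the $\widetilde{u}$-compatibility; the only genuinely calculus-dependent input is the identity ${}_0(\sigma_\Omega) = {}_0\sigma$ from Theorem \ref{11thmay20192}(ii) and Proposition \ref{11thjuly20193}.
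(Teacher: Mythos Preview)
Your proposal is correct and follows essentially the same route as the paper: both arguments identify $(\Psym)_\Omega$ and $(\Psym)_{\E_\Omega}$ by showing they are idempotents with the same range ${\rm Ker}(\sigma_\Omega-1)=({\rm Ker}(\sigma-1))_\Omega$ and the same kernel $\F_\Omega=\widetilde{u}^{\E_\Omega\otimes_{\A_\Omega}\E_\Omega}({}_0\F\tensorc\A_\Omega)$, using ${}_0(\sigma_\Omega)={}_0\sigma$ and the compatibility of $\widetilde{u}$ with the deformation. Your additional remarks on why $(\Psym)_\Omega$ is idempotent and on the bookkeeping through $\xi$ are helpful elaborations but do not change the strategy.
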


\subsection{Deformation of connections} \label{18thseptember20192}
Now we deal with bicovariant connections on $ \E_\Omega. $ Suppose that $ \nabla $ is a bicovariant connection on $ \E. $ Then Proposition \ref{11thjuly20192} yields a $ \mathbb{C} $-linear map $ \nabla_\Omega $ from $ \E_\Omega $ to $ (\E \tensora \E)_\Omega. $ However, we would like to have the deformed map to take value in $ \E_{\Omega} \otimes_{\A_\Omega} \E_{\Omega}.$ For this, we will need to use the isomorphism 
$ \xi: \E_\Omega \otimes_{\A_\Omega} \E_\Omega \rightarrow (\E \tensora \E)_\Omega $ introduced in the previous subsection and the equations \eqref{23rdnov20192}, \eqref{23rdnov20193}.

	The following lemma will be needed to prove that $ \nabla_\Omega $ is actually a connection.
\begin{lemma} \label{14thseptember20191}
Suppose $ \omega \in \zeroE $ and $ a \in \A. $ Then the following equation holds:
	$$ \xi^{-1} (\omega_{(0)} \tensora (da)_{(0)} \overline{\gamma} (\omega_{(1)} \tensorc (da)_{(1)})) = \omega \otimes_{\A_\Omega} d_{\Omega} (a).$$
Moreover, if $ \nabla $ is a bicovariant connection on a bicovariant differential calculus $ (\E, d) $ and we write 
 $$ {}_{\E \tensora \E} \Delta (\nabla (e)) = (\nabla (e))_{(0)} \tensorc (\nabla (e))_{(1)},$$ 
	then for all $ \omega \in \zeroE $ and $ a $ in $ \A, $ we have
$$ \xi^{-1} ((\nabla (\omega))_{(0)} a_{(1)} \overline{\gamma} ((\nabla (\omega))_{(1)} \tensorc a_{(2)})) = \xi^{-1}(\nabla_\Omega (\omega)) \staromega a, $$
where $ \nabla_\Omega: \E_\Omega \rightarrow (\E \tensora \E)_\Omega $ is the deformation of the $ \mathbb{C} $-linear bicovariant map $ \nabla: \E \rightarrow \E \tensora \E. $ 
\end{lemma}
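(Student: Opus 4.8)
The plan is to prove both identities by transporting them through the bicovariant bimodule isomorphism $\xi$ of \eqref{23rdnov20192}--\eqref{23rdnov20193} and reducing everything to the explicit formula \eqref{9thmay20192} for the right $\A_\Omega$-action on a left-invariant element. Since $\xi$ is an isomorphism, each identity is equivalent to the one obtained by applying $\xi$ to both sides, so in each case I would apply $\xi$, simplify using the explicit coaction formulas, and finally apply $\xi^{-1}$.

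For the first identity I would compute $\xi(\omega \otimes_{\A_\Omega} d_\Omega(a))$ directly from \eqref{23rdnov20192}. As $\omega \in \zeroE$ is left-invariant, its left coaction is trivial, so the prefactor $\gamma(\omega_{(-1)} \tensorc (da)_{(-1)})$ collapses to $\gamma(1 \tensorc (da)_{(-1)}) = \epsilon((da)_{(-1)})$ by the normalisation of $\gamma$. Substituting the full bicomodule coaction of $da$, namely $(da)_{(-1)} \tensorc (da)_{(0)} \tensorc (da)_{(1)} = a_{(1)} \tensorc d(a_{(2)}) \tensorc a_{(3)}$ from Lemma \ref{6thjune20192}(iii), the counit absorbs $a_{(1)}$ and leaves $\omega_{(0)} \tensora d(a_{(1)}) \overline{\gamma}(\omega_{(1)} \tensorc a_{(2)})$. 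Rewriting $d(a_{(1)}) \tensorc a_{(2)} = (da)_{(0)} \tensorc (da)_{(1)}$ by Lemma \ref{6thjune20192}(ii) produces precisely the element to which $\xi^{-1}$ is applied on the left-hand side, so applying $\xi^{-1}$ yields the claim.

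For the second identity I would again apply $\xi$ to the right-hand side. Because $\xi$ is right $\A_\Omega$-linear, $\xi(\xi^{-1}(\nabla_\Omega(\omega)) \staromega a) = \nabla_\Omega(\omega) \staromega a$, now computed inside $(\E \tensora \E)_\Omega$. Since $\nabla$ is left-covariant, Lemma \ref{nablaleftinv} gives $\nabla(\omega) \in \zeroE \tensorc \zeroE$, i.e.\ $\nabla_\Omega(\omega) = \nabla(\omega)$ is a left-invariant element of $(\E \tensora \E)_\Omega$ (the relevant right coaction being unchanged by \eqref{10thseptember20191sm}). Applying \eqref{9thmay20192} to the bicovariant bimodule $\E \tensora \E$ and the left-invariant element $\nabla(\omega)$ then gives $\nabla_\Omega(\omega) \staromega a = (\nabla(\omega))_{(0)} a_{(1)} \overline{\gamma}((\nabla(\omega))_{(1)} \tensorc a_{(2)})$, and applying $\xi^{-1}$ recovers the stated identity.

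These are routine Sweedler-calculus manipulations; the only point requiring genuine care is keeping the left coaction, the right coaction, and the full bicomodule coaction distinct throughout, and invoking the normalisation of $\gamma$ together with the three parts of Lemma \ref{6thjune20192} in the correct slots. The main structural input is the interplay between the explicit form of $\xi$ and the right-module formula \eqref{9thmay20192}; once one observes that $\nabla(\omega)$ is left-invariant, the second identity is essentially just the definition of $\staromega$ transported through $\xi$.
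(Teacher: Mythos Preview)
Your proposal is correct and follows essentially the same approach as the paper. For the second identity your argument is identical to the paper's: left-invariance of $\nabla(\omega)$, formula \eqref{9thmay20192}, and right $\A_\Omega$-linearity of $\xi$. For the first identity you apply $\xi$ to the right-hand side whereas the paper applies $\xi^{-1}$ directly to the left-hand side and then uses the convolution-inverse relation $\gamma * \overline{\gamma} = \epsilon \otimes \epsilon$ to collapse the extra factors; your route via Lemma~\ref{6thjune20192} is a minor but entirely equivalent variation.
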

\begin{proof}
Let us first clarify that we view $ \omega_{(0)} \tensora (da)_{(0)} \overline{\gamma} (\omega_{(1)} \tensorc (da)_{(1)}) $ and $ (\nabla (\omega))_{(0)} a_{(1)} \overline{\gamma} ((\nabla (\omega))_{(1)} \tensorc a_{(2)}) $ as elements in $ (\E \tensora \E)_\Omega.$
The first equation follows from the following computation:
\begin{equation*}
	\begin{aligned}
		 & \xi^{-1} (\omega_{(0)} \tensora (da)_{(0)} \overline{\gamma} (\omega_{(1)} \tensorc (da)_{(1)})) = \xi^{-1} (\omega_{(0)} \tensora (da)_{(0)})\overline{\gamma} (\omega_{(1)} \tensorc (da)_{(1)}) \\
		=& \overline{\gamma}(1 \tensorc (da)_{(-1)}) \omega_{(0)} \otimes_{\A_\Omega} (da)_{(0)} \gamma(\omega_{(1)} \tensorc (da)_{(1)}) \overline{\gamma}(\omega_{(2)} \tensorc (da)_{(2)}) {\rm (} ~ {\rm as} ~ \omega \in \zeroE ~ {\rm)}\\
		=& \epsilon((da)_{(-1)}) \omega_{(0)} \otimes_{\A_\Omega} (da)_{(0)} \gamma(\omega_{(1)} \tensorc (da)_{(1)}) \overline{\gamma}(\omega_{(2)} \tensorc (da)_{(2)})\\
		=& \omega_{(0)} \otimes_{\A_\Omega} (da)_{(0)} \gamma(\omega_{(1)} \tensorc (da)_{(1)}) \overline{\gamma}(\omega_{(2)} \tensorc (da)_{(2)})\\
		=& \omega_{(0)} \otimes_{\A_\Omega} (da)_{(0)} \epsilon(\omega_{(1)}) \epsilon ((da)_{(1)}) \ {\rm(since \ \overline{\gamma} \ is \ the \ convolution \ inverse \ of \ \gamma)}\\
		=& \omega \otimes_{\A_\Omega} da = \omega \otimes_{\A_\Omega} d_\Omega a.
	\end{aligned}
\end{equation*}
For the proof of the second equation, we use the right $\A_\Omega$-module structure of $(\E \tensora \E)_\Omega$ and the bicovariance of the map $ \nabla_\Omega $ (Proposition \ref{11thjuly20192}). In particular, this implies that if $ \omega \in \zeroE, $ then $ \nabla_\Omega (\omega) $ is an element of $ {}_0 ((\E \otimes \E)_\Omega). $ Hence, by \eqref{9thmay20192}, we get:
\begin{equation*}
	\begin{aligned}
		 & \nabla_{\Omega}(\omega) * a = (\nabla_{\Omega}(\omega))_{(0)} . a_{(1)} \overline{\gamma}((\nabla_{\Omega}(\omega))_{(1)} \tensorc a_{(2)})\\
		=& (\nabla(\omega))_{(0)} . a_{(1)} \overline{\gamma}((\nabla(\omega))_{(1)} \tensorc a_{(2)}),
	\end{aligned}
\end{equation*}
where the equality is of elements in $(\E \tensora \E)_\Omega$. Then, by the right $\A_\Omega$-linearity of $\xi$, we have 
$$ \xi^{-1} ((\nabla (\omega))_{(0)} a_{(1)} \overline{\gamma} ((\nabla (\omega))_{(1)} \tensorc a_{(2)})) = \xi^{-1}(\nabla_\Omega (\omega)) \staromega a ,$$
 where the equality is of elements in $\E_\Omega \otimes_{\A_\Omega} \E_\Omega$. This completes the proof of the lemma.
\end{proof}
By an abuse of notation, we will denote the map $ \xi^{-1} \nabla_\Omega $ by the symbol $ \nabla_\Omega $ again. Thus, $ \nabla_\Omega $ takes value in $ \E_\Omega \otimes_{\A_\Omega} \E_\Omega $ as desired. Then we have the following theorem. 
\begin{thm} \label{18thseptember20191}
Suppose $ (\E, d) $ is a bicovariant differential calculus. Then a bicovariant connection $ \nabla $ deforms to a bicovariant connection $ \nabla_\Omega $ on $ \E_\Omega. $ In fact, bicovariant connections on $ \E $ and $ \E_\Omega $ are in bijective correspondence.
\end{thm}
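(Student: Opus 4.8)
The plan is to identify the deformed object, verify it is a connection (the Leibniz rule being the only nontrivial point), and then upgrade the assignment to a bijection. By definition a bicovariant connection $\nabla\colon \E \to \E \tensora \E$ is in particular a bicovariant $\IC$-linear map, so part (i) of Proposition \ref{11thjuly20192} yields a bicovariant map $\nabla_\Omega \colon \E_\Omega \to (\E \tensora \E)_\Omega$ with $\nabla_\Omega(m) = \nabla(m)$; composing with the bicovariant $\A_\Omega$-bimodule isomorphism $\xi^{-1}\colon (\E \tensora \E)_\Omega \to \E_\Omega \otimes_{\A_\Omega} \E_\Omega$ (Proposition 2.26 of \cite{paganietal}) gives the candidate $\xi^{-1}\nabla_\Omega$, which I again denote $\nabla_\Omega$. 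Its bicovariance is immediate, since it is a composite of two bicovariant maps. Hence the only real content is the deformed Leibniz rule, and this is where I expect the main obstacle.

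For the Leibniz rule I would first reduce to generators. As $\zeroE = {}_0(\E_\Omega)$ is right $\A_\Omega$-total in $\E_\Omega$ (Proposition \ref{moduleiso} applied to $\E_\Omega$) and $d_\Omega$ satisfies the Leibniz rule on $\A_\Omega$ (Theorem \ref{11thmay20192}(i)), it suffices to prove
\[ \nabla_\Omega(\omega \staromega a) = \nabla_\Omega(\omega)\staromega a + \omega \otimes_{\A_\Omega} d_\Omega a \]
for $\omega \in \zeroE$ and $a \in \A_\Omega$; the general case $\rho = \sum_i \omega_i \staromega a_i$ then follows from associativity of $\staromega$, the $d_\Omega$-Leibniz rule, and moving $\A_\Omega$-factors across $\otimes_{\A_\Omega}$. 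To prove the displayed identity I would expand $\omega \staromega a = \omega_{(0)}.a_{(1)}\,\overline{\gamma}(\omega_{(1)} \tensorc a_{(2)})$ via \eqref{9thmay20192}, apply $\nabla$ (pulling out the scalar $\overline{\gamma}$-factors by $\IC$-linearity), and split $\nabla(\omega_{(0)}.a_{(1)}) = \nabla(\omega_{(0)}).a_{(1)} + \omega_{(0)} \tensora d(a_{(1)})$ by the undeformed Leibniz rule.

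The two resulting summands are exactly the two terms of Lemma \ref{14thseptember20191}. For the first, right-covariance of $\nabla$ gives $(\nabla(\omega))_{(0)} \tensorc (\nabla(\omega))_{(1)} = \nabla(\omega_{(0)}) \tensorc \omega_{(1)}$, which turns $\nabla(\omega_{(0)}).a_{(1)}\,\overline{\gamma}(\omega_{(1)} \tensorc a_{(2)})$ into the left-hand side of the second equation of Lemma \ref{14thseptember20191}, so its image under $\xi^{-1}$ is $\nabla_\Omega(\omega)\staromega a$. For the second, Lemma \ref{6thjune20192}(ii) replaces $d(a_{(1)}) \tensorc a_{(2)}$ by $(da)_{(0)} \tensorc (da)_{(1)}$, turning $\omega_{(0)} \tensora d(a_{(1)})\,\overline{\gamma}(\omega_{(1)} \tensorc a_{(2)})$ into $\omega_{(0)} \tensora (da)_{(0)}\,\overline{\gamma}(\omega_{(1)} \tensorc (da)_{(1)})$, whose image under $\xi^{-1}$ is $\omega \otimes_{\A_\Omega} d_\Omega a$ by the first equation of Lemma \ref{14thseptember20191}. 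Applying $\xi^{-1}$ to the full expansion then gives the Leibniz identity. The delicate point is the bookkeeping of the cocycle factors $\gamma, \overline{\gamma}$, but Lemma \ref{14thseptember20191} was set up precisely to absorb it, so beyond these substitutions no further computation is needed.

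Finally, for the bijective correspondence I would invoke the invertibility of cocycle deformation. On $\A_\Omega$ there is an inverse dual $2$-cocycle whose deformation returns $\A$, carries $\E_\Omega$ back to $\E$, and inverts $(-)_\Omega$ on bicovariant $\IC$-linear maps (see \cite{paganietal}). Running the same argument with the roles of $(\A,\Omega)$ and this inverse cocycle interchanged shows that every bicovariant connection $\nabla'$ on $\E_\Omega$ deforms to a bicovariant connection on $\E$; by the functoriality of deformation (Proposition \ref{11thjuly20192}(iii)) the two constructions are mutually inverse, so $\nabla \mapsto \nabla_\Omega$ is a bijection between the bicovariant connections on $\E$ and those on $\E_\Omega$.
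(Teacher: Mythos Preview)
Your proof is correct and follows essentially the same route as the paper's: expand $\omega \staromega a$ via \eqref{9thmay20192}, apply the undeformed Leibniz rule, rewrite the two terms using right-covariance of $\nabla$ and of $d$ (the paper cites \eqref{23rdaugust20191} where you cite Lemma \ref{6thjune20192}(ii)), and then invoke the two identities of Lemma \ref{14thseptember20191}; the extension to general elements and the bijection via $\Omega^{-1}$ are handled identically.
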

\begin{proof} For $ \omega \in \zeroE $ and $ a \in \A, $ we have
\begin{equation*} 
	\begin{aligned}
		& \nabla_\Omega(\omega *_\Omega a) = \nabla_\Omega(\omega_{(0)} a_{(1)} \overline{\gamma}(\omega_{(1)} \tensorc a_{(2)})) ~ {\rm (} ~ {\rm by} ~ \eqref{9thmay20192} ~ {\rm)}\\
		=& \nabla_\Omega(\omega_{(0)} a_{(1)}) \overline{\gamma}(\omega_{(1)} \tensorc a_{(2)}) = \nabla(\omega_{(0)} a_{(1)}) \overline{\gamma}(\omega_{(1)} \tensorc a_{(2)}) \\
		= & (\nabla(\omega_{(0)}) a_{(1)} + \omega_{(0)} \tensora d (a_{(1)})) \overline{\gamma}(\omega_{(1)} \tensorc a_{(2)}) \\
		= & \nabla(\omega_{(0)}) a_{(1)} \overline{\gamma}(\omega_{(1)} \tensorc a_{(2)}) + \omega_{(0)} \tensora d (a_{(1)}) \overline{\gamma}(\omega_{(1)} \tensorc a_{(2)}). 
		\end{aligned}
		\end{equation*}
		Now, by the right covariance of the maps $ \nabla $ and $ d $ (see \eqref{23rdaugust20191}), the following equations hold:
		$$ \nabla(\omega_{(0)}) \tensorc \omega_{(1)} = (\nabla(\omega))_{(0)} \tensorc (\nabla(\omega))_{(1)}, ~ d(a_{(1)}) \tensorc a_{(2)} = (da)_{(0)} \tensorc (da)_{(1)},$$
		and therefore, the above expression is equal to
		\begin{equation*}
		 \begin{aligned}
		& (\nabla(\omega))_{(0)} a_{(1)} \overline{\gamma}((\nabla(\omega))_{(1)} \tensorc a_{(2)}) + \omega_{(0)} \tensora (da)_{(0)}\overline{\gamma}(\omega_{(1)} \tensorc (da)_{(1)}) \\
		=& \nabla_\Omega(\omega) *_\Omega a + \omega \otimes_{\A_\Omega} d_\Omega a
		\end{aligned}
\end{equation*}
where we have used the two equations of Lemma \ref{14thseptember20191}. This proves that for all $ \omega $ in $ \zeroE $ and $ a $ in $ \A, $ 
\begin{equation} \label{29thjuly20192} \nabla_\Omega (\omega \staromega a) = \nabla_\Omega (\omega) \staromega a + \omega \otimes_{\A_\Omega} da. \end{equation}
Since $\zeroE = {}_0(\E_\Omega)$ is right $\A_\Omega$-total in $\E_\Omega$, we are left to prove that for all $a,b$ in $\A$ and $\omega$ in $\zeroE$,
 $$ \nabla_{\Omega}((\omega *_\Omega a) *_\Omega b) = \nabla_\Omega(\omega *_\Omega a) *_\Omega b + \omega *_\Omega a \otimes_{\A_\Omega} d_\Omega b. $$ But this follows easily from \eqref{29thjuly20192}.
Since the right and left comodule structure of the calculus and its deformation are the same, hence $\nabla_\Omega$ is also bicovariant.\\
To show that the bicovariant connections of $\E$ and $\E_\Omega$ are in a bijective correspondence, we consider the bicovariant calculus $(\E,d)$ as a cocycle deformation of the calculus $(\E_\Omega, d_\Omega)$ under the cocycle $\Omega^{-1}$. If $\nabla^\prime$ is a bicovariant connection on $(\E_\Omega, d_\Omega)$, then by the above argument, $(\nabla^\prime)_{\Omega^{-1}}$ is a bicovariant connection on $((\E_\Omega)_{\Omega^{-1}}, (d_\Omega)_{\Omega{-1}}) = (\E,d)$. Moreover, $\nabla^\prime = ((\nabla^\prime)_{\Omega^{-1}})_{\Omega}$ and hence is a cocycle deformation of a bicovariant connection on $(\E,d)$ under the cocycle $\Omega$.
\end{proof}

\subsection{The existence of Levi-Civita connections} \label{24thnov20191}
In this subsection, we prove the main result of this section, namely that, if $(\E,d)$ is a bicovariant differential calculus on $\A$ satisfying the conditions of Theorem \ref{15thjune20193} and $g^\prime$ is a pseudo-Riemannian bi-invariant metric on the deformed bimodule $\E_\Omega$, then there exists a unique left-invariant connection which is torsionless and compatible with $g^\prime$. A similar result was proved in Section 7 of \cite{article1} for Connes-Landi deformations of bimodules. We will continue to use the notations $\sigma_\Omega, g_\Omega$ introduced in Theorem \ref{11thmay20192} and $\nabla_\Omega$ from Theorem \ref{18thseptember20191}. In particular, if $ g $ be a pseudo-Riemannian bi-invariant metric on $ \E, $ then $g_\Omega$ is a pseudo-Riemannian bi-invariant metric on $\E_\Omega$ by Theorem \ref{11thmay20192}. 
\begin{thm} \label{18thseptember20193}
	Suppose $ (\E, d) $ is a bicovariant differential calculus on a Hopf algebra $ \A, $ $ \sigma $ be the corresponding braiding map and $ \Omega $ a normalized dual $2$-cocycle on $\A$. If $ {}_0 \sigma $ is diagonalisable and $ g $ is a pseudo-Riemannian bi-invariant metric on $ \E, $ then the following statements hold:
\begin{itemize}	
\item[(i)] If $ \nabla $ is a bicovariant Levi-Civita connection for the triple $ (\E, d, g), $ then $ \nabla $ deforms to a bicovariant Levi-Civita connection $\nabla_\Omega$ for $ (\E_\Omega, d_\Omega, g_\Omega). $
\item[(ii)] In the set up of 1., if we assume that $ \nabla $ is the unique Levi-Civita connection for $ (\E, d, g), $ then $ \nabla_\Omega $ is the unique bi-covariant Levi-Civita connection for $ (\E_\Omega, d_\Omega, g_\Omega). $
\end{itemize}	
\end{thm}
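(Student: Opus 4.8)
The plan is to check that the deformed connection $\nabla_\Omega$ — already a bicovariant connection on $\E_\Omega$ by Theorem \ref{18thseptember20191} — is torsionless and compatible with $g_\Omega$, and then to read off uniqueness from the bijection between bicovariant connections on $\E$ and on $\E_\Omega$. The guiding principle is that cocycle deformation is invisible on left-invariant elements: by Proposition \ref{11thjuly20193} and Theorem \ref{11thmay20192}, every bicovariant right $\A$-linear map $T$ satisfies ${}_0(T_\Omega) = {}_0 T$ under the canonical identifications ${}_0(\E_\Omega) = \zeroE$ and ${}_0(\E_\Omega \otimes_{\A_\Omega} \E_\Omega) = \zeroE \tensorc \zeroE$, and in particular ${}_0(\sigma_\Omega) = {}_0\sigma$, ${}_0((\Psym)_\Omega) = {}_0(\Psym)$ (Proposition \ref{25thseptember20191}) and ${}_0(g_\Omega) = {}_0 g$. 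For part (i) I would first handle torsion. Since $d_\Omega = d$ as $\IC$-linear maps and composition of bicovariant maps is preserved under deformation (Proposition \ref{11thjuly20192}), the torsion $T_{\nabla_\Omega} = \wedge_\Omega \circ \nabla_\Omega + d_\Omega$ coincides with the deformation $(T_\nabla)_\Omega$ of the bicovariant right $\A$-linear torsion map $T_\nabla = \wedge \circ \nabla + d$, once one knows that $\wedge_\Omega$ on $\E_\Omega \otimes_{\A_\Omega} \E_\Omega$ matches the deformed wedge on $(\E \tensora \E)_\Omega$ through $\xi$, which is exactly what Proposition \ref{25thseptember20191} records. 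Then ${}_0(T_{\nabla_\Omega}) = {}_0(T_\nabla) = 0$ by Proposition \ref{11thjuly20193}, and since $T_{\nabla_\Omega}$ is right $\A_\Omega$-linear and determined by its restriction to $\zeroE$ (Proposition \ref{ahoma}), it vanishes.

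For metric compatibility I would invoke Corollary \ref{28thoct20191sm}, which reduces compatibility of $\nabla_\Omega$ with $g_\Omega$ to the single equation $\widetilde{\Pi^0_{g_\Omega}}(\nabla_\Omega) = 0$ as a map on $\zeroE \tensorc \zeroE$. Expanding the defining formula
\[ \widetilde{\Pi^0_{g_\Omega}}(\nabla_\Omega) = 2(\id \tensorc g_\Omega)(\sigma_\Omega \tensorc \id)(\nabla_\Omega \tensorc \id)\,{}_0((\Psym)_\Omega), \]
one checks that on $\zeroE \tensorc \zeroE$ every factor equals its undeformed counterpart: ${}_0((\Psym)_\Omega) = {}_0(\Psym)$ maps into $\zeroE \tensorc \zeroE$, then (after the $\xi$-identification) ${}_0(\nabla_\Omega) = {}_0\nabla$ maps this into $\zeroE \tensorc \zeroE \tensorc \zeroE$, while $\sigma_\Omega$ and $g_\Omega$ agree with $\sigma$ and $g$ on invariant pairs (with $g$ taking values in $\IC$ by Lemma \ref{14thfeb20191}). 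Hence $\widetilde{\Pi^0_{g_\Omega}}(\nabla_\Omega) = \widetilde{\Pi^0_g}(\nabla)$, which is zero since $\nabla$ is compatible with $g$. This proves part (i).

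For part (ii) I would use that $(\E, d)$ is itself the cocycle deformation of $(\E_\Omega, d_\Omega)$ by $\Omega^{-1}$, and that bicovariant connections on the two calculi are in bijection under $\nabla'' \mapsto (\nabla'')_\Omega$ with inverse $\nabla' \mapsto (\nabla')_{\Omega^{-1}}$ (Theorem \ref{18thseptember20191}). Note that the hypotheses needed to run part (i) in this reverse direction hold: ${}_0(\sigma_\Omega)$ is diagonalisable and $g_\Omega$ is bi-invariant by Theorem \ref{11thmay20192}. Given any bicovariant Levi-Civita connection $\nabla'$ for $(\E_\Omega, d_\Omega, g_\Omega)$, part (i) applied to the cocycle $\Omega^{-1}$ shows that $(\nabla')_{\Omega^{-1}}$ is a bicovariant Levi-Civita connection for $((\E_\Omega)_{\Omega^{-1}}, (d_\Omega)_{\Omega^{-1}}, (g_\Omega)_{\Omega^{-1}}) = (\E, d, g)$. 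By the uniqueness hypothesis $(\nabla')_{\Omega^{-1}} = \nabla$, and deforming back gives $\nabla' = ((\nabla')_{\Omega^{-1}})_\Omega = \nabla_\Omega$, as required.

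I expect the main obstacle to be the bookkeeping with the isomorphism $\xi : \E_\Omega \otimes_{\A_\Omega} \E_\Omega \to (\E \tensora \E)_\Omega$ and its threefold analogue. One must verify that the deformed maps $\wedge_\Omega$, $\sigma_\Omega$, $g_\Omega$, $\nabla_\Omega$ that appear inside $T_{\nabla_\Omega}$ and $\widetilde{\Pi^0_{g_\Omega}}(\nabla_\Omega)$ are exactly the transports through $\xi$ of the maps to which Propositions \ref{11thjuly20193} and \ref{25thseptember20191} apply, so that the clean ``invisible on invariants'' identities may be used legitimately inside the compositions; in particular the identity ${}_0(\nabla_\Omega) = {}_0\nabla$ is really a statement about $\xi^{-1}$ restricting to the identity on invariant two-tensors. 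Once this $\xi$-compatibility is confirmed, the evaluations on $\zeroE \tensorc \zeroE$ are immediate and the two conditions transfer verbatim.
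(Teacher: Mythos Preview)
Your proposal is correct and follows essentially the same strategy as the paper: deform the torsion map $T_\nabla$ to get $T_{\nabla_\Omega}=(T_\nabla)_\Omega=0$, transfer metric compatibility by using that the relevant bicovariant maps agree with their undeformed versions on invariants, and for uniqueness run part (i) backwards via the inverse cocycle $\Omega^{-1}$. The only difference is presentational: for metric compatibility the paper deforms the full bicovariant map $\widetilde{\Pi_g}(\nabla)-dg$ in one stroke (it is zero, so its deformation $\widetilde{\Pi_{g_\Omega}}(\nabla_\Omega)-d_\Omega g_\Omega$ is zero), whereas you reduce via Corollary~\ref{28thoct20191sm} to the equation $\widetilde{\Pi^0_{g_\Omega}}(\nabla_\Omega)=0$ on $\zeroE\tensorc\zeroE$ and verify each factor there; your route makes explicit the $\xi$-bookkeeping that the paper's one-line claim implicitly relies on.
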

\begin{proof} 
 We start by proving that $\nabla_\Omega$ is torsionless and metric compatible.
Since $\wedge$, $\nabla$ and $d$ are bicovariant, therefore the right $\A$-linear homomorphism $T_\nabla = \wedge \circ \nabla + d$ is also bicovariant. Therefore its cocycle deformation exists and $(T_\nabla)_\Omega = (\wedge \circ \nabla + d)_\Omega = \wedge_\Omega \circ \nabla_\Omega + d_\Omega = T_{\nabla_\Omega}$. Since $\nabla$ is torsionless, we have that $T_{\nabla_\Omega} = 0$.\\
To prove that $\nabla_\Omega$ is compatible with the metric $g$, let us recall that since $\nabla$ is bicovariant and $g$ is bi-invariant, Remark \ref{23rdnov20194} and Proposition \ref{proppignabla} imply that the map $ \widetilde{\Pi_g}(\nabla) $ is bicovariant. The map $ dg: \E \tensora \E \rightarrow \E $ can also be checked to be bicovariant. Therefore, the deformation of the map $\widetilde{\Pi_g}(\nabla) - dg$ exists and is equal to $\widetilde{\Pi_{g_\Omega}}(\nabla_\Omega) - d_\Omega g_\Omega$. Since $\widetilde{\Pi_g}(\nabla) - dg = 0$, therefore we have that $\widetilde{\Pi_{g_\Omega}}(\nabla_\Omega) - d_\Omega g_\Omega = 0$.\\
For the second part of the proof, assume that $\nabla^\prime$ is a bicovariant Levi-Civita connection for the triple $(\E_\Omega, d_\Omega, g_\Omega)$. Viewing $(\E, d, g)$ as a cocycle deformation of $(\E_\Omega, d_\Omega, g_\Omega)$ under the cocycle $\Omega^{-1}$, by the first part of the proof, $(\nabla^\prime)_{\Omega^{-1}}$ is a bicovariant Levi-Civita connection on $(\E_\Omega, d_\Omega, g_\Omega)$. By our hypothesis, such a connection is unique. Hence $(\nabla^\prime)_{\Omega^{-1}} = \nabla$, and hence $\nabla^\prime = \nabla_\Omega$. Thus $(\E_\Omega, d_\Omega, g_\Omega)$ admits a unique bicovariant Levi-Civita connection.
\end{proof}

In Theorem \ref{15thjune20193}, we proved that if the map $ ({}_0(\Psym))_{23} $ is an isomorphism from $ (\zeroE \otimes^{{\rm sym}}_{\mathbb{C}} \zeroE) \tensorc \zeroE $ to $ \zeroE \tensorc (\zeroE \otimes^{{\rm sym}}_{\mathbb{C}} \zeroE), $ then there exists a unique left-covariant Levi-Civita connection for $ (\E, d, g). $ The next theorem shows that under the same assumption, $ (\E_\Omega, d_\Omega) $ admits a unique left-covariant Levi-Civita connection for any bi-invariant pseudo-Riemannian metric.

\begin{thm} \label{18thjuly20194}	
	Suppose $ (\E, d) $ is a bicovariant differential calculus such that ${}_0 \sigma$ is diagonalisable. If the map 
	$$({}_0(\Psym))_{23}: (\zeroE \tensorc^{\rm sym} \zeroE) \tensorc \zeroE \to \zeroE \tensorc (\zeroE \tensorc^{\rm sym} \zeroE) $$ is an isomorphism, then
	\begin{enumerate} 
		\item[(i)] the following map is also an isomorphism: 
		$$({}_0((\Psym)_\Omega))_{23}: ({}_0(\E_\Omega) \tensorc^{\rm sym} {}_0(\E_\Omega)) \tensorc {}_0(\E_\Omega) \to {}_0(\E_\Omega) \tensorc({}_0(\E_\Omega) \tensorc^{\rm sym} {}_0(\E_\Omega)), $$
		\item[(ii)] the corresponding deformed calculus $(\E_\Omega, d_\Omega)$ admits, for every bi-invariant pseudo-Riemannian metric $g^\prime$, a unique left-covariant connection which is torsionless and compatible with $g^\prime$. Moreover, if $\A$ is cosemisimple, this connection is also right-covariant.
	\end{enumerate}
\end{thm}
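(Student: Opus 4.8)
The plan is to deduce both parts from the principle---established in Proposition \ref{11thjuly20193} and Proposition \ref{25thseptember20191}---that cocycle deformation leaves the restriction of a bicovariant map to invariant elements untouched. Part (i) will then reduce to the observation that $({}_0((\Psym)_\Omega))_{23}$ is literally the same linear map, between the same vector spaces, as $({}_0(\Psym))_{23}$; and part (ii) will follow by applying the already-proven Theorem \ref{15thjune20193} and Proposition \ref{Phigiso} (and, for right-covariance, Theorem \ref{17thseptember20191sm}) to the deformed calculus $(\E_\Omega, d_\Omega)$.

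For part (i), I would first record that, by \eqref{10thseptember20191sm}, the left and right comodule coactions of $\E_\Omega$ agree as $\IC$-linear maps with those of $\E$, so the invariant subspaces coincide, ${}_0(\E_\Omega) = \zeroE$. Next, applying Proposition \ref{11thjuly20193} to the bicovariant map $T = \Psym$ gives ${}_0((\Psym)_\Omega) = {}_0(\Psym)$ as maps on $\zeroE \tensorc \zeroE$; together with Proposition \ref{25thseptember20191}, which identifies $(\Psym)_\Omega$ with the symmetrization idempotent $(\Psym)_{\E_\Omega}$ of the deformed calculus, this shows via Definition \ref{22ndaugust20192} that the deformed symmetric-tensor subspace satisfies ${}_0(\E_\Omega) \tensorc^{\rm sym} {}_0(\E_\Omega) = {\rm Ran}({}_0((\Psym)_\Omega)) = {\rm Ran}({}_0(\Psym)) = \zeroE \tensorc^{\rm sym} \zeroE$. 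Consequently the domain $({}_0(\E_\Omega) \tensorc^{\rm sym} {}_0(\E_\Omega)) \tensorc {}_0(\E_\Omega)$ and codomain ${}_0(\E_\Omega) \tensorc ({}_0(\E_\Omega) \tensorc^{\rm sym} {}_0(\E_\Omega))$ coincide with $(\zeroE \tensorc^{\rm sym} \zeroE) \tensorc \zeroE$ and $\zeroE \tensorc (\zeroE \tensorc^{\rm sym} \zeroE)$ respectively, and since ${}_0((\Psym)_\Omega) = {}_0(\Psym)$ the map $({}_0((\Psym)_\Omega))_{23}$ equals $({}_0(\Psym))_{23}$ on the nose. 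Hence the hypothesis that the latter is an isomorphism immediately yields the isomorphism of the former.

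For part (ii), Theorem \ref{11thmay20192} guarantees that $(\E_\Omega, d_\Omega)$ is a bicovariant differential calculus over $\A_\Omega$ whose braiding $\sigma_\Omega$ again has diagonalisable restriction ${}_0(\sigma_\Omega)$, so all the standing hypotheses of Section \ref{21staugust20197} hold for the deformed calculus. Given any bi-invariant pseudo-Riemannian metric $g^\prime$ on $\E_\Omega$, I would invoke part (i) together with the forward direction of the equivalence in Theorem \ref{15thjune20193}, applied to $(\E_\Omega, d_\Omega, g^\prime)$---which relies only on the factorization \eqref{15thjune20191}, Lemma \ref{15thjune20192} and Proposition \ref{15thjune20195}, and not on cosemisimplicity---to conclude that $\widetilde{\Phi_{g^\prime}}$ is a vector-space isomorphism. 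Proposition \ref{Phigiso} then delivers a unique left-covariant torsionless connection compatible with $g^\prime$, valid for \emph{every} such $g^\prime$ precisely because the sufficient condition in play is metric-independent. Finally, since $\A_\Omega$ shares the coalgebra structure of $\A$, its category of comodules is unchanged and $\A_\Omega$ is cosemisimple whenever $\A$ is; under this extra hypothesis Theorem \ref{17thseptember20191sm} upgrades the unique left-covariant connection to a bicovariant one.

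The main obstacle is conceptual rather than computational: one must verify that deformation genuinely fixes the relevant invariant subspaces and the map $({}_0(\Psym))_{23}$ exactly---this is what Proposition \ref{11thjuly20193} and Proposition \ref{25thseptember20191} provide---and take care that the direction of Theorem \ref{15thjune20193} used for the left-covariant conclusion does not covertly invoke cosemisimplicity, the latter being needed only at the final step to pass from left-covariance to bicovariance via Theorem \ref{17thseptember20191sm}.
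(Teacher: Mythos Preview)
Your proposal is correct and follows essentially the same approach as the paper: part (i) reduces to the identification ${}_0((\Psym)_\Omega) = {}_0(\Psym)$ via Proposition \ref{11thjuly20193} (together with ${}_0(\E_\Omega) = \zeroE$), and part (ii) then applies Theorem \ref{15thjune20193} and Proposition \ref{Phigiso} to the deformed calculus, with cosemisimplicity of $\A_\Omega$ invoked only for the final right-covariance claim via Theorem \ref{17thseptember20191sm}. Your care in isolating the fact that the equivalence in Theorem \ref{15thjune20193} between the isomorphism of $({}_0(\Psym))_{23}$ and that of $\widetilde{\Phi_{g'}}$ does not itself depend on cosemisimplicity is a useful clarification that the paper's proof leaves implicit.
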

\begin{proof}
	The first part of the theorem follows by recalling that ${}_0(\E_\Omega) = {}_0 \E$, and the fact that by Proposition \ref{11thjuly20193}, we have ${}_0 (\Psym) = {}_0((\Psym)_\Omega)$.\\
	By Proposition \ref{25thseptember20191}, $ (\Psym)_\Omega$ is the unique idempotent on $ \E_\Omega \otimes_{\A_\Omega} \E_\Omega $ with range $ \E_\Omega \otimes^{{\rm sym}}_{\A_\Omega} \E_\Omega $ and kernel $ \F_\Omega. $ The existence of a unique left-covariant Levi-Civita connection for $ (\E_\Omega, d_\Omega, g^\prime) $ follows by combining the first part and Theorem \ref{15thjune20193}. 
	
	If in addition, if $\A$ is cosemisimple, then $\A_\Omega$ is also cosemisimple and the right-covariance of the Levi-Civita connection follows from Theorem \ref{15thjune20193}.
\end{proof}

As a direct corollary to Theorem \ref{18thjuly20194} and the existence uniqueness theorem for Levi-Civita connection on a classical manifold, we have:	
\begin{prop} \label{18thsep20195}
 Let $\A$ be the Hopf algebra of regular functions on a linear algebraic group $G$ whose category of finite dimensional representations is semisimple. Suppose $ (\E, d) $ is the classical bicovariant differential calculus on $\A$ and $\Omega$ a normalised dual $2$-cocycle on $\A.$ If $g^\prime$ is a pseudo-Riemannian bi-invariant metric on the bicovariant differential calculus $ (\E_\Omega, d_\Omega) $ over the Hopf algebra $ \A_\Omega, $ then there exists a unique bicovariant Levi-Civita connection for the triple $ (\E_\Omega, d_\Omega, g^\prime).$
\end{prop}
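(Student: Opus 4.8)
The plan is to obtain this statement as a direct application of Theorem \ref{18thjuly20194} to the classical calculus, so the only real work is to check that its hypotheses are met for the base calculus $ (\E, d). $ First I would record that for the classical bicovariant differential calculus on $G$, the canonical braiding $ \sigma $ of Proposition \ref{4thmay20193} is the flip map $ \sigmacan $ (as observed in the proof of Proposition \ref{20thnov20191}); hence $ \sigma^2 = 1, $ and in particular $ {}_0 \sigma $ is diagonalisable with eigenvalues among $ \pm 1 $ (Proposition \ref{14thoct20191}(i)). This places us in the regime where Theorem \ref{18thjuly20194} can be invoked, provided the isomorphism hypothesis on $ ({}_0(\Psym))_{23} $ is verified for $\E.$

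Next I would confirm that $ ({}_0(\Psym))_{23}: (\zeroE \tensorc^{\rm sym} \zeroE) \tensorc \zeroE \to \zeroE \tensorc (\zeroE \tensorc^{\rm sym} \zeroE) $ is an isomorphism. Since $ \sigma^2 = 1 $ in the classical case, this is exactly the content of Proposition \ref{15thjune20197}, which gives the conclusion directly. This is also where the classical existence--uniqueness theorem enters as an alternative justification: for a bi-invariant pseudo-Riemannian metric $g$ on $\E$ (which exists, since by Theorem \ref{11thmay20192}(iii) the given $g^\prime$ on $\E_\Omega$ is of the form $g_\Omega$), the fundamental theorem of pseudo-Riemannian geometry furnishes a unique torsionless metric-compatible connection, and for a bi-invariant $g$ this connection is left-covariant. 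By the correspondence established in the proof of Proposition \ref{Phigiso} (via Lemma \ref{15feb20191} and \eqref{1stmay20192}), uniqueness forces $ \widetilde{\Phi_g} $ to be injective, hence bijective as its domain and codomain have equal dimension, and Theorem \ref{15thjune20193} then returns the isomorphism of $ ({}_0(\Psym))_{23}. $ Either route secures the required hypothesis.

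I would then note that the assumption that the category of finite-dimensional representations of $G$ is semisimple is precisely the cosemisimplicity of $\A.$ With diagonalisability of $ {}_0 \sigma, $ the isomorphism of $ ({}_0(\Psym))_{23}, $ and the cosemisimplicity of $\A$ all in place, Theorem \ref{18thjuly20194}(ii) applies: the deformed calculus $ (\E_\Omega, d_\Omega) $ admits, for the given bi-invariant metric $g^\prime,$ a unique left-covariant torsionless $g^\prime$-compatible connection, which by cosemisimplicity is moreover right-covariant. Since every bicovariant connection is in particular left-covariant, this unique left-covariant Levi-Civita connection is the unique bicovariant Levi-Civita connection for the triple $ (\E_\Omega, d_\Omega, g^\prime), $ as claimed.

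Being an assembly of results already proved, this corollary presents no serious obstacle; the only points demanding care are the identification of the classical braiding with the flip map (which is what makes Proposition \ref{15thjune20197} applicable and feeds the reduction to the classical Levi-Civita theorem), and the translation of the semisimplicity of the representation category of $G$ into cosemisimplicity of $\A,$ which is exactly what licenses the right-covariance, and hence full bicovariance, of the resulting connection.
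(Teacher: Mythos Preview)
Your proposal is correct and follows essentially the same route as the paper: verify that in the classical case $\sigma^2=1$ so that $({}_0(\Psym))_{23}$ is an isomorphism by Proposition~\ref{15thjune20197}, note that semisimplicity of the representation category is cosemisimplicity of $\A$, and then invoke Theorem~\ref{18thjuly20194}(ii). The paper additionally records (via Theorem~\ref{11thmay20192}(iii)) that $g'=g_\Omega$ for some bi-invariant $g$ on $\E$, but this is not needed for the application of Theorem~\ref{18thjuly20194}, and you correctly mention it only as part of the alternative route.
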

\begin{proof}
The map $ g^\prime $ is a bi-invariant pseudo-Riemannian metric on $ \E_\Omega $ and so by Theorem \ref{11thmay20192}, there exists a bi-invariant pseudo-Riemannian metric $g$ on $\E$ such that $ g_\Omega = g^\prime. $ The Levi-Civita connection for the triple $ (\E, d, g) $ is bicovariant. This is well-known and can also be directly seen from Corollary \ref{18thsep20194}. Therefore, we can apply Theorem \ref{18thjuly20194} to reach the desired conclusion. 
\end{proof}

We conclude this section by proving Proposition \ref{25thnov20191} which shows that our definition of metric-compatibility coincides with that in \cite{heckenberger}. The proof will use the notations and discussions preceding the statement of Theorem \ref{25thnov20191}.

{\bf Proof of Proposition \ref{25thnov20191}:} By our assumption, $ \nabla^\prime $ and $g^\prime$ are bicovariant. It can be easily checked that analogues of Theorem \ref{18thseptember20191} for left connections and the third assertion of Theorem \ref{11thmay20192} for left $\A$-linear pseudo-Riemannian metrics hold. This implies that there exist a bicovariant left-connection $\nabla$ on $\E$ and a left $\A$-linear bi-invariant pseudo-Riemannian metric $g$ on $\E$ such that $ \nabla^\prime = \nabla_\Omega $ and $g^\prime = g_\Omega.$ 

Now suppose that $ \nabla^\prime = \nabla_\Omega $ is such that \eqref{25thnov20192} holds for the left $\A$-linear bi-invariant pseudo-Riemannian metric $g^\prime = g_\Omega.$ Then by \eqref{25thnov20193}, $ \widetilde{{}_L \Pi^0_{g_\Omega}} (\nabla_\Omega) = 0,$ i.e, 
$$ 2(g_\Omega \tensorc \id) (\id \tensorc \sigma_\Omega) (\id \tensorc \nabla_\Omega) {}_0 ((\Psym)_\Omega) = 0 $$
as maps on $ {}_0 (\E_\Omega) \tensorc {}_0 (\E_\Omega) = \zeroE \tensorc \zeroE. $ Since the maps $ g_\Omega, \sigma_\Omega, {}_0 (\Psym)_\Omega $ coincide with $g, \sigma, {}_0 (\Psym) $ respectively on $ \zeroE \tensorc \zeroE, $ we can conclude that 
 $$ 2(g \tensorc \id) (\id \tensorc \sigma) (\id \tensorc \nabla) {}_0 (\Psym) = 0 $$
 as maps on $\zeroE \tensorc \zeroE. $ But $\E$ is the classical space of forms on the group $G$ and therefore, our definition of metric-compatibility coincides with that in \cite{heckenberger}. Hence we have
 $$ (\id \tensorc g)(\nabla \tensorc \id) + (g \tensorc \id)(\id \tensorc \sigma)(\id \tensorc \nabla) = 0$$
Applying the same argument as above, we deduce that 
$$ (\id \tensorc g_\Omega)(\nabla_\Omega \tensorc \id) + (g_\Omega \tensorc \id)(\id \tensorc \sigma_\Omega)(\id \tensorc \nabla_\Omega) = 0,$$
i.e, $ \nabla^\prime = \nabla_\Omega $ is compatible with $ g^\prime = g_\Omega $ in the sense of \cite{heckenberger}. 

The converse part follows similarly and this completes the proof. \qed 
\\

We end the article with some comments and limitations of our article. First of all, we need to see whether we still get the existence and uniqueness theorem for Levi-Civita connections for a different construction of two-forms as in Remark \ref{28thnov20191}. Secondly, in this article, we did not take into consideration the $\ast$-compatibility of a connection as defined and studied in \cite{starcompatible}. It remains to be seen whether the Levi-Civita connections obtained in \cite{article3} (or \cite{article1}) or here are compatible with the $\ast$-structure if the algebra under question is equipped with an involution. It would be interesting to have existence and uniqueness type results for covariant differential calculi on quantum homogeneous spaces. For such a theorem on the Podl\'es sphere, we refer to \cite{majidpodles}. Finally, the construction of the curvature and the Ricci tensor also need to be investigated. We hope to answer some of these problems elsewhere.


\begin{thebibliography}{6666}
		
\bibitem{paganietal} P. Aschieri,P. Bieliavsky, C. Pagani and A. Schenkel: Noncommutative Principal Bundles Through Twist Deformation. Comm. Math. Phys. 352, 287--344 (2016).

\bibitem{starcompatible} E.J. Beggs and S. Majid: ${}^*$-compatible connections in noncommutative Riemannian geometry. J. Geom. Phys. 61, 95--124 (2011).

\bibitem{beggsmajidbook} E.J. Beggs and S. Majid: Quantum Riemannian Geometry. Springer International Publishing (2019).


\bibitem{beggssmith} E.J. Beggs and P.S. Smith: Non-commutative complex differential geometry. J. Geom. Phys. 72, 7--33 (2013).

\bibitem{buachallacomplex} R.O. Buachalla: Noncommutative complex structures on quantum homogeneous spaces. J. Geom. Phys. 99, 154--173 (2016).

\bibitem{buachallakahler} R.O. Buachalla: Noncommutative K\"ahler structures on quantum homogeneous spaces, Adv. Math. 322, 892--939 (2017).

\bibitem{watamura} U. Carow-Watamura and M. Schlieker and S. Watamura and W. Weich : Bicovariant differential calculus on quantum groups $SU_q(N)$ and $SO_q (N)$, Comm. Math. Phys. 142, 605--641 (1991).

\bibitem{article1} J. Bhowmick, D. Goswami and S. Mukhopadhyay: Levi-Civita connections for a class of spectral triples. Lett. Math. Phys. 110, 835--884 (2019).


\bibitem{article2} J. Bhowmick, D. Goswami and S. Joardar: A new look at Levi-Civita connection in noncommutative geometry, arXiv: 1606.08142v4.

\bibitem{article3} J. Bhowmick, D. Goswami and G. Landi: On the Koszul formula in noncommutative geometry. Rev. Math. Phys. 2050032 (2020).


\bibitem{article5} J. Bhowmick and S. Mukhopadhyay: Pseudo-Riemannian metrics on bicovariant bimodules. arXiv: 1911.06036. Accepted for publication in Ann. Math. Blaise Pascal.

\bibitem{chakpal} P.S. Chakraborty and A. Pal: Equivariant Spectral Triples on the Quantum SU(2) Group. K-Theory. 28 (2), 107--126 (2003). 

\bibitem{connes} A. Connes: Noncommutative geometry. Academic Press, Inc., San Diego, CA (1994).

\bibitem{conneslocalindex} A. Connes: Cyclic cohomology, quantum group symmetries and the local index formula for $ SU_q (2)$, J. Inst. Math. Jussieu 3 (1), 17--68 (2004).

\bibitem{trieste} L. Dabrowski, G. Landi, A. Sitarz, W. Suijlekom and J.C. Varilly: The Dirac operator on $SU_q(2)$: Comm. Math. Phys. {259} (3), 729--759 (2005).

\bibitem{heckenberger} I. Heckenberger and K. Schm{\"u}dgen : Levi-Civita Connections on the Quantum Groups ${\rm SL}_q(N)$ $O_q(N)$ and ${\rm Sp}_q(N)$, Comm. Math. Phys. {185}, 177--196 (1997).

\bibitem{heckenbergerlaplace} I. Heckeneberger: Hodge and Laplace-Beltrami operators for bicovariant differential calculi on quantum groups. Compositio Math. {123} (3) 329--354 (2000). 

\bibitem{heckenbergerspin} I. Heckenberger: Spin geometry on quantum groups via covariant differential calculi. Adv. Math. {175} ({2}), 197--242 (2003).

\bibitem{soumalya} S. Joardar: Scalar Curvature of a Levi-Civita Connection on Cuntz algebra with three generators. Lett. Math. Phys. 109, 2665–2679 (2019).

\bibitem{klimyk} A. Klimyk and K. Schm{\"u}dgen: Quantum Groups and their Representations. Springer, New York (1998).

\bibitem{majidcocycle} S. Majid and R. Oeckl : Twisting of quantum differentials and the Planck Scale Hopf algebra, Comm. Math. Phys. {205}, 617--655, 1999.

\bibitem{majidpodles} S. Majid : Noncommutative {R}iemannian and spin geometry of the standard {$q$}-sphere. Comm. Math. Phys., {256} (2), 255--285 (2005).

\bibitem{matassakahler} M. Matassa: Kahler structures on quantum irreducible flag manifolds. J. Geom. Phys. 145, 103477 (2019).

\bibitem{suq2} S. Mukhopadhyay: Levi-Civita connection for $SU_q (2)$. arXiv:2003.14196.

\bibitem{neshveyev} S. Neshveyev and L. Tuset: The Dirac operator on compact quantum groups. J. Reine Angew. Math. {641}, 1--20 (2010).

\bibitem{schauenberg} P. Schauenburg: Hopf modules and {Y}etter-{D}rinfel'd modules. {J. Algebra}, 169 (3), 874--890 (1994).

\bibitem{weber} T. Weber: Braided Cartan Calculi and Submanifold Algebras. J. Geom. Phys. 150, 103612 (2020).

\bibitem{woronowicz} S.L. Woronowicz:
Differential calculus on compact matrix pseudogroups (quantum
groups). {Comm. Math. Phys.}, {122} {(1)}, {125--170}, (1989).

		\end{thebibliography}
\end{document}